\newdimen\theight
\def\TeXref#1{%
              \leavevmode\vadjust{\setbox0=\hbox{{\tt
                      \quad\quad  {\small \textrm #1}}}%
              \theight=\ht0
              \advance\theight by \lineskip
              \kern -\theight \vbox to
              \theight{\rightline{\rlap{\box0}}%
              \vss}%
              }}%
\theoremstyle{plain}
\newtheorem{thm}{Theorem}[section]
\newtheorem{lem}[thm]{Lemma}
\newtheorem{cor}[thm]{Corollary}
\newtheorem{prop}[thm]{Proposition}
\theoremstyle{definition}
\newtheorem{defn}[thm]{Definition}
\newtheorem{ex}[thm]{Example}
\newtheorem{quest}[thm]{Question}
\theoremstyle{remark}
\newtheorem{rem}{Remark}
\newtheorem{claim}{Claim}
\newcommand{\BB}{\mathcal{B}}
\newcommand{\EE}{\mathcal{E}}
\newcommand{\FF}{\mathcal{F}}
\newcommand{\KK}{\mathcal{K}}
\newcommand{\CC}{\mathcal{C}}
\newcommand{\MM}{\mathcal{M}}
\newcommand{\HH}{\mathcal{H}}
\newcommand{\VV}{\mathcal{V}}
\newcommand{\UU}{\mathcal{U}}
\newcommand{\NN}{\mathcal{N}}
\newcommand{\XX}{\mathcal{X}}
\newcommand{\YY}{\mathcal{Y}}
\newcommand{\AAA}{\mathcal{A}}
        \newcommand{\field}[1]{\text{$\mathbb{#1}$}}
        \newcommand{\N}{\field{N}}
        \newcommand{\Z}{\field{Z}}
        \newcommand{\R}{\field{R}}
\newcommand{\supp}{\operatorname{supp}}
\newcommand{\codim}{\operatorname{codim}}
\newcommand{\id}{\operatorname{id}}
\newcommand{\sign}{\operatorname{sign}}
\newcommand{\Fix}{\operatorname{Fix}}
\newcommand{\pr}{\operatorname{pr}}
\newcommand{\Diff}{\operatorname{Diff}}
\newcommand{\Graph}{\operatorname{Graph}}
\newcommand{\Coin}{\operatorname{Coin}}
\newcommand{\Int}{\operatorname{Int}}
\newcommand{\cinf}{\text{$C^\infty$}}
\newcommand{\D}{\text{$\Delta$}}
\title[Transversality and Lefschetz numbers for foliation maps]{Transversality and Lefschetz numbers for foliation maps}
\author[J.A. \'Alvarez L\'opez]{Jes\'us A. \'Alvarez L\'opez}
\address{Departamento de Xeometr\'{\i}a e Topolox\'{\i}a\\
         Facultade de Matem\'aticas\\
         Universidade de Santiago de Compostela\\
         15782 Santiago de Compostela\\ Spain}
\email{jalvarez@usc.es}
\thanks{The first author is partially supported by the MEC grant MTM2004-08214}
\author[Y.A. Kordyukov]{Yuri A. Kordyukov}
\address{Institute of Mathematics\\ Russian Academy of Sciences\\
112 Chernyshevsky street\\ 450077 Ufa\\ Russia}
\email{yurikor@matem.anrb.ru}
\thanks{The second author is partially supported by the RFBR grant 06-01-00208.}
\subjclass{57R30, 58J20}
\keywords{foliation, foliation map, transversality, integrable homotopy, $\Lambda$-Lefschetz number, Lefschetz trace formula}
\begin{document}

\begin{abstract}
Let $\FF$ be a smooth foliation on a closed Riemannian manifold $M$,
and let $\Lambda$ be a transverse invariant measure of $\FF$.
Suppose that $\Lambda$ is absolutely continuous with respect to the
Lebesgue measure on smooth transversals. Then a topological
definition of the $\Lambda$-Lefschetz number of any leaf preserving
diffeomorphism $(M,\FF)\to(M,\FF)$ is given. For this purpose,
standard results about smooth approximation and transversality are
extended to the case of foliation maps. It is asked whether this
topological $\Lambda$-Lefschetz number is equal to the analytic
$\Lambda$-Lefschetz number defined by Heitsch and Lazarov which
would be a version of the Lefschetz trace formula. Heitsch and
Lazarov have shown such a trace formula when the fixed point set is
transverse to $\FF$.
\end{abstract}

\maketitle

\tableofcontents


\section{Introduction}

Let $\FF$ be a smooth foliation of codimension $q$ on a closed
Riemannian manifold $M$, and let $\Lambda$ be a transverse invariant
measure of $\FF$. Continuing the ideas of  Connes' index theory for
foliations \cite{Connes1979,Connes1982}, Heitsch and Lazarov use
$\Lambda$ to define an analytic version of Lefschetz number
for any leaf preserving diffeomorphism $\phi:(M,\FF)\to(M,\FF)$
which is geometric with respect to a leafwise Dirac complex
\cite{HeitschLazarov}. For the sake of simplicity, consider only the
case of the leafwise de~Rham complex. Then, by using the Hodge
isomorphism on the leaves, the ``local super-trace'' of the induced
map on the reduced $L^2$ cohomology of the leaves can be defined as
a density on the leaves. The pairing of this leafwise density with
$\Lambda$ is a measure on $M$, whose integral is the {\em analytic
$\Lambda$-Lefschetz number\/}, which will be denoted by
$L_{\Lambda,\text{\rm an}}(\phi)$.

Let $\Fix(\phi)$ denote the fixed point set of $\phi$. In
\cite{HeitschLazarov}, Heitsch and Lazarov have also proved a Lefschetz
trace formula when $\Fix(\phi)$ is a regular submanifold transverse
to $\FF$. Assume that $\dim\Fix(\phi)=q$ for the sake of simplicity;
thus $\Fix(\phi)$ is a closed transversal of $\FF$. Then the
Lefschetz trace formula is
\begin{equation}\label{e:Lefschetz}
L_{\Lambda,\text{\rm an}}(\phi)=\int_{\Fix(\phi)}\epsilon_\phi\,\Lambda\;,
\end{equation}
where $\epsilon_\phi:\Fix(\phi)\to\{\pm1\}$ is the locally constant function defined by
$$
\epsilon_\phi(x)=\sign\det(\id-\phi_*:T_x\FF\to T_x\FF)\;.
$$
The right hand side of~\eqref{e:Lefschetz} can be called the {\em
topological $\Lambda$-Lefschetz number\/} of $\phi$, and denoted by
$L_{\Lambda,\text{\rm top}}(\phi)$. Thus~\eqref{e:Lefschetz} becomes
\begin{equation}\label{e:Lefschetz'}
L_{\Lambda,\text{\rm an}}(\phi)=L_{\Lambda,\text{\rm top}}(\phi)\;.
\end{equation}

In this paper, we give some steps towards a generalization
of~\eqref{e:Lefschetz'} by relaxing the transversality condition on
$\Fix(\phi)$. The motivation is that existence of a closed
transversal is a strong restriction on $\FF$. But we have to assume
that $\Lambda$ is absolutely continuous with respect to the Lebesgue
measure on smooth transversals; {\em i.e.\/}, it is given by a
continuous density on smooth transversals.

For that purpose, we extend some standard results about maps to the
case of {\em foliation maps\/} between foliated manifolds (those
which map leaves to leaves). This kind of study was begun in
\cite{AlvMasa2006,AlvMasa}, where the set of smooth foliation maps
is endowed with certain topologies, called  the {\em strong\/} and
{\em weak plaquewise topologies\/}. That study is recalled and
pursued further here; in particular, we study certain version of
transversality for foliation maps, called {\em ls-transversality\/}.

It is shown that any leaf preserving diffeomorphism
$\phi:(M,\FF)\to(M,\FF)$ is ``homotopic along the leaves'' to a leaf
preserving map $\xi:(M,\FF)\to(M,\FF)$ whose graph is ls-transverse
to the diagonal of $M\times M$. With this condition, it turns out
that $\Fix(\xi)$ is a closed regular submanifold of $M$ of dimension
$q$. Let $\Fix_0(\xi)\subset\Fix(\xi)$ denote the open subset where
$\Fix(\xi)$ is transverse to $\FF$, which is a transversal of $\FF$.
Then we prove that the {\em topological $\Lambda$-Lefschetz
number\/} of $\phi$ is well defined by the formula
\begin{equation}\label{e:L Lambda,top}
L_{\Lambda,\text{\rm top}}(\phi)=\int_{\Fix_0(\xi)}\epsilon_\xi\,\Lambda\;;
\end{equation}
{\em i.e.\/}, we show that this integral is defined and is independent of $\xi$.

With more generality, the {\em coincidence point set\/} of two
foliation maps $\phi,\psi:(M,\FF)\to(M,\FF)$ is
$$
\Coin(\phi,\psi)=\{x\in M\ |\ \phi(x)=\psi(x)\}\;.
$$
If $\phi$ and $\psi$ are ``transversely smooth'' and induce the same
map on the leaf space $M/\FF$, then, with the obvious generalization
of the notation of~\eqref{e:L Lambda,top}, it is proved that their
{\em topological $\Lambda$-coincidence\/} is well defined by a
formula
$$
\Coin_{\Lambda,\text{\rm top}}(\phi,\psi)=\int_{\Coin_0(\xi,\zeta)}\epsilon_{\xi,\zeta}\,\Lambda\;.
$$
Therefore, for any ``transversely smooth'' leaf preserving map
$\phi:M\to M$, the {\em topological $\Lambda$-Lefschetz number\/} of
$\phi$ is well defined by
\begin{equation}\label{e:L Lambda,top'}
L_{\Lambda,\text{\rm top}}(\phi)=\Coin_{\Lambda,\text{\rm top}}(\id,\phi)\;.
\end{equation}

This work raises the following question.

\begin{quest}
With the definition of $L_{\Lambda,\text{\rm top}}(\phi)$ given
by~\eqref{e:L Lambda,top}, is the Lefschetz trace
formula~\eqref{e:Lefschetz'} valid without the assumption that the
fixed point set of $\phi$ is a transversal?
\end{quest}

Besides its own interest, an affirmative answer to this question
would be helpful to describe the Lefschetz distribution of a Lie
foliation on a closed manifold \cite{AlvKordy}.

There are some other works related with measurable Lefschetz numbers
for foliation maps. Note that the analytic $\Lambda$-Lefschetz
number of the identity map coincides with the $\Lambda$-Euler
characteristic of the foliation introduced by Connes in
\cite{Connes1979,Connes1982}. Connes has also proven the
corresponding Lefschetz trace formula, which is just a measurable
index theorem for the leafwise de Rham complex on a compact foliated
manifold. The topological $\Lambda$-Euler characteristic was studied
recently by Berm\'udez \cite{Bermudez} in a more general context of
measurable foliations.

In \cite{AlvKordy:Betti}, the authors showed that non-vanishing of
the $\Lambda$-Euler characteristic of a transitive codimension one
foliation implies that the reduced leafwise cohomology of this
foliation is infinite-dimensional. This result was extended to the
case of Lie foliations in \cite{AlvKordy} and to the case of
Riemannian foliations in \cite{Mumken}.

Finally, let us mention that Benameur in \cite{Benameur} has proved
a $K$-theoretical version of the Lefschetz formula for leaf
preserving isometries, extending Connes and Skandalis
$K$-theoretical index theorem for foliations \cite{ConnesSkandalis}.

In the rest of the paper, we will only consider the topological
$\Lambda$-Lefschetz number and the topological
$\Lambda$-coincidence. So the word ``topological'' and the subindex
``top'' will be removed from this terminology and notation.

\section{Preliminaries}\label{s:preliminaries}

Recall the following terminology and notation about foliations (see {\em e.g.\/} \cite{CandelConlon}). Let $M$ be a manifold of dimension $n=p+q$. A foliation $\FF$ of {\em dimension\/} $p$ and {\em codimension\/} $q$ on $M$ can be defined as a maximal atlas $\{U_i,\theta_i\}$ of $M$ such that $\theta_i(U_i)=T_i\times B_i$ for some open subset $T_i\subset\R^q$ and some convex open subset $B_i\subset\R^p$, and the corresponding changes of coordinates are of the form
\begin{equation}\label{e:changes of coordinates}
\theta_j\circ\theta_i^{-1}(x,y)=(h_{i,j}(x),g_{i,j}(x,y))\;.
\end{equation}
These charts are called {\em foliation charts\/} of $\FF$. The pair $(M,\FF)$ is called a {\em
foliated manifold\/}. Usually, it is assumed that each $B_i$ is the
standard ball of $\R^p$, but, with the above freedom to choose them,
it will be easier to give foliation charts. Each $U_i$ is called a
{\em distinguished open set\/}. The composite of $\theta_i$ with the
factor projection $T_i\times B_i\to T_i$ is called the {\em local
projection\/} of $\theta_i$ or $U_i$, and denoted by $p_i:U_i\to
T_i$. The fibers of each $p_i$ are called {\em plaques\/}. Each
$T_i$ can be identified to the quotient space of plaques of $U_i$,
which is called the {\em local quotient\/} of $U_i$. The plaques
form a base of a topology, called the {\em leaf topology\/}. The
connected components of the leaf topology are $p$-dimensional
immersed submanifolds of $M$ called {\em leaves\/}. The leaves
determine $\FF$. Let $M/\FF$ denote the quotient space of leaves. A
subset of $M$ is called {\em saturated\/} if it is a union of
leaves.

For each $y\in B_i$, the $q$-dimensional regular submanifold $\theta_i^{-1}(T_i\times\{y\})\subset M$ is called a {\em local transversal\/} of $\FF$. A {\em transversal\/} of $\FF$ is any regular submanifold of $M$ which has an open covering by local transversals. A transversal is called {\em complete\/} if it meets all leaves.

The {\em restriction\/} of $\FF$ to any open subset $U\subset M$ is the foliation $\FF|_U$ whose foliation charts are the foliation charts of $\FF$ defined on subsets of $U$. The leaves of $\FF|_U$ are the connected components of the intersections of the leaves of $\FF$ with $U$.

 A collection of foliation charts whose domains cover $M$ is called a {\em foliation atlas\/}, and determines $\FF$. A foliation atlas $\{U_i,\theta_i\}$ is called {\em regular\/} if $U_i\cup U_j$ is contained in some distinguished open set whenever $U_i$ meets $U_j$. There always exists a locally finite regular atlas.

 For $r\in\N\cup\{\infty\}$, a {\em $C^r$ structure\/} on $\FF$ is a maximal foliation atlas whose changes of coordinates are $C^r$ maps. When $\FF$ is endowed with a $C^r$ structure, then $\FF$ is called a {\em $C^r$ foliation\/}, or it is said that $\FF$ is of {\em class $C^r$\/}.

 If $\FF$ is of class $C^r$ with $r\ge1$, then the vectors tangent to the leaves form a $C^r$ vector subbundle $T\FF\subset TM$, whose fiber at each $x\in M$ is denoted by $T_x\FF$. Then $\nu=TM/T\FF$ is called the {\em normal bundle\/} of $\FF$.

Let $N$ be another $C^r$ manifold, with $r\ge1$. A $C^r$ map $\phi:N\to M$ is said to be {\em transverse\/} to $\FF$ if it is transverse to all leaves of $\FF$; {\em i.e.\/}, when
$$
\phi_*(T_xN)+T_{\phi(x)}\FF=T_{\phi(x)}M
$$
for all $x\in N$. In this case, the inverse image by $\phi$ of the leaves of $\FF$ are the leaves of a foliation $\phi^*\FF$ on $N$, called the {\em pull-back\/} of $\FF$ by $\phi$. If $N$ is a regular $C^r$ submanifold of $M$ and $\phi$ is the inclusion map, then $\phi^*\FF$ is called the {\em restriction\/} of $\FF$ to $N$, and denoted by $\FF|_N$. Observe that the $C^r$ transversals are the $q$-dimensional regular submanifolds transverse to $\FF$.

Let $\{U_i,\theta_i\}$ be any regular foliation atlas of $\FF$ with $\theta_i(U_i)=T_i\times B_i$. Then the maps $h_{i,j}$ of~\eqref{e:changes of coordinates} are local homeomorphisms of $T=\bigsqcup_iT_i$, and thus generate a pseudogroup $\HH$. The pseudogroups generated by different foliation atlases are {\em equivalent\/} in a sense introduced in \cite{Haefliger1980}. The equivalence class of these pseudogroups is called the {\em holonomy pseudogroup\/} of $\FF$. If $\{U_i\}$ is locally finite, then $T$ can be identified to a transversal of $\FF$, and the maps $h_{i,j}$ can be considered as ``slidings'' of local transversals along the leaves. For any transversal of $\FF$, the ``slidings'' of its small enough open sets along the leaves, called {\em holonomy transformations\/}, also generate a representative of the holonomy pseudogroup.

Any $\HH$-invariant measure $\Lambda$ on $T$ is called an {\em
transverse invariant measure\/} of $\FF$. This definition is independent of the
representative $\HH$ of the holonomy pseudogroup because invariant
measures can be pushed forward by pseudogroup equivalences. Thus
$\Lambda$ can be considered as a measure on transversals invariant
by holonomy transformations. There is a maximal saturated open
subset $U\subset M$ such that $\Lambda$ vanishes on transversals
contained in $U$; then $M\setminus U$ is called the {\em support\/}
of $\Lambda$.

Suppose that $\FF$ is $C^1$. Then the class of the Lebesgue measure is well defined on $C^1$ transversals. Thus it makes sense to assume that $\Lambda$ is absolutely continuous with respect to the Lebesgue measure on $C^1$ transversals. This means that $\Lambda$ can be considered as a continuous density on $C^1$ transversals.

{\em Orientations\/} and {\em transverse orientations\/} of
$\FF$ can be defined as orientations of $T\FF$ and $\nu$,
respectively; thus they can be represented by non-vanishing
continuous sections, $\chi$ of $\bigwedge^pT\FF^*$ and $\omega$ of
$\bigwedge^q\nu^*$. Since there is a canonical injection $\nu^*\to
TM^*$, $\omega$ can be considered as a differential $q$-form on $M$.
With the choice of a splitting $TM\cong\nu\oplus T\FF$, we can also
consider $\chi$ as a differential $p$-form on $M$. Moreover
$\omega\wedge\chi$ is independent of the splitting and does not
vanish; thus it defines an induced orientation of $M$. Of course,
orientations and transverse orientations could also be defined
without using differentiability; in particular, a transverse
orientation can be given as an $\HH$-invariant orientation on $T$.

A continuous differential form $\alpha$ on $M$ is called {\em horizontal\/} if $i_X\alpha=0$ for all vector field $X$ tangent to the leaves; {\em i.e.\/}, it can be considered as a continuous section of $\bigwedge\nu^*$. If $\alpha$ is horizontal and invariant by the flow of any $C^1$ vector field $X$ tangent to the leaves, then it is said to be {\em basic\/}. With respect to local foliation coordinates, the coefficients of the expression of a continuous basic form are constant on the plaques. On the other hand, if $\alpha$ is a $C^1$ horizontal $q$-form, then the local expressions of $d\alpha$ involve only leafwise derivatives. Therefore, if $\alpha$ is a continuous basic $q$-form, then there is a sequence of $C^1$ horizontal $q$-forms $\alpha_i$ converging to $\alpha$ such that $d\alpha_i$ converges to zero, where the compact-open topology is considered. Since the basic forms on each $U_i$ are the pull-back of differential forms on $T_i$ by the corresponding local projection, it follows that basic forms can be also considered as $\HH$-invariant differential forms on $T$.

Suppose that $\FF$ is transversely oriented, and thus $T$ has a
corresponding $\HH$-invariant orientation. This orientation can be
used to identified $\HH$-invariant continuous densities on $T$ with
$\HH$-invariant continuous differential forms of degree $q$, which in turn can be considered as continuous basic forms of degree $q$. Therefore any transverse invariant
measure $\Lambda$, absolutely
continuous with respect to the Lebesgue measure, can be identified
to a continuous basic $q$-form $\alpha$. It turns out that, for any
$C^1$ transversal $\Sigma$ and any compactly supported continuous
function $f:\Sigma\to\R$, we have
$$
\int_\Sigma f\,\Lambda=\int_\Sigma f\,\alpha\;,
$$
where the restriction of $\alpha$ to $\Sigma$ is also denoted by $\alpha$. With respect to the opposite orientation on $T$, $\Lambda$ is identified to $-\alpha$.

\section{Spaces of foliation maps}\label{sec:foliation maps}

In this section, we adapt to
foliations the usual strong and weak topologies on the sets of maps between
manifolds, and we shall show some elementary properties of these spaces.

For $r\in\N\cup\{\infty\}$, let $\FF$ and $\FF'$ be $C^r$ foliations on $C^r$ manifolds $M$ and
$M'$ respectively. A {\em foliation map\/} $\phi:(M,\FF)\to(M',\FF')$ (or simply $\phi:\FF\to\FF'$) is a
map $M\to M'$ that maps leaves of $\FF$ to leaves of $\FF'$. If $r\ge1$, for any $C^r$ foliation map $\phi:\FF\to\FF'$, its tangent map $\phi_*:TM\to TM'$ restricts to a $C^r$ homomorphism $\phi_*:T\FF\to T\FF'$.

Let $C^r(\FF,\FF')$ denote the set of $C^r$ foliation maps
$\FF\to\FF'$. The notation $C(\FF,\FF')$ will be also used in the case $r=0$. Assume first that $r$ is
finite, and fix the following data:
\begin{itemize}

\item Any $\phi\in C^r(\FF,\FF')$.

\item Any locally finite
collection of foliation charts  of $\FF$, $\Theta=\{U_i,\theta_i\}$ with
$\theta_i(U_i)=T_i\times B$.

\item A collection of foliation charts of $\FF'$ with the same index set,  $\Theta'=\{U'_i,\theta'_i\}$ with
$\theta'_i(U'_i)=T'_i\times B'$. Let
$\pi'_i:U'_i\to T'_i$ denote the local projection defined by each $\theta'_i$.

\item A family of compact subsets of $M$ with the same index set, $\KK=\{K_i\}$, such that
$K_i\subset U_i$ and $\phi(K_i)\subset U'_i$ for all $i$.

\item A family of positive numbers with the same index set, $\EE=\{\epsilon_i\}$.

\end{itemize}
Then let $\NN^r(\phi,\Theta,\Theta',\KK,\EE)$ be the set of foliation maps
$\psi\in C^r(\FF,\FF')$ such that
\begin{gather}
\psi(K_i)\subset U'_i\;,\label{e:K i}\\
\quad\pi'_i\circ\psi(x)=\pi'_i\circ\phi(x)\;,\label{e:pi'i}\\
\left\|D^k\left(\theta_i'\circ\psi\circ\theta_i^{-1}\right)(\theta_i(x))
-D^k\left(\theta_i'\circ\phi\circ\theta_i^{-1}\right)(\theta_i(x))\right\|<\epsilon_i
\label{e:epsilon i}
\end{gather}
for each $i$, any $x\in K_i$ and every $k\in\{0,\dots,r\}$. Observe
that~\eqref{e:pi'i} means that, for each plaque $P$ of $\theta_i$,
the images of $P\cap K$ by $\phi_i$ and $\psi_i$ lie in the same
plaque of $\theta'_i$. For each finite $r$, the family of such sets
$\NN^r(\phi,\Theta,\Theta',\KK,\EE)$ is a base of a topology on
$C^r(\FF,\FF')$, which will be called the {\em strong
plaquewise topology\/}, or simply the {\em sp-topology\/}. When
$\phi$ is fixed, all possible sets
$\NN^r(\phi,\Theta,\Theta',\KK,\EE)$ form a base of neighbourhoods
of $\phi$ in this space.

On the other hand, the sets $\NN^r(\phi,\Theta,\Theta',\KK,\EE)$,
with $i$ running in a finite index set, form a base of a coarser topology on $C^r(\FF,\FF')$, which
will be called the {\em weak plaquewise topology\/}, or simply the {\em wp-topology\/}. For
families $\Theta$, $\Theta'$, $\KK$ and $\EE$ with just one element, say $\theta$, $\theta'$, $K$ and $\epsilon$, the
corresponding sets $\NN^r(\phi,\theta,\theta',K,\epsilon)=\NN^r(\phi,\Theta,\Theta',\KK,\EE)$ form
a subbase of this topology. When $\phi$ is fixed, all possible sets
$\NN^r(\phi,\theta,\theta',K,\epsilon)$ form a subbase of neighbourhoods of $\phi$ in
this space.

In the case $r=0$, consider also the set $\NN(\phi,\Theta,\Theta',\KK)$ of foliation maps $\psi\in C(\FF,\FF')$ satisfying~(1) and~(2). Observe that all of these sets form a base of the sp-topology of $C(\FF,\FF')$. A base of its wp-topology is similarly given with finite families $\Theta$, $\Theta'$ and $\KK$.

Now let us consider the case $r=\infty$. With the above notation, for arbitrary $s\in\N$ and
$\phi$, $\Theta$, $\Theta'$, $\KK$ and $\EE$ as above, all sets
$$
\cinf(\FF,\FF')\cap\NN^s(\phi,\Theta,\Theta',\KK,\EE)
$$
form a base of a topology on $C^\infty(\FF,\FF')$, which will
be called the {\em strong plaquewise topology\/}, or simply the {\em sp-topology\/}. Finally, the
{\em weak plaquewise topology\/}, or simply the {\em wp-topology\/}, on $C^\infty(\FF,\FF')$ can be
similarly defined by considering families with $i$ running in a finite index set.

For $r=0$, a base of the sp-topology of $C(\FF,\FF')$ is given
by the sets $\NN(\phi,\Theta,\Theta',\KK)$ of foliation maps
$\psi:(M,\FF)\to(M',\FF')$ satisfying~\eqref{e:K i}
and~\eqref{e:pi'i}, where $\phi$, $\Theta$, $\Theta'$ and $\KK$ vary
as above. A base of the wp-topology of $C(\FF,\FF')$ can be
given in the same way with $i$ running in a finite index set.

If $M$ is compact, then both of the above
topologies on each $C^r(\FF,\FF')$ are obviously equal. In this
case, it will be called the {\em plaquewise topology\/} on
$C^r(\FF,\FF')$.

The subindex ``SP'' or ``WP'' will
mean that the restriction of the strong or weak plaquewise topology is considered on any set of $C^r$ foliation maps; the subindex
``P'' will be used when the manifolds are compact. Moreover expressions like ``sp-open,''
``sp-continuous,'' etc. mean that these topological properties hold with respect to the sp-topology
on sets of $C^r$ foliation maps, or for the induced topologies on quotients, products,
etc. A similar notation will be used for the wp-topology. The subindex ``S'' or ``W'' will mean that the usual strong or weak topology is considered on any set of $C^r$ maps.

The following extreme examples may help to clarify the above topologies.

\begin{ex}\label{ex:dim cF' = 0}
If the leaves of $\FF'$ are points ($\dim\FF'=0$),
then $C^r_{SP}(\FF,\FF')$ is discrete.
\end{ex}

\begin{ex}\label{ex:dim cF = 0, dim cF' = dim M'}
If the leaves of $\FF$ are points ($\dim\FF=0$) and the leaves of $\FF'$ are open in $M'$
($\dim\FF'=\dim M'$), then
$C^r_{SP}(\FF,\FF')=C^r_S(M,M')$ and $C^r_{WP}(\FF,\FF')=C^r_W(M,M')$.
\end{ex}

Recall that a subset of a topological space is called {\em residual\/} if it contains a
countable intersection of dense open subsets; the complement of each residual set is
called a {\em meager\/} set. Also, recall that a topological space is called a {\em
Baire space\/} when every residual subset is dense.

\begin{thm}\label{t:Baire}
$C^r_{WP}(\FF,\FF')$ and $C^r_{SP}(\FF,\FF')$ are Baire spaces for $0\le r\le\infty$.
\end{thm}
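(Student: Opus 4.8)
The plan is to show that each of these plaquewise topologies admits a natural metrizable refinement on suitable pieces, or more directly, to adapt the classical argument (as in Hirsch's \emph{Differential Topology} for $C^r_S(M,M')$ and $C^r_W(M,M')$) that the strong and weak topologies on spaces of maps are Baire. The key classical input is that a topological space which is either completely metrizable or locally compact Hausdorff is Baire; the weak topology on $C^r(M,M')$ is metrizable when $M$ is not compact only after passing to a compact exhaustion, while the strong topology is handled by a direct $\varepsilon_i$-shrinking argument. I would run the foliated analogue of this, exploiting the fact that the sets $\NN^r(\phi,\Theta,\Theta',\KK,\EE)$ have exactly the same combinatorial shape as the classical basic neighbourhoods, the only genuinely new feature being the rigid constraint~\eqref{e:pi'i} that pins down the transverse component of $\psi$ plaque by plaque.

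First I would treat the weak plaquewise topology. Fix a countable, locally finite regular foliation atlas of $\FF$ (which exists by the remarks in Section~\ref{s:preliminaries}), and a compact exhaustion $M=\bigcup_n L_n$. Because wp-neighbourhoods only constrain finitely many charts at a time, the wp-topology on $C^r_{WP}(\FF,\FF')$ is the topology of a countable family of pseudometrics coming from uniform $C^s$-approximation (with $s=r$ if $r<\infty$, and $s$ ranging over $\N$ if $r=\infty$) of $\theta'_i\circ\psi\circ\theta_i^{-1}$ over compacta, \emph{together with} the discrete-type condition~\eqref{e:pi'i}. The subtlety here: condition~\eqref{e:pi'i} is not open in the ordinary $C^0$ sense — two nearby maps may send a plaque into different nearby plaques — so $C^r_{WP}(\FF,\FF')$ is not simply a subspace of $C^r_W(M,M')$. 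The fix is to observe that, once we are inside one basic wp-neighbourhood $\NN^r(\phi_0,\theta,\theta',K,\epsilon)$, the local-quotient coordinate $\pi'_i\circ\psi = \pi'_i\circ\phi_0$ is \emph{locked}, so on this neighbourhood the only free data is the leafwise part, which varies in an honest (complete) metric space of $C^s$ sections over a compact set. Hence every point has a wp-neighbourhood that is completely metrizable, and a space with an open cover by completely metrizable sets is Baire (a Baire category argument, or Oxtoby's theorem: being Baire is a local property in the sense that if every point has a Baire open neighbourhood the space is Baire). This gives the wp-statement.

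For the strong plaquewise topology I would instead mimic the standard proof that $C^r_S(M,M')$ is Baire. Let $\{W_k\}_{k\ge1}$ be a countable family of dense sp-open sets; I must show $\bigcap_k W_k$ meets every basic sp-neighbourhood $\NN_0 = \NN^r(\phi_0,\Theta_0,\Theta'_0,\KK_0,\EE_0)$. Build inductively a decreasing sequence of basic sp-neighbourhoods $\NN_0\supset\overline{\NN_1}\supset\NN_1\supset\overline{\NN_2}\supset\cdots$ with $\NN_k\subset W_k\cap\NN_{k-1}$, arranging at stage $k$ that the tolerance family $\EE_k=\{\epsilon_{k,i}\}$ satisfies $\sum_{j\ge k}\epsilon_{j,i} < \epsilon_{k-1,i}/2$ for each fixed $i$, and that the index sets grow to exhaust the chosen locally finite atlas; the locking condition~\eqref{e:pi'i} is \emph{inherited automatically} down the chain since it only gets more restrictive, so it causes no trouble here. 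Then for each chart index $i$ the maps $\theta'_i\circ\psi_k\circ\theta_i^{-1}$ form a Cauchy sequence in $C^s$ over $K_{k,i}$, and because the local-quotient parts are eventually constant the limiting local maps glue (using local finiteness and the plaquewise compatibility~\eqref{e:changes of coordinates}) to a genuine $C^r$ foliation map $\psi_\infty\in\bigcap_k W_k\cap\NN_0$.

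The step I expect to be the main obstacle is \textbf{verifying that the inductively constructed limit $\psi_\infty$ is well defined as a map $M\to M'$ and is a foliation map of class $C^r$} — i.e.\ that the local $C^s$ limits on overlapping $K_{k,i}$ patch consistently and that the frozen transverse coordinates $\pi'_i\circ\psi_\infty$ are compatible under the transition homeomorphisms $h'_{i,j}$. Classically one only has to glue the ambient maps; here one must simultaneously glue the leafwise pieces \emph{and} check that the discrete transverse data agree on overlaps, which requires choosing the atlases $\Theta_k,\Theta'_k$ compatibly (subordinate to a fixed regular atlas, so that $U_i\cap U_j$ lies in a common distinguished set) and invoking the regularity of the atlas exactly as in the definition of the holonomy pseudogroup. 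Once the gluing is secured, membership $\psi_\infty\in W_k$ follows since $\psi_\infty\in\overline{\NN_k}\subset W_k$, completing the proof; the $r=\infty$ case is handled, as usual, by diagonalizing over the auxiliary order $s$.
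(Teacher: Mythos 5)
Your strategy differs from the paper's: the paper does not re-prove Baire-ness from scratch, but reduces it to the classical theorems by observing that the set of foliation maps and the locking condition \eqref{e:pi'i} are closed in $C^r_W(M,M')$, so that each piece $\NN^r(\phi,\Theta,\Theta',\KK)$ (conditions \eqref{e:K i} and \eqref{e:pi'i} only) is an open subset of a weakly closed subspace $\AAA\subset C^r_W(M,M')$; then $\AAA_W$ is completely metrizable, $\AAA_S$ is Baire by Hirsch, open subsets of Baire spaces are Baire, and these pieces form open covers of $C^r_{WP}(\FF,\FF')$ and $C^r_{SP}(\FF,\FF')$. Measured against any complete argument, your proposal has two genuine gaps. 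For the wp half, it is not true that inside a basic wp-neighbourhood ``the only free data is the leafwise part, which varies in a complete metric space of $C^s$ sections over a compact set'': the elements of $\NN^r(\phi_0,\theta,\theta',K,\epsilon)$ are foliation maps defined on all of $M$, entirely unconstrained off $K$, and the subspace wp-topology on this neighbourhood also controls behaviour on every other compactum through other subbasic sets, including locking conditions relative to other chart pairs, which are not implied by membership in the given neighbourhood (two maps lying in the same plaques of one chart system need not lie in the same plaques of another; this is exactly where regularity of atlases, or the paper's closed-subspace device, must enter). So the complete metrizability of the neighbourhood is precisely what has to be proved, and without it your ``locally Baire implies Baire'' step has no input.

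For the sp half, the sp-topology, like the strong Whitney topology over a noncompact $M$, is not metrizable, and your argument leans on closures: you need $\psi_\infty\in\overline{\NN_{k+1}}\subset\NN_k\subset W_k$, but $\psi_\infty$ is produced only as a $C^s$ limit uniformly on the compacta $K_{k,i}$, and uniform convergence on compacta does not give sp-convergence, hence does not place $\psi_\infty$ in sp-closures; nor do you show that basic sp-neighbourhoods contain closed neighbourhoods. The classical proof avoids closures by verifying the defining inequalities of each neighbourhood directly for the limit, using slack, summable tolerances and one fixed locally finite (here regular) atlas; in the foliated setting one must in addition check that the frozen transverse coordinates are compatible on overlaps and that the glued limit is a $C^r$ foliation map --- the very step you identify as the ``main obstacle'' and leave unresolved. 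Both halves could be repaired along these lines, but the shortest repair is the paper's: note that \eqref{e:pi'i} and the property of mapping leaves to leaves are preserved under uniform convergence on compacta, and then quote the Baire property of weakly closed subsets of $C^r_S(M,M')$ and the complete metrizability of $C^r_W(M,M')$.
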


\begin{proof}
This result could be shown by adapting the proof of the corresponding result for the usual weak
and strong topologies \cite[Chapter~2, Theorem~4.4]{Hirsch}.
But a shorter proof is given by relating the weak and strong plaquewise
topologies with the usual weak and strong topologies.

For $\phi,\Theta,\Theta',\KK$ as above, let $\NN^r(\Theta',\KK)$ denote the set of
foliation maps $\psi\in C^r(\FF,\FF')$ satisfying~\eqref{e:K i} for all $i$,
and let $\NN^r(\phi,\Theta,\Theta',\KK)$ denote the set of
foliation maps $\psi\in C^r(\FF,\FF')$ satisfying~\eqref{e:K i} and~\eqref{e:pi'i} for
all $i$ and $x\in K_i$.
The set $\NN^r(\phi,\Theta,\Theta',\KK)$ is easily seen to be closed in
$\NN^r_W(\Theta',\KK)$. Hence there is a closed subspace $\AAA$ of $C^r_W(\FF,\FF')$
such that
$$
\NN^r(\phi,\Theta,\Theta',\KK)=\AAA\cap\NN^r(\Theta',\KK)\;.
$$
It is easy to check that $C^r(\FF,\FF')$ is closed
in $C^r_W(M,M')$. Then $\AAA$ is closed in $C^r_W(M,M')$ as well, and thus
$\AAA_S$ is a Baire space \cite[Chapter~2, Theorem~4.4]{Hirsch}. On the other hand, since
$C^r_W(M,M')$ is completely metrizable \cite[Chapter~2, Theorem~4.4]{Hirsch}, its closed
subspace $\AAA_W$ is completely metrizable too, and thus $\AAA_W$ is a Baire space by the Baire
Category Theorem.

Since $\NN^r(\Theta',\KK)$ is open in $C^r_S(\FF,\FF')$, it follows that
$\NN^r(\phi,\Theta,\Theta',\KK)$ is open in the Baire space $\AAA_S$. Therefore
$\NN^r_S(\phi,\Theta,\Theta',\KK)$ is a Baire space too \cite[Proposition~8.3]{Kechris}.
Then it easily follows that $C^r_{SP}(\FF,\FF')$ is a Baire space because all
possible sets $\NN^r(\phi,\Theta,\Theta',\KK)$ form an open covering of
$C^r_{SP}(\FF,\FF')$.

Now assume that $i$ runs in a finite index set. Then $\NN^r(\Theta',\KK)$ is also open in
$C^r_W(\FF,\FF')$, and thus $\NN^r(\phi,\Theta,\Theta',\KK)$ is open in the Baire
space $\AAA_W$. Again, it follows that $\NN^r_W(\phi,\Theta,\Theta',\KK)$ is a Baire space,
and thus so is $C^r_{WP}(\FF,\FF')$ because all possible sets
$\NN^r(\phi,\Theta,\Theta',\KK)$, with $i$ running in a finite index set, form an open covering of
$C^r_{WP}(\FF,\FF')$.
\end{proof}

\begin{lem}\label{l:countable}
For $0\le r\le\infty$, any open subset of $C^r_{SP}(\FF,\FF')$ is the
intersection of a countable family of open subsets of
$C^r_{WP}(\FF,\FF')$.
\end{lem}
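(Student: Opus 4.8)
The plan is to reduce the statement to the second-countability of the local quotients and an exhaustion argument. First I would fix an sp-open set $\WW\subset C^r_{SP}(\FF,\FF')$ and, using the description of the sp-topology, cover $\WW$ by basic sp-open sets: for each $\phi\in\WW$ choose $\NN^r(\phi,\Theta_\phi,\Theta'_\phi,\KK_\phi,\EE_\phi)\subset\WW$ (for $r=\infty$, intersected with $\cinf(\FF,\FF')$ at some finite order $s_\phi$). The key observation is that each such basic sp-set, although it involves a \emph{locally finite} (possibly infinite) family of charts $\{U_i,\theta_i\}$, can be written as a \emph{countable} intersection of basic wp-sets: namely $\NN^r(\phi,\Theta,\Theta',\KK,\EE)=\bigcap_i\NN^r(\phi,\theta_i,\theta'_i,K_i,\epsilon_i)$, and by local finiteness the index set is countable. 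So each basic sp-neighbourhood is already a countable intersection of wp-open sets; the real content is to pass from the (a priori uncountable) \emph{union} $\WW=\bigcup_\phi\NN^r(\phi,\dots)$ to a countable subfamily whose union is still $\WW$ and which moreover can be reorganized as a countable intersection of wp-opens.

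For that I would invoke a Lindelöf-type property. The space $C^r_{SP}(\FF,\FF')$ need not be second countable, but the relevant combinatorial data is: a basic sp-set is determined by a locally finite chart family on $M$, finitely-or-countably much data on each chart, and rational $\epsilon_i$'s, all of which range over countable sets once one fixes a countable regular foliation atlas of $\FF$ and of $\FF'$ and a countable exhaustion of $M$ by compacta. Concretely, I would show that the family of basic sp-sets $\NN^r(\psi,\Theta,\Theta',\KK,\EE)$ with $\Theta,\Theta'$ drawn from fixed countable atlases, $\KK$ from a fixed countable family of compacta, $\EE$ rational-valued, and $\psi$ ranging over a fixed countable dense subset (which exists since $C^r_{WP}(M,M')$, hence $C^r_{SP}(\FF,\FF')$, is separable-metrizable-ish on each Baire piece — or simply because one only needs countably many $\psi$ to witness membership in an open set of a metrizable space) — this countable family is a \emph{base} for the sp-topology. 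Granting this, write $\WW=\bigcup_{n}\VV_n$ with each $\VV_n$ from this countable base, and each $\VV_n=\bigcap_{m}\VV_{n,m}$ with $\VV_{n,m}$ wp-open.

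Then the final step is the standard set-theoretic manipulation: a countable union of countable intersections of wp-open sets is a countable union of $G_\delta$'s in the wp-topology, but I want a single countable intersection. Here I would use that $\WW$, being sp-open, is in particular "locally" a wp-$G_\delta$, and patch: cover $M$ by the countable exhaustion $\{C_k\}$ and note that on the locally finite scale each point of $\WW$ only sees finitely many charts, so membership in $\WW$ is equivalent to a countable conjunction of wp-conditions globally. Explicitly, $\psi\in\WW$ iff there exists $n$ with $\psi\in\VV_n$, and I would re-express "$\exists n$" by absorbing it: since the $\VV_n$ are increasing after replacing $\VV_n$ by $\bigcup_{j\le n}\VV_j$ (still sp-open, still a countable intersection of wp-opens by distributing), one gets $\WW=\bigcap_m\bigl(\bigcup_n\VV_{n,m}^{(n)}\bigr)$ after a diagonal relabeling, and each inner union $\bigcup_n(\text{wp-open})$ is wp-open. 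This yields $\WW$ as a countable intersection of wp-open sets, as required.

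The main obstacle I expect is precisely establishing that the sp-topology admits a \emph{countable} base of the special sets appearing above — equivalently, controlling the uncountably many possible locally finite chart families by reducing to countably many "combinatorial types." I would handle this by fixing once and for all a locally finite countable regular atlas of each of $\FF$ and $\FF'$ and showing every basic sp-neighbourhood contains one subordinate to these fixed atlases with rational constants; local finiteness of the fixed atlas is what makes each resulting sp-set a countable (not uncountable) intersection of wp-sets, and separability of the ambient $C^r_W(M,M')$ (Hirsch, Chapter~2, Theorem~4.4, used in the proof of Theorem~\ref{t:Baire}) is what bounds the number of base points $\psi$ needed. Everything else is the routine $G_\delta$ bookkeeping sketched above.
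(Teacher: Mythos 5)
Your first step is fine and is the same observation the paper starts from: since a locally finite family of charts on a manifold is countable, each basic sp-open set $\NN^r(\phi,\Theta,\Theta',\KK,\EE)$ is the intersection of the countably many wp-open (subbasic) sets $\NN^r(\phi,\theta_i,\theta'_i,K_i,\epsilon_i)$. The gap is in everything after that. Your plan hinges on the claim that the sp-topology admits a countable base built from fixed countable atlases, rational constants and countably many centers $\psi$, i.e.\ essentially on second countability (or a Lindel\"of-type property) of $C^r_{SP}(\FF,\FF')$. This is false whenever $M$ is noncompact: by Example~\ref{ex:dim cF = 0, dim cF' = dim M'}, $C^r_{SP}(\FF,\FF')$ can be the usual strong space $C^r_S(M,M')$, which contains uncountably many pairwise disjoint nonempty open sets (uniform tubes around maps differing by bumps escaping to infinity), so it is neither separable nor Lindel\"of and admits no countable base of the kind you describe. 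Separability of $C^r_W(M,M')$ does not rescue this: a weakly dense set of centers is of no use, because a basic sp-neighbourhood centered at a weakly nearby $\psi$ need not be contained in a prescribed sp-neighbourhood of $\phi$ --- the sp-topology constrains behaviour at infinity, which the weak topology does not see; the sp-topology is not metrizable here either, so the parenthetical appeal to metrizability does not apply. Moreover, even granting a countable cover $\WW=\bigcup_n\VV_n$ by wp-$G_\delta$ sets, the final ``diagonal relabeling'' is not a valid set-theoretic manipulation: a countable union of $G_\delta$ sets need not be $G_\delta$ (think of $\Q=\bigcup_n\{q_n\}$ in $\R$), so the inclusion $\bigcap_m\bigl(\bigcup_n\VV^{(n)}_{n,m}\bigr)\subset\WW$ is exactly the delicate point and cannot be dismissed as bookkeeping; it would have to use the specific structure of the basic sets, which you do not do.

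For comparison, the paper's proof never needs a countable subcover at all. It writes an arbitrary sp-open set as $\MM=\bigcup_\alpha\NN_\alpha$ with $\alpha$ running in an arbitrary index set, each $\NN_\alpha=\bigcap_{i\in\N}\NN_{\alpha,i}$ with $\NN_{\alpha,i}$ wp-open as above, and then forms $\AAA_i=\bigcup_\alpha(\NN_{\alpha,0}\cap\dots\cap\NN_{\alpha,i})$, which is wp-open simply because arbitrary unions of wp-open sets are wp-open, asserting $\MM=\bigcap_{i\in\N}\AAA_i$. Thus the only countability used is that of the ``layer index'' $i$ coming from the locally finite chart families, never countability of the cover; this is precisely the ingredient your Lindel\"of route cannot supply. (Note also that the nontrivial inclusion $\bigcap_i\AAA_i\subset\MM$ is the same kind of reverse inclusion your diagonal step glosses over, so any complete write-up must address it using the structure of the sets $\NN_{\alpha,i}$ rather than pure set manipulation.)
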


\begin{proof}
The statement is obvious for the above basic open sets of
$C^r_{SP}(\FF,\FF')$.

An arbitrary open set $\MM$ of
$C^r_{SP}(\FF,\FF')$ is a union of basic open sets $\NN_\alpha$ as
above with
$\alpha$ running in an arbitrary index set. In turn, each $\NN_\alpha$
is the intersection of open subsets $\NN_{\alpha,i}$ of
$C^r_{WP}(\FF,\FF')$ with $i$ running in $\N$. Then
$\MM=\bigcap_{i\in\N}\AAA_i$, where
$$
\AAA_i=\bigcup_\alpha(\NN_{\alpha,0}\cap\dots\cap\NN_{\alpha,i})
$$
is open in $C^r_{WP}(\FF,\FF')$.
\end{proof}

\begin{cor}\label{c:residual}
For $0\le r\le\infty$, any residual subset of $C^r_{SP}(\FF,\FF')$ is also
residual in $C^r_{WP}(\FF,\FF')$.
\end{cor}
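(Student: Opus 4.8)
The plan is to unwind the definition of residual set and combine Lemma~\ref{l:countable} with the elementary fact that denseness is preserved when one passes to a coarser topology. Note in particular that the Baire property of Theorem~\ref{t:Baire} is not needed for this statement; only the set-theoretic structure of residual sets is used.

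First I would fix a residual subset $R\subseteq C^r_{SP}(\FF,\FF')$ and choose dense open subsets $V_n$ of $C^r_{SP}(\FF,\FF')$, $n\in\N$, with $\bigcap_nV_n\subseteq R$. The first step is to observe that each $V_n$ is dense in $C^r_{WP}(\FF,\FF')$: since the wp-topology is coarser than the sp-topology, every nonempty wp-open set $U$ is nonempty and sp-open, hence meets the sp-dense set $V_n$; so $V_n$ meets every nonempty wp-open set. The second step is to apply Lemma~\ref{l:countable} to write each $V_n=\bigcap_{m\in\N}W_{n,m}$ with each $W_{n,m}$ open in $C^r_{WP}(\FF,\FF')$. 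Because $W_{n,m}\supseteq V_n$ and $V_n$ is wp-dense by the first step, each $W_{n,m}$ is a wp-dense, wp-open subset of $C^r_{WP}(\FF,\FF')$.

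Finally, the family $\{W_{n,m}\}_{(n,m)\in\N\times\N}$ is countable, and
\[
\bigcap_{n,m}W_{n,m}=\bigcap_nV_n\subseteq R,
\]
so $R$ contains a countable intersection of dense open subsets of $C^r_{WP}(\FF,\FF')$, i.e.\ $R$ is residual there. The only point that requires care — and it is precisely where Lemma~\ref{l:countable} is doing the work — is that the index set produced by that lemma is \emph{countable}, so that the double intersection above is again a countable intersection; if Lemma~\ref{l:countable} gave only an arbitrary intersection of wp-open sets, the argument would fail. Beyond this there is no real obstacle; everything else is routine.
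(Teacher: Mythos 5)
Your proof is correct and follows essentially the same route as the paper: decompose each sp-dense open set into countably many wp-open sets via Lemma~\ref{l:countable}, note these are wp-dense because they contain a set that is dense for the finer sp-topology, and conclude with the countable double intersection. Your explicit remark that the countability of the index set in Lemma~\ref{l:countable} is what makes the argument work, and that the Baire property is not needed, is accurate but does not change the argument.
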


\begin{proof}
Let $\AAA$ be a residual subset of $C^r_{SP}(\FF,\FF')$. Thus
$\AAA$ contains a countable intersection of open dense subsets
$\AAA_k$ of $C^r_{SP}(\FF,\FF')$, $k\in\N$. In turn, by
Lemma~\ref{l:countable}, each $\AAA_k$ is a countable intersection of
open subsets $\AAA_{k,\ell}$ of $C^r_{WP}(\FF,\FF')$,
$\ell\in\N$. Moreover each $\AAA_{k,\ell}$ is dense in
$C^r_{WP}(\FF,\FF')$ because it contains $\AAA_k$, which is
dense in $C^r_{WP}(\FF,\FF')$ since the sp-topology is finer
than the wp-topology. Therefore $\AAA$ is residual in
$C^r_{WP}(\FF,\FF')$ because it contains the intersection of the
sets $\AAA_{k,\ell}$.
\end{proof}

The proofs of the following two lemmas are easy exercises, similar
to the proofs of the corresponding results for the usual weak and
strong topologies \cite[page~41]{Hirsch}.

\begin{lem}\label{l:iota*}
Let $U$ be an open subset of $M$, $\FF_U=\FF|_U$, and $\iota:U\to M$
the inclusion map. Then the restriction map
$$
\iota^*:C^r(\FF,\FF')\to C^r(\FF_U,\FF')\;,\quad
\phi\mapsto\phi|_U=\phi\circ\iota\;,
$$
is sp-open for $0\le r\le\infty$.
\end{lem}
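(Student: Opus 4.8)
The plan is to prove openness directly on a base, and to keep the \emph{direction} correct: for each basic sp-open set upstairs I must exhibit a basic sp-neighbourhood downstairs \emph{contained in its image}, which forces me to extend maps rather than merely restrict them. Since taking images commutes with unions and the sets $\NN^r(\phi,\Theta,\Theta',\KK,\EE)$ form a base, it suffices to show that $\iota^*(\NN)$ is sp-open for each basic $\NN=\NN^r(\phi,\Theta,\Theta',\KK,\EE)$ of $C^r_{SP}(\FF,\FF')$. Because $\NN$ is itself sp-open, every $\psi_0\in\NN$ has a basic sp-neighbourhood $\NN_0\subset\NN$ with $\iota^*(\NN_0)\subset\iota^*(\NN)$; so if I can prove that the image of \emph{any} basic set is a neighbourhood of its centre, then applying this to $\NN_0$ shows $\iota^*(\NN)$ is a neighbourhood of the arbitrary point $\psi_0|_U$, hence open. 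Thus the whole problem reduces to producing one basic sp-neighbourhood $\VV$ of $\phi|_U$ in $C^r_{SP}(\FF_U,\FF')$ with $\VV\subset\iota^*(\NN)$; equivalently, \emph{every} $\chi\in\VV$ must admit an extension $\psi\in\NN$ with $\psi|_U=\chi$.

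I would build the extension by gluing $\chi$ to $\phi$: set $\psi=\chi$ on $U$ and $\psi=\phi$ on $M\setminus U$, and design $\VV$ so that this $\psi$ is a $C^r$ foliation map lying in $\NN$. First I choose a family of foliation charts of $\FF_U$ with compact cores $L_j$ covering $U$, locally finite \emph{in $U$} and hence accumulating on $\partial U$; this is possible precisely because the family is required to be locally finite only in the manifold $U$. I take the charts fine enough that, on each $L_j$, sp-closeness of $\chi$ to $\phi|_U$ in the $\VV$-charts translates through the transition maps (bounded on compacta) into conditions \eqref{e:K i}, \eqref{e:pi'i} and \eqref{e:epsilon i} for the original $\NN$-charts on $K_i\cap U$. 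The plaquewise condition \eqref{e:pi'i} is built into every basic set and is intrinsic, so imposing it on a cover of $U$ makes $\chi$ and $\phi|_U$ send each leaf of $\FF_U$ into the same leaf of $\FF'$; this is exactly what guarantees that the glued $\psi$ still maps each leaf of $\FF$ (which may enter and leave $U$ repeatedly) into a single leaf of $\FF'$, i.e. that $\psi$ is a foliation map. Finally I let the tolerances $\delta_j$ shrink as $L_j\to\partial U$ (below a fixed power of the distance from $L_j$ to $M\setminus U$), which forces all partial derivatives of $\chi-\phi$ up to order $r$ to tend to $0$ at $\partial U$, so that extension by $\phi$ is genuinely $C^r$ across the boundary.

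With $\VV$ so chosen, the verification that $\psi\in\NN$ splits over each $K_i$ into the part outside $U$, where $\psi=\phi$ and \eqref{e:K i}--\eqref{e:epsilon i} hold trivially, and the part in $K_i\cap U$, where $\psi=\chi$ and these conditions hold by the construction of $\VV$. Since $\iota^*(\psi)=\chi$, this gives $\VV\subset\iota^*(\NN)$ and completes the reduction. For $r=\infty$ the identical gluing works once $\chi$ is tangent to $\phi$ to \emph{infinite} order along $\partial U$, which I would secure by letting the controlled order grow while the tolerances shrink along the charts accumulating at $\partial U$, so that the strong plaquewise $C^\infty$ structure pins down derivatives of every order there.

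The step I expect to be the main obstacle is exactly this boundary analysis: proving that the glued map is $C^r$ (and $C^\infty$ when $r=\infty$) across the possibly very irregular set $\partial U$ from the vanishing of the jets of $\chi-\phi$ there, and checking that all the constraints needed near $\partial U$ can be packaged into a single admissible basic sp-neighbourhood $\VV$. Everything else — the reduction to the centre, the chart-to-chart translation of estimates, and the leafwise bookkeeping that keeps $\psi$ a foliation map — should be routine once the locally finite family accumulating on $\partial U$ is in place.
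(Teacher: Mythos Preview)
Your proposal is correct and is precisely the argument the paper has in mind: the paper gives no proof, merely remarking that it is ``an easy exercise, similar to the proof of the corresponding result for the usual weak and strong topologies'' in Hirsch, and your gluing-with-shrinking-tolerances construction is exactly that Hirsch argument, adapted to keep track of the plaquewise condition~\eqref{e:pi'i}. Your identification of the boundary regularity of the glued map as the only nontrivial point is accurate, and it is handled just as in Hirsch by letting the $\delta_j$ decay with the distance from $L_j$ to $M\setminus U$ (and, for $r=\infty$, letting the controlled order grow).
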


\begin{lem}\label{l:cF i}
Let $\{V_i\}$ be a locally finite family of open
subsets of $M$. For each $i$, let $\FF_i=\FF|_{V_i}$, let
$\iota_i:V_i\to M$ denote the inclusion map, and let $\AAA_i$ be a
wp-open subset of $C^r(\FF_i,\FF')$ for $0\le r\le\infty$.
Then $\bigcap_i(\iota_i^*)^{-1}(\AAA_i)$ is sp-open in
$C^r(\FF,\FF')$.
\end{lem}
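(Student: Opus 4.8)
The plan is to amalgamate, into a single basic sp‑neighbourhood of a given $\phi\in\bigcap_i(\iota_i^*)^{-1}(\AAA_i)$, a collection of basic wp‑neighbourhoods, one associated with each $V_i$; the local finiteness of $\{V_i\}$ is precisely what keeps the amalgamated family of foliation charts locally finite in $M$. So fix such a $\phi$; then $\phi|_{V_i}=\iota_i^*\phi\in\AAA_i$ for every $i$. Since $\AAA_i$ is wp‑open and wp‑basic sets are built from a \emph{finite} family of foliation charts, I would pick for each $i$ a basic wp‑neighbourhood $\NN^r(\phi|_{V_i},\Theta_i,\Theta'_i,\KK_i,\EE_i)\subset\AAA_i$ with $\Theta_i=\{U_{i,j},\theta_{i,j}\}_{j\in J_i}$, $\Theta'_i=\{U'_{i,j},\theta'_{i,j}\}_{j\in J_i}$, $\KK_i=\{K_{i,j}\}_{j\in J_i}$, $\EE_i=\{\epsilon_{i,j}\}_{j\in J_i}$ and $J_i$ finite (for $r=0$ one drops $\EE_i$ and uses $\NN(\phi|_{V_i},\Theta_i,\Theta'_i,\KK_i)$; for $r=\infty$ one uses $\cinf(\FF_i,\FF')\cap\NN^{s_i}(\phi|_{V_i},\Theta_i,\Theta'_i,\KK_i,\EE_i)$ for a suitable $s_i\in\N$). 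Since the foliation charts of $\FF_i=\FF|_{V_i}$ are exactly the foliation charts of $\FF$ with domain contained in $V_i$, we have $U_{i,j}\subset V_i$; also $K_{i,j}\subset U_{i,j}$ and, as $K_{i,j}\subset V_i$, $\phi(K_{i,j})=\phi|_{V_i}(K_{i,j})\subset U'_{i,j}$.

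Next I would amalgamate this data over the disjoint union $I=\bigsqcup_iJ_i$, setting $\Theta=\{U_{i,j},\theta_{i,j}\}_{(i,j)\in I}$ and likewise $\Theta'$, $\KK$, $\EE$. The family $\Theta$ is locally finite in $M$: every point of $M$ has a neighbourhood meeting only finitely many $V_i$, and for each such $i$ only finitely many $U_{i,j}$ (since $J_i$ is finite and $U_{i,j}\subset V_i$), so it meets only finitely many $U_{i,j}$. Together with $K_{i,j}\subset U_{i,j}$ and $\phi(K_{i,j})\subset U'_{i,j}$, this shows that $\NN^r(\phi,\Theta,\Theta',\KK,\EE)$ is a legitimate basic sp‑neighbourhood of $\phi$ in $C^r(\FF,\FF')$; for $r=\infty$ one forms the corresponding $\cinf(\FF,\FF')$‑intersection, retaining on the charts coming from each $V_i$ their order $s_i$.

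It then remains to verify $\NN^r(\phi,\Theta,\Theta',\KK,\EE)\subset\bigcap_i(\iota_i^*)^{-1}(\AAA_i)$. Let $\psi$ lie in the left‑hand side and fix $i$. For each $j\in J_i$ the map $\psi$ satisfies \eqref{e:K i}, \eqref{e:pi'i} and \eqref{e:epsilon i} at the index $(i,j)$; since $U_{i,j}$ and $K_{i,j}$ are contained in $V_i$, the maps $\psi$ and $\phi$ agree with $\psi|_{V_i}$ and $\phi|_{V_i}$ wherever those conditions are evaluated, so $\psi|_{V_i}$ satisfies the same conditions relative to $(\Theta_i,\Theta'_i,\KK_i,\EE_i)$. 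Hence $\psi|_{V_i}\in\NN^r(\phi|_{V_i},\Theta_i,\Theta'_i,\KK_i,\EE_i)\subset\AAA_i$, that is, $\psi\in(\iota_i^*)^{-1}(\AAA_i)$; as $i$ was arbitrary, $\psi\in\bigcap_i(\iota_i^*)^{-1}(\AAA_i)$, which completes the argument.

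The only point where the hypotheses genuinely intervene — and the one deserving care — is the local finiteness of the amalgamated family $\Theta$, which rests at once on the local finiteness of $\{V_i\}$, on the inclusions $U_{i,j}\subset V_i$, and on the finiteness of each $J_i$, the last being forced by working with the \emph{weak} plaquewise topology on the spaces $C^r(\FF_i,\FF')$. Everything else is a routine unwinding of the definitions of the sp‑ and wp‑topologies, much as in the corresponding classical statement.
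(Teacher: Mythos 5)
For $0\le r<\infty$ (including $r=0$) your amalgamation argument is correct, and it is precisely the routine exercise the authors intend (the paper offers no written proof, only a pointer to the analogous statements for the usual weak and strong topologies in Hirsch): the disjoint-union family $\Theta=\{U_{i,j}\}$ is locally finite because $\{V_i\}$ is locally finite, each $J_i$ is finite and $U_{i,j}\subset V_i$; and since conditions \eqref{e:K i}--\eqref{e:epsilon i} at the charts indexed by $J_i$ only involve points of $V_i$, membership of $\psi$ in the amalgamated basic sp-set forces $\psi|_{V_i}\in\NN^r(\phi|_{V_i},\Theta_i,\Theta'_i,\KK_i,\EE_i)\subset\AAA_i$ for every $i$.

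The case $r=\infty$, however, is not settled by your parenthetical remark, and this is a genuine gap. In the paper's definition, a basic sp-open set of $C^\infty(\FF,\FF')$ is $\cinf(\FF,\FF')\cap\NN^{s}(\phi,\Theta,\Theta',\KK,\EE)$ for a \emph{single} finite order $s$, whereas your chosen wp-neighbourhoods come with orders $s_i$ depending on $i$. The set you form by ``retaining on the charts coming from each $V_i$ their order $s_i$'' is a legitimate basic sp-set only when $\sup_i s_i<\infty$ (then one may take $s=\max_i s_i$ and conclude as in the finite case); when the orders are unbounded it is not a basic set, and it need not contain any basic sp-neighbourhood of $\phi$, so it cannot play the role your argument assigns to it. The failure is not merely notational: in the situation of Example~\ref{ex:dim cF = 0, dim cF' = dim M'} (so sp $=$ strong, wp $=$ weak), take $M=M'=\R$, $\phi=0$, $V_i=(i-1,i+1)$ for $i\in\Z$, $K_i=[i-\tfrac12,i+\tfrac12]$, and let $\AAA_i$ be the basic wp-set about $0$ of order $|i|$ with $\epsilon=1$ on $K_i$. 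Any basic sp-neighbourhood of $0$ has some finite order $s$ and, by local finiteness of its data, contains functions of the form $\delta\sin(\omega(x-i_0))$ cut off near $i_0$ with $|i_0|=s+2$, $\delta\omega^{s}$ tiny but $\delta\omega^{s+2}>1$; such a function violates the order-$(s+2)$ condition on $K_{i_0}$, hence lies outside $\bigcap_i(\iota_i^*)^{-1}(\AAA_i)$. So for $r=\infty$ your construction breaks down exactly at the amalgamation step, and with the paper's literal definitions the conclusion requires either a uniform bound on the orders of the wp-basic sets involved or a finer ``very strong'' version of the $C^\infty$ sp-topology in which the order may vary with the chart. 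At a minimum you should state your proof for finite $r$ (where it is complete) and flag the bounded-order issue in the $C^\infty$ case, which is relevant because the lemma is later invoked with infinite coverings and arbitrary wp-open sets in the proof of the Foliations Globalization Theorem.
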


Let $\FF''$ be another $C^r$ foliation on a $C^r$ manifold $M''$. The proof of the following lemma
is an easy exercise, similar to the proof of the corresponding result for the usual weak and strong
topologies
\cite[page~64--65]{Hirsch}.

\begin{lem}\label{l:composition}
For $0\le r\le\infty$, the following properties hold:
\begin{enumerate}

\item The composition defines a wp-continuous map
$$
C^r(\FF',\FF'')\times C^r(\FF,\FF')\to C^r(\FF;\FF'')\;.
$$

\item If $\sigma:\FF'\to\FF''$ is $C^r$, then the map
$$
\sigma_*:C^r(\FF,\FF')\to C^r(\FF,\FF'')\;,\quad\psi\mapsto\sigma\circ\psi\;,
$$
is sp-continuous.

\item If $\tau:\FF\to\FF'$ is proper and $C^r$, then the map
$$
\tau^*:C^r(\FF',\FF'')\to C^r(\FF,\FF'')\;,\quad
\phi\mapsto\phi\circ\tau\;,
$$
is sp-continuous.

\end{enumerate}
\end{lem}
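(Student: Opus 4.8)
The plan is to adapt the classical argument for continuity of composition in the weak and strong topologies \cite[pp.~64--65]{Hirsch}, the only genuinely new features being the bookkeeping with plaques and with local finiteness of the auxiliary foliation charts. Throughout I will reduce to the (sub)basic neighbourhoods of Section~\ref{sec:foliation maps}, and use two elementary facts: a $C^r$ foliation map carries each plaque of a foliation chart into a single plaque of any foliation chart of the target containing its image (the image of a plaque is connected, lies in a single leaf, and a leaf meets a distinguished open set in a union of at most countably many plaques, which are therefore its connected components); and any foliation chart may be shrunk to a foliation chart with relatively compact, arbitrarily small domain.

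For part~(1) fix $(g_0,f_0)\in C^r(\FF',\FF'')\times C^r(\FF,\FF')$ and a subbasic wp-neighbourhood $\NN=\NN^r(g_0\circ f_0,\theta,\theta'',K,\epsilon)$, where $\theta\colon U\to T\times B$ and $\theta''\colon U''\to T''\times B''$ are foliation charts of $\FF$ and $\FF''$ and $K\subset U$ is compact with $(g_0\circ f_0)(K)\subset U''$. By compactness of $f_0(K)$, choose finitely many foliation charts $(U'_\ell,\theta'_\ell)$ of $\FF'$ with $U'_\ell\subset g_0^{-1}(U'')$ and compacta $L_\ell\subset U'_\ell$ with $f_0(K)\subset\bigcup_\ell\Int L_\ell$, and then compacta $K_\ell\subset K$ with $K=\bigcup_\ell K_\ell$ and $f_0(K_\ell)\subset\Int L_\ell$. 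Put $\mathcal V=\NN^r(f_0,\{\theta\}_\ell,\{\theta'_\ell\}_\ell,\{K_\ell\}_\ell,\{\delta_\ell\}_\ell)$ and $\mathcal W=\NN^r(g_0,\{\theta'_\ell\}_\ell,\{\theta''\}_\ell,\{L_\ell\}_\ell,\{\eta_\ell\}_\ell)$, wp-neighbourhoods of $f_0$ and $g_0$. For $\psi\in\mathcal V$, $\psi'\in\mathcal W$ and $x\in K_\ell$: the case $k=0$ of~\eqref{e:epsilon i} forces $\psi(x)\in L_\ell$ once $\delta_\ell$ is small, so $\psi'(\psi(x))\in\psi'(L_\ell)\subset U''$, which is~\eqref{e:K i} for $\psi'\circ\psi$; since~\eqref{e:pi'i} for $\mathcal V$ puts $\psi(x)$ and $f_0(x)$ in one plaque $P$ of $\theta'_\ell$ with $g_0(P)\subset U''$, while~\eqref{e:pi'i} for $\mathcal W$ puts $\psi'(\psi(x))$ and $g_0(\psi(x))$ in one plaque of $\theta''$, we obtain~\eqref{e:pi'i} for $\psi'\circ\psi$; and writing the chart expression of $\psi'\circ\psi$ as a composite of those of $\psi'$ and $\psi$, the $C^r$ estimate~\eqref{e:epsilon i} for $\psi'\circ\psi$ follows from the chain rule together with the $C^r$ closeness of $\psi$ and $\psi'$ and the uniform continuity of the bounded derivatives of the chart expressions of $f_0$ and $g_0$ on the relevant compacta, once $\delta_\ell,\eta_\ell$ are small enough. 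Thus composition sends $\mathcal W\times\mathcal V$ into $\NN$; an arbitrary wp-neighbourhood of $g_0\circ f_0$ is a finite intersection of subbasic ones, so the claim follows by intersecting the corresponding product neighbourhoods.

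Parts~(2) and~(3) are the same computation with one factor frozen, now for the sp-topology. For~(2), fix $\psi_0$ and a basic sp-neighbourhood $\NN^r(\sigma\circ\psi_0,\Theta,\Theta'',\KK,\EE)$ with $\Theta=\{(U_i,\theta_i)\}_{i\in I}$ locally finite; for each $i$ cover the compactum $\psi_0(K_i)$ by finitely many foliation charts $(U'_{i,\ell},\theta'_{i,\ell})$ of $\FF'$ with $U'_{i,\ell}\subset\sigma^{-1}(U''_i)$, and take $L_{i,\ell}\subset U'_{i,\ell}$ and $K_{i,\ell}\subset K_i$ as before. The restrictions of the $\theta_i$ near the $K_{i,\ell}$, indexed by the pairs $(i,\ell)$, still form a locally finite family, so the resulting set $\mathcal V=\NN^r(\psi_0,\dots)$ is an sp-neighbourhood of $\psi_0$, and the computation of part~(1) — with the fixed $\sigma$ in the role of $g_0$, so that the ``outer'' chart expression is constant and no hypothesis on $\sigma$ beyond $C^r$-ness intervenes — gives $\sigma_*(\mathcal V)\subset\NN^r(\sigma\circ\psi_0,\Theta,\Theta'',\KK,\EE)$. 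For~(3), fix $\tau$ and a basic sp-neighbourhood $\NN^r(\phi_0\circ\tau,\Theta,\Theta'',\KK,\EE)$ with $\Theta=\{(U_i,\theta_i)\}_{i\in I}$ locally finite; here properness enters, since it makes $\{\tau(K_i)\}_{i\in I}$ a locally finite family of compacta in $M'$, so by the shrinking lemma for such families one may pick the auxiliary foliation charts $(U'_{i,\ell},\theta'_{i,\ell})$ of $\FF'$ covering $\tau(K_i)$ (with $U'_{i,\ell}\subset\phi_0^{-1}(U''_i)$, and $L_{i,\ell}$, $K_{i,\ell}$ as before) so that $\{U'_{i,\ell}\}_{i,\ell}$ is itself locally finite in $M'$; then $\mathcal V=\NN^r(\phi_0,\{\theta'_{i,\ell}\},\{\theta''_i\},\{L_{i,\ell}\},\{\delta_{i,\ell}\})$ really is an sp-neighbourhood of $\phi_0$, and the same plaque-and-chain-rule estimates give $\tau^*(\mathcal V)\subset\NN^r(\phi_0\circ\tau,\Theta,\Theta'',\KK,\EE)$. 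In each part the case $r=\infty$ is handled by running the argument with an arbitrary finite $s$ in place of $r$.

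I expect the main obstacle to be the local-finiteness bookkeeping in part~(3): one must arrange the auxiliary charts of $\FF'$ both to cover the sets $\tau(K_i)$ inside the prescribed open sets $\phi_0^{-1}(U''_i)$ and to form a locally finite family, which is exactly where properness of $\tau$ is indispensable — without it the $\tau(K_i)$ may accumulate in $M'$, no such family exists, and one obtains continuity only into the wp-topology (which is why~(1) is asserted only for the weak plaquewise topology). Everything else reduces to the standard chain-rule estimate already used for the weak and strong topologies on ordinary mapping spaces.
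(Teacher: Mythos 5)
Your proof is correct and follows essentially the route the paper intends: the paper gives no argument beyond calling the lemma an easy exercise analogous to the composition results for the usual weak and strong topologies in Hirsch (pp.~64--65), and your adaptation carries out exactly that scheme, with the right foliation-specific additions (plaques map into plaques, and properness of $\tau$ to keep the auxiliary source charts of $\FF'$ locally finite in part~(3)).
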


\begin{lem}\label{l:iota'*}
Let $U'$ be an open subset of $M'$, $\FF'_{U'}=\FF'|_{U'}$, and $\iota':U'\to M'$ the
inclusion map. Then the map
$$
\iota'_*:C^r(\FF,\FF'_{U'})\to C^r(\FF,\FF')\;,\quad
\phi\mapsto\iota'\circ\phi\;,
$$
is an sp-open embedding for $0\le r\le\infty$.
\end{lem}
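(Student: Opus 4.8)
The plan is to verify the three ingredients of an sp-open embedding: that $\iota'_*$ is injective and sp-continuous, that its image is sp-open in $C^r_{SP}(\FF,\FF')$, and that it carries sp-open sets to sp-open sets. Injectivity of $\iota'_*$ is immediate from that of $\iota'$. The inclusion $\iota'\colon U'\to M'$ is itself a $C^r$ foliation map $\FF'_{U'}\to\FF'$ (each leaf of $\FF'_{U'}$ is, by the definition of the restriction, a connected component of the intersection with $U'$ of a leaf of $\FF'$, hence sits inside that leaf), so sp-continuity of $\iota'_*$ is exactly Lemma~\ref{l:composition}(2). Thus everything reduces to the two openness claims.

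For these I will use the structural observation that a foliation chart of $\FF'_{U'}$ is precisely a foliation chart of $\FF'$ whose domain is contained in $U'$, with the same local projection. Hence, given any basic sp-neighbourhood $\NN^r(\phi,\Theta,\Theta',\KK,\EE)$ of a point $\phi\in C^r(\FF,\FF'_{U'})$ (with the usual modifications when $r=0$ or $r=\infty$), the family $\Theta'$ already consists of foliation charts of $\FF'$, so the same data determines a basic sp-open set $\NN^r(\iota'\circ\phi,\Theta,\Theta',\KK,\EE)$ in $C^r(\FF,\FF')$, and conditions \eqref{e:K i}--\eqref{e:epsilon i} are literally the same formulas for $\psi$ and for $\iota'\circ\psi$. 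Writing $\II=\iota'_*\bigl(C^r(\FF,\FF'_{U'})\bigr)=\{\psi\in C^r(\FF,\FF')\mid\psi(M)\subseteq U'\}$, I will check directly the identity
$$
\iota'_*\bigl(\NN^r(\phi,\Theta,\Theta',\KK,\EE)\bigr)=\NN^r(\iota'\circ\phi,\Theta,\Theta',\KK,\EE)\cap\II\;,
$$
where the only nonformal point is the inclusion $\supseteq$: a $C^r$ foliation map $\psi\colon\FF\to\FF'$ with $\psi(M)\subseteq U'$ factors through $U'$, and the factor is a $C^r$ foliation map $\FF\to\FF'_{U'}$, because $U'$ is open and $\psi$ sends each (connected) leaf of $\FF$ into a connected subset of $U'$ contained in a single leaf of $\FF'$, hence into a leaf of $\FF'_{U'}$. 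Granting that $\II$ is sp-open, this identity shows $\iota'_*$ takes basic sp-open sets to sp-open subsets of $C^r_{SP}(\FF,\FF')$, so, being injective, it is an sp-open map; together with sp-continuity this gives the desired sp-open embedding, uniformly in $r$.

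It then remains to show $\II$ is sp-open. Let $\psi\in\II$, so $\psi(M)\subseteq U'$. Since the foliation charts of $\FF'$ with domain inside the open set $U'$ form a basis of the topology of $U'$, and $M$ is paracompact (as in the construction of locally finite regular atlases recalled in Section~\ref{s:preliminaries}), I can choose a locally finite family of foliation charts $\{V_i,\eta_i\}$ of $\FF$ and compact sets $L_i\subseteq V_i$ with $\bigcup_i\Int L_i=M$, together with foliation charts $\{U'_i,\theta'_i\}$ of $\FF'$ satisfying $U'_i\subseteq U'$ and $\psi(L_i)\subseteq U'_i$ for every $i$. Then $\NN^r(\psi,\{V_i,\eta_i\},\{U'_i,\theta'_i\},\{L_i\},\EE)$ (for any $\EE$; intersected with $\cinf(\FF,\FF')$ if $r=\infty$; without the $\EE$-condition if $r=0$) is an sp-neighbourhood of $\psi$, and by \eqref{e:K i} every $\psi'$ in it has $\psi'(L_i)\subseteq U'_i\subseteq U'$ for all $i$, whence $\psi'(M)=\bigcup_i\psi'(L_i)\subseteq U'$, i.e.\ $\psi'\in\II$.

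I expect the only genuinely delicate step to be this last paragraph, namely the shrinking argument producing a locally finite family of foliation charts of $\FF$ with a subordinate compact cover of $M$ each member of which $\psi$ maps into a single foliation chart of $\FF'$ with domain in $U'$. This is the foliated counterpart of the standard fact that $C^r_S(M,U')$ is open in $C^r_S(M,M')$ (cf.\ \cite[Chapter~2]{Hirsch}); it is routine once one notes that the foliation charts with domain in $U'$ form a basis of the topology of $U'$. All the remaining steps --- injectivity, sp-continuity from Lemma~\ref{l:composition}, and the bookkeeping identity for $\iota'_*$ on basic open sets --- are formal.
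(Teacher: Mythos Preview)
Your proof is correct and follows essentially the same approach as the paper's. The only organizational difference is that the paper folds your two openness steps into one: rather than proving separately that $\II$ is sp-open and that $\iota'_*(\NN)=\MM\cap\II$, it observes that when the compacts in $\KK$ cover $M$ (and such basic sets form a base of the sp-topology), every $\psi\in\MM=\NN^r(\iota'\circ\phi,\Theta,\Theta',\KK,\EE)$ already satisfies $\psi(M)=\bigcup_i\psi(K_i)\subset\bigcup_iU'_i\subset U'$, so $\iota'_*(\NN)=\MM$ outright.
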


\begin{proof}
The map $\iota'_*$ is clearly injective, and moreover it is
sp-continuous by Lemma~\ref{l:composition}-(2).

To show that $\iota'_*$ is sp-open, assume first that $r<\infty$.
Then let $\NN$ denote the basic open subset of
$C^r_{SP}(\FF,\FF'_{U'})$ defined with some data
$\phi$, $\Theta$, $\Theta'$, $\KK$ and $\EE$. Since the elements of $\Theta'$ are
also foliation charts of $\FF'$, we can also consider the basic open
subset $\MM$ of $C^r_{SP}(\FF,\FF')$ defined with the data
$\iota'\circ\phi$, $\Theta$, $\Theta'$, $\KK$ and $\EE$. Obviously, $\iota'_*(\NN)$
is the set of maps in $\MM$ whose image is contained in $U'$. But
the image of any map of $\MM$ is contained in $U'$ when $\KK$ covers
$M$, and thus $\iota'_*(\NN)=\MM$ in this case. So $\iota'_*$ is
open as desired.

When $r=\infty$, the result follows similarly by adding an arbitrary $s\in\N$ to the data needed
to define the above basic open sets.
\end{proof}

Let $\FF'_1$ and $\FF'_2$ be $C^r$ foliations on manifolds $M'_1$ and $M'_2$,
respectively, and let
$\FF'_{\times}$ be the {\em product foliation\/} $\FF'_1\times\FF'_2$ on $M'_{\times}=M'_1\times
M'_2$, whose leaves are the products of leaves of
$\FF'_1$ with leaves of $\FF'_2$.

\begin{lem}\label{l:product}
For $0\le r\le\infty$, the canonical map
$$
C^r(\FF,\FF'_1)\times C^r(\FF,\FF'_2)\to
C^r(\FF,\FF'_{\times})\;,
$$
which assigns the mapping $x\mapsto(\phi_1(x),\phi_2(x))$ to each pair
$(\phi_1,\phi_2)$, is a homeomorphism with the weak and strong plaquewise
topologies.
\end{lem}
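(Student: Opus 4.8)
The plan is to show that the canonical map, call it $\Phi$, is a continuous open bijection, simultaneously for the strong and the weak plaquewise topologies. That it is a bijection is routine: the projections $\pr_k\colon M'_\times\to M'_k$ are $C^r$ foliation maps $\FF'_\times\to\FF'_k$, since each leaf of $\FF'_\times$ is a product $L'_1\times L'_2$ of leaves of the factors; and conversely, whenever $\phi_k\in C^r(\FF,\FF'_k)$, the map $x\mapsto(\phi_1(x),\phi_2(x))$ is a $C^r$ foliation map $\FF\to\FF'_\times$, because it carries each leaf $L$ of $\FF$ into $L'_1\times L'_2$ with $\phi_k(L)\subseteq L'_k$. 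These two operations are mutually inverse, so $\Phi^{-1}$ is the map $\psi\mapsto((\pr_1)_*\psi,(\pr_2)_*\psi)$. Openness of $\Phi$ is equivalent to continuity of $\Phi^{-1}$, and a map into a product space is continuous iff its two components are; hence it is enough that each $(\pr_k)_*\colon C^r(\FF,\FF'_\times)\to C^r(\FF,\FF'_k)$ be sp- and wp-continuous. The sp-continuity is Lemma~\ref{l:composition}-(2); the wp-continuity follows by restricting the wp-continuous composition map of Lemma~\ref{l:composition}-(1) to the slice $\{\pr_k\}\times C^r(\FF,\FF'_\times)$, on which it coincides with $(\pr_k)_*$.

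The substantive part is the continuity of $\Phi$ itself. Fix $(\phi_1,\phi_2)$, put $\phi=\Phi(\phi_1,\phi_2)$, and let $\NN=\NN^r(\phi,\Theta,\Theta'_\times,\KK,\EE)$ be a basic neighbourhood of $\phi$, with the index set finite in the wp-case and an auxiliary $s\in\N$ adjoined when $r=\infty$. The strategy is to shrink $\NN$ to a basic neighbourhood built entirely from \emph{product} foliation charts of $\FF'_\times$, that is, charts $\theta'_1\times\theta'_2$ with $\theta'_k$ a foliation chart of $\FF'_k$. For each $i$ one covers the compact set $\phi(K_i)$ by finitely many such product charts contained in $U'_{\times,i}$ and extracts a finite cover of $\phi(K_i)$ by compact sets, each contained in one of these charts; pulling these back along $\phi$ exhibits every $K_i$ as a finite union of compact sets, each carried on the target by one product chart and on the source by a sub-chart of $\theta_i$ whose image is still a product. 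Since changes of foliation charts have the triangular form $(x,y)\mapsto(h(x),g(x,y))$ and are $C^r$ (resp.\ $C^s$) on the relevant compacta, a standard modulus-of-continuity estimate shows that, for small enough tolerances, the resulting basic neighbourhood $\NN'=\NN^r(\phi,\tilde\Theta,\tilde\Theta'_1\times\tilde\Theta'_2,\tilde\KK,\tilde\EE)$ satisfies $\NN'\subseteq\NN$, still has a finite index set in the wp-case, and keeps the same $s$ when $r=\infty$.

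Once the target charts are products, the defining conditions split. In a chart $\tilde\theta'_{1,j}\times\tilde\theta'_{2,j}$ the local projection is the pair of the two local projections, and for any $\psi\in C^r(\FF,\FF'_\times)$ with components $\psi_k=(\pr_k)_*\psi$ one has $(\tilde\theta'_{1,j}\times\tilde\theta'_{2,j})\circ\psi\circ\theta_j^{-1}=(\tilde\theta'_{1,j}\circ\psi_1\circ\theta_j^{-1},\,\tilde\theta'_{2,j}\circ\psi_2\circ\theta_j^{-1})$. Hence condition~\eqref{e:pi'i} for $\psi$ relative to the product chart is equivalent to~\eqref{e:pi'i} for $\psi_1$ and for $\psi_2$ relative to the factor charts, and, by equivalence of norms on a finite-dimensional space, condition~\eqref{e:epsilon i} for $\psi$ is, up to a fixed factor, equivalent to~\eqref{e:epsilon i} for $\psi_1$ and for $\psi_2$. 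Therefore $\Phi^{-1}(\NN')$ contains a product $\NN^r(\phi_1,\tilde\Theta,\tilde\Theta'_1,\tilde\KK,\tilde\EE_1)\times\NN^r(\phi_2,\tilde\Theta,\tilde\Theta'_2,\tilde\KK,\tilde\EE_2)$ for suitably small $\tilde\EE_1,\tilde\EE_2$, which is a basic neighbourhood of $(\phi_1,\phi_2)$ in the product space (a wp-basic one when the index set is finite). Since $\NN'\subseteq\NN$, this is a neighbourhood of $(\phi_1,\phi_2)$ carried by $\Phi$ into $\NN$; the cases $r=0$ (no derivative conditions) and $r=\infty$ (carry the auxiliary $s$) need no change. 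This proves $\Phi$ sp- and wp-continuous, hence a homeomorphism for both topologies.

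The only step here that is not purely formal is the reduction to product charts in the second paragraph: that an arbitrary foliation chart of $\FF'_\times$ can be shrunk to a product one, and that $C^r$-closeness of maps measured in the small chart controls $C^r$-closeness measured in the original chart on a fixed compact set. This is the usual bookkeeping behind independence of the plaquewise topologies from the chosen atlas, parallel to Hirsch's treatment of products for the ordinary weak and strong topologies \cite[pp.~64--65]{Hirsch}; once it is granted, everything else follows by unwinding the definitions.
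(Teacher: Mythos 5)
Your proof is correct and follows essentially the same route as the paper: bijectivity is immediate, continuity of the inverse comes from composing with the factor projections via Lemma~\ref{l:composition}, and continuity of the canonical map is checked on basic neighbourhoods built from product foliation charts of $\FF'_1\times\FF'_2$. The only difference is that you spell out the reduction of an arbitrary basic neighbourhood to ones defined by product charts, a bookkeeping step the paper leaves as ``easily checked.''
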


\begin{proof}
The canonical map of the statement is obviously bijective. Moreover it is easily checked
to be continuous with the weak and strong plaquewise topologies by taking products of foliation
charts of $\FF'_1$ and $\FF'_2$ as foliation charts of $\FF'_{\times}$ to
define basic open sets as above.

The factor projections of the product $M'_1\times M'_2$ are foliation maps
$\pr'_1:\FF'_{\times}\to\FF'_1$ and
$\pr'_2:\FF'_{\times}\to\FF'_2$.
Then the inverse of the canonical map of the statement is given by
$$
\sigma\mapsto(\pr'_1\circ\sigma,\pr'_2\circ\sigma)\;,
$$
which is continuous with the weak and strong plaquewise topologies by
Lemma~\ref{l:composition}-(1)--(2).
\end{proof}

The canonical map of Lemma~\ref{l:product} will be considered as an
identity.

Let $M'_{\times}$ and $\FF'_{\times}$ be as before. Moreover let $\FF_1$ and $\FF_2$ be other $C^r$
foliations on manifolds
$M_1$ and $M_2$, respectively, and let $M_{\times}=M_1\times M_2$ and
$\FF_{\times}=\FF_1\times\FF_2$.

\begin{lem}\label{l:product to product}
For $0\le r\le\infty$, the product map
$$
C^r(\FF_1,\FF'_1)\times C^r(\FF_2,\FF'_2)\to
C^r(\FF_{\times},\FF'_{\times})\;,\qquad
(\phi_1,\phi_2)\mapsto\phi_1\times\phi_2\;,
$$
is a wp-embedding.
\end{lem}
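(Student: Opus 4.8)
The plan is to reduce the statement to the one-factor case via Lemma~\ref{l:product}, and then to check the embedding property by translating between the two systems of plaquewise neighbourhoods. Throughout one may assume all the manifolds are nonempty, the statement being trivial otherwise.

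Applying Lemma~\ref{l:product} with $\FF=\FF_{\times}$ gives a homeomorphism $\Psi\colon C^r(\FF_{\times},\FF'_1)\times C^r(\FF_{\times},\FF'_2)\to C^r(\FF_{\times},\FF'_{\times})$ for the weak plaquewise topologies, and a direct computation shows that the product map $F$ of the statement satisfies $F=\Psi\circ(\pr_1^*\times\pr_2^*)$, where $\pr_j\colon\FF_{\times}\to\FF_j$ is the $C^r$ factor projection and $\pr_j^*\colon C^r(\FF_j,\FF'_j)\to C^r(\FF_{\times},\FF'_j)$, $\phi\mapsto\phi\circ\pr_j$. Since $\Psi$ is a homeomorphism and a product of two topological embeddings is an embedding, it suffices to prove that each $\pr_j^*$ is a wp-embedding; by symmetry I would only treat $\pr_1^*$. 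That $\pr_1^*$ is injective is clear ($\phi$ is recovered from $\phi\circ\pr_1$ by restriction to a slice $M_1\times\{x_2\}$), and its wp-continuity follows from Lemma~\ref{l:composition}-(1) by fixing the second composition variable to be $\pr_1$.

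The remaining point --- that $\pr_1^*$ is open onto its image --- is the heart of the argument. Since $\pr_1^*$ is injective it preserves unions and finite intersections, so it is enough, for each subbasic wp-open set $\NN=\NN^r(\phi_0,\theta,\theta',K,\epsilon)$ of $C^r(\FF_1,\FF'_1)$, to produce a subbasic wp-open set $\VV$ of $C^r(\FF_{\times},\FF'_1)$ with $(\pr_1^*)^{-1}(\VV)=\NN$, for then $\pr_1^*(\NN)=\pr_1^*(C^r(\FF_1,\FF'_1))\cap\VV$, and the general wp-open set follows by taking unions of finite intersections. I would pick any foliation chart $\theta_2\colon U_2\to T_2\times B_2$ of $\FF_2$ and any nonempty compact $K_2\subset U_2$, note that $\theta\times\theta_2$ is a foliation chart of $\FF_{\times}$ and that $(\phi_0\circ\pr_1)(K\times K_2)=\phi_0(K)$ lies in the domain of $\theta'$, and set $\VV=\NN^r(\phi_0\circ\pr_1,\theta\times\theta_2,\theta',K\times K_2,\epsilon)$. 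The identity
\[
\theta'\circ(\phi\circ\pr_1)\circ(\theta\times\theta_2)^{-1}(x,y)=\theta'\circ\phi\circ\theta^{-1}(x)
\]
has $y$-independent right-hand side, so for each $k\le r$ the form $D^k$ of its left-hand side at $(x,y)$ is the extension of $D^k(\theta'\circ\phi\circ\theta^{-1})(x)$ by zero in the $\theta_2$-directions and hence has the same norm; together with $\pr_1(K\times K_2)=K$, this makes conditions~\eqref{e:K i}, \eqref{e:pi'i}, \eqref{e:epsilon i} for $\phi\circ\pr_1\in\VV$ term-by-term equivalent to those for $\phi\in\NN$. The same recipe handles $r=0$ (drop the derivative bound) and $r=\infty$ (carry an extra jet order $s$ in the data). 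The only step beyond routine bookkeeping is this norm identity for the higher-order derivatives; I expect it, rather than the structural reductions, to be where any real care is needed.
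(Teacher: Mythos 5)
Your proof is correct, but it is organized differently from the paper's. The continuity step is the same in both: via Lemma~\ref{l:product} the product map is identified with $\pr_1^*\times\pr_2^*$ and wp-continuity comes from Lemma~\ref{l:composition}-(1). For openness onto the image, however, the paper works directly with the product map: given subbasic neighbourhoods $\NN_1,\NN_2$ of $\phi_1,\phi_2$, it forms the subbasic neighbourhood $\MM$ of $\phi_1\times\phi_2$ in $C^r_{WP}(\FF_\times,\FF'_\times)$ with data $\theta_1\times\theta_2$, $\theta'_1\times\theta'_2$, $K_1\times K_2$, $\min\{\epsilon_1,\epsilon_2\}$, and only checks the one-way implication $\psi_1\times\psi_2\in\MM\Rightarrow(\psi_1,\psi_2)\in\NN_1\times\NN_2$; this needs nothing more than the trivial inequality that restricting the $k$-th derivative to vectors tangent to one factor does not increase its norm. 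You instead prove the stronger intermediate statement that each $\pr_j^*:C^r(\FF_j,\FF'_j)\to C^r(\FF_\times,\FF'_j)$ is itself a wp-embedding, by exhibiting an exact preimage identity $(\pr_1^*)^{-1}(\VV)=\NN$ for subbasic sets (auxiliary chart $\theta_2$ and nonempty compact $K_2$ in the other source factor, same target chart and $\epsilon$), and then invoke that a product of embeddings composed with the homeomorphism of Lemma~\ref{l:product} is an embedding. Your reduction to subbasic sets via finite intersections and unions is sound (preimage equality passes to finite intersections, images commute with unions), and the two-sided norm comparison you flag as the delicate point is indeed what your route requires beyond the paper's: since $\theta'\circ(\phi\circ\pr_1)\circ(\theta\times\theta_2)^{-1}$ is $\theta'\circ\phi\circ\theta^{-1}$ precomposed with the coordinate projection, its $k$-th derivative is the factor derivative composed with that projection in each slot, and the projection is norm-one and surjective onto the factor, so the norms agree; your justification is right. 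What your version buys is the reusable fact that $\pr_j^*$ is a wp-embedding even though $\pr_j$ is not proper (so Lemma~\ref{l:composition}-(3) gives no sp-statement); what the paper's buys is brevity, since it needs only the easy direction of the derivative-norm comparison and no exact preimage identity.
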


\begin{proof}
This map is obviously injective. Moreover it is wp-continuous by
Lemma~\ref{l:composition}-(1) because, according to Lemma~\ref{l:product},
it can be identified with the map $\pr_1^*\times\pr_2^*$, where
$$
\pr_i^*:C^r(\FF_i,\FF'_i)\to C^r(\FF_{\times},\FF'_i)
$$
is induced by the factor
projection $\pr_i:\FF_{\times}\to\FF_i$ for $i=1,2$.

To finish the proof, assume first that $r$ is finite. For each $i=1,2$, let $\NN_i$ be a
subbasic neighbourhood of some $\phi_i$ in $C^r_{WP}(\FF_i,\FF'_i)$ determined by some data
$\theta_i$, $\theta'_i$, $K_i$ and $\epsilon_i$, as explained in the definition of the
wp-topology. Let $\MM$ be the subbasic neighbourhood of $\phi_1\times\phi_2$ in
$C^r_{WP}(M_{\times},\FF_{\times};M'_{\times},\FF'_{\times})$ defined by the data
$$
\theta_1\times\theta_2\;,\quad\theta'_1\times\theta'_2\;,\quad
K_1\times K_2\;,\quad\min\{\epsilon_1,\epsilon_2\}\;.
$$
It is easy to check that
$$
\psi_1\times\psi_2\in\MM\;\Longrightarrow\;(\psi_1,\psi_2)\in\NN_1\times\NN_2
$$
for any $\psi_i\in C^r(\FF_i,\FF'_i)$, $i=1,2$. Hence the image of
$\NN_1\times\NN_2$ by the product map
is a wp-neighbourhood of $\phi_1\times\phi_2$ in the image of the product map, which finishes the
proof in this case.

If $r=\infty$, the result follows in a similar way by adding an arbitrary $s\in\N$ to the data
needed to define the above subbasic neighbourhoods.
\end{proof}

\begin{lem}\label{l:H(psi)}
For $1\le r<\infty$, let $\FF$ and $\FF'$ be a $C^r$ foliations on manifolds $M$ and $M'$, respectively.
Let $U$ be an open subset of $M$. For each
$C^r$ foliation map
$\phi:\FF\to\FF'$, there is an sp-neighborhood $\MM$ in $C^r(\FF|_U,\FF')$ of the restriction
$\phi|_U$ such that, for each $\psi\in\MM$, the combination $H(\psi)$ of $\psi$ and $\phi|_{M\setminus U}$ is $C^r$ foliation map $\FF\to\FF'$, and moreover the assignment $\psi\mapsto H(\psi)$
defines a continuous map $H:\MM_{SP}\to C^r_{SP}(\FF,\FF')$.
\end{lem}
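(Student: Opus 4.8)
The plan is to build the combined map $H(\psi)$ explicitly on a suitable neighbourhood of $\phi|_U$, check that it is $C^r$ and a foliation map, and then verify continuity using the plaquewise neighbourhood bases. First I would choose an open set $V$ with $\overline{V}\subset U$ together with a closed set $C$ such that $M\setminus U\subset \Int C$ and $C\cap\overline{V}=\varnothing$; then $\{V,\,M\setminus\overline{V}\}$ is an open cover of $M$, $\psi$ will be defined on a set containing $\overline{V}$, and $\phi$ is defined everywhere, so I can set $H(\psi)=\psi$ on $V$ and $H(\psi)=\phi$ on $M\setminus\overline{V}$. The point of the sp-neighbourhood $\MM$ is to force $\psi$ and $\phi|_U$ to agree on the overlap region $U\setminus\overline{V}$, so that $H(\psi)$ is unambiguously defined there; this is exactly what condition~\eqref{e:pi'i} together with~\eqref{e:epsilon i} (at $k=0$) can be made to give if one is not careful, so instead I would build $\MM$ directly as the set of $\psi$ that restrict to $\phi$ on a fixed neighbourhood of $\partial V$ within $U$. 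Concretely, cover $\overline{U\setminus V}$ near $\partial V$ by finitely many (locally finitely many, if $U$ is not relatively compact) foliation charts $\theta_i$ of $\FF$, pick corresponding charts $\theta'_i$ of $\FF'$ and compacts $K_i$ with $\phi(K_i)\subset U'_i$, and let $\MM=\NN^r(\phi|_U,\Theta,\Theta',\KK,\EE)$ for small enough $\EE$; then any $\psi\in\MM$ agrees with $\phi$ to high order near $\partial V$, and in particular $H(\psi)$ is well defined and $C^r$ by the usual gluing lemma for $C^r$ maps (this is the foliated analogue of \cite[page~41]{Hirsch}).

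Next I would check that $H(\psi)$ is a foliation map: on $V$ it maps leaves of $\FF|_V$ into leaves of $\FF'$ because $\psi$ does, and on $M\setminus\overline{V}$ because $\phi$ does; since leaves of $\FF$ are connected and a leaf is covered by plaques, and on each plaque $P$ meeting both regions the two formulas agree on the overlap and each sends $P$ into a single plaque of $\FF'$ (by the plaquewise condition on $\psi$ and by $\phi$ being a foliation map), the images of $P$ under the two pieces lie in the same leaf of $\FF'$; hence $H(\psi)(L)$ is contained in a single leaf of $\FF'$ for every leaf $L$ of $\FF$. (Here one uses that the combining set $U\setminus\overline V$ is saturated for $\FF|_U$ only plaquewise, which is enough.)

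Finally, for continuity of $H:\MM_{SP}\to C^r_{SP}(\FF,\FF')$, I would take a basic sp-neighbourhood $\NN^r(H(\psi_0),\widetilde\Theta,\widetilde\Theta',\widetilde\KK,\widetilde\EE)$ of $H(\psi_0)$ and split its index set into those $i$ with $\widetilde K_i\subset M\setminus\overline{V}$ — where the defining conditions \eqref{e:K i}--\eqref{e:epsilon i} only involve $\phi$ and hence are automatic — and those $i$ with $\widetilde K_i\subset U$, for which the conditions pull back to an sp-neighbourhood of $\psi_0$ in $C^r(\FF|_U,\FF')$ via the fact that $H(\psi)=\psi$ on $U\setminus\overline V\cup V\supset \widetilde K_i$; after refining $\widetilde\Theta$ so that every $\widetilde K_i$ falls into one of these two classes (using that $\{V,M\setminus\overline V\}$ covers $M$ and a chart-subdivision argument), the preimage under $H$ contains a basic sp-neighbourhood of $\psi_0$, which is exactly sp-continuity. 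For $r=\infty$ one adjoins an auxiliary order $s\in\N$ to every piece of data, as in the earlier lemmas. The main obstacle is the bookkeeping of the index sets and charts near $\partial V$: one must choose the charts $\widetilde\Theta$ defining a test neighbourhood of $H(\psi_0)$ fine enough that each compact $\widetilde K_i$ lies entirely in $V$ or entirely in $M\setminus\overline V$, so that the combined-map conditions decouple into a condition purely on $\psi$ and a vacuous condition from $\phi$; everything else is the routine $C^r$ gluing estimate already used for the classical strong topology.
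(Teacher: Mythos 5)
Your construction does not establish the lemma as stated, and the central step fails. The set you ultimately need --- maps $\psi$ that \emph{coincide} with $\phi$ on a fixed neighbourhood of $\partial V$ --- is not an sp-neighbourhood of $\phi|_U$ in $C^r(\FF|_U,\FF')$: a basic set $\NN^r(\phi|_U,\Theta,\Theta',\KK,\EE)$ forces only the equality~\eqref{e:pi'i} of the transverse projections and the $\epsilon_i$-closeness~\eqref{e:epsilon i} of the leafwise data, so maps arbitrarily sp-close to $\phi|_U$ need not agree with $\phi$ at a single point. Consequently your fallback ``take $\MM=\NN^r(\phi|_U,\Theta,\Theta',\KK,\EE)$ with $\EE$ small, then $\psi$ agrees with $\phi$ to high order near $\partial V$'' is not true, and for such a $\psi$ your two-piece map ($\psi$ on $V$, $\phi$ on $M\setminus\overline V$) is in general not even continuous along $\partial V$, so no $C^r$ gluing lemma applies. (Also, $\{V,M\setminus\overline V\}$ is not an open cover of $M$: it misses $\partial V$.) Moreover, even where your map is defined it is not the map the lemma asserts: the combination $H(\psi)$ of $\psi$ and $\phi|_{M\setminus U}$ must equal $\psi$ on \emph{all} of $U$; replacing $\psi$ by $\phi$ on $U\setminus\overline V$ changes the statement and would break the use made of the lemma in Corollary~\ref{c:ls-transverse}, where ls-transversality of $H(\psi)$ over $U$ is inherited from $\psi$.

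The missing idea --- which is the paper's proof, modelled on Lemma~2.8 of Chapter~2 in \cite{Hirsch} --- is to glue along the frontier of $U$ itself and to make the tolerances of the sp-neighbourhood shrink there, rather than to force exact agreement on a collar. Fix an exhaustion $\{V_k\}$ of $U$ with $\overline{V_k}\subset V_{k+1}$, data $\Theta$, $\Theta'$, $\KK$ with $\{U_i\}$ covering $U$, and set $\epsilon_i=1/\min\{k\in\N\mid K_i\subset V_k\}$, $\MM=\NN^r(\phi|_U,\Theta,\Theta',\KK,\EE)$. For $\psi\in\MM$ the local differences between $\psi$ and $\phi$ and their derivatives up to order $r$ tend to $0$ as one approaches $M\setminus U$; integrating these bounds shows that the difference vanishes to order $r$ at the frontier of $U$, so the combination of $\psi$ with $\phi|_{M\setminus U}$ is $C^r$ (this is where $r<\infty$ is used), it is a foliation map because~\eqref{e:pi'i} keeps $\psi(x)$ in the same plaque, hence the same leaf, as $\phi(x)$, and the same shrinking tolerances give the sp-continuity of $H$. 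No exact agreement of $\psi$ with $\phi$ is needed anywhere, and $\MM$ is a genuine sp-neighbourhood, as required.
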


\begin{proof}
This is analogous to the proof of Lemma~2.8 of Chapter~2 in~\cite{Hirsch}.
Let $\{V_k\}_{k\in\N}$ be an open covering of $U$ such that $\overline{V_k}\subset V_{k+1}$. Consider some data $\Theta$, $\Theta'$ and $\KK$, like in the definition of the sp-topology, such that $\{U_i\}$ is an open covering of $U$. For each $i$, let
$$
\epsilon_i=\frac{1}{\min\{k\in\N\ |\ K_i\subset V_k\}}\;,
$$
and set $\EE=\{\epsilon_i\}$. Then the result follows with $\MM=\NN^r(\phi|_U,\Theta,\Theta',\KK,\EE)$.
\end{proof}

When a foliation map $\phi:\FF\to\FF$ preserves each leaf, then it will
be called a {\em leaf preserving map\/}. For $0\le r\le\infty$, the family of $C^r$ leaf preserving maps $\FF\to\FF$ will be
denoted by $C^r_{\text{\rm leaf}}(\FF,\FF)$. The proof of the following
result is elementary.

\begin{prop}\label{p:C r leaff(cF,cF) is open}
The set $C^r_{\text{\rm leaf}}(\FF;\FF)$ is open in
$C^r_{SP}(\FF,\FF)$.
\end{prop}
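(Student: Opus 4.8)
The plan is to exhibit, around an arbitrary $\phi\in C^r_{\text{\rm leaf}}(\FF,\FF)$, a basic sp-neighbourhood all of whose members are leaf preserving. The key observation is that the defining condition \eqref{e:pi'i} of the sets $\NN^r(\phi,\Theta,\Theta',\KK,\EE)$ already forces the plaque containing $\psi(x)$ to be the plaque containing $\phi(x)$, for every $x$ in the relevant compact set; so if we choose the target charts $\Theta'$ to coincide with the source charts $\Theta$ and arrange that the $K_i$ cover $M$, then any $\psi$ in the neighbourhood sends each plaque of $\theta_i$ into the same plaque as $\phi$ does, hence into the leaf of $\FF$ through that plaque — which, since $\phi$ is leaf preserving, is the leaf through $x$.

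More precisely, first fix a locally finite regular foliation atlas $\Theta=\{U_i,\theta_i\}$ of $\FF$ with $\theta_i(U_i)=T_i\times B$, and choose a family of compact sets $\KK=\{K_i\}$ with $K_i\subset U_i$ and $\bigcup_i\Int K_i=M$ (possible by local finiteness and paracompactness). Since $\phi$ is leaf preserving, for each $i$ and each plaque $P$ of $\theta_i$, $\phi(P\cap K_i)$ lies in the leaf $L$ through $P$. By continuity of $\phi$ and compactness of $K_i$ we may, after refining, also assume $\phi(K_i)\subset U_i$, so that the data $(\phi,\Theta,\Theta,\KK)$ are admissible and $\NN:=\NN^r(\phi,\Theta,\Theta,\KK)$ (equivalently $\NN^r(\phi,\Theta,\Theta,\KK,\EE)$ for any $\EE$, since the $\epsilon_i$ play no role in the argument) is an sp-open neighbourhood of $\phi$. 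Now take any $\psi\in\NN$ and any leaf $L$ of $\FF$ and any $x\in L$. Pick $i$ with $x\in\Int K_i$, let $P$ be the plaque of $\theta_i$ through $x$; condition \eqref{e:pi'i} says $\pi_i\circ\psi=\pi_i\circ\phi$ on $K_i$, i.e. $\psi$ and $\phi$ send $P\cap K_i$ into the same plaque $P'$ of $\theta_i$, and $\phi(x)\in P'\subset L$ forces $P'\subset L$; hence $\psi(x)\in L$. Running this over a chain of plaques (using that $\{\Int K_i\}$ covers $M$ and that $\FF$ is a foliation, so any two points of $L$ are joined by a plaque chain) shows $\psi(L)\subset L$ for every leaf $L$, i.e. $\psi$ is leaf preserving. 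Thus $\NN\subset C^r_{\text{\rm leaf}}(\FF,\FF)$, proving openness; in the case $r=\infty$ one adds an arbitrary $s\in\N$ to the data, exactly as in the definition of the sp-topology, with no change to the argument.

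The only mildly delicate point — the "main obstacle," though it is routine — is the passage from "$\psi$ preserves plaques locally" to "$\psi$ preserves leaves globally": one must check that a plaque chain in $L$ connecting $x$ to an arbitrary $y\in L$ can be chosen so that consecutive plaques both meet some $\Int K_i$, so that the local argument applies along the whole chain. This is immediate from local finiteness of $\Theta$ together with $\bigcup_i\Int K_i=M$ and the definition of the leaf topology (plaques form a base), so the proposition follows. I expect the whole write-up to be short, mirroring the remark already made in the text after \eqref{e:epsilon i} that \eqref{e:pi'i} means $\phi$ and $\psi$ land in the same plaque.
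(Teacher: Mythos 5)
Your core mechanism is the right one (and is presumably the ``elementary'' argument the paper has in mind, since the paper gives no proof): condition~\eqref{e:pi'i} forces $\psi(x)$ to lie in the same plaque of $\theta'_i$ as $\phi(x)$, hence in the leaf through $\phi(x)$, which is the leaf through $x$ because $\phi$ is leaf preserving; if the $K_i$ cover $M$, this holds at every point, and leaf preservation is exactly the pointwise condition $\psi(x)\in L_x$. But your specific construction has a genuine flaw: you insist on taking $\Theta'=\Theta$ and claim that ``after refining'' one may assume $\phi(K_i)\subset U_i$. This is false for a general leaf preserving map: the admissibility condition $\phi(K_i)\subset U'_i$ with $U'_i=U_i$ and $\bigcup_i K_i=M$ would force every $x$ to lie in a common distinguished chart with $\phi(x)$, which fails, e.g., for a leafwise rotation by a definite angle of the foliation of $T^2$ by circles once the charts have small diameter; refining the atlas only makes this worse. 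So the neighbourhood $\NN^r(\phi,\Theta,\Theta,\KK,\EE)$ you propose need not exist.

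The repair is immediate and makes the argument shorter: do not couple $\Theta'$ to $\Theta$. Choose a locally finite family $\{U_i,\theta_i\}$ and compacta $K_i\subset U_i$ covering $M$, small enough that for each $i$ there is \emph{some} foliation chart $(U'_i,\theta'_i)$ of $\FF$ with $\phi(K_i)\subset U'_i$ (possible by continuity of $\phi$ and compactness). Then for any $\psi$ in the resulting basic sp-neighbourhood and any $x\in K_i$, \eqref{e:pi'i} places $\psi(x)$ in the plaque of $\theta'_i$ through $\phi(x)$, hence $\psi(x)\in L_{\phi(x)}=L_x$; since the $K_i$ cover $M$, $\psi$ is leaf preserving. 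Note also that your ``main obstacle,'' the plaque-chain globalization, is not needed at all: there is nothing to propagate along the leaf, because the pointwise statement $\psi(x)\in L_x$ for all $x$ is already the definition of leaf preservation.
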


\begin{cor}\label{c:C r leaff(cF,cF) is Baire}
$C^r_{\text{\rm leaf},SP}(\FF;\FF)$ is a Baire space.
\end{cor}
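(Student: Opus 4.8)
The plan is to deduce this immediately from the two preceding results together with the standard fact that an open subspace of a Baire space is again a Baire space. First I would recall that, by Theorem~\ref{t:Baire}, the space $C^r_{SP}(\FF,\FF)$ is a Baire space for $0\le r\le\infty$. Next, by Proposition~\ref{p:C r leaff(cF,cF) is open}, the subset $C^r_{\text{\rm leaf}}(\FF,\FF)$ of leaf preserving maps is open in $C^r_{SP}(\FF,\FF)$.

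It then only remains to invoke the purely topological statement that an open subset of a Baire space, endowed with the subspace topology, is a Baire space; this is exactly \cite[Proposition~8.3]{Kechris}, which was already used in the proof of Theorem~\ref{t:Baire}. Applying it to the open subspace $C^r_{\text{\rm leaf}}(\FF,\FF)\subset C^r_{SP}(\FF,\FF)$ yields that $C^r_{\text{\rm leaf},SP}(\FF,\FF)$ is a Baire space, which is the assertion.

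I do not expect any real obstacle: all the substance is contained in Theorem~\ref{t:Baire} and Proposition~\ref{p:C r leaff(cF,cF) is open}, and the final step is a one-line appeal to general topology. The only mild point of care is that the Baire property passes to \emph{open} subspaces with no extra hypotheses (in contrast with arbitrary, or even closed, subspaces), so no additional metrizability or separability assumption on $C^r_{SP}(\FF,\FF)$ needs to be checked here; the structure established in the course of proving Theorem~\ref{t:Baire} already more than suffices.
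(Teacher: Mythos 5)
Your proposal is correct and is exactly the paper's argument: cite Theorem~\ref{t:Baire} for the Baire property of $C^r_{SP}(\FF,\FF)$, Proposition~\ref{p:C r leaff(cF,cF) is open} for openness of $C^r_{\text{\rm leaf}}(\FF,\FF)$, and \cite[Proposition~8.3]{Kechris} for the fact that an open subspace of a Baire space is Baire. Nothing further is needed.
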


\begin{proof}
By \cite[Proposition~8.3]{Kechris}, this is a direct consequence of
Theorem~\ref{t:Baire} and Proposition~\ref{p:C r leaff(cF,cF) is open}.
\end{proof}

Let $\Diff^r(M)$ be the set of $C^r$
diffeomorphisms $M\to M$; here, a $C^0$ diffeomorphism is just a homeomorphism.
Then let
\begin{align*}
\Diff^r(M,\FF)&=\Diff^r(M)\cap C^r(\FF,\FF)\;,\\
\Diff^r(\FF)&=\Diff^r(M)\cap C^r_{\text{\rm leaf}}(\FF;\FF)\;.
\end{align*}
With the operation of composition, $\Diff^r(M,\FF)$ is a group, and
$\Diff^r(\FF)$ is a normal subgroup of $\Diff^r(M,\FF)$. Assuming
$r=\infty$, this notation is compatible with the usual notation
$\mathfrak{X}(M,\FF)$ for the Lie algebra of infinitesimal
transformations of $(M,\FF)$, and
$\mathfrak{X}(\FF)\subset\mathfrak{X}(M,\FF)$ for the normal Lie
subalgebra of vector fields tangent to the leaves. The flow of any
vector field in $\mathfrak{X}(M,\FF)$ (respectively,
$\mathfrak{X}(\FF)$) is a uniparametric subgroup of
$\Diff^\infty(M,\FF)$ (respectively, $\Diff^\infty(\FF)$).

\begin{cor}\label{c:Diff is open}
For $1\le r\le\infty$, the sets $\Diff^r(M,\FF)$ and
$\Diff^r(\FF)$ are open in
$C^r_{SP}(\FF,\FF)$.
\end{cor}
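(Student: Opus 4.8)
The plan is to reduce to the classical fact that, for a manifold without boundary and $1\le r\le\infty$, the set $\Diff^r(M)$ of $C^r$ diffeomorphisms is open in $C^r_S(M,M)$ \cite[Chapter~2]{Hirsch}. The bridge is the observation that the inclusion $\iota:C^r_{SP}(\FF,\FF)\to C^r_S(M,M)$ is continuous, i.e., that the sp-topology is finer than the topology induced from the usual strong topology on $C^r(M,M)$. Granting this, $\Diff^r(M,\FF)=\Diff^r(M)\cap C^r(\FF,\FF)=\iota^{-1}(\Diff^r(M))$ is sp-open, being the preimage of an open set under a continuous map.

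To verify the continuity of $\iota$, fix $\phi\in C^r(\FF,\FF)$ and a basic strong neighbourhood $\VV$ of $\phi$ in $C^r_S(M,M)$, given by a locally finite family of charts $\{A_i,\alpha_i\}$ of $M$, charts $\{B_i,\beta_i\}$ of $M$ with the same index set, compacta $L_i\subset A_i$ with $\phi(L_i)\subset B_i$, and numbers $\delta_i>0$. For each $i$ I would cover $L_i$ by finitely many foliation charts $U_{i,j}$, each with $\overline{U_{i,j}}$ compact and contained in $A_i$ and small enough that $\phi(\overline{U_{i,j}})$ lies in some foliation chart $U'_{i,j}$ with $\overline{U'_{i,j}}\subset B_i$ (possible by continuity of $\phi$ and openness of $A_i$ and $B_i$); then pick compacta $K_{i,j}\subset U_{i,j}$ still covering $L_i$. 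The family $\{U_{i,j}\}$ is locally finite, and a change of variables on each $K_{i,j}$, expressed in these foliation charts, yields numbers $\epsilon_{i,j}>0$ for which the basic sp-neighbourhood $\NN^r(\phi,\{U_{i,j}\},\{U'_{i,j}\},\{K_{i,j}\},\{\epsilon_{i,j}\})$ (with an auxiliary order $s\in\N$ added when $r=\infty$) is contained in $\VV$. This set indeed contains $\phi$: the single extra condition~\eqref{e:pi'i} present in the definition of the sp-neighbourhood, as compared with the corresponding strong neighbourhood, only shrinks the set and is trivially satisfied by $\phi$ itself. A related comparison of topologies underlies the proof of Theorem~\ref{t:Baire} and appears in \cite{AlvMasa}.

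It follows that $\Diff^r(M,\FF)$ is sp-open. For $\Diff^r(\FF)$, note that a leaf preserving map is in particular a foliation map, so
$$
\Diff^r(\FF)=\Diff^r(M)\cap C^r_{\text{\rm leaf}}(\FF;\FF)=\Diff^r(M,\FF)\cap C^r_{\text{\rm leaf}}(\FF;\FF)\;,
$$
the intersection of the sp-open set just obtained with the sp-open set $C^r_{\text{\rm leaf}}(\FF;\FF)$ supplied by Proposition~\ref{p:C r leaff(cF,cF) is open}; hence it too is sp-open. The only non-formal step is the continuity of $\iota$: there the point to watch is precisely that the plaquewise condition~\eqref{e:pi'i} poses no obstruction when refining a strong neighbourhood by an sp-neighbourhood, after which what remains is the routine change-of-charts estimate on compact sets.
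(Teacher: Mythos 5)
Your proof is correct and follows essentially the same route as the paper: reduce to the openness of $\Diff^r(M)$ in $C^r_S(M,M)$ from \cite[page~38]{Hirsch} together with Proposition~\ref{p:C r leaff(cF,cF) is open}, the only extra content being your explicit verification that the sp-topology is finer than the topology induced from $C^r_S(M,M)$, which the paper leaves implicit. That verification (refining a strong basic neighbourhood by foliation charts on compacta, noting that condition~\eqref{e:pi'i} only shrinks the set) is sound.
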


\begin{proof}
This is a consequence of Proposition~\ref{p:C r leaff(cF,cF) is open}
since $\Diff^r(M)$ is open in $C^r_S(M,M)$
\cite[page~38]{Hirsch}.
\end{proof}

\begin{cor}\label{c:Diff is Baire}
$\Diff^r_{SP}(M,\FF)$ and $\Diff^r_{SP}(\FF)$ are Baire spaces for $1\le r\le\infty$.
\end{cor}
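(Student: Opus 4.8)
The plan is to imitate verbatim the argument already used for Corollary~\ref{c:C r leaff(cF,cF) is Baire}. The only two ingredients needed are: (i) Theorem~\ref{t:Baire}, which tells us that $C^r_{SP}(\FF,\FF)$ is a Baire space for $0\le r\le\infty$; and (ii) Corollary~\ref{c:Diff is open}, which tells us that, for $1\le r\le\infty$, both $\Diff^r(M,\FF)$ and $\Diff^r(\FF)$ are \emph{open} subsets of $C^r_{SP}(\FF,\FF)$. Since $\Diff^r_{SP}(M,\FF)$ and $\Diff^r_{SP}(\FF)$ are by definition these subsets equipped with the subspace topology inherited from $C^r_{SP}(\FF,\FF)$, it then suffices to invoke the general topological fact that an open subspace of a Baire space is itself a Baire space, which is \cite[Proposition~8.3]{Kechris}. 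This yields the conclusion immediately.

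I expect no genuine obstacle: the corollary is a formal consequence of two results stated just above it, exactly as in the proof of Corollary~\ref{c:C r leaff(cF,cF) is Baire}, the only difference being that $C^r_{\text{\rm leaf}}(\FF;\FF)$ and Proposition~\ref{p:C r leaff(cF,cF) is open} are replaced by $\Diff^r(M,\FF)$, $\Diff^r(\FF)$ and Corollary~\ref{c:Diff is open}. So the proof reduces to a single sentence citing \cite[Proposition~8.3]{Kechris}, Theorem~\ref{t:Baire} and Corollary~\ref{c:Diff is open}; the mild restriction $r\ge1$ simply propagates from Corollary~\ref{c:Diff is open} (which in turn needs it so that $\Diff^r(M)$ makes sense as an open subset of $C^r_S(M,M)$).
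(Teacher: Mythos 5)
Your proposal is correct and coincides with the paper's own proof, which likewise derives the corollary in one line from \cite[Proposition~8.3]{Kechris}, Theorem~\ref{t:Baire} and Corollary~\ref{c:Diff is open}, exactly parallel to the proof of Corollary~\ref{c:C r leaff(cF,cF) is Baire}. No further argument is needed.
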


\begin{proof}
This follows from \cite[Proposition~8.3]{Kechris}, Theorem~\ref{t:Baire}
and Corollary~\ref{c:Diff is open}.
\end{proof}

The proof of the following lemma is an easy exercise, similar to the proof of the
corresponding result for the usual weak and strong topologies
\cite[page~64]{Hirsch}.

\begin{lem}\label{l:Diff is a topological group}
For $0\le r\le\infty$, $\Diff^r(M,\FF)$ and $\Diff^r(\FF)$ are topological groups with the operation of
composition and the weak and strong plaquewise topologies.
\end{lem}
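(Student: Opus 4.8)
The plan is to verify, for both the weak and the strong plaquewise topologies, that the composition $\mu\colon(\psi,\phi)\mapsto\psi\circ\phi$ and the inversion $\iota\colon\phi\mapsto\phi^{-1}$ are continuous on $\Diff^r(M,\FF)$. Once this is done, $\Diff^r(\FF)$ is a topological group with no extra work, being a subgroup of $\Diff^r(M,\FF)$ carrying the induced topology. I would organize the verification around the standard observation that a group equipped with a topology is a topological group as soon as (i) every left translation $L_\phi\colon\psi\mapsto\phi\circ\psi$ and every right translation $R_\phi\colon\psi\mapsto\psi\circ\phi$ is a homeomorphism, (ii) $\mu$ is continuous at $(\id,\id)$, and (iii) $\iota$ is continuous at $\id$; indeed, since the translations are homeomorphisms, translating a basic neighborhood of $\id$ then produces the required neighborhoods at arbitrary points.

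Ingredient (i) is already available from Lemma~\ref{l:composition}: the joint wp-continuity in part~(1) gives wp-continuity of each $L_\phi$ and $R_\phi$, part~(2) gives sp-continuity of $L_\phi=\phi_*$, and part~(3) gives sp-continuity of $R_\phi=\phi^*$ (a diffeomorphism being proper); since $\phi^{-1}$ again lies in $\Diff^r(M,\FF)$ and provides the two-sided inverses $L_{\phi^{-1}}$ and $R_{\phi^{-1}}$, all these translations are homeomorphisms. In the weak case, part~(1) of Lemma~\ref{l:composition} even yields joint wp-continuity of $\mu$ directly, so for that topology only (iii) is left.

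For (ii) and (iii), I would start from a basic sp-neighborhood of $\id$; intersecting it with a neighborhood built from a locally finite foliation atlas whose domains cover $M$, I may assume it has the shape $\NN^r(\id,\Theta,\Theta,\KK,\EE)$ with $\Theta=\{U_i,\theta_i\}$ locally finite, $\{U_i\}$ covering $M$, $K_i\subset U_i$, and (when $r=\infty$) a fixed order $s$. Because $\id$ is a homeomorphism and $\{U_i\}$ is locally finite, for each $i$ only finitely many charts $\theta_j$ are relevant near $K_i$, so I can choose sp-neighborhoods $\UU$ and $\VV$ of $\id$, defined by sufficiently small index-dependent constants, so that every $\phi\in\UU$ and every $\psi\in\VV$ stays so close to the identity that it keeps each pertinent plaque inside the same distinguished open set and induces the correct (identity) map on local quotients. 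Then for $\psi\in\VV$, $\phi\in\UU$ the conditions~\eqref{e:K i} and~\eqref{e:pi'i} for $\psi\circ\phi$ hold by construction, while~\eqref{e:epsilon i}, applied to the local representation $(\theta_i\circ\psi\circ\theta_j^{-1})\circ(\theta_j\circ\phi\circ\theta_i^{-1})$, follows from the chain rule exactly as for ordinary maps in \cite[page~64]{Hirsch}, local finiteness ensuring that only finitely many fixed changes of coordinates enter the estimate near each point; this gives $\VV\circ\UU\subseteq\NN^r(\id,\Theta,\Theta,\KK,\EE)$. Continuity of $\iota$ at $\id$ is obtained the same way, now controlling the derivatives of $\theta_i\circ\phi^{-1}\circ\theta_i^{-1}$ via the inverse function theorem (the case $r=0$ being immediate), and with~\eqref{e:pi'i} automatic since $\phi^{-1}$ preserves the relevant plaques whenever $\phi$ does near the identity. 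Restricting $\Theta$ and $\KK$ to finite index sets yields the same statements in the wp-topology.

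The step I expect to give the most trouble is the joint continuity of $\mu$ in the \emph{strong} plaquewise topology over a noncompact $M$, for exactly the classical reason: as $\phi$ varies, the image $\phi(U_i)$ of a chart domain cannot be located in advance, so $\mu$ does not localize naively and is not obtained by merely combining Lemma~\ref{l:composition}-(2) and -(3). My plan is to dispose of this precisely as Hirsch does for $\Diff^rM$, namely by reducing to continuity at $(\id,\id)$, where the possible ``spreading'' of the inner map $\phi$ is tamed by the local finiteness of $\{U_i\}$ together with $\phi$ being forced near the identity; apart from this point, incorporating the plaquewise conditions~\eqref{e:K i}--\eqref{e:epsilon i} into Hirsch's argument is just bookkeeping.
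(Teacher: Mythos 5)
Your overall strategy---reduce everything to continuity of composition and inversion at the identity by means of translations, obtain the translations from Lemma~\ref{l:composition}, and treat the identity by chart-by-chart chain-rule and inverse-function-theorem estimates---is exactly the Hirsch-style argument the paper has in mind (its proof is literally a reference to \cite[page~64]{Hirsch}), and your treatment of the strong plaquewise topology is sound: there the neighbourhood $\VV$ of $\id$ can be built from a locally finite atlas whose compact sets cover $M$, and it is precisely this covering that lets you locate $\phi(K_i)$ (for composition) and, crucially, $\phi^{-1}(K_i)$ (for inversion) inside predetermined charts before running the estimates.

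The gap is in your final sentence, where the weak case is dispatched by ``restricting $\Theta$ and $\KK$ to finite index sets''. For composition this is harmless (Lemma~\ref{l:composition}-(1) already gives joint wp-continuity), but for inversion at $\id$ over a noncompact $M$ it fails as stated: a wp-neighbourhood of $\id$ constrains $\phi$ only on finitely many compact sets, and nothing in those constraints prevents $\phi$ from sending a point lying far outside all of them into $K_i$. Hence conditions~\eqref{e:K i} and~\eqref{e:pi'i} for $\phi^{-1}$---which are assertions about the a priori unknown set $\phi^{-1}(K_i)$---are not ``automatic''; your sp-argument for them used exactly the covering family of constrained compacts that the wp-topology no longer provides. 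The missing ingredient is global: one must exploit that $\phi$ is a homeomorphism of $M$ close to the identity on a large compact manifold-with-boundary neighbourhood $C$ of $K_i$ with $d(K_i,\partial C)$ larger than the allowed displacement, and argue (via a mod~$2$ degree argument for $\phi|_C$ relative to $\partial C$, or Arens' argument for homeomorphism groups of locally compact, locally connected spaces in the compact-open topology) that $K_i\subset\phi(\Int(C))$, whence $\phi^{-1}(K_i)\subset C$ by injectivity; only after this can the plaque condition and the derivative estimates for $\phi^{-1}$ on $K_i$ be read off from finitely many constraints on $\phi$ over $C$, as in your sp-argument. (When $M$ is compact the two plaquewise topologies coincide and the issue disappears, but the lemma is stated without compactness.)
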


\section{Leafwise approximations}\label{s:approximations}

The sp-topology is well
adapted to improve leafwise differentiability by approximation of foliation maps. We need
certain result of this kind.

For $r,s\in\N\cup\{\infty\}$ with $s<r$, let $\FF$ and $\FF'$ be $C^r$
foliations on manifolds
$M$ and $M'$, respectively. A foliation map $\phi:\FF\to\FF'$
will be called of {\em class $C^{r,s}$\/} if it is of class $C^s$
and moreover, for any pair of domains $U$ and $U'$ of foliation charts of
$\FF$ and $\FF'$ with $\phi(U)\subset U'$, the composite of $\phi:U\to U'$
with the local projection $U'\to T'$ is $C^r$, where $T'$ is the
local quotient of $U'$. The set of $C^{r,s}$ foliation maps $\FF\to\FF'$ will be
denoted by $C^{r,s}(\FF,\FF')$; the notation
$C^{r,s}_{SP}(\FF,\FF')$ means that this set is endowed with
the restriction of the sp-topology of $C^s(\FF,\FF')$.

\begin{ex}\label{ex:local}
Let $T$, $T'$, $L$ and $L'$ be open subsets of Euclidean spaces. Assume that
$L$ and $L'$ are connected, and let $\FF$ and $\FF'$ be the \cinf\ foliations on
$M=T\times L$  and $M'=T'\times L'$ with leaves $\{x\}\times L$ and
$\{x'\}\times L'$ for $x\in T$ and $x'\in T'$, respectively. For any
$u\in C^s(\FF,\FF')$, write $u(x,y)=(\bar u(x),\tilde u(x,y))$
for all $(x,y)\in N$. Then the foliation map $u$ is of class
$C^{r,s}$ if and only if the map $\bar u:T\to T'$ is of class $C^r$.
\end{ex}

\begin{ex}
This example shows that $C_{\text{\rm leaf}}^s(\FF,\FF)\not\subset C^{r,s}(\FF,\FF)$ in general. Let $M$ be the M\"obius band without boundary, considered as the quotient of $\R^2$ by the group of transformations generated by the diffeomorphism $\sigma$ defined by $\sigma(x,y)=(-x,y+1)$. The $C^\infty$ foliation $\widetilde{\FF}$ of $\R^2$ with leaves $\{x\}\times\R$, $x\in\R$, is $\sigma$-invariant. Thus $\widetilde{\FF}$ induces a $C^\infty$ foliation $\FF$ on $M$. The foliation map $\tilde\phi:\widetilde{\FF}\to\widetilde{\FF}$, defined by $\tilde\phi(x,y)=(|x|,0)$, satisfies $\tilde\phi\circ\sigma=\tilde\phi$. So $\tilde\phi$ induces a foliation map $\phi:\FF\to\FF$, which is $C^{0,\infty}$ and leaf preserving, but it is not $C^1$.
\end{ex}

\begin{thm}[Leafwise approximation theorem]\label{t:approximation}
With the above notation, suppose that some $\phi\in
C^{r,s}(\FF,\FF')$ is $C^r$ on some neighbourhood of a closed
subset $E\subset M$. Then any neighbourhood of $\phi$ in
$C^{r,s}_{SP}(\FF,\FF')$ contains some $\psi\in
C^r(\FF,\FF')$ which equals $\phi$ on some neighbourhood of
$E$. In particular, the set $C^r(\FF,\FF')$ is dense in
$C^{r,s}_{SP}(\FF,\FF')$.
\end{thm}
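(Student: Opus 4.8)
The plan is to carry the proof of the classical relative $C^r$-approximation theorem for maps between manifolds (\cite[Chapter~2, Section~2]{Hirsch}) over to foliation maps, the essential simplification being that condition~\eqref{e:pi'i} in the plaquewise topology forces the transverse component of any competitor to agree \emph{exactly} with that of $\phi$: since $\phi\in C^{r,s}(\FF,\FF')$ already has transverse component of class $C^r$, only the leafwise regularity must be improved, and it can be improved without disturbing the transverse part. First comes a local step in foliation charts, then a globalisation by a locally finite patching argument in the spirit of Lemma~\ref{l:H(psi)}.

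\emph{Local step.} In foliation charts $\theta\colon U\to T\times L$ and $\theta'\colon U'\to T'\times L'$ with $\phi(U)\subset U'$, write $\phi$ as $(x,y)\mapsto(\bar\phi(x),\tilde\phi(x,y))$; by Example~\ref{ex:local}, $\bar\phi$ is of class $C^r$ and $\tilde\phi$ of class $C^s$. Choose a $C^\infty$ mollifier $\rho_\delta$ and a $C^\infty$ cutoff $\beta$, put $\tilde\psi=(1-\beta)\,\tilde\phi+\beta\,(\tilde\phi*\rho_\delta)$ with the convolution taken jointly in $(x,y)$, and set $\psi=(\bar\phi,\tilde\psi)$, slightly shrinking the domain so that its image lies in $U'$. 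Then $\psi$ is again a foliation map, because its transverse component is the unchanged function $\bar\phi(x)$; it satisfies~\eqref{e:pi'i} identically, equals $\phi$ wherever $\beta=0$ (so one may arrange $\psi=\phi$ near $E$), and is of class $C^r$ on the open set where $\tilde\phi$ is $C^r$ and on the interior of $\{\beta=1\}$; and, since $\tilde\phi*\rho_\delta\to\tilde\phi$ together with all derivatives of order $\le s$ uniformly on compacta, $\psi$ lies in any prescribed basic sp-neighbourhood of $\phi$ once $\delta$ is small enough. The point is that jointly mollifying the leafwise component, with $\bar\phi$ left fixed, simultaneously provides $C^r$-regularity and $C^s$-closeness and costs nothing transversally. (For $r=\infty$ one adds an arbitrary finite derivative order to the data defining the neighbourhoods, as usual.)

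\emph{Globalisation.} Inside a given sp-neighbourhood of $\phi$ fix a basic one $\NN^s(\phi,\Theta_0,\Theta_0',\KK_0,\EE_0)$, and choose a locally finite cover $\{U_i\}_{i\in\N}$ of $M$ by domains of foliation charts refining $\Theta_0,\Theta_0'$, with compact cores $K_i\subset U_i$ still covering $M$. Starting from $\psi_0=\phi$, construct $\psi_i$ from $\psi_{i-1}$ by the local step over $U_i$ (with a cutoff vanishing near $E$) combined with $\psi_{i-1}$ on $M\setminus U_i$ as in Lemma~\ref{l:H(psi)}, so that $\psi_i=\psi_{i-1}$ off $U_i$ and near $E$, $\psi_i$ is $C^r$ on a neighbourhood of $K_1\cup\dots\cup K_i$, and $\psi_i$ stays in a preassigned sp-neighbourhood of $\psi_{i-1}$ — Lemma~\ref{l:H(psi)} also makes the construction sp-continuous in its local datum, which is how the successive errors are held below threshold. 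By local finiteness only finitely many steps alter the map near any given point, so $\psi=\lim_i\psi_i$ is well defined, equals $\phi$ near $E$, and is $C^r$ everywhere (each point lies in some $K_j$, after which later convolutions keep it $C^r$); allocating the successive sp-neighbourhoods summably small, via an exhaustion $\overline{V_k}\subset V_{k+1}$ of $M$ exactly as in the proof of Lemma~\ref{l:H(psi)}, keeps $\psi$ inside $\NN^s(\phi,\Theta_0,\Theta_0',\KK_0,\EE_0)$; the density assertion is the case $E=\emptyset$. The routine part is the local mollification and checking that it respects both the foliation-map condition and~\eqref{e:pi'i}; I expect the main obstacle to be this last bookkeeping — getting the limit to be genuinely $C^r$ rather than merely $C^s$, which rests on local finiteness, on the degenerating closeness near chart boundaries supplied by Lemma~\ref{l:H(psi)}, and on convolutions of already-$C^r$ maps remaining $C^r$, while simultaneously keeping the accumulated $C^s$-errors within the prescribed \emph{strong} plaquewise neighbourhood.
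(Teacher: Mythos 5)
Your proposal is correct and follows essentially the same route as the paper: a local relative approximation in foliation charts that keeps the transverse component $\bar\phi$ fixed and improves only the leafwise one, followed by the Hirsch-style inductive globalisation over a locally finite cover with compact cores, patching each step by the combination map as in Lemma~\ref{l:H(psi)} and using local finiteness to stabilise the limit. The only cosmetic difference is in the local step: rather than redoing the mollification with a cutoff, the paper notes that the second-factor projection identifies the space of maps with prescribed transverse part (on which the sp- and strong topologies agree) with $C^s_S(M,L')$ and then simply cites the classical relative approximation theorem.
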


The first step to prove Theorem~\ref{t:approximation} is the following local result.

\begin{lem}\label{l:approximation}
With the notation of Example~\ref{ex:local}, let
$Q\subset M$ be a closed subset and $W\subset M$ an open subset. If
some $u\in C^{r,s}(\FF,\FF')$ is of class $C^r$ on some
neighbourhood of $Q\setminus W$, then any neighbourhood of $u$ in
$C^{r,s}_{SP}(\FF,\FF')$ contains some map $v$ which is $C^r$
on some neighbourhood of $Q$, and which equals $u$ on $M\setminus
W$.
\end{lem}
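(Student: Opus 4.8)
The plan is to combine a leafwise smoothing of $\tilde u$ in the $L$-directions with a cutoff that confines the modification to $W$, in the spirit of the local step of the Weierstrass-type approximation argument in \cite[Chapter~2, \S2]{Hirsch}. Write $u(x,y)=(\bar u(x),\tilde u(x,y))$ as in Example~\ref{ex:local}; since $u$ is of class $C^{r,s}$, the map $\bar u\colon T\to T'$ is already $C^r$, so the only obstruction to $u$ being $C^r$ is the leafwise regularity of $\tilde u\colon T\times L\to L'$, which a priori is only $C^s$ jointly. The key observation is that the defining conditions \eqref{e:pi'i} and \eqref{e:epsilon i} for a basic sp-neighbourhood constrain only the $\tilde u$-component relative to $\bar u$: condition \eqref{e:pi'i} forces any competitor $v$ to have the same $T'$-component $\bar v=\bar u$, and \eqref{e:epsilon i} then asks for $C^s$-closeness of the leafwise components on each $K_i$. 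Thus it suffices to produce $\tilde v$ that is $C^r$ near $Q$, agrees with $\tilde u$ off $W$, is $C^s$-close to $\tilde u$, and such that $(\bar u(x),\tilde v(x,y))$ still maps leaves to leaves — the last point being automatic because $\bar v=\bar u$.

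The steps I would carry out, in order, are as follows. First, shrink the problem: choose an open set $W'$ with $Q\setminus W \subset M\setminus\overline{W'}$ and $\overline{W'}\subset W$ if $Q\cap W\neq\emptyset$ — more precisely, pick open $W_0$ with $\overline{W_0}\subset W$ covering $Q\cap \overline W$, and a $C^\infty$ bump function $\lambda\colon M\to[0,1]$ with $\lambda\equiv 1$ on $\overline{W_0}$ and $\supp\lambda\subset W$; near $Q\setminus W$ the map $u$ is already $C^r$ by hypothesis, so no modification is needed there. Second, smooth $\tilde u$ in the leaf variable: using a fixed mollifier $\rho_\delta$ on $\R^{\dim L}$ (and a partition of unity to handle the geometry of $L\subset\R^{\dim L}$ as in \cite{Hirsch}), set $\tilde u_\delta(x,y)=(\rho_\delta *_y \tilde u(x,\cdot))(y)$; because $\tilde u$ is $C^r$ in $x$ jointly with its $C^s$ joint regularity, $\tilde u_\delta$ is $C^r$ in $(x,y)$ for $\delta>0$, and $\tilde u_\delta\to\tilde u$ in the $C^s$ compact-open sense as $\delta\to 0$. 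Third, interpolate: define $\tilde v = (1-\lambda)\tilde u + \lambda\,\tilde u_\delta$, which equals $\tilde u$ on $M\setminus W$, equals $\tilde u_\delta$ (hence is $C^r$) on $W_0\supset Q\cap\overline W$, and is $C^r$ near $Q\setminus W$ since $\tilde u$ is; choosing $\delta$ small enough makes $\|\tilde v-\tilde u\|_{C^s}$ as small as desired on the relevant compacta. Fourth, set $v(x,y)=(\bar u(x),\tilde v(x,y))$, check $v$ is a foliation map (immediate: the leaf through $(x,y)$ goes into the leaf through $\bar u(x)$), that $v\in C^{r,s}(\FF,\FF')$, that $v$ is $C^r$ on a neighbourhood of $Q$, that $v=u$ off $W$, and that $v$ lies in the prescribed sp-neighbourhood of $u$ by the $C^s$-estimate.

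The main obstacle I expect is the bookkeeping in the second and third steps: one must check that mollifying only in the $y$-variable genuinely upgrades the regularity from $C^{r,s}$ to $C^r$ near $Q$ — this uses that differentiating $\tilde u_\delta$ in $x$ up to order $r$ is legitimate because $\partial_x^\alpha\tilde u$ exists and is continuous for $|\alpha|\le r$ by the $C^{r,s}$ hypothesis (Example~\ref{ex:local}), while the $y$-derivatives of any order are supplied by the mollifier — and that the convergence $\tilde u_\delta\to\tilde u$ is uniform together with derivatives up to order $s$ on the compact sets $K_i$ entering the neighbourhood, which is the standard mollifier estimate but must be stated carefully on the possibly non-product region where $L$ sits inside Euclidean space. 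The rest is routine and parallels \cite[Lemma~2.8, Chapter~2]{Hirsch}.
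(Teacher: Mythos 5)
There is a genuine gap in your second step. You justify mollifying only in the leaf variable $y$ by the claim that $\partial_x^\alpha\tilde u$ exists and is continuous for $|\alpha|\le r$ ``by the $C^{r,s}$ hypothesis''. That is not what $C^{r,s}$ means here: by the definition in Section~\ref{s:approximations} (and as Example~\ref{ex:local} states explicitly), $u=(\bar u,\tilde u)$ is of class $C^{r,s}$ if and only if $u$ is $C^s$ and the \emph{transverse component} $\bar u\colon T\to T'$ is $C^r$; no transverse regularity beyond $C^s$ is imposed on the leafwise component $\tilde u$. Consequently $\tilde u_\delta(x,y)=(\rho_\delta *_y\tilde u(x,\cdot))(y)$ is smooth in $y$ but in general still only $C^s$ in $x$, hence not jointly $C^r$. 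For instance, with $r=1$, $s=0$, take $\tilde u(x,y)=g(x)$ with $g$ continuous and nowhere differentiable; then $\tilde u_\delta=\tilde u$, and your interpolated $\tilde v$ is not $C^1$ at points of $Q\cap W$, so the construction does not produce a map that is $C^r$ on a neighbourhood of $Q$.

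The remedy is to smooth $\tilde u$ in \emph{all} variables, which is legitimate precisely because of the structural observation you made correctly at the outset: an sp-neighbourhood of $u$ only pins down $\bar v=\bar u$ (already $C^r$) and asks for $C^s$-closeness of $\tilde v$ to $\tilde u$ on a locally finite family of compacta. This is how the paper argues: the set $\MM^s(u)=\{v\ |\ \bar v=\bar u\}$ is sp-open, the sp-topology on it coincides with the usual strong topology, and $v\mapsto\tilde v=\pr_2\circ v$ identifies $\MM^s_S(u)$ with $C^s_S(M,L')$; then the relative approximation theorem of \cite{Hirsch} applied to $\tilde u\in C^s(M,L')$, with the closed set $Q$ and the open set $W$, produces $h$ which is $C^r$ near $Q$, equals $\tilde u$ off $W$, and is as close as required in the strong $C^s$ topology, and $v=(\bar u,h)$ finishes the proof (it is $C^r$ near $Q$ because $\bar u$ is $C^r$ everywhere). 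Note also that even after correcting the regularity issue, a single global mollification parameter $\delta$ would not in general give closeness in the \emph{strong} topology, since the compacta and tolerances form a locally finite, possibly infinite, family; some locally finite patching as in Hirsch's proof is unavoidable, and quoting his relative theorem handles both difficulties at once.
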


\begin{proof}
Fix any neighbourhood $\NN$ of $u$ in $C^{r,s}_{SP}(\FF,\FF')$.
Let $\MM^s(u)$ be the set of $C^s$ foliation maps
$v:\FF\to\FF'$ such that $\bar v=\bar u$.
Note that $\MM^s(u)$ is open in $C^s_{SP}(\FF,\FF')$. So $\MM^s(u)$ is open in
$C^{r,s}_{SP}(\FF,\FF')$ too, and thus we can assume that $\NN\subset\MM^s(u)$.
Moreover the spaces $C^s_S(M,M')$ and
$C^s_{SP}(\FF,\FF')$ induce the same topology on $\MM^s(u)$, and the corresponding
space will be denoted by
$\MM^s_S(u)$. Observe also that the second factor projection, $\pr_2:M'\to L'$, induces a
homeomorphism
$$
\pr_{2*}:\MM^s_S(u)\to C^s_S(M,L')\;,\quad v\mapsto\pr_2\circ v=\tilde v\;,
$$
Therefore, by the relative approximation theorem \cite[pp.~48--49]{Hirsch}, there is some
$h\in\pr_{2*}(\NN)$ which is $C^r$ on some neighbourhood of $Q$, and which equals $\tilde u$ on
$M\setminus W$. Then the result follows with the map $v\in\MM^s(u)$ satisfying $\tilde v=h$.
\end{proof}

\begin{proof}[Proof of Theorem~\ref{t:approximation}]
This is similar to the proof of the usual approximation theorem \cite[p.~49]{Hirsch}.
Fix the following:
\begin{itemize}

\item A locally finite open covering $\{U_i\}$ of $M$ by relatively compact
domains of foliation charts of $\FF$.

\item A covering $\{K_i\}$ of $M$ by compact subsets satisfying $K_i\subset U_i$
for all $i$.

\item An open subset $V_i\subset M$ for each $i$ with $K_i\subset V_i$ and
$\overline{V_i}\subset U_i$.

\item For each $i$, a domain $U'_i$ of some foliation chart of
$\FF'$ such that $\phi\left(\overline{U_i}\right)\subset U'_i$. Let
$\pi'_i:U'_i\to T'_i$ denote the local projection of $U'_i$.

\item An open neighbourhood $O$ of $E$ so that $\phi$ is $C^r$ around $\overline{O}$.

\item Any neighbourhood $\MM$ of $\phi$ in $C^{r,s}_{SP}(\FF,\FF')$ satisfying
$\psi\left(\overline{U_i}\right)\subset U'_i$ for all $\psi\in\MM$ and all $i$; this
property holds when $\MM$ is small enough.

\end{itemize}
Since the open covering $\{U_i\}$ of $M$ is locally
finite, $i$ runs in a countable index set; say either in $\{0,\dots,k\}$ for some
$k\in\N$, or in $\N$.

\begin{claim}\label{cl:psi i}
For each $i$, there is some $\psi_i\in\MM$ which is $C^r$ on some
neighbourhood of $K_0\cup\dots\cup K_i$, which equals $\phi$ on some neighbourhood of
$\overline{O}$, and which equals $\psi_{i-1}$ on some neighbourhood of $M\setminus U_i$ if $i>0$.
\end{claim}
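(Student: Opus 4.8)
The plan is to prove Claim~\ref{cl:psi i} by induction on $i$, using Lemma~\ref{l:approximation} as the single-step engine and the combination procedure of Lemma~\ref{l:H(psi)} to patch the locally modified map back into a global foliation map while staying inside the prescribed sp-neighbourhood $\MM$. This mirrors the inductive scheme in Hirsch's proof of the approximation theorem, the only new feature being that all approximations are carried out in the leaf-space direction only, which is exactly what the $C^{r,s}$ hypothesis and Example~\ref{ex:local} make possible.

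For the base case $i=0$ I would work inside the single chart domain $U_0$. Using the foliation charts $\theta_0$ and $\theta'_0$, the restriction $\phi|_{U_0}$ is, after identification, a $C^{r,s}$ foliation map between product foliations as in Example~\ref{ex:local}, which is already $C^r$ on a neighbourhood of $\overline O\cap U_0$; set $Q=K_0$ and $W=U_0\setminus(\overline O\cup(M\setminus V_0))$ — more carefully, choose $W\subset U_0$ open with $K_0\subset V_0$, $\overline W\subset U_0$, and $W$ disjoint from a slightly shrunk copy of $\overline O$, so that $Q\setminus W$ lies in the region where $\phi$ is already $C^r$. Lemma~\ref{l:approximation} then yields a local map $v$ on $U_0$, arbitrarily sp-close to $\phi|_{U_0}$, which is $C^r$ near $K_0$ and agrees with $\phi|_{U_0}$ off $W$, in particular on $\overline O\cap U_0$ and near $\partial U_0$. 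By Lemma~\ref{l:H(psi)}, combining $v$ with $\phi|_{M\setminus U_0}$ produces a global $C^{r,s}$ foliation map $\psi_0=H(v)$; since $H$ is sp-continuous and $v$ can be taken in any prescribed sp-neighbourhood of $\phi|_{U_0}$, we may guarantee $\psi_0\in\MM$. By construction $\psi_0$ is $C^r$ near $K_0$ and equals $\phi$ near $\overline O$.

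For the inductive step, assume $\psi_{i-1}\in\MM$ is $C^r$ near $K_0\cup\dots\cup K_{i-1}$ and equals $\phi$ near $\overline O$. Apply the same argument to $\psi_{i-1}$ in the chart $U_i$: now $\psi_{i-1}$ is $C^r$ on a neighbourhood of $(K_i\setminus W)$ provided $W\subset U_i$ is chosen small enough to avoid the closed set where $\psi_{i-1}$ might fail to be $C^r$ — namely $W$ must be disjoint from a shrunk $\overline O$ and from $M\setminus V_i$, which is harmless since $K_i\subset V_i$, and $W$ should also avoid the part of $K_0\cup\dots\cup K_{i-1}$ lying outside $V_i$; the point is that $Q\setminus W$ with $Q=K_0\cup\dots\cup K_i$ can be arranged to lie in the open set where $\psi_{i-1}$ is already $C^r$ together with $K_i\cap W$'s complement. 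Lemma~\ref{l:approximation} gives a local $v_i$ on $U_i$, sp-close to $\psi_{i-1}|_{U_i}$, which is $C^r$ near $K_i$, equals $\psi_{i-1}$ off $W$, hence near $\partial U_i$ and near $\overline O$; Lemma~\ref{l:H(psi)} combines it with $\psi_{i-1}|_{M\setminus U_i}$ to give $\psi_i=H(v_i)$, which by sp-continuity of $H$ and the right choice of neighbourhood lies in $\MM$. Then $\psi_i$ is $C^r$ near $K_0\cup\dots\cup K_i$ (it agrees with $\psi_{i-1}$, hence is $C^r$, near the previous $K_j$ outside $W$, and is freshly $C^r$ near $K_i$, with $W$ chosen not to spoil the old ones), equals $\phi$ near $\overline O$, and equals $\psi_{i-1}$ off $U_i$, completing the induction.

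The step I expect to be the main obstacle is the bookkeeping in the inductive choice of $W$ and the sp-neighbourhoods: one must simultaneously (a) keep each successive modification inside the \emph{fixed} global neighbourhood $\MM$, which requires applying Lemma~\ref{l:H(psi)} with the $\EE$-bounds shrinking as $i$ grows (exactly the $\epsilon_i=1/\min\{k: K_i\subset V_k\}$ trick in the proof of Lemma~\ref{l:H(psi)}), and (b) ensure local finiteness of $\{U_i\}$ makes the final map $\psi:=\lim_i\psi_i$ (stabilising locally since $\psi_i=\psi_{i-1}$ off $U_i$) well-defined, globally $C^r$ — because every point has a neighbourhood meeting only finitely many $U_i$ and hence lies eventually in the $C^r$ region — and still in $\MM$. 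Verifying that the $C^r$-ness obtained near each $K_j$ at stage $j$ genuinely survives all later stages (i.e. that no later $W$ intrudes on $K_j$) is the one place where the choices of the shrinkings $V_i$ and of $W\subset U_i$ must be coordinated, but this is routine once one fixes, say, $W\subset U_i\setminus\overline{O'}$ for a fixed slightly smaller neighbourhood $O'$ of $E$ and notes $K_j\subset V_j$ with the relevant $U_i$'s only finitely many near any point.
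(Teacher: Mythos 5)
Your proposal follows essentially the same route as the paper: induction on $i$, applying Lemma~\ref{l:approximation} in the chart $U_i$ with the modification confined to an open set $W\subset V_i$ kept away from $\overline{O}$, then gluing the local approximation with $\psi_{i-1}$ off $U_i$ and using continuity of the gluing to remain inside the fixed neighbourhood $\MM$. The one caveat is that Lemma~\ref{l:H(psi)} is stated only for $C^r$ maps with $1\le r<\infty$, so it does not literally apply to the $C^{r,s}$ map $\psi_{i-1}$; but since your local map agrees with $\psi_{i-1}$ off a set whose closure is contained in $U_i$, the combination is automatically a $C^{r,s}$ foliation map and the continuity you need is immediate --- this is precisely the paper's device of the subspace $\YY_i$ and the extension map $H_i$, so the discrepancy is a citation-level repair rather than a gap.
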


To prove this assertion, each $\psi_i$ is defined by induction on $i$. To
simplify the construction of this sequence, we can assume $U_0=K_0=\emptyset$, and take
$\psi_0=\phi$. Now let $i>0$, and assume that $\psi_{i-1}$ is defined. Let
$\FF_i=\FF|_{U_i}$ and
$\FF'_i=\FF'|_{U'_i}$, and let $u_{i-1}$ denote the restriction
$\psi_{i-1}:\FF_i\to\FF'_i$, which is of class $C^{r,s}$. Let
$\YY_i$ denote the subspace of $C^{r,s}_{SP}(\FF_i,\FF'_i)$
whose elements are the foliation maps
$v\in C^{r,s}(\FF_i,\FF'_i)$ satisfying
$$
\pi'_i\circ v=\pi'_i\circ u_{i-1}\;,\quad
v|_{U_i\setminus V_i}=u_{i-1}|_{U_i\setminus V_i}\;.
$$
Let $H_i:\YY_i\to C^{r,s}_{SP}(\FF,\FF')$ be the extension map given by
$$
H_i(v)=
\begin{cases}
v&\text{on $U_i$}\\
\psi_{i-1}&\text{on $M\setminus U_i$}\;.
\end{cases}
$$
It is easy to prove that $H_i$ is continuous, and thus there is some neighbourhood $\NN_i$ of
$u_{i-1}$ in $\YY_i$ such that $H_i(\NN_i)\subset\MM$. Let $\NN'_i$ be a neighbourhood of
$u_{i-1}$ in $C^{r,s}_{SP}(\FF_i,\FF'_i)$ with $\NN_i=\YY_i\cap\NN'_i$.
Then, by Lemma~\ref{l:approximation}, there
is some $v_i\in\NN'_i$ which is
$C^r$ on some neighbourhood of $K_i$, and which equals $u_{i-1}$ on $U_i\setminus W_i$, where
$W_i=V_i\cap\left(U_i\setminus\overline{O}\right)$. It follows that $v_i\in\YY_i$, and
Claim~\ref{cl:psi i} holds with $\psi_i=H_i(v_i)$.

The result follows from Claim~\ref{cl:psi i} because a foliation map $\psi\in\MM$ is well
defined by
$\psi(x)=\psi_{i_x}(x)$ for each $x\in M$, where
$$
i_x=\max\{i\ |\ x\in U_i\}\;.
$$
Such a $\psi$ is $C^r$ because the sets $K_i$ cover $M$, and $\psi$ is obviously equal to
$\phi$ on $O$.
\end{proof}

\section{Transversality for foliation maps}\label{s:transver}

For $1\le r\le\infty$, let $M$ and $M'$ be $C^r$ manifolds, and let $A$ be a $C^r$ embedded
submanifold of $M'$. Recall that a $C^r$ map $\phi:M\to M'$ is {\em transverse\/} to $A$ along a
subset $K\subset M$ when
$$
T_{\phi*(x)}M'=T_{\phi*(x)}A+\phi_*(T_xM)
$$
for all $x\in K$. The standard notation
$\phi\pitchfork_KA$ will be used in this case. If $\phi\pitchfork_MA$, then it is simply said that $\phi$ is {\em transverse\/} to $A$, and the notation $\phi\pitchfork A$ is used. As usual, let
$$
\pitchfork^r\!\!(M,M';A)=
\left\{\phi\in C^r(M,M')\ |\ \phi\pitchfork A\right\}\;.
$$

Now, let $\FF$ and $\FF'$ be $C^r$ foliations on $M$ and $M'$, respectively. In general, with the sp-topology, it may not
be possible to approach any foliation map $\FF\to\FF'$ by foliation
maps transverse to $A$, as can be easily seen by using
Example~\ref{ex:dim cF' = 0}. But the next theorem shows that this type
of approach holds when
$A$ is transverse to $\FF'$; \textit{i.e.},
$$
T_{x'}M'=T_{x'}A+T_{x'}\FF'
$$
for all $x'\in A$. This is equivalent to the existence of a foliation chart $\theta':U'\to T'\times B'$
of $\FF'$ around each point of $A$ so that
$$
\theta'(T'\times B'_0)=A\cap U'
$$
for some $C^r$ embedded submanifold $B'_0\subset B'$.

\begin{thm}[Foliations Transversality Theorem]\label{t:transver}
With the above notation, if $A$ is transverse to $\FF'$, then
$$
\pitchfork^r\!\!(M,M';A)\cap C^r(\FF,\FF')
$$
is residual in $C^r_{SP}(\FF,\FF')$.
\end{thm}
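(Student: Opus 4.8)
The plan is to reduce the foliated transversality statement to the classical Thom transversality theorem by working locally and then patching with the Baire category machinery already developed. First I would fix a regular foliation atlas $\{U_i,\theta_i\}$ of $\FF$ with $\theta_i(U_i)=T_i\times B$ and a locally finite family of foliation charts $\{U'_j,\theta'_j\}$ of $\FF'$ such that $\theta'_j(U'_j)=T'_j\times B'$ and $\theta'_j(T'_j\times B'_{0,j})=A\cap U'_j$ for some $C^r$ embedded submanifold $B'_{0,j}\subset B'$; the existence of such charts is exactly the reformulation of ``$A$ transverse to $\FF'$'' recorded before the theorem. Relative to these coordinates a foliation map $\psi$ near a given $\phi$ has the local form $\psi(x,y)=(\bar\psi(x),\tilde\psi(x,y))$ with $\bar\psi$ determined by the holonomy data; transversality of $\psi$ to $A$ at a point in $U_i$ with image in $U'_j$ is a condition only on the leafwise component $\tilde\psi(x,\cdot):B\to B'$, namely that it be transverse to $B'_{0,j}$ as a map of the ball $B$. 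Thus the problem genuinely localizes to a leafwise transversality question.

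Next I would carry out the local density step. Fix $i$, a chart $U'_j$ with $\phi(\overline{U_i})\subset U'_j$, and a compact $K_i\subset U_i$. Let $\MM(\phi)$ denote the set of foliation maps agreeing with $\phi$ on the normal (holonomy) component over $U_i$ and sending $\overline{U_i}$ into $U'_j$; on $\MM(\phi)$ the sp-topology of $C^r(\FF|_{U_i},\FF'|_{U'_j})$ agrees with the usual strong $C^r$-topology on the leafwise components, via the homeomorphism $\psi\mapsto\tilde\psi$ into $C^r(T_i\times B,B')$, as in the proof of Lemma~\ref{l:approximation}. Classical parametric transversality (Thom) applied to the family $\{\tilde\psi(x,\cdot):B\to B'\}$ and the submanifold $B'_{0,j}$, together with the relative version, shows that the set of $\psi\in\MM(\phi)$ with $\psi\pitchfork_{K_i}A$ is open and dense in $\MM(\phi)$. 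Combining with Lemma~\ref{l:H(psi)} (the patching lemma, which lets one modify $\phi$ over $U_i$ keeping it fixed outside while staying sp-continuous), this yields: the set
$$
\AAA_i=\left\{\psi\in C^r(\FF,\FF')\ \middle|\ \psi\pitchfork_{K_i}A\right\}
$$
is sp-open and sp-dense in $C^r_{SP}(\FF,\FF')$. Openness of $\AAA_i$ globally uses that $K_i$ is compact and transversality along a compact set is an open condition; density uses the local density just established plus the fact that modifications localized to $U_i$ can be made arbitrarily sp-small.

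Finally I would assemble the global statement. Choosing the $K_i$ so that $\{K_i\}$ covers $M$ (a locally finite such cover exists), we have
$$
\pitchfork^r\!\!(M,M';A)\cap C^r(\FF,\FF')=\bigcap_i\AAA_i\;,
$$
a countable intersection of sp-open sp-dense sets, hence residual in $C^r_{SP}(\FF,\FF')$; note $C^r_{SP}(\FF,\FF')$ is a Baire space by Theorem~\ref{t:Baire}, so ``residual'' is the right notion and the intersection is in particular dense. In the case $r=\infty$ one runs the same argument with an auxiliary finite jet order $s$ added to the data defining basic open sets, exactly as in Lemma~\ref{l:iota'*} and Theorem~\ref{t:approximation}, taking the union over $s$ at the end. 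The main obstacle is the local density step: one must verify carefully that perturbing only the leafwise component $\tilde\psi$ (holding the transverse/holonomy component $\bar\psi$ fixed, so as to remain within the space of \emph{foliation} maps) still suffices to achieve transversality to $A$ — and here the hypothesis that $A$ is transverse to $\FF'$ is essential, because it guarantees that $A\cap U'_j$ is a union of plaque-pieces $T'_j\times B'_{0,j}$, so that transversality to $A$ is equivalent to leafwise transversality to $B'_{0,j}$ and the frozen normal component creates no obstruction.
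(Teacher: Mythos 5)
The architecture (local statement in adapted charts, then globalize) matches the paper's, but your key local step contains a genuine error. You assert that, in charts with $A\cap U'_j\cong T'_j\times B'_{0,j}$, transversality of $\psi$ to $A$ at a point is a condition on the slice map $\tilde\psi(x,\cdot):B\to B'$ alone, i.e.\ that $\psi\pitchfork A$ is \emph{equivalent} to leafwise transversality to $B'_{0,j}$, and you then try to achieve that slicewise condition at every point of $K_i$ by parametric transversality. The equivalence is false: writing $\psi(x,y)=(\bar\psi(x),\tilde\psi(x,y))$, the condition $\psi\pitchfork A$ at $(x,y)$ asks that the composite $T_{(x,y)}M\to T_{\tilde\psi(x,y)}L'/T L'_0$ be onto, and this composite involves the transverse derivative $\partial\tilde\psi/\partial x$ as well as the leafwise one; only the implication ``slicewise transverse $\Rightarrow$ transverse to $A$'' holds. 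Moreover the slicewise condition you aim for is exactly the leafwise transversality of Section~\ref{s:leaf transver}, for which no density theorem holds: if, say, $\dim\FF=0$ (as in the example opening Section~\ref{s:leaf transver}), every slice map is constant, so a slice through a point of $\psi^{-1}(A)$ is never transverse when $B'_{0,j}$ has positive codimension; if $\tilde\phi$ crosses $B'_{0,j}$ over $K_i$, no sp-small perturbation with frozen transverse component can make all slices over $K_i$ transverse, although such a $\phi$ may already be transverse to $A$. Parametric transversality only yields slice transversality for almost every parameter $x$, not for all $x\in K_i$, so your local open-and-dense claim fails. The correct local reduction (Lemma~\ref{l:transver}) freezes $\bar\psi$ and perturbs $\tilde\psi$ \emph{as a map of the whole chart} $T_i\times B$: via the homeomorphism $\pr_{2*}$ of Lemma~\ref{l:approximation}, $\psi\pitchfork A$ is equivalent to $\tilde\psi\pitchfork B'_{0,j}$ as a map of all of $T_i\times B$, and then the ordinary (not leafwise) transversality theorem gives residuality.

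A second, smaller gap is in the global assembly: you invoke ``transversality along a compact set is an open condition,'' which is true only when $A$ is closed in $M'$; the theorem assumes only that $A$ is an embedded submanifold transverse to $\FF'$, and for non-closed $A$ the set $\AAA_i$ need not be open (a map can be vacuously transverse along $K_i$ while arbitrarily small perturbations meet the frontier region of $A$ non-transversally). This is fixable by exhausting $A$ by countably many compact pieces, but note that the paper sidesteps both difficulties: it only proves that the local sets are sp-\emph{residual} (Lemma~\ref{l:transver}) and then glues them with a general Foliations Globalization Theorem (Theorem~\ref{t:global}), whose proof is precisely where the delicate wp/sp comparison lemmas enter; your closing appeal to ``modifications localized to $U_i$ can be made arbitrarily sp-small'' is the step that machinery is designed to justify.
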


As in the proof of the usual transversality theorem
\cite[pp.~74--77]{Hirsch}, a local version is shown first, and then
a globalization procedure is applied. The notation of
Lemma~\ref{l:approximation} is used to prove this local result.
Thus, again, let $T$, $T'$, $L$ and $L'$ be open subsets of Euclidean spaces,
where $L$ and $L'$ are connected, and let $\FF$ and $\FF'$ be the foliations on
$M=T\times L$  and $M'=T'\times L'$ with leaves $\{x\}\times L$ and
$\{x'\}\times L'$ for $x\in T$ and $x'\in T'$, respectively. Let
$L'_0\subset L'$ be an embedded submanifold, and set $A=T'\times
L'_0\subset M'$.

\begin{lem}\label{l:transver}
The set
$$
\pitchfork^r\!\!(M,M';A)\cap C^r(\FF,\FF')
$$
is residual in $C^r_{SP}(\FF,\FF')$.
\end{lem}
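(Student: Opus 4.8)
The plan is to reduce the statement to the classical Thom transversality theorem by exploiting the product structure of the local model. Write each $u\in C^r(\FF,\FF')$ in the form $u(x,y)=(\bar u(x),\tilde u(x,y))$, where $\bar u\colon T\to T'$ and $\tilde u=\pr'_2\circ u\colon M\to L'$ are $C^r$ maps, $\pr'_2\colon M'\to L'$ being the second projection. The first step is a purely linear‑algebraic observation: if $u(x,y)\in A$, then $\tilde u(x,y)\in L'_0$, and inside $T_{u(x,y)}M'=T_{\bar u(x)}T'\oplus T_{\tilde u(x,y)}L'$ the subspace $T_{u(x,y)}A=T_{\bar u(x)}T'\oplus T_{\tilde u(x,y)}L'_0$ already contains the whole $T'$‑factor, while the image of $u_*(T_{(x,y)}M)$ under the projection onto the $L'$‑factor is exactly $\tilde u_*(T_{(x,y)}M)$; hence $u$ is transverse to $A$ at $(x,y)$ if and only if $\tilde u$ is transverse to $L'_0$ at $(x,y)$. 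Consequently $u\in\pitchfork^r(M,M';A)$ if and only if $\tilde u\in\pitchfork^r(M,L';L'_0)$.

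The second ingredient is a description of $C^r_{SP}(\FF,\FF')$ through its vertical slices. For $u\in C^r(\FF,\FF')$ put $\MM(u)=\{v\in C^r(\FF,\FF')\mid\bar v=\bar u\}$. These sets partition $C^r(\FF,\FF')$, and each of them is sp‑open: given $v_0\in\MM(u)$, choose a locally finite family $\Theta$ of foliation charts of $\FF$ whose domains cover $M$, compacta $K_i$ in these domains still covering $M$, and let $\Theta'$ be the constant family equal to the identity chart $(M',\id)$ of $\FF'$; then condition~\eqref{e:pi'i} forces every $v$ in the basic sp‑neighbourhood $\NN^r(v_0,\Theta,\Theta',\KK,\EE)$ to satisfy $\bar v=\bar v_0$ on each $\pr_1(K_i)$, hence $\bar v=\bar v_0=\bar u$ everywhere, so this neighbourhood lies in $\MM(u)$. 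Moreover, exactly as in the proof of Lemma~\ref{l:approximation}, the assignment $v\mapsto\tilde v$ is an sp‑homeomorphism of $\MM(u)$ onto $C^r_S(M,L')$: on $\MM(u)$ the base component is frozen, so the only effective constraints are the estimates~\eqref{e:epsilon i}, which reduce to the ones defining the strong $C^r$ topology of $C^r_S(M,L')$. For $r=\infty$ the same holds after adjoining the auxiliary integer $s$ to the data of the basic sp‑neighbourhoods.

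Granting these two facts, the conclusion follows quickly. By the Thom transversality theorem \cite[pp.~74--77]{Hirsch}, $\pitchfork^r(M,L';L'_0)$ is residual in $C^r_S(M,L')$. Carrying this back through the sp‑homeomorphism $v\mapsto\tilde v$ and using the equivalence of the first step, $\pitchfork^r(M,M';A)\cap\MM(u)$ is residual in $\MM(u)$ for every $u$. Finally, when a topological space is the union of a family of pairwise disjoint open subsets, a subset is residual as soon as its intersection with each of these pieces is residual — one assembles, level by level, the dense open sets witnessing residuality on each piece into their unions, which are then dense and open in the whole space, and whose intersection, by disjointness, lands inside the given subset. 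Applied to the partition $\{\MM(u)\}$ of $C^r_{SP}(\FF,\FF')$, this shows that $\pitchfork^r(M,M';A)\cap C^r(\FF,\FF')$ is residual in $C^r_{SP}(\FF,\FF')$.

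The routine parts here are the linear‑algebra reduction and the disjoint‑open‑pieces bookkeeping; the point requiring the most care is the slice description of the sp‑topology — namely that $\MM(u)$ is sp‑open and that the sp‑topology it inherits is carried by $v\mapsto\tilde v$ onto the strong $C^r$ topology on $C^r_S(M,L')$. Since this description has already been worked out for the $C^s$ sp‑topology inside the proof of Lemma~\ref{l:approximation}, in practice the task is to check that the same reasoning applies verbatim to the $C^r$ sp‑topology, which it does.
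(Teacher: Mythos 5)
Your proof is correct and follows essentially the same route as the paper: it slices $C^r_{SP}(\FF,\FF')$ into the sp-open sets $\MM(u)=\{v\mid\bar v=\bar u\}$, identifies each slice with $C^r_S(M,L')$ via $v\mapsto\tilde v$ (as in the proof of Lemma~\ref{l:approximation}), pulls back the classical transversality theorem using the equivalence $u\pitchfork A\Leftrightarrow\tilde u\pitchfork L'_0$, and globalizes over the open cover by the slices. The only difference is that you spell out the linear algebra and the residuality bookkeeping (using that the slices form a disjoint open partition) which the paper leaves implicit.
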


\begin{proof}
For any $u\in C^r(\FF,\FF')$, consider the space $\MM^r_S(u)$ introduced in
the proof of Lemma~\ref{l:approximation}, as well as the homeomorphism
$$
\pr_{2*}:\MM^r_S(u)\to C^r_S(M,L')\;,
$$
where $\pr_2:M'\to L'$ denotes the second factor projection. Observe that
$$
\pr_{2*}^{-1}\left(\pitchfork^r\!\!(M,L';W)\right)=
\text{}\pitchfork^r\!\!(M,M';A)\cap\MM^r(u)
$$
since $A=T'\times L'_0$. So
$$
\pitchfork^r\!\!(M,M';A)\cap\MM^r(u)
$$
is residual in $\MM^r_S(u)$ since $\pitchfork^r\!\!(M,L';L'_0)$ is
residual in $C^r_S(M,L')$ by the transversality theorem. Then the result follows since the
sets $\MM^r_S(u)$ form an open covering of $C^r_{SP}(\FF,\FF')$ when $u$
runs in $C^r(\FF,\FF')$.
\end{proof}

Now, we show a general globalization procedure for foliations. For $C^r$
foliated manifolds
$(M,\FF)$ and $(M',\FF')$, a {\em $C^r$ mapping class\/} on $(\FF,\FF')$ is
an assignment of a subset
$$
\XX(U,U')\subset C^r(\FF|_U,\FF'|_{U'})
$$
to each pair of open subsets, $U\subset M$ and $U'\subset M'$, such that the following {\em
localization axiom\/} is satisfied: Given any $\phi\in
C^r(\FF|_U,\FF'|_{U'})$ for open subsets $U\subset M$ and $U'\subset M'$, we have
$\phi\in\XX(U,U')$ if there exists an open covering $\{U_i\}$ of $U$, and a family $\{U'_i\}$ of open
subsets of $U'$ such that the restriction $\phi:U_i\to U'_i$ is well
defined and belongs to $\XX(U_i,U'_i)$ for every $i$.

A $C^r$ mapping class $\XX$ on $(\FF,\FF')$ is called {\em
rich\/} if there are bases $\UU$ and $\UU'$ of open sets of $M$ and $M'$ such
that $\XX(U,U')$ is sp-residual in $C^r(\FF|_U,\FF'|_{U'})$ for
all $U\in\UU$ and $U'\in\UU'$. This definition does not correspond
with the usual concept of a rich mapping class on manifolds
\cite[page~75]{Hirsch}, but it is appropriate for the type of arguments
that will be used in this paper.

\begin{thm}[Foliations Globalization Theorem]\label{t:global}
If $\XX$ is a rich $C^r$ mapping class on $(\FF,\FF')$, then
$\XX(M,M')$ is residual in $C^r_{SP}(\FF,\FF')$.
\end{thm}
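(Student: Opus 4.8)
The strategy is to mimic the classical globalization argument for transversality (Hirsch, Chapter~3), but adapted to the plaquewise topologies and the \emph{localization axiom} built into the definition of a mapping class. The key point is that the sp-topology on $C^r(\FF,\FF')$ is, by construction, glued together from wp-data on the pieces of a locally finite cover, so a residual subset can be produced by intersecting countably many open dense sets, each of which controls the behavior on one coordinate patch. Concretely, first I would fix a locally finite countable cover $\{U_i\}$ of $M$ by open sets drawn from the base $\UU$ (shrinking so that each $\overline{U_i}$ is compact and contained in a chart domain), together with compact $K_i\subset U_i$ still covering $M$, and for each $i$ an open $U_i'\in\UU'$ with enough $U_i'$ to cover $M'$ in a way compatible with foliation maps. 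Then I would define, for each $i$,
\[
\AAA_i=\{\phi\in C^r(\FF,\FF')\ :\ \phi|_{U_i}\in\XX(U_i,U_i')\text{ for some suitable }U_i'\}\,,
\]
or more precisely the preimage $(\iota_i^*)^{-1}$ of the sp-residual set $\XX(U_i,U_i')$ inside $C^r(\FF|_{U_i},\FF'|_{U_i'})$, pushed forward by $\iota'_*$ using Lemmas~\ref{l:iota*}, \ref{l:iota'*}.

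\textbf{Turning residual pieces into a residual whole.} The heart of the argument is to show $\XX(M,M')\supset\bigcap_i\AAA_i'$ for appropriate \emph{open dense} refinements $\AAA_i'$ of the $\AAA_i$, and that this countable intersection is still residual in $C^r_{SP}(\FF,\FF')$. For the inclusion, the localization axiom does exactly the work: if $\phi$ restricts into $\XX(U_i,U_i')$ on every member of a cover, then $\phi\in\XX(M,M')$. The difficulty is that each $\XX(U_i,U_i')$ is only \emph{residual}, not open, so I cannot directly take its preimage and intersect. The fix is the standard one: write each $\XX(U_i,U_i')$ as a countable intersection of open dense subsets $\XX_{i,\ell}$ of $C^r_{SP}(\FF|_{U_i},\FF'|_{U_i'})$, and — crucially — note that by Lemma~\ref{l:countable} and Corollary~\ref{c:residual}, a set residual in the sp-topology on $C^r(\FF|_{U_i},\FF'|_{U_i'})$ is also residual in the wp-topology there, so each $\XX_{i,\ell}$ may be taken \emph{wp-open}. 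Then Lemma~\ref{l:cF i} applies: for a locally finite family $\{U_i\}$, the set $\bigcap_{i}(\iota_i^*)^{-1}(\XX_{i,\ell_i})$ is \emph{sp-open} in $C^r(\FF,\FF')$ for any choice of indices, and intersecting over the countable families of $(i,\ell)$ pairs yields a countable intersection of sp-open sets.

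\textbf{Density.} It remains to check that each of these sp-open sets is sp-dense in $C^r_{SP}(\FF,\FF')$, so that their countable intersection is sp-residual; since that intersection is contained in $\bigcap_i(\iota_i^*)^{-1}(\XX(U_i,U_i'))\subset\XX(M,M')$ by localization, this finishes the proof. Density is where I expect the main obstacle. One cannot simply apply the local result on one $U_i$ and ignore the others, because modifying $\phi$ on $U_i$ to land in $\XX_{i,\ell}$ could destroy membership in some $\XX_{j,\ell'}$ for $U_j$ overlapping $U_i$. The way around this is an inductive "patching" argument exactly parallel to the proof of Theorem~\ref{t:approximation}: enumerate the charts, and at stage $i$ perturb the current map only inside $U_i$ (keeping it fixed outside a slightly smaller open set, and keeping the local projection fixed so that the map stays a foliation map), invoking the local residuality of $\XX(U_i,U_i')$ together with the fact that the conditions already achieved for $j<i$ are \emph{open}, hence survive a small enough perturbation. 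Local finiteness guarantees the process stabilizes near every point and produces a genuine $C^r$ foliation map in the prescribed sp-neighborhood lying in $\bigcap_i(\iota_i^*)^{-1}(\XX(U_i,U_i'))$. Lemma~\ref{l:H(psi)} is the precise tool that makes the "perturb only inside $U_i$, patch with the rest, depend continuously" step legitimate. Assembling these pieces — the countable-cover bookkeeping, the sp/wp transfer via Corollary~\ref{c:residual}, the openness from Lemma~\ref{l:cF i}, and the inductive density patch — gives the theorem.
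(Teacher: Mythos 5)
Your proposal is correct in outline and uses the same decomposition as the paper — a locally finite countable subfamily $\{V_i\}\subset\UU$ with compact closures and targets $V'_i\in\UU'$, writing each residual $\XX(V_i,V'_i)$ as a countable intersection of open dense sets, pushing forward by the sp-open embedding $\iota'_{i*}$ (Lemma~\ref{l:iota'*}), converting to wp-open sets via Lemma~\ref{l:countable}, intersecting over $i$ with Lemma~\ref{l:cF i}, and finishing with the localization axiom — but it diverges at the density step, and there the paper is genuinely softer. The paper never perturbs or patches: it works inside the sp-open sets $\NN^r(\VV,\VV')=\{\phi\ |\ \phi(\overline{V_i})\subset V'_i \text{ for all } i\}$, which are Baire by Theorem~\ref{t:Baire} and \cite[Proposition~8.3]{Kechris}, shows that for a \emph{single} $i$ the set $\NN^r(\VV,\VV')\cap(\iota_i^*)^{-1}(\iota'_{i*}(\AAA_{i,k}))$ is dense merely because the restriction map $\iota_i^*$ is sp-open (Lemma~\ref{l:iota*}) and the preimage of a dense set under a continuous open map is dense, and then lets the Baire property of $\NN^r(\VV,\VV')$ handle the countable intersection over $i,k,\ell$; residuality on each member of the open cover $\{\NN^r(\VV,\VV')\}$ then gives residuality in $C^r_{SP}(\FF,\FF')$. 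So the obstacle you anticipate — that improving the map over one chart could destroy what was gained over an overlapping one — simply never arises; one chart is treated at a time and Baire category does the gluing. Your inductive patching modelled on Theorem~\ref{t:approximation} can be made to work (the previously achieved conditions are preimages of wp-open sets, hence sp-open and stable under small perturbations, and restrictions to each $V_j$ stabilize by local finiteness), but it carries avoidable burdens: the whole induction must be run inside the sets $\{\phi\ |\ \phi(\overline{V_i})\subset V'_i\}$ so that restrictions actually land in $C^r(\FF|_{V_i},\FF'|_{V'_i})$ where richness applies; the limit of infinitely many patches must be checked to be $C^r$, a foliation map, and inside the prescribed sp-neighbourhood; and Lemma~\ref{l:H(psi)} is stated only for $1\le r<\infty$, so the cases $r=0$ and $r=\infty$ would need a separate patching device. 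Finally, note that one should intersect, for each fixed $(k,\ell)$, the sp-open set $\bigcap_i(\iota_i^*)^{-1}(\BB_{i,k,\ell})$ and then run over the countably many pairs $(k,\ell)$; the family of all assignments $i\mapsto\ell_i$ is uncountable, so your phrasing there needs this small correction.
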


\begin{proof}
Let $\UU$ and $\UU'$ be bases of open sets of $M$ and $M'$ satisfying the richness condition of
$\XX$. Take (indexed) subfamilies
$$
\VV=\{V_i\}\subset\UU\;,\quad
\VV'=\{V'_i\}\subset\UU'
$$
such that $\VV$ is a locally finite covering of $M$, and
$\overline{V_i}$ is compact for each $i$. In particular, the index $i$ runs in a
countable set. Then let
$$
\NN^r(\VV,\VV')=\left\{\phi\in C^r(\FF,\FF')\ |\
\text{$\phi\left(\overline{V_i}\right)\subset V'_i$ for all $i$}\right\}\;.
$$
Observe that $\NN^r(\VV,\VV')$ is an open subset of $C^r_{SP}(\FF,\FF')$, and thus, by
Theorem~\ref{t:Baire} and \cite[Proposition~8.3]{Kechris}, it is a Baire space with the sp-topology.

For each $i$, let
$\FF_i=\FF|_{V_i}$ and $\FF'_i=\FF'|_{V'_i}$, and let $\iota_i$ and $\iota'_i$
denote the inclusion maps of $V_i\hookrightarrow M$ and $V'_i\hookrightarrow M'$, which induce the maps
\begin{gather*}
\iota_i^*:C^r(\FF,\FF')\to C^r(\FF_i,\FF')\;,\quad
\phi\mapsto\phi|_{V_i}=\phi\circ\iota_i\;,\\
\iota'_{i*}: C^r(\FF_i,\FF'_i)\to C^r(\FF_i,\FF')\;,\quad
\phi\mapsto\iota'_i\circ\phi\;.
\end{gather*}

Each set $\XX(V_i,V'_i)$ is residual in
$C^r_{SP}(\FF_i,\FF'_i)$, and thus contains
a countable intersection of open dense
subsets $\AAA_{i,k}$ of $C^r_{SP}(\FF_i,\FF'_i)$, $k\in\N$. Each
$\iota'_{i*}(\AAA_{i,k})$ is open in $C^r_{SP}(\FF_i,\FF')$ by
Lemma~\ref{l:iota'*}. So
$\iota'_{i*}(\AAA_{i,k})$ is equal to a countable intersection of open
subsets $\BB_{i,k,\ell}$ of $C^r_{WP}(\FF_i,\FF')$, $\ell\in\N$, by
Lemma~\ref{l:countable}. Moreover each set
$$
\CC_{k,\ell}=\bigcap_i(\iota_i^*)^{-1}(\BB_{i,k,\ell})\;,
$$
is open in $C^r_{SP}(\FF,\FF')$ by Lemma~\ref{l:cF i}.

Since each $\AAA_{i,k}$ is dense in
$C^r_{SP}(\FF_i,\FF'_i)$, each $\iota'_{i*}(\AAA_{i,k})$ is also
dense in the open subspace $\iota'_{i*}(C^r(\FF_i,\FF'_i))$ of
$C^r_{SP}(\FF_i,\FF')$ by Lemma~\ref{l:iota'*}. Since
$$
\iota_i^*(\NN^r(\VV,\VV'))\subset\iota'_{i*}(C^r(\FF_i,\FF'_i))\;,
$$
it follows that each set
$$
\NN^r(\VV,\VV')\cap(\iota_i^*)^{-1}(\iota'_{i*}(\AAA_{i,k}))
$$
is dense in $\NN^r_{SP}(\VV,\VV')$ by Lemma~\ref{l:iota*}. Therefore
$$
\NN^r(\VV,\VV')\cap(\iota_i^*)^{-1}(\BB_{i,k,\ell})
$$
is a dense open subset of $\NN^r_{SP}(\VV,\VV')$ by
Lemma~\ref{l:composition}, and thus $\NN^r(\VV,\VV')\cap\CC_{k,\ell}$ is residual
in the Baire space $\NN^r_{SP}(\VV,\VV')$ because $i$ runs in a countable index set.
So $\NN^r(\VV,\VV')\cap\CC_{k,\ell}$ is a dense open subset of
$\NN^r_{SP}(\VV,\VV')$.
Moreover
\begin{align*}
\bigcap_{k,\ell=1}^\infty\CC_{k,\ell}
&=\bigcap_i\bigcap_{k,\ell=1}^\infty(\iota_i^*)^{-1}(\BB_{i,k,\ell})\\
&=\bigcap_i\bigcap_{k=1}^\infty(\iota_i^*)^{-1}(\iota'_{i*}(\AAA_{i,k}))\\
&\subset\bigcap_i(\iota_i^*)^{-1}(\iota'_{i*}(\XX(V_i,V'_i)))\\
&\subset\XX(M,M')\;,
\end{align*}
where the last inclusion holds by the localization axiom. Therefore
$$
\XX(M,M')\cap\NN^r(\VV,\VV')
$$
is residual in $\NN^r_{SP}(\VV,\VV')$, yielding that $\XX(M,M')$ is residual in
$C^r_{SP}(\FF,\FF')$ because all possible sets $\NN^r(\VV,\VV')$ form an open covering of
$C^r_{SP}(\FF,\FF')$.
\end{proof}

\begin{proof}[Proof of Theorem~\ref{t:transver}]
Let $\XX$ be defined by
$$
\XX(U,U')=\text{}\pitchfork^r\!\!(U,U';A\cap U')\cap
C^r(\FF|_U,\FF'|_{U'})
$$
for open subsets $U\subset M$ and $U'\subset M'$. Obviously, $\XX$ satisfies the localization axiom, and
thus it is a $C^r$ mapping class on $(\FF,\FF')$. Moreover, by
Lemma~\ref{l:transver}, $\XX$ satisfies the richness condition with the bases of open sets
given by domains of foliation charts of $\FF$ and $\FF'$. Then the result follows by
Theorem~\ref{t:global}.
\end{proof}

\section{Local saturation and clean intersection}\label{s:clean}
For $0\le r\le\infty$, let $\FF$ be a $C^r$ foliation on a manifold
$M$, and let $A$ be a subset of $M$. A {\em local saturation\/} of
$A$ is the saturation of $A$ with respect to the restriction of
$\FF$ to some open neighbourhood of $A$ in $M$. A set is called {\em
locally saturated\/} if it is a local saturation of itself.

Observe that, if $A$ is a closed regular $C^r$
submanifold of $M$ transverse to $\FF$, then a local saturation of
$A$ is an arbitrary open neighborhood $U$ of $A$ such that each leaf
of $\FF|_U$ meets $A$.

The {\em intersection\/} of $A$ and $\FF$, denoted by $A\cap\FF$, is
the partition of $A$ into the intersections of $A$ with the leaves
of $\FF$. If the connected components of the classes of $A\cap\FF$
are leaves of a foliated structure on $A$ (see {\em e.g.\/}
\cite{CandelConlon}), this foliated structure is called the {\em
restriction\/} of $\FF$ to $A$, and denoted by $\FF|_A$. For instance,
this restriction is defined when $A$ is locally saturated.

Assume now that $A$ is a regular $C^r$ submanifold of $M$. The
intersection $A\cap\FF$ is called {\em clean\/} if there exists
a foliation chart $\theta:U\to T\times B$ of $\FF$ around each point
of $A$ such that
$$
\theta(U\cap A)=T_0\times B_0
$$
for some regular $C^r$ submanifolds $T_0\subset T$ and $B_0\subset
B$. In this case, the connected components of the classes of $A\cap
L$ are the leaves of a $C^r$ foliation $\FF|_A$ on $A$, which is
also called the {\em restriction\/} of $\FF$ to $A$. It is clear that, if $A$ is transverse to $\FF$, then $A\cap\FF$ is clean. In this case, $T_0$ coincides with $T$.

\begin{prop}\label{p:clean}
If $A\cap\FF$ is clean, then any
neighbourhood of $A$ contains some local saturation $S$ of $A$ such
that $S$ is a regular $C^r$ submanifold of $M$, $\FF|_S$ is a
$C^r$ foliation whose leaves are open subsets of the leaves of $\FF$, and the inclusion map $T_x\FF\hookrightarrow T_xS$ induces an isomorphism
\begin{equation}\label{e:T xFF/T x(FF| A)}
T_x\FF/T_x(\FF|_A)\cong T_xS/T_xA
\end{equation}
for each $x\in A$.
\end{prop}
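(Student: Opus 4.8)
The plan is to reduce the statement to an evident local model in a clean chart, and then to globalise.

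\emph{Local model.} Let $\theta:U\to T\times B$ be a clean foliation chart of $\FF$, so that $\theta(U\cap A)=T_0\times B_0$ for regular $C^r$ submanifolds $T_0\subset T$ and $B_0\subset B$. Choose an open set $T^\circ$ with $T_0\subset T^\circ\subset T$ and a connected open set $B'$ with $B_0\subset B'\subset B$, both as small as desired, and put $V_U=\theta^{-1}(T^\circ\times B')$ and $S_U=\theta^{-1}(T_0\times B')$. The plaques of $\FF|_{V_U}$ are the sets $\theta^{-1}(\{x\}\times B')$ with $x\in T^\circ$, and such a plaque meets $A$ precisely when $x\in T_0$, in which case it contains $\theta^{-1}(\{x\}\times B_0)\subset A$; hence $S_U$ is exactly the saturation of $U\cap A$ with respect to $\FF|_{V_U}$. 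Moreover $S_U$ is visibly a regular $C^r$ submanifold of $M$ on which $\FF$ restricts to the $C^r$ foliation with leaves $\theta^{-1}(\{x\}\times B')$, $x\in T_0$, each an open subset of a leaf of $\FF$; and a direct computation in the coordinates $\theta$ (using $\theta(U\cap A)=T_0\times B_0$) gives $T_a\FF\cap T_aA=T_a(\FF|_A)$ and $T_a\FF+T_aA=T_aS_U$ for every $a\in U\cap A$, so that the inclusion $T_a\FF\hookrightarrow T_aS_U$ induces the isomorphism~\eqref{e:T xFF/T x(FF| A)}. All of this is routine.

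\emph{Globalisation.} Given a neighbourhood $N$ of $A$, choose a locally finite family of clean foliation charts $\{U_i,\theta_i\}$ of $\FF$, with $\theta_i(U_i)=T_i\times B_i$ and $\theta_i(U_i\cap A)=T_{i,0}\times B_{i,0}$, with each $\overline{U_i}$ compact and contained in $N$, and with $\{U_i\cap A\}$ covering $A$. As in the local model choose open sets $T_{i,0}\subset T_i^\circ\subset T_i$ and connected open sets $B_{i,0}\subset B_i'\subset B_i$, and set $V_i=\theta_i^{-1}(T_i^\circ\times B_i')$ and $S_i=\theta_i^{-1}(T_{i,0}\times B_i')$; note $V_i\cap A=U_i\cap A$, so $A\subset V:=\bigcup_iV_i\subset N$. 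Let $S$ be the saturation of $A$ with respect to $\FF|_V$; then $S$ is a local saturation of $A$ contained in $N$. Since $S$ is $\FF|_V$-saturated and $V$ is open, the moment $S$ is known to be a regular $C^r$ submanifold it follows that $\FF|_S$ is the $C^r$ foliation whose leaves are the connected components of the sets $L\cap V$ lying in $S$ (for $L$ a leaf of $\FF$), each open in its leaf, and then~\eqref{e:T xFF/T x(FF| A)} holds on $A$ by the local model. So it remains only to prove that $S$ is a regular $C^r$ submanifold, and for this it suffices to show $S\cap V_i=S_i$ for every $i$, since then near each of its points $S$ coincides with one of the $S_i$.

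\emph{The consistency step, and the main obstacle.} The inclusion $S_i\subseteq S\cap V_i$ is clear, because a plaque of $\FF|_{V_i}$ that meets $A$ is contained in a leaf of $\FF|_V$ that meets $A$. The reverse inclusion is the crux: one must rule out that a point $x\in V_i$ with $x\notin S_i$ is joined to $A$ by a chain of plaques running through $V$, which amounts to showing that along any plaque chain in $V$ reaching $A$ the transverse coordinate in every chart visited stays pinned inside the corresponding $T_{j,0}$. This is forced by taking $V$ thin enough: using local finiteness of $\{U_i\}$ and compactness of the $\overline{U_i}$, the boxes $V_i$ can be shrunk — one index at a time — so that, for every leaf $L$ of $\FF$, each connected component of $L\cap V$ that meets $A$ is confined to a prescribed leafwise neighbourhood of $L\cap A$ enjoying this property, whence $x\in S_i$. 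Arranging that the $\FF|_V$-saturation of $A$ cannot spread beyond the union of the local models $S_i$ is the main technical point; it could equally be carried out by an inductive patching over the charts in the spirit of the proofs of Theorems~\ref{t:approximation} and~\ref{t:global}, after which the remaining verifications are routine.
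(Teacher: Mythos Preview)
Your local model and the overall strategy (take $S$ to be the $\FF|_V$-saturation of $A$ for a suitable open $V\supset A$) match the paper exactly. The divergence is in how the consistency step is handled, and here the paper's device is considerably simpler than your shrinking/inductive patching.

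The paper does not shrink anything. Instead, it chooses the locally finite family $\{U_i,\theta_i\}$ of clean charts to be \emph{regular}: whenever $U_i\cap U_j\ne\emptyset$, the union $U_i\cup U_j$ is contained in the domain of a single clean chart $W$. With this, your ``reverse inclusion'' is immediate. If a plaque $P_i$ of $U_i$ meets $A$ and an adjacent plaque $P_j$ of $U_j$ meets $P_i$, both lie in one plaque $P_W$ of $W$; since $P_i\cap A\ne\emptyset$, the $W$-transverse coordinate of $P_W$ lies in $T_{W,0}$, hence $P_j\subset S_W:=\theta_W^{-1}(T_{W,0}\times B_W)$. This propagates along any plaque chain, so the $\FF|_V$-saturation of $A$ is exactly $\bigcup_i S_i$, and in each $U_i$ it coincides with the local model $S_i$. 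No shrinking is needed: regularity of the atlas is the whole trick.

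Your alternative---shrinking the boxes $V_i$ ``one index at a time'' so that leaves of $\FF|_V$ meeting $A$ cannot escape---is plausible but is not actually carried out; the phrases ``can be shrunk'' and ``could equally be carried out by an inductive patching'' are promissory. Since the obstacle you correctly identify dissolves once you impose the regular-atlas condition (which is standard and costs nothing: see the existence of locally finite regular atlases recalled in Section~\ref{s:preliminaries}), you should simply add that hypothesis to your choice of $\{U_i,\theta_i\}$ and drop the shrinking discussion.
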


\begin{proof}
In any neighborhood of $A$, there is a locally finite family of
foliation charts, $\{U_i,\theta_i\}$, such that each $\theta_i$ satisfies the condition of clean
intersection for $A$ and $\FF$, $A\subset\bigcup_iU_i$, and $U_i\cup U_j$ is contained in the domain
of another foliation chart satisfying the condition of clean
intersection whenever $U_i\cap U_j\neq\emptyset$. Then
the statement holds with $S$ equal to $\FF|_V$-saturation of $A$, where $V=\bigcup_iU_i$.
\end{proof}

Suppose that $A\cap\FF$ is clean. A local saturation is called {\em
good\/} if it satisfies the properties stated in
Proposition~\ref{p:clean}. Let $C^r(\FF,\FF';A)$ denote the set of
maps $\phi\in C^r(\FF,\FF')$ such that there is some good local
saturation $S$ of $A$ so that $\phi^{-1}(S)$ is a neighborhood of
$\phi^{-1}(A)$ in $M$; the notation $C(\FF,\FF';A)$ is also used
when $r=0$. For $0\le s<r$, let
$$
C^{r,s}(\FF,\FF';A)=C^{r,s}(\FF,\FF')\cap C(\FF,\FF';A)\;.
$$

The intersection of two good local saturations of $A$ is a good local saturation of $A$. Moreover, if $S$ is a fixed good local saturation of $A$, the good local saturations of $A$ contained in $S$ form a base of open neighborhoods of $A$ in $S$. Hence, if $S$ is some fixed good local saturation of $A$, then $\phi^{-1}(S)$ is a neighborhood of $\phi^{-1}(A)$ in $M$ for all $\phi\in C^r(\FF,\FF';A)$. This easily yields the following result.

\begin{prop}\label{p:C r(cF,cF';A) is sp-open}
$C^r(\FF,\FF';A)$ is open in $C^r_{SP}(\FF,\FF')$.
\end{prop}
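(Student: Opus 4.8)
The plan is to first get rid of the existential quantifier over good local saturations in the definition of $C^r(\FF,\FF';A)$. Fix once and for all a good local saturation $S$ of $A$, together with an open set $V\subseteq M'$ containing $A$ such that $S$ is the $\FF'|_V$-saturation of $A$; in particular $S\subseteq V$ and $S$ is $\FF'|_V$-saturated. By the observation made just before the statement, $C^r(\FF,\FF';A)$ equals the set $\OO$ of those $\phi\in C^r(\FF,\FF')$ for which $\phi^{-1}(S)$ is a neighbourhood of $\phi^{-1}(A)$ in $M$ (the inclusion $\OO\subseteq C^r(\FF,\FF';A)$ is immediate, and the reverse is exactly that observation). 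So it is enough to exhibit, for each $\phi\in\OO$, a basic sp-neighbourhood $\MM$ of $\phi$ contained in $\OO$.

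Fix $\phi\in\OO$ and put $O=\Int\phi^{-1}(S)$, an open set with $\phi^{-1}(A)\subseteq O$ and $\phi(O)\subseteq S\subseteq V$. Recall $A$ is closed in $M'$ (as it is in all the intended applications, where $A$ is a diagonal or a fixed-point set); hence $\{O,\ M\setminus\phi^{-1}(A)\}$ is an open cover of $M$. A standard refinement then yields: a locally finite covering $\{U_i,\theta_i\}_{i\in I}$ of $M$ by domains of foliation charts of $\FF$ with $\overline{U_i}$ compact, a partition $I=I_1\sqcup I_2$, and foliation charts $\{U'_i,\theta'_i\}$ of $\FF'$, such that $\phi(U_i)\subseteq U'_i$ for all $i$, while $U_i\subseteq O$ and $U'_i\subseteq V$ when $i\in I_1$, and $U_i\subseteq M\setminus\phi^{-1}(A)$ and $U'_i\subseteq M'\setminus A$ when $i\in I_2$ (the first type exists near $\phi^{-1}(A)$ because $\phi(O)\subseteq V$, the second because $M'\setminus A$ is open; smallness of the $U_i$ makes the $\phi$-images land in single $\FF'$-charts). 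Choose an open cover $\{W_i\}$ of $M$ with $\overline{W_i}\subseteq U_i$, set $K_i=\overline{W_i}$, and $O''=\bigcup_{i\in I_1}W_i$. Any chart meeting $\phi^{-1}(A)$ is of the first type, so $\phi^{-1}(A)\subseteq O''\subseteq O$; thus $O''$ is open and $\phi(O'')\subseteq S$. Let $\MM$ be the basic sp-neighbourhood of $\phi$ determined by $\Theta=\{U_i,\theta_i\}$, $\Theta'=\{U'_i,\theta'_i\}$ and $\KK=\{K_i\}$ (when $r\ge 1$ append an arbitrary positive family $\EE$, and when $r=\infty$ also an arbitrary order $s\in\N$; these play no role).

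Now take any $\psi\in\MM$. If $y\notin O''$, then $y\in W_i\subseteq K_i$ for some $i\in I_2$, so $\psi(y)\in U'_i\subseteq M'\setminus A$ by~\eqref{e:K i}; hence $\psi^{-1}(A)\subseteq O''$. If instead $y\in O''$, then $y\in K_i$ for some $i\in I_1$, and by~\eqref{e:K i} and~\eqref{e:pi'i} the point $\psi(y)$ lies in $U'_i$ and in the same plaque of $\theta'_i$ as $\phi(y)$. That plaque is connected, contained in $U'_i\subseteq V$, and contained in the leaf of $\FF'$ through $\phi(y)$, hence in the leaf of $\FF'|_V$ through $\phi(y)$; since $\phi(y)\in\phi(O'')\subseteq S$ and $S$ is $\FF'|_V$-saturated, this leaf — and so the whole plaque — lies in $S$, whence $\psi(y)\in S$. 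Thus $\psi^{-1}(A)\subseteq O''\subseteq\psi^{-1}(S)$ with $O''$ open, i.e. $\psi\in\OO$. Therefore $\MM\subseteq\OO=C^r(\FF,\FF';A)$, and the set is sp-open.

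The conceptual content sits entirely in the last paragraph: sp-closeness forces $\psi$ to send each point of $O''$ into the same $\FF'$-plaque that $\phi$ does, and a local saturation is built precisely so that the plaque through a point of $S$ stays in $S$; the "outer" charts indexed by $I_2$ serve only to stop $\psi^{-1}(A)$ from escaping $O''$, and it is here that closedness of $A$ in $M'$ is used. I expect the only genuinely delicate point to be the point-set bookkeeping of the second paragraph — shrinking the $\FF$-chart domains so their $\phi$-images lie in single $\FF'$-charts, simultaneously sorting the inner charts (targets in $V$) from the outer ones (targets missing $A$), and still retaining local finiteness and a covering of $M$ — which is routine but has to be arranged with care.
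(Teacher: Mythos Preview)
Your proof is correct and follows exactly the route the paper sketches: the paper establishes the same reduction to a single fixed good local saturation $S$ (the observation preceding the proposition), then simply writes ``This easily yields the following result'' without further detail; you have supplied that detail with the natural construction of a basic sp-neighbourhood.

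One point to flag: you add the hypothesis that $A$ is closed in $M'$, which the paper's statement of Proposition~\ref{p:C r(cF,cF';A) is sp-open} does not include. You are right that it holds in every application (the diagonal $\Delta_{M'}$), and your argument genuinely uses it to make $M'\setminus A$ open so that the ``outer'' charts $U'_i$, $i\in I_2$, can be chosen inside it. Since the paper gives no details, it is not clear whether the authors intended a proof avoiding this hypothesis or simply had the closed case in mind; either way your argument covers everything the paper needs.
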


\begin{cor}\label{c:C r SP(cF,cF';A) is Baire}
$C^r_{SP}(\FF,\FF';A)$ is a Baire space.
\end{cor}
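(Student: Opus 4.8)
The plan is to follow verbatim the pattern already used for Corollaries~\ref{c:C r leaff(cF,cF) is Baire} and~\ref{c:Diff is Baire}: exhibit $C^r_{SP}(\FF,\FF';A)$ as an \emph{open} subspace of a space that is already known to be Baire, and then invoke the fact that an open subspace of a Baire space is itself a Baire space \cite[Proposition~8.3]{Kechris}.

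Concretely, first I would recall Theorem~\ref{t:Baire}, which gives that $C^r_{SP}(\FF,\FF')$ is a Baire space for $0\le r\le\infty$. Next, by Proposition~\ref{p:C r(cF,cF';A) is sp-open}, the set $C^r(\FF,\FF';A)$ is sp-open in $C^r_{SP}(\FF,\FF')$; and by definition $C^r_{SP}(\FF,\FF';A)$ is precisely this set endowed with the subspace topology induced by the sp-topology. Applying \cite[Proposition~8.3]{Kechris} to this open subspace then yields at once that $C^r_{SP}(\FF,\FF';A)$ is a Baire space.

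There is essentially no obstacle here, since everything has been reduced to the two cited results: the Baire property of $C^r_{SP}(\FF,\FF')$ (established in Theorem~\ref{t:Baire} by comparing the plaquewise topologies with the usual weak and strong topologies) and the openness of $C^r(\FF,\FF';A)$ (Proposition~\ref{p:C r(cF,cF';A) is sp-open}, which rests on the observation that, once a good local saturation $S$ of $A$ is fixed, one has $\phi^{-1}(S)$ a neighbourhood of $\phi^{-1}(A)$ for every $\phi$ in the set). The only point worth keeping in mind is that it is the \emph{openness} in Proposition~\ref{p:C r(cF,cF';A) is sp-open} that makes the hereditary Baire statement directly applicable, exactly in the form already packaged in \cite[Proposition~8.3]{Kechris}.
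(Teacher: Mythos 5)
Your proposal is correct and coincides with the paper's own argument: the paper likewise deduces the corollary from Theorem~\ref{t:Baire}, Proposition~\ref{p:C r(cF,cF';A) is sp-open}, and \cite[Proposition~8.3]{Kechris} (open subspaces of Baire spaces are Baire).
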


\begin{proof}
This is a consequence of \cite[Proposition~8.3]{Kechris}, Theorem~\ref{t:Baire} and Proposition~\ref{p:C r(cF,cF';A) is sp-open}.
\end{proof}

\begin{cor}\label{c:C r(cF,cF';A) is dense in C r,s(cF,cF';A)}
For $0\le s<r$,  suppose that some $\phi\in
C^{r,s}(\FF,\FF';A)$ is $C^r$ on some neighbourhood of a closed
subset $E\subset M$. Then any neighbourhood of $\phi$ in
$C^{r,s}_{SP}(\FF,\FF';A)$ contains some $\psi\in
C^r(\FF,\FF';A)$ which equals $\phi$ on some neighbourhood of
$E$. In particular, the set $C^r(\FF,\FF';A)$ is dense in
$C^{r,s}_{SP}(\FF,\FF';A)$.
\end{cor}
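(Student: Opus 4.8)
The plan is to obtain this corollary as a formal consequence of the Leafwise Approximation Theorem (Theorem~\ref{t:approximation}) together with the openness assertion of Proposition~\ref{p:C r(cF,cF';A) is sp-open}, so that no new inductive construction is required. The one preliminary observation needed is that $C^{r,s}(\FF,\FF';A)$ is sp-open in $C^{r,s}_{SP}(\FF,\FF')$.

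To see this, I would first apply Proposition~\ref{p:C r(cF,cF';A) is sp-open} with $r$ replaced by $s$, which gives that $C^s(\FF,\FF';A)$ is sp-open in $C^s_{SP}(\FF,\FF')$; here one uses that the Proposition holds for all $0\le r\le\infty$, the value $r=0$ corresponding to the membership condition defining $C(\FF,\FF';A)$. Since, by definition, the sp-topology on $C^{r,s}(\FF,\FF')$ is the restriction of the sp-topology of $C^s(\FF,\FF')$, the set $C^s(\FF,\FF';A)\cap C^{r,s}(\FF,\FF')$ is sp-open in $C^{r,s}_{SP}(\FF,\FF')$. Finally this set equals $C^{r,s}(\FF,\FF';A)$: indeed $C^{r,s}(\FF,\FF')\subset C^s(\FF,\FF')$, and $C^s(\FF,\FF';A)=C^s(\FF,\FF')\cap C(\FF,\FF';A)$ while $C^{r,s}(\FF,\FF';A)=C^{r,s}(\FF,\FF')\cap C(\FF,\FF';A)$ by their definitions.

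With this in hand the corollary follows quickly. Let $\phi\in C^{r,s}(\FF,\FF';A)$ be $C^r$ on a neighbourhood of the closed set $E$, and let $\MM$ be a neighbourhood of $\phi$ in $C^{r,s}_{SP}(\FF,\FF';A)$; shrinking $\MM$, I may assume it is sp-open there, and hence, by the openness established above, sp-open in $C^{r,s}_{SP}(\FF,\FF')$, so that it is a neighbourhood of $\phi$ in $C^{r,s}_{SP}(\FF,\FF')$. Theorem~\ref{t:approximation} then supplies some $\psi\in C^r(\FF,\FF')\cap\MM$ with $\psi=\phi$ on a neighbourhood of $E$. Since $\psi\in\MM\subset C^{r,s}(\FF,\FF';A)\subset C(\FF,\FF';A)$ and $\psi$ is of class $C^r$, we get $\psi\in C^r(\FF,\FF')\cap C(\FF,\FF';A)=C^r(\FF,\FF';A)$, as desired; taking $E=\emptyset$ yields the density statement.

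There is no genuine obstacle here: the entire content sits in Theorem~\ref{t:approximation} and Proposition~\ref{p:C r(cF,cF';A) is sp-open}, and the remaining work is only the bookkeeping with the definitions of $C^{r,s}(\FF,\FF';A)$, $C^r(\FF,\FF';A)$ and $C(\FF,\FF';A)$. The single point deserving attention is confirming that Proposition~\ref{p:C r(cF,cF';A) is sp-open} is indeed available at differentiability level $s$ (including $s=0$) and that sp-openness descends to the $C^{r,s}$-subspace; once that is granted, everything is formal. One could alternatively re-run the inductive proof of Theorem~\ref{t:approximation} while carrying the constraint that defines $C^r(\FF,\FF';A)$ through the induction, but the deduction above is shorter and cleaner.
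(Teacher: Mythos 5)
Your proposal is correct and coincides with the paper's own argument, which simply cites Theorem~\ref{t:approximation} together with Proposition~\ref{p:C r(cF,cF';A) is sp-open}; your elaboration (that $C^{r,s}(\FF,\FF';A)=C^{r,s}(\FF,\FF')\cap C(\FF,\FF';A)$ is sp-open in $C^{r,s}_{SP}(\FF,\FF')$, so any neighbourhood in $C^{r,s}_{SP}(\FF,\FF';A)$ is one in $C^{r,s}_{SP}(\FF,\FF')$, and the approximating $C^r$ map automatically lands in $C^r(\FF,\FF';A)$) is exactly the bookkeeping the paper leaves implicit.
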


\begin{proof}
This follows from Theorem~\ref{t:approximation} and Proposition~\ref{p:C r(cF,cF';A) is sp-open}.
\end{proof}

\section{Transversality in good local saturations}\label{s:ls-transver}

For $1\le r\le\infty$, let $\FF$ and $\FF'$ be $C^r$ foliations on manifolds $M$ and $M'$, and let $A$ be
a regular $C^r$ submanifold of $M'$ whose intersection with $\FF'$
is clean. A foliation map
$\phi\in C^r(\FF,\FF')$ will be called {\em ls-transverse\/} to $A$ on a subset $K\subset M$,
denoted by
$\phi\pitchfork_{\text{\rm ls},K}A$, when there is some open neighbourhood $U$ of $\phi^{-1}(A)$
and some good local saturation $S$ of $A$ such that
$\phi(U)\subset S$ and the restriction $\phi:U\to S$ is transverse to $A$ on $K$. If $K=M$, then it is said that $\phi$ is {\em ls-transverse\/} to $A$ and the notation $\phi\pitchfork_{\text{\rm ls}}A$ is used. Here, ``ls-transverse'' stands for ``transverse in good local saturations''.

\begin{prop}\label{p:ls-transver}
If $\phi\pitchfork_{\text{\rm ls}}A$, then $\phi^{-1}(A)$ is a regular $C^r$ submanifold of $M$ whose codimension equals the codimension of $A$ in any of its good local saturations.
\end{prop}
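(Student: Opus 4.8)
The plan is to reduce the statement to the classical preimage theorem for transverse maps between manifolds, using Proposition~\ref{p:clean} to supply the relevant ambient manifold. First I would unwind the hypothesis $\phi\pitchfork_{\text{\rm ls}}A$: by definition it furnishes an open neighbourhood $U$ of $\phi^{-1}(A)$ in $M$ and a good local saturation $S$ of $A$ such that $\phi(U)\subset S$ and the corestriction $\phi|_U\colon U\to S$ is transverse to $A$ in the ordinary sense. Since $\phi^{-1}(A)\subset U$ and $A\subset S$, one has $(\phi|_U)^{-1}(A)=\phi^{-1}(A)$, so it is enough to analyse the preimage of $A$ under $\phi|_U$, viewed as a map into $S$.

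Next I would invoke Proposition~\ref{p:clean}, which says that $S$ is a regular $C^r$ submanifold of $M'$; consequently $\phi|_U\colon U\to S$ is a $C^r$ map, since the corestriction of a $C^r$ map to a regular submanifold containing its image is again $C^r$. The hypothesis now states precisely that this $C^r$ map is transverse to the regular $C^r$ submanifold $A\subset S$ (the transversality ``on $M$'' in the definition of ls-transversality gives transversality at every point of $(\phi|_U)^{-1}(A)$, which is what is needed). Hence the usual preimage theorem applies and yields that $(\phi|_U)^{-1}(A)$ is a regular $C^r$ submanifold of the open set $U$, and therefore of $M$, with $\codim_M\phi^{-1}(A)=\codim_S A$.

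It then remains to check that $\codim_S A$ does not depend on the choice of good local saturation $S$. The cleanest route is again through Proposition~\ref{p:clean}: applied to $\FF'$ and $A\subset M'$, it provides for every $x\in A$ a natural isomorphism $T_x\FF'/T_x(\FF'|_A)\cong T_xS/T_xA$, whence $\codim_S A=\dim\FF'-\dim(\FF'|_A)$, a quantity that depends only on $A$ and $\FF'$. (Alternatively one could observe that the intersection of two good local saturations is again a good local saturation, open in each of them, so that $A$ has the same codimension in all of them.) Combining this with the previous paragraph gives the proposition.

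I do not expect a genuine obstacle here: the geometric content has already been packaged into Proposition~\ref{p:clean}, and what remains is essentially bookkeeping — verifying that $\phi^{-1}(A)$ actually lies in $U$ and agrees with the preimage of $A$ under the corestricted map, that $\phi|_U\colon U\to S$ is $C^r$, and that $\codim_S A$ is independent of $S$. The only point where a little care is warranted is matching the transversality condition in the definition of ls-transversality with the hypotheses of the classical preimage theorem, but this becomes immediate once the corestriction $\phi|_U\colon U\to S$ is at hand.
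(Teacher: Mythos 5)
Your argument is correct and follows essentially the same route as the paper: the paper's proof simply applies the classical preimage theorem to the transverse map $\phi:U\to S$ guaranteed by the definition of ls-transversality and then uses that $U$ is open in $M$. Your additional remarks (that the corestriction is $C^r$, and that $\codim_S A$ is independent of the good local saturation via the isomorphism of Proposition~\ref{p:clean}) are correct bookkeeping that the paper leaves implicit.
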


\begin{proof}
With the above notation, since $\phi:U\to S$ is transverse to $A$, we get that $\phi^{-1}(A)$ is a regular $C^r$ submanifold of $U$ whose codimension is equal to the codimension of $A$ in $S$ (see {\em e.g.\/} Theorem~3.3 of Chapter~1 in \cite{Hirsch}). Then the result follows because $U$ is open in $M$.
\end{proof}

Let
$$
\pitchfork_{\text{\rm ls}}^r\!\!(\FF,\FF';A)= \{\phi\in
C^r(\FF,\FF')\ |\ \phi\pitchfork_{\text{\rm ls}}A\}\;.
$$
We will need the
following slight sharpening of Theorem~\ref{t:transver}.

\begin{thm}\label{t:ls-transverse}
With the above notation, if $A$ is closed in $M'$, then $\pitchfork_{\text{\rm
ls}}^r\!\!(\FF,\FF';A)$ is residual in $C^r_{SP}(\FF,\FF';A)$.
\end{thm}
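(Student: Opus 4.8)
The plan is to deduce the statement from the Foliations Globalization Theorem (Theorem~\ref{t:global}), applied to a suitable $C^r$ mapping class, and then to pass to the open subspace $C^r(\FF,\FF';A)$, which is a Baire space by Corollary~\ref{c:C r SP(cF,cF';A) is Baire}. The subtle point is that $\pitchfork_{\text{\rm ls}}^r(\FF,\FF';A)$ lies inside the \emph{proper} open subset $C^r(\FF,\FF';A)$ of $C^r_{SP}(\FF,\FF')$, so it cannot be residual in all of $C^r_{SP}(\FF,\FF')$; the remedy is to enlarge it by all maps which are not even in $C^r(\cdot\,;A)$. Concretely, for open $U\subset M$ and $U'\subset M'$ I set
\[
\XX(U,U')=\pitchfork_{\text{\rm ls}}^r(\FF|_U,\FF'|_{U'};A\cap U')\ \cup\ \bigl(C^r(\FF|_U,\FF'|_{U'})\setminus C^r(\FF|_U,\FF'|_{U'};A\cap U')\bigr).
\]
Then one must check: (i) $\XX$ satisfies the localization axiom; (ii) $\XX$ is rich; and (iii) $\XX(M,M')\cap C^r(\FF,\FF';A)=\pitchfork_{\text{\rm ls}}^r(\FF,\FF';A)$, which is immediate.

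For (i), suppose $\phi\in C^r(\FF|_U,\FF'|_{U'})$ restricts to a member of $\XX(U_\alpha,U'_\alpha)$ over an open cover $\{U_\alpha\}$ of $U$; I may assume $\phi\in C^r(\FF|_U,\FF'|_{U'};A\cap U')$, as otherwise $\phi\in\XX(U,U')$ for free. Fix a good local saturation $S$ of $A\cap U'$ with $\phi^{-1}(S)$ a neighbourhood of $\phi^{-1}(A\cap U')$. Using that good local saturations restrict to open subsets and intersect to good local saturations, each $\phi|_{U_\alpha}$ lies in $C^r(\FF|_{U_\alpha},\FF'|_{U'_\alpha};A\cap U'_\alpha)$, hence is ls-transverse to $A\cap U'_\alpha$ with some witnessing open set $V_\alpha\subset U_\alpha$ and good local saturation $S_\alpha$; shrinking $S_\alpha$ inside $S$ (the good local saturations contained in a fixed one form a base of its open neighbourhoods of the submanifold) I may assume $S_\alpha$ is open in $S$. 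Then $V=\bigcup_\alpha V_\alpha$ is a neighbourhood of $\phi^{-1}(A\cap U')$, $\phi(V)\subset\bigcup_\alpha S_\alpha\subset S$, and $\phi\colon V\to S$ is transverse to $A\cap U'$: this is automatic off $\phi^{-1}(A\cap U')$, while at a point of $\phi^{-1}(A\cap U')$ it already holds into the open submanifold $S_\alpha\subset S$, along which $A\cap U'_\alpha$ agrees with $A\cap U'$. Hence $\phi\pitchfork_{\text{\rm ls}}(A\cap U')$, i.e. $\phi\in\XX(U,U')$.

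For (ii), take as $\UU$ the domains of foliation charts of $\FF$ and as $\UU'$ those domains $U'$ of foliation charts $\theta'\colon U'\to T'\times B'$ of $\FF'$ with $A\cap U'=\emptyset$, or with $\theta'(A\cap U')=T'_0\times B'_0$ for regular $C^r$ submanifolds $T'_0\subset T'$ and $B'_0\subset B'$; since $A$ is closed and $A\cap\FF'$ is clean, $\UU'$ is a base of $M'$. If $A\cap U'=\emptyset$ then $\XX(U,U')=C^r(\FF|_U,\FF'|_{U'})$. Otherwise $A'=(\theta')^{-1}(T'\times B'_0)$ is transverse to $\FF'|_{U'}$, so by Lemma~\ref{l:transver}, $\pitchfork^r(U,U';A')\cap C^r(\FF|_U,\FF'|_{U'})$ is residual in $C^r_{SP}(\FF|_U,\FF'|_{U'})$. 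Writing a foliation map in the product coordinates as $\phi=(\bar\phi,\tilde\phi)$ and using $S=(\theta')^{-1}(T'_0\times B')$ as good local saturation, a direct differential computation shows that $\phi\pitchfork A'$ amounts to transversality of the leafwise component $\tilde\phi$ to $B'_0$, and that this, for $\phi\in C^r(\FF|_U,\FF'|_{U'};A\cap U')$, forces $\phi\pitchfork_{\text{\rm ls}}(A\cap U')$. Intersecting the residual set above with the open set $C^r(\FF|_U,\FF'|_{U'};A\cap U')$ (Proposition~\ref{p:C r(cF,cF';A) is sp-open}) shows that $\pitchfork_{\text{\rm ls}}^r(\FF|_U,\FF'|_{U'};A\cap U')$ is residual in that open set; since a subset that is meager in an open subspace is meager in the ambient space, $\XX(U,U')$ is residual in $C^r_{SP}(\FF|_U,\FF'|_{U'})$. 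Thus $\XX$ is rich, Theorem~\ref{t:global} gives that $\XX(M,M')$ is residual in $C^r_{SP}(\FF,\FF')$, and intersecting with the open Baire subspace $C^r(\FF,\FF';A)$ and using (iii) yields the theorem.

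The step I expect to be the main obstacle is the local differential computation in (ii): one has to install the clean-intersection normal form, observe that a map in $C^r(\FF|_U,\FF'|_{U'};A\cap U')$ carries a neighbourhood of $\phi^{-1}(A)$ into the good local saturation $S\cong T'_0\times B'$ (this is where the genuinely non-transverse case $T'_0\subsetneq T'$ departs from Theorem~\ref{t:transver}, and where the closedness of $A$ enters), and then match transversality of $\phi$ into $S$ with transversality of the leafwise part $\tilde\phi$ to $B'_0$. The ancillary facts — that good local saturations restrict to open sets and have good intersections, and the elementary behaviour of residual and meager sets under passage to open subspaces — are routine but must be in place.
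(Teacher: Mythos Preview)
Your argument is correct and takes a genuinely different route from the paper's own proof. The paper does \emph{not} invoke the Globalization Theorem here; instead it covers $C^r_{SP}(\FF,\FF';A)$ by the open sets
\[
\NN^r(U,S)=\{\phi\ :\ \phi(\overline{U})\subset S,\ \phi(M\setminus U)\subset M'\setminus A\}
\]
for $U\subset M$ open and $S$ a good local saturation of $A$, and then applies Theorem~\ref{t:transver} directly to the pair $(\FF|_U,\FF'|_S)$, exploiting that $A$ is transverse to $\FF'|_S$ by construction. The residuality of $\pitchfork^r_{\text{\rm ls}}$ inside each $\NN^r(U,S)$ is then obtained via Lemmas~\ref{l:iota*},~\ref{l:iota'*} and~\ref{l:countable}, much as inside the proof of the Globalization Theorem itself. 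Your approach, by contrast, packages the globalization once and for all in Theorem~\ref{t:global}, at the cost of the trick of padding $\XX$ with the complement of $C^r(\cdot\,;A)$ and of having to verify the localization axiom for this hybrid class. Your local step also differs: rather than restricting the target foliation to $S$, you fatten $A=T'_0\times B'_0$ to the $\FF'$-transverse $A'=T'\times B'_0$ and observe that $\phi\pitchfork A'$ together with $\phi\in C^r(\cdot\,;A)$ forces $\phi\pitchfork_{\text{\rm ls}}A$. Both routes work; the paper's is shorter because it reuses Theorem~\ref{t:transver} wholesale, while yours isolates more cleanly where the clean-intersection hypothesis and the closedness of $A$ actually enter.
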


\begin{proof}
For any open subset $U\subset M$ and any good local saturation $S$ of $A$, it is easy to check
that the set
$$
\NN^r(U,S)=\{\phi\in C^r(\FF,\FF')\ |\
\phi(\overline{U})\subset S,\ \phi(M\setminus U)\subset M'\setminus A\}
$$
is open in $C^r_{SP}(\FF,\FF';A)$ because $A$ is closed in $M'$. Let
$\iota:U\to M$ and $\iota':S\to M'$ denote the inclusion maps, and let
$\FF_U=\FF|_U$ and $\FF'_S=\FF'|_S$. Let $U'$ be an open subset of $M'$ such that $S$ is the
$\FF'_{U'}$-saturation of $A$, where
$\FF'_{U'}=\FF'|_{U'}$.
Observe that $C^r(\FF_U,\FF'_S)$ is sp-open in
$C^r(\FF_U,\FF'_{U'})$, and thus the map
$$
\iota'_*:C^r(\FF_U,\FF'_S)\to C^r(\FF_U,\FF')
$$
is an sp-open embedding by Lemma~\ref{l:iota'*}.

The set
$$
\XX(U,S;A)=\pitchfork^r\!\!(U,S;A)\cap C^r(\FF_U,\FF'_S)
$$
is sp-residual in $C^r(\FF_U,\FF'_S)$ by Theorem~\ref{t:transver}. Then
$\XX(U,S;A)$ contains a countable intersection of open dense subsets $\AAA_k$ of
$C^r_{SP}(\FF_U,\FF'_S)$, $k\in\N$. Since each set
$\iota'_*(\AAA_k)$ is open in
$C^r_{SP}(\FF_U,\FF')$, it is equal to a
countable intersection of open subsets $\BB_{k,\ell}$ of
$C^r_{WP}(\FF_U,\FF')$,
$\ell\in\N$, by Lemma~\ref{l:countable}. Because each $\iota'_*(\AAA_k)$
is also dense in the open subspace
$\iota'_*(C^r(\FF_U,\FF'_S))$ of $C^r_{SP}(\FF_U,\FF')$, and since
$$
\iota^*(\NN^r(U,S))\subset\iota'_*(C^r(\FF_U,\FF'_S))\;,
$$
it follows that each set
$$
\NN^r(U,S)\cap(\iota^*)^{-1}(\iota'_*(\AAA_k))
$$
is dense in $\NN^r_{SP}(U,S)$ by Lemma~\ref{l:iota*}. Therefore
$$
\NN^r(U,S)\cap(\iota^*)^{-1}(\BB_{k,\ell})
$$
is open and dense in $\NN^r_{SP}(U,S)$ by Lemma~\ref{l:composition}-(1).
Moreover
\begin{align*}
\bigcap_{k,\ell=1}^\infty(\iota^*)^{-1}(\BB_{k,\ell})
&=\bigcap_{k=1}^\infty(\iota^*)^{-1}(\iota'_*(\AAA_k))\\
&\subset(\iota^*)^{-1}(\iota'_*(\XX(U,S;A)))\\
&\subset\text{}\pitchfork_{\text{\rm ls}}^r\!\!(\FF,\FF';A)\;.
\end{align*}
So
$$
\NN^r(U,S)\cap\text{}\pitchfork_{\text{\rm ls}}^r\!\!(\FF,\FF';A)
$$
is residual in $\NN^r_{SP}(U,S)$, and the result follows because all possible sets
$\NN^r(U,S)$ form an open covering of $C_{SP}^r(\FF,\FF';A)$.
\end{proof}

\begin{cor}\label{c:ls-transverse}
Assume that $1\le r<\infty$,  and that $A$ is closed in $M'$. If some $\phi\in
C^r(\FF,\FF';A)$ is ls-transverse to $A$ on some neighbourhood of a closed
subset $E\subset M$, then any neighbourhood of $\phi$ in
$C^r_{SP}(\FF,\FF';A)$ contains some foliation map in
$\pitchfork^r_{\text{\rm ls}}\!\!(\FF,\FF';A)$ which equals $\phi$ on some neighbourhood of
$E$.
\end{cor}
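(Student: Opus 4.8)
The plan is to reduce the statement to the non-relative result, Theorem~\ref{t:ls-transverse}, by perturbing $\phi$ only away from $E$ and then splicing the perturbation back to $\phi$ on a neighbourhood of $E$ by means of the combination construction of Lemma~\ref{l:H(psi)}; this parallels the way the relative transversality theorem is derived from its absolute version.

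Since $\phi$ is ls-transverse to $A$ on an open neighbourhood $O$ of $E$ and $E$ is closed, I would first pick an open set $W$ with $E\subset W\subset\overline W\subset O$, put $U=M\setminus\overline W$ and $\FF_U=\FF|_U$, and note that $\phi|_U\in C^r(\FF_U,\FF';A)$. Let $H$ be the combination map provided by Lemma~\ref{l:H(psi)} for the open set $U$ (it combines a map on $U$ with $\phi|_{M\setminus U}$); then $H(\phi|_U)=\phi$, and $H$ is sp-continuous on a suitable sp-neighbourhood of $\phi|_U$. Given an arbitrary neighbourhood $\MM$ of $\phi$ in $C^r_{SP}(\FF,\FF';A)$, I would combine the sp-continuity of $H$ with the sp-openness of $C^r(\FF,\FF';A)$ and of $C^r(\FF_U,\FF';A)$ (Proposition~\ref{p:C r(cF,cF';A) is sp-open}) to obtain an sp-open neighbourhood $\MM'$ of $\phi|_U$ in $C^r_{SP}(\FF_U,\FF';A)$ with $H(\MM')\subset\MM$. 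By Theorem~\ref{t:ls-transverse} applied to $(\FF_U,\FF')$ (note $A$ is still closed in $M'$), the set $\pitchfork_{\text{\rm ls}}^r(\FF_U,\FF';A)$ is residual in $C^r_{SP}(\FF_U,\FF';A)$, which is a Baire space by Corollary~\ref{c:C r SP(cF,cF';A) is Baire}; hence it is dense and meets the nonempty open set $\MM'$. Choosing $\psi$ in this intersection and setting $\xi=H(\psi)$, we get $\xi\in\MM$ and $\xi=\phi$ on $M\setminus U=\overline W\supset W$, so $\xi$ equals $\phi$ on the neighbourhood $W$ of $E$.

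It remains to check $\xi\pitchfork_{\text{\rm ls}}A$, and this is the step I expect to be the main obstacle: ls-transversality records a choice of good local saturation, so it is not a purely local condition and need not patch for free. Since $\xi=\phi$ on $\overline W\subset O$ and $\xi=\psi$ on $U$, with $\overline W\cup U=M$, and since $\phi$ is ls-transverse to $A$ on $O$ and $\psi$ is ls-transverse to $A$ on $U$, I would fix good local saturations $S_1$ and $S_\psi$ witnessing these two facts, set $S=S_1\cap S_\psi$ (again a good local saturation of $A$), and re-express both pieces using $S$, relying on the facts that the preimage of $S$ under any map of $C^r(\FF,\FF';A)$ is a neighbourhood of the preimage of $A$, and that transversality into $S_1$ or into $S_\psi$ is inherited as transversality into the open subset $S$. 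The genuinely delicate part is the behaviour of $\xi$ near the ``seam'' $\partial W$, where in every small neighbourhood $\xi$ takes both values $\phi$ and $\psi$; here one exploits that the sp-topology is plaquewise, so membership of $\psi$ in $\MM'$ forces $\psi$ to send each small plaque near $\partial W$ into the same plaque of $\FF'$ as $\phi$ does, whence $\xi$ still maps a neighbourhood of $\xi^{-1}(A)\cap\partial W$ into $S$, and then the $C^1$-closeness of $\psi$ to $\phi$ built into $\MM'$ yields transversality there by the usual openness of transversality. Granting this verification, $\xi\in\pitchfork_{\text{\rm ls}}^r(\FF,\FF';A)$, which completes the proof; the essential inputs are Lemma~\ref{l:H(psi)}, the sp-openness of $C^r(\FF,\FF';A)$, and the hypothesis that $A$ is closed in $M'$.
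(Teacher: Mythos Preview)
Your proposal is correct and follows essentially the same route as the paper's proof: choose open sets separating $E$ from the region to be perturbed, apply Lemma~\ref{l:H(psi)} to obtain a continuous combination map $H$, use Theorem~\ref{t:ls-transverse} together with the Baire property (Corollary~\ref{c:C r SP(cF,cF';A) is Baire}) to find an ls-transverse $\psi$ on $U$ inside the relevant neighbourhood, and set $\xi=H(\psi)$. The paper uses a pair $U,V$ with $U\cup V=M$, $E\cap\overline U=\emptyset$, $\phi\pitchfork_{\text{ls},V}A$, which is exactly your $U=M\setminus\overline W$ and $V=O$ up to notation.

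One comment on your ``seam'' discussion: you make this harder than it is. Since $\xi=\phi$ on $M\setminus U=\overline W$, in particular $\xi=\phi$ on the \emph{open} set $W$, so $\xi_*=\phi_*$ on $W$ and hence, by continuity of both differentials, $\xi_*=\phi_*$ on all of $\overline W$. Thus at any $x\in\overline W\cap\xi^{-1}(A)$ one has $\xi_*(T_xM)=\phi_*(T_xM)$, and ls-transversality there is inherited directly from $\phi$ (recall $\overline W\subset O$). On $U$ one has $\xi=\psi$ and inherits ls-transversality from $\psi$. Taking $S=S_1\cap S_\psi$ (still a good local saturation) and using $\xi\in C^r(\FF,\FF';A)$ to get that $\xi^{-1}(S)$ is a neighbourhood of $\xi^{-1}(A)$ finishes the patching without any appeal to openness of transversality or to the plaquewise nature of $\MM'$. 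The paper simply asserts ``$H(\psi)$ is ls-transverse to $A$'' at this point; your more careful bookkeeping with $S_1\cap S_\psi$ is a welcome addition, but the delicate analysis you sketch near $\partial W$ is unnecessary.
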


\begin{proof}
There are open subsets $U$ and $V$ of $M$ such that $E\subset V$,
$U\cup V=M$, $E\cap\overline{U}=\emptyset$ and
$\phi\pitchfork_{\text{\rm ls},V}A$. By Lemma~\ref{l:H(psi)}, there
is an sp-neighborhood $\MM$ in $C^r_{SP}(\FF|_U,\FF';A)$ of the
restriction $\phi|_U$ such that, for each $\psi\in\MM$, the
combination $H(\psi)$ of $\psi$ and $\phi|_{M\setminus U}$ is a
$C^r$ foliation map $\FF\to\FF'$, and moreover the assignment
$\psi\mapsto H(\psi)$ defines a continuous map $H:\MM_{SP}\to
C^r_{SP}(\FF,\FF')$. Then, given any sp-neighborhood $\NN$ of
$\phi$, there is some neighborhood $\NN'$ of the restriction
$\phi:\FF|_U\to\FF'$ in $\MM_{SP}$ such that $H(\NN')\subset\NN$.
From Corollary~\ref{c:C r SP(cF,cF';A) is Baire} and
Theorem~\ref{t:ls-transverse}, it follows that there is some
$\psi\in\NN'$ which is ls-transverse to $A$. Then $H(\psi)\in\NN$ is
ls-transverse to $A$ and equals $\phi$ on the neighborhood
$M\setminus\overline{U}$ of $E$.
\end{proof}

\section{Leafwise transversality}\label{s:leaf transver}

With the notation of the above section, a point $x\in M$ is said to
be {\em leafwise critical\/} for a $C^r$ foliation map
$\phi:\FF\to\FF'$ if $\phi_*:T_x\FF\to T_{\phi(x)}\FF'$ is not surjective; {\em i.e.\/}, the
leafwise critical points of $\phi$ are the critical points of its
restriction to the leaves. The images by $\phi$ of its leafwise
critical points are called {\em leafwise critical values\/} of
$\phi$, and a point of $M'$ is called a {\em leafwise regular
value\/} of $\phi$ if it is not a leafwise critical value.

Suppose that $A$ is a regular $C^r$ submanifold of $M'$ whose intersection
with
$\FF'$ is clean. A foliation map $\phi\in C^r(\FF,\FF')$ is said to be
{\em leafwise transverse\/} to
$A$ along any subset $K\subset M$, denoted by $\phi\pitchfork_{\text{leaf},K}A$, when
$$
T_{\phi(x)}\FF'=\phi_*(T_x\FF)+T_{\phi(x)}(\FF'|_A)
$$
for all $x\in K\cap\phi^{-1}(A)$; \textit{i.e.}, when its restriction to the leaves is transverse
along $K$ to $\FF'|_A$. If $K$ consists of only one point $x$, then the notation $\phi\pitchfork_{\text{leaf},x}A$ can be used. It is said that $\phi$ is {\em leafwise transverse\/} to $A$, denoted by $\phi\pitchfork_{\text{leaf}}A$, if $\phi\pitchfork_{\text{\rm leaf},M}A$.

Leafwise versions of the Morse-Sard Theorem and the Transversality Theorem
do not hold in general, as shown by the following naive example.

\begin{ex}
Let $\FF$ be the foliation on $\R$
whose leaves are points, and $\FF'$ the
foliation on $\R$ with one leaf. Then
$C^\infty_{SP}(\FF,\FF')=C^\infty_S(\R,\R)$, and the set of leafwise
critical values for any $\phi\in\cinf(\R,\R)$ is its image
$\phi(\R)$. Thus
$\phi$ has no leafwise regular values if it is surjective. Moreover, if $\phi$ is a
diffeomorphism and $\psi$ is close enough to $\phi$ in $C^\infty_S(\R,\R)$, then
$\psi$ is also a diffeomorphism, and thus $\psi$ has no leafwise regular
values either; {\em i.e.\/},
$\psi$ cannot be leafwise transverse to any point.
\end{ex}

Therefore, in general, we consider partial leafwise
transversality. Precisely, for any $\phi\in C^r(\FF,\FF';A)$, let $N=\phi^{-1}(A)$, and
$$
N_0=\{x\in N\ |\ \phi\pitchfork_{\text{\rm leaf},x}A\}\;.
$$

\begin{prop}\label{p:N 0 is open}
$N_0$ is an open subset of $N$.
\end{prop}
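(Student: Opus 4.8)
The plan is to localize around an arbitrary point of $N_0$ and reduce the statement to the elementary fact that surjectivity of a continuous family of linear maps is an open condition. Fix $x_0\in N_0$; it suffices to find a neighbourhood of $x_0$ in $N$ contained in $N_0$. Since $A\cap\FF'$ is clean, choose a foliation chart $\theta':U'\to T'\times B'$ of $\FF'$ around $\phi(x_0)$ with $\theta'(U'\cap A)=T'_0\times B'_0$ for regular $C^r$ submanifolds $T'_0\subset T'$ and $B'_0\subset B'$, and write $\theta'(\phi(x))=(t'(x),b'(x))$ for $x\in\phi^{-1}(U')$; thus $b'(x_0)\in B'_0$ and, for $x\in N\cap\phi^{-1}(U')$, one has $t'(x)\in T'_0$ and $b'(x)\in B'_0$. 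Because $B'_0$ is an embedded $C^r$ submanifold of $B'$, pick an open neighbourhood $O$ of $b'(x_0)$ in $B'$ and a $C^r$ submersion $\rho:O\to\R^c$ with $\rho^{-1}(0)=B'_0\cap O$, where $c=\codim(B'_0,B')$.

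Next I would introduce $g=\pr_{B'}\circ\theta'\circ\phi$ on the open set $\phi^{-1}(U')$, set $W=\phi^{-1}(U')\cap g^{-1}(O)$, an open neighbourhood of $x_0$ in $M$, and let $h=\rho\circ(g|_W):W\to\R^c$, which is $C^r$. The heart of the argument is the claim that for $x\in N\cap W$,
\[
\phi\pitchfork_{\text{\rm leaf},x}A\quad\Longleftrightarrow\quad dh_x|_{T_x\FF}:T_x\FF\to\R^c\ \text{is surjective}.
\]
This is pure chart bookkeeping. For $x\in N\cap W$, the point $\phi(x)$ lies in the plaque $P'=(\theta')^{-1}(\{t'(x)\}\times B')$ of $\FF'$ in $U'$, and by cleanness $A\cap P'=(\theta')^{-1}(\{t'(x)\}\times B'_0)$ is a neighbourhood of $\phi(x)$ in its $\FF'|_A$-leaf, so $\theta'_*(T_{\phi(x)}(\FF'|_A))=\{0\}\oplus\ker d\rho_{g(x)}$ inside $\{0\}\oplus T_{g(x)}B'=\theta'_*(T_{\phi(x)}\FF')$. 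Since $\phi$ is a foliation map, $\phi_*(T_x\FF)\subset T_{\phi(x)}\FF'$, and applying $\pr_{B'}$ shows $\theta'_*(\phi_*(T_x\FF))=\{0\}\oplus dg_x(T_x\FF)$. Applying $\theta'_*$ to the defining equation $T_{\phi(x)}\FF'=\phi_*(T_x\FF)+T_{\phi(x)}(\FF'|_A)$ therefore turns it into $dg_x(T_x\FF)+\ker d\rho_{g(x)}=T_{g(x)}B'$, which is exactly surjectivity of $d\rho_{g(x)}\circ dg_x|_{T_x\FF}=dh_x|_{T_x\FF}$.

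Finally, the openness follows. Over a small enough neighbourhood of any point of $W$, choose a continuous local frame of $T\FF$; then $x\mapsto dh_x|_{T_x\FF}$ is represented by a $c\times p$ matrix with continuous entries, so the locus $V=\{x\in W:dh_x|_{T_x\FF}\ \text{surjective}\}$ — equivalently, where some $c\times c$ minor is nonzero — is open in $W$, hence in $M$. By the claim, $N_0\cap W=N\cap V$, which is open in $N$ and contains $x_0$; since it is contained in $N_0$, the point $x_0$ is interior to $N_0$ in $N$. As $x_0\in N_0$ was arbitrary, $N_0$ is open in $N$. I do not anticipate a genuine obstacle; the only point requiring care is the displayed equivalence, where one must use both that $\phi$ is a foliation map (so $\phi_*T_x\FF\subset T_{\phi(x)}\FF'$) and that cleanness makes $A\cap P'$ the correct local model for the $\FF'|_A$-leaf through $\phi(x)$.
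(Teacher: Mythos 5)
Your proof is correct and is essentially the paper's argument in local coordinates: the paper observes that the condition defining $N_0$ is surjectivity of the composite $T_x\FF\to T_{\phi(x)}\FF'\to T_{\phi(x)}\FF'/T_{\phi(x)}(\FF'|_A)$, which varies continuously as a vector bundle homomorphism $(T\FF)|_N\to(T\FF')|_A/T(\FF'|_A)$, so its surjectivity locus is open. Your chart-and-submersion computation (the map $dh_x|_{T_x\FF}$ and the minor criterion) is exactly the local expression of that homomorphism, with cleanness used in the same way to identify $T_{\phi(x)}(\FF'|_A)$ with $\ker d\rho_{g(x)}$.
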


\begin{proof}
A point $x\in N$ is in $N_0$ just when the composite
\begin{equation}\label{e:composite on T xFF}
\begin{CD}
T_x\FF @>{\phi_*}>> T_{\phi(x)}\FF' @>>> T_{\phi(x)}\FF'/T_{\phi(x)}(\FF'|_A)
\end{CD}
\end{equation}
is surjective, where the last arrow denotes the canonical projection. Then the result follows because these composites combine to define a vector bundle homomorphism $(T\FF)|_N\to(T\FF')|_A/T(\FF'|_A)$. \end{proof}

\begin{prop}\label{p:N 0 is C r and transverse}
$N_0$ is a regular $C^r$ submanifold of $M$ transverse to $\FF$.
\end{prop}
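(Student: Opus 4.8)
The plan is to reduce the leafwise transversality condition defining $N_0$ to ordinary transversality of $\phi$, viewed as a $C^r$ map into a good local saturation of $A$, and then invoke the standard preimage theorem. Since being a regular $C^r$ submanifold transverse to $\FF$ is a local property, I would fix $x_0\in N_0$ and work in a neighbourhood of $x_0$. Using the hypothesis $\phi\in C^r(\FF,\FF';A)$, pick a good local saturation $S$ of $A$ with $\phi^{-1}(S)$ a neighbourhood of $N=\phi^{-1}(A)$; shrinking, one obtains an open set $U\ni x_0$ with $\phi(U)\subset S$, so that $\phi$ restricts to a genuine $C^r$ map $\phi_S\colon U\to S$ between manifolds with $N\cap U=\phi_S^{-1}(A)$.

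The key step is to recognise, for $x\in N\cap U$, that $x\in N_0$ if and only if $\phi_S$ is transverse to $A$ at $x$ \emph{using only the leaf directions} $T_x\FF$. By Proposition~\ref{p:clean} applied to $S$, the leaves of $\FF'|_S$ are open subsets of the leaves of $\FF'$, hence $T_y\FF'=T_y(\FF'|_S)\subset T_yS$ for $y\in S$, and the inclusion $T_y\FF'\subset T_yS$ induces an isomorphism $T_y\FF'/T_y(\FF'|_A)\cong T_yS/T_yA$ for every $y\in A$. Under this isomorphism, the composite~\eqref{e:composite on T xFF} from the proof of Proposition~\ref{p:N 0 is open} is identified with the composite
$$
T_x\FF\hookrightarrow T_xM\xrightarrow{\ (\phi_S)_*\ }T_{\phi(x)}S\xrightarrow{\ q\ }T_{\phi(x)}S/T_{\phi(x)}A\;,
$$
where $q$ is the projection. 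Since $x\in N_0$ means exactly that~\eqref{e:composite on T xFF} is onto, this shows that at every point of $N_0\cap U$ the map $q\circ(\phi_S)_*$ is already onto on the subspace $T_x\FF$; in particular $\phi_S\pitchfork_xA$ there.

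With this in hand I would finish as follows. By the preimage theorem (Theorem~3.3 of Chapter~1 in~\cite{Hirsch}), $N\cap U=\phi_S^{-1}(A)$ is a regular $C^r$ submanifold of a neighbourhood of each point of $N_0\cap U$, with tangent space $\ker\bigl(q\circ(\phi_S)_*\bigr)$ there; and $N_0$ is open in $N$ by Proposition~\ref{p:N 0 is open}. Hence $N_0$ is a regular $C^r$ submanifold of $M$ near $x_0$, and—letting $x_0$ range over $N_0$, all local pieces having codimension equal to that of $A$ in its good local saturations—these pieces patch to a regular $C^r$ submanifold of $M$. For transversality to $\FF$: at $x\in N_0$ one has $T_xN_0=\ker\bigl(q\circ(\phi_S)_*\bigr)$ with $q\circ(\phi_S)_*$ onto, so $T_xN_0+T_x\FF=T_xM$ holds iff the restriction of $q\circ(\phi_S)_*$ to $T_x\FF$ is onto, i.e.\ iff~\eqref{e:composite on T xFF} is onto, which holds precisely because $x\in N_0$.

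I expect the only real obstacle to be the identification in the second paragraph—matching the intrinsic leafwise transversality condition, which sees only $T_x\FF$ and the small leaf $\FF'|_A$, with honest transversality of $\phi_S$ to $A$ inside a good local saturation. This is exactly where Proposition~\ref{p:clean} does the work (through the normal-bundle isomorphism along $A$ together with the openness of the leaves of $\FF'|_S$ in those of $\FF'$), and where the hypothesis $\phi\in C^r(\FF,\FF';A)$ is needed so that $\phi_S$ is available at all. The remaining ingredients—the preimage theorem and the gluing of local regular submanifolds—are routine, paralleling the proof of Proposition~\ref{p:ls-transver}.
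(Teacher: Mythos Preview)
Your proposal is correct and follows essentially the same approach as the paper's proof: both use the isomorphism~\eqref{e:T xFF/T x(FF| A)} from Proposition~\ref{p:clean} to identify the leafwise transversality condition defining $N_0$ with surjectivity of the composite $T_x\FF\hookrightarrow T_xM\to T_{\phi(x)}S\to T_{\phi(x)}S/T_{\phi(x)}A$, deduce that $\phi$ is transverse to $A$ inside $S$ along $N_0$ (hence $N_0$ is a regular $C^r$ submanifold via the preimage theorem), and then obtain $T_xN_0+T_x\FF=T_xM$ from the fact that $T_xN_0$ is the kernel of a map already surjective on $T_x\FF$. The only cosmetic difference is that the paper packages the preimage step as an application of Proposition~\ref{p:ls-transver}, whereas you invoke Hirsch directly and phrase things locally with a subsequent patching.
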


\begin{proof}
Let $S$ be a good local saturation of $A$. For each $x\in N$, we have $\phi_*(T_xM)\subset T_{\phi(x)}S$, and thus we can consider the composite
\begin{equation}\label{e:composite on T xM}
\begin{CD}
T_xM @>{\phi_*}>> T_{\phi(x)}S @>>> T_{\phi(x)}S/T_{\phi(x)}A\;,
\end{CD}
\end{equation}
where the last arrow denotes the canonical projection. Via the isomorphism~\eqref{e:T xFF/T x(FF| A)}, the restriction of this composition to $T_x\FF$ corresponds to the composition~\eqref{e:composite on T xFF}. So $N_0$ is the set of points $x\in N$ where the restriction of the composition~\eqref{e:composite on T xM} to $T_x\FF$ is surjective. Therefore $\phi$ is ls-transverse to $A$ on $N_0$, and thus $N_0$ is a regular $C^r$ submanifold of $M$ by Proposition~\ref{p:ls-transver}.

For each $x\in N_0$, $T_x N_0$ is the kernel of the composite~\eqref{e:composite on T xM}. Thus that composite induces an isomorphism
\begin{equation}\label{e:T xM/T xN}
T_xM/T_xN\cong T_{\phi(x)}S/T_{\phi(x)}A\;,
\end{equation}
whose restriction to $(T_xN+T_x\FF)/T_xN$ is surjective. Therefore $T_xN+T_x\FF=T_xM$.
\end{proof}

\begin{cor}\label{c:N 0 = transverse}
If $\phi$ is ls-transverse to $A$, then $N$ is a regular $C^\infty$ submanifold of $M$, and
$N_0$ is the set of points where $N$ is transverse to $\FF$.
\end{cor}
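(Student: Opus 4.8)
The plan is to deduce both parts of the statement almost directly from Propositions~\ref{p:ls-transver} and~\ref{p:N 0 is C r and transverse}, supplying only the short argument for the reverse inclusion in the characterization of $N_0$.

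First, since $\phi\pitchfork_{\text{\rm ls}}A$ by hypothesis, Proposition~\ref{p:ls-transver} immediately gives that $N=\phi^{-1}(A)$ is a regular $C^r$ submanifold of $M$; I would also note in passing that its codimension equals that of $A$ in any good local saturation. For the description of $N_0$, one inclusion is already available: Proposition~\ref{p:N 0 is C r and transverse} shows that $N_0$ is contained in the set of points of $N$ at which $N$ is transverse to $\FF$. So the only thing left is the opposite inclusion.

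For that, I would fix a good local saturation $S$ of $A$ together with an open neighbourhood $U\supset N$ such that $\phi(U)\subset S$ and the restriction $\phi:U\to S$ is transverse to $A$; such data exist precisely because $\phi$ is ls-transverse to $A$, and since $N\subset U$ we have $\phi^{-1}(A)\cap U=N$. Then, for every $x\in N$, transversality of $\phi:U\to S$ to $A$ makes the composite~\eqref{e:composite on T xM} surjective with kernel $T_xN$, so it induces the isomorphism~\eqref{e:T xM/T xN}, namely $T_xM/T_xN\cong T_{\phi(x)}S/T_{\phi(x)}A$. Recalling from the proof of Proposition~\ref{p:N 0 is C r and transverse} that $x\in N_0$ exactly when the restriction of~\eqref{e:composite on T xM} to $T_x\FF$ is surjective, and noting that under the isomorphism above the image of $T_x\FF$ by~\eqref{e:composite on T xM} is the subspace $(T_x\FF+T_xN)/T_xN$, I would conclude that $x\in N_0$ if and only if $T_x\FF+T_xN=T_xM$, i.e. if and only if $N$ is transverse to $\FF$ at $x$. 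This finishes the argument.

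I do not expect any real obstacle here, since essentially all the work is done in the two cited propositions. The one point deserving attention is the identification, for \emph{all} $x\in N$ (not only for $x\in N_0$), of $T_xN$ with the kernel of the composite~\eqref{e:composite on T xM}; this is where it matters that ls-transversality supplies a \emph{single} neighbourhood $U$ of all of $N$ on which $\phi$, viewed as a map into $S$, is transverse to $A$, so that $\phi^{-1}(A)\cap U=N$ near every such point.
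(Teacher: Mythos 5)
Your proposal is correct and follows essentially the same route as the paper: Proposition~\ref{p:ls-transver} gives that $N$ is a regular submanifold, and ls-transversality makes the composite~\eqref{e:composite on T xM} induce the isomorphism~\eqref{e:T xM/T xN} at every $x\in N$, so that surjectivity of its restriction to $T_x\FF$ (i.e.\ $x\in N_0$) is equivalent to $T_xN+T_x\FF=T_xM$. Your extra care in identifying $T_xN$ with the kernel of~\eqref{e:composite on T xM} for all $x\in N$ is exactly the point the paper's shorter argument relies on, so there is nothing to add.
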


\begin{proof}
$N$ is a regular $C^\infty$ submanifold of $M$ by Proposition~\ref{p:ls-transver}. In this case,  the composite~\eqref{e:composite on T xM} induces the isomorphism~\eqref{e:T xM/T xN} for all $x\in N$. Hence $T_xN+T_x\FF=T_xM$ if and only if the restriction of the composite~\eqref{e:composite on T xM} to $T_x\FF$ is surjective, which means that $x\in N_0$.
\end{proof}

\begin{rem}
By using the parametric transversality theorem \cite[pp.~79--80]{Hirsch} and~\eqref{e:T xFF/T x(FF| A)}, it can be proved that, if
$$
r>\max\{0,\dim\FF-\codim_SA\}
$$
for some good local saturation $S$ of $A$, then, for any $\phi$ in $\pitchfork^r_{\text{\rm ls}}\!\!(\FF,\FF';A)$, there is some saturated subset $X\subset M$ of null Lebesgue measure such that $\phi\pitchfork_{\text{\rm leaf},M\setminus X}A$. But we omit the proof because it will not be needed in this paper.
\end{rem}

\section{Approximation and transversality on foliated manifolds with boundary}\label{s:boundary}

With the notation of Section~\ref{s:preliminaries}, the definition of foliation can be adapted to the case where $M$ is a manifold with boundary in the following way (see {\em e.g.\/} \cite{CandelConlon}). For each positive integer $m$, consider the Euclidean half space
$$
H^m=\{(x_1,\dots,x_m)\in\R^m\ |\ x_m\ge0\}\;.
$$
By fixing a homeomorphism between $H^q\times H^p$ and $H^n$, we can assume that the charts of $M$ have values in $H^q\times H^p$. Then the generalization is obtained by considering charts $(U_i,\theta_i)$ of $M$ with values in with $\theta_i(U_i)=T_i\times B_i$, where $T_i$ is an open subset $H^q$ and $B_i$ is a convex open subset of $H^p$. The {\em tangent\/} and {\em transverse boundaries\/} of each $U_i$, respectively denoted by $\partial_\tau U_i$ and $\partial_\pitchfork U_i$, are defined as follows:
\begin{enumerate}

\item If $\partial T_i=\partial B_i=\emptyset$, then $U_i\subset\Int(M)$ and $\partial_\tau U_i=\partial_\pitchfork U_i=\emptyset$.

\item If $\partial T_i\neq\emptyset$ and $\partial B_i=\emptyset$, then $\partial_\tau U_i=\theta_i^{-1}(\partial T_i\times B_i)$ and $\partial_\pitchfork U_i=\emptyset$.

\item If $\partial T_i=\emptyset$ and $\partial B_i\neq\emptyset$, then
$\partial_\pitchfork U_i=\theta_i^{-1}(T_i\times\partial B_i)$ and $\partial_\tau U_i=\emptyset$.

\item If $\partial T_i\neq\emptyset\neq\partial B_i$, then
$$
\partial_\tau U_i=\theta_i^{-1}(\partial T_i\times\Int(B_i))\;,\qquad
\partial_\pitchfork U_i=\theta_i^{-1}(\Int(T_i)\times\partial B_i)\;.
$$
In this case, $\overline{\partial_\tau U_i}\cap\overline{\partial_\pitchfork U_i}$ is the ``corner'' $\theta_i^{-1}(\partial T_i\times\partial B_i)$ of $U$.

\end{enumerate}
Then $\partial_\tau M=\bigcup_i\partial_\tau U_i$ and $\partial_\pitchfork M=\bigcup_i\partial_\pitchfork U_i$ are respectively called the {\em tangent\/} and {\em transverse boundaries\/} of $M$. Observe that $\partial_\tau M$ is a union of leaves, and $\partial_\pitchfork M$ is transverse to the leaves.

All concepts recalled in Section~\ref{s:preliminaries} can be directly extended to the case with boundary. There is only a small subtlety in the definition of a $C^r$ structure of $\FF$ for $r\ge1$ because there is no $C^r$ diffeomorphism between $H^q\times H^p$ and $H^n$. There are two possible solutions to this problem. Either we assume that the case~(4) does not hold, and then we can use a $C^r$ open embedding
$$
(\Int(H^p)\times H^q)\cup(H^p\times\Int(H^q))\to H^n
$$
to consider $C^r$ foliation charts with values in $(\Int(H^p)\times H^q)\cup(H^p\times\Int(H^q))$; in this case, we have $\partial M=\partial_\tau M\cup\partial_\pitchfork M$. Or we assume that $M$ is a $C^r$ manifold with corners: $\overline{\partial_\tau M}\cap\overline{\partial_\pitchfork M}$ is a corner of dimension $n-2$.

Also, the concepts of $C^r$ foliation map, the weak and strong plaquewise topologies, and ls-transversality can be directly extended to the foliated manifolds with boundary. In particular, the sets $C^r(\FF,\FF';A)$ and $\pitchfork^r_{\text{\rm ls}}\!\!(\FF,\FF';A)$ can be defined for the case with boundary in exactly the same way as they were introduced in Sections~\ref{s:clean} and~\ref{s:ls-transver}. These direct extensions of basic definitions will be used in the next sections.

Some of our main results can be also extended to the case with boundary. For instance, there are versions of Theorem~\ref{t:approximation} and Proposition~\ref{p:ls-transver} for foliated manifolds with boundary, which are similar to the corresponding results for manifolds with boundary (Theorem~3.3 of Chapter~2 and Theorem~4.2 of Chapter~1 in \cite{Hirsch}). These extended results will not be used for the sake of simplicity.

\section{Leafwise homotopies}\label{s:homotopies}

For $0\le r\le\infty$, let $\FF$ and $\FF'$ be $C^r$ foliations on $C^r$ manifolds $M$ and $M'$.
Recall that an {\em integrable\/} or {\em leafwise homotopy\/} between foliation maps
$\phi,\psi:\FF\to\FF'$ is a homotopy
$H:M\times I\to M'$ between $\phi$ and $\psi$ so that,  for each $x\in M$, the path $I\to M'$
given by $t\mapsto H(x,t)$ lies in a leaf of $\FF'$; \textit{i.e.}, the homotopy $H$ is a foliation
map $\FF\times I\to\FF'$, where $\FF\times I$ is the foliation
with leaves $L\times I$ for leaves $L$ of $\FF$. Observe that $\partial(M\times I)=\partial_\pitchfork(M\times I)$.

\begin{prop}\label{p:C r integrable homotopy}
Let $H$ be an integrable homotopy between $C^r$ foliation maps $\phi,\psi:\FF\to\FF'$.
Then any neighborhood of $H$ in $C_{SP}(\FF\times I,\FF')$ contains some $C^r$ integrable homotopy between $\phi$ and
$\psi$.
\end{prop}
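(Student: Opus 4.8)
The plan is to reduce Proposition~\ref{p:C r integrable homotopy} to the Leafwise Approximation Theorem (Theorem~\ref{t:approximation}) applied to the product foliation $\FF\times I$ on $M\times I$. The point is that an integrable homotopy $H$ between $\phi$ and $\psi$ is by definition an element of $C(\FF\times I,\FF')$, and we want to approximate it, rel the ends $M\times\{0\}$ and $M\times\{1\}$, by a $C^r$ foliation map. The subtlety is that $M\times I$ is a manifold with boundary and that we must not perturb $H$ on $M\times\partial I$, where it already agrees with the given $C^r$ maps $\phi$ and $\psi$. So the first step is to observe that $H$, being a foliation map $\FF\times I\to\FF'$, is automatically of class $C^{r,0}$ in the sense of Section~\ref{s:approximations}: for any pair of foliation charts with $H(U)\subset U'$, the composite with the local projection $U'\to T'$ sends each leaf $L\times I$ of $\FF\times I$ into a single plaque, hence is locally constant along the $I$-direction and along leaves of $\FF$, so it factors through the local quotient of a foliation chart of $\FF$; smoothness of $H$ in that transverse direction is not assumed, but that is irrelevant because $C^{r,0}$ only requires the transverse composite to be $C^r$ as a map of the transverse quotient\,---\,wait, more carefully, $C^{r,s}$ with $s=0$ just asks that $H$ be $C^0$ and the transverse composite be $C^r$, and here the transverse composite is constant along $I$, so it is as smooth as the holonomy of $\FF'$ allows; in any case $H\in C^{r,0}(\FF\times I,\FF')$ by inspection since the transverse behavior is determined by the leaf-to-leaf assignment, which is the same for all $t$.

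**Next I would** set up the ``equals $\phi$ and $\psi$ on the ends'' constraint as a closed-subset hypothesis. Take $E=M\times\{0,1\}\subset M\times I$, a closed subset, and note that $H$ restricted to a neighborhood of $E$\,---\,namely $M\times([0,\varepsilon)\cup(1-\varepsilon,1])$ for small $\varepsilon$\,---\,need not be $C^r$ a priori. This is the one genuine gap: Theorem~\ref{t:approximation} requires $H$ to already be $C^r$ on a neighborhood of $E$. To fix this, I would first modify $H$ near the ends by a leafwise reparametrization: replace the $I$-coordinate by a smooth monotone function $\rho:I\to I$ with $\rho\equiv 0$ near $0$ and $\rho\equiv 1$ near $1$, i.e. consider $H\circ(\id_M\times\rho)$. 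Since $t\mapsto H(x,t)$ lies in a leaf and the leaves of $\FF'$ are immersed submanifolds, this reparametrized map agrees with $\phi$ on a neighborhood of $M\times\{0\}$ and with $\psi$ on a neighborhood of $M\times\{1\}$, where it is \emph{genuinely} $C^r$ because $\phi,\psi$ are $C^r$ and $\rho$ is smooth and locally constant there. Moreover $H\circ(\id_M\times\rho)$ is integrably homotopic to $H$ (the homotopy of homotopies being again leafwise), and it lies in any prescribed sp-neighborhood of $H$ provided $\rho$ is chosen close enough to $\id_I$\,---\,here one uses that the sp-topology on $C(\FF\times I,\FF')$ only controls plaquewise, i.e. transverse, behavior together with $C^0$ leafwise behavior, and transversely $H\circ(\id_M\times\rho)$ equals $H$ exactly, while $C^0$-closeness follows from uniform continuity of $H$ on compact sets and $\rho$ close to $\id$.

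**Then I would** apply Theorem~\ref{t:approximation} directly, with foliations $\FF\times I$ and $\FF'$, the map $H\circ(\id_M\times\rho)\in C^{r,0}(\FF\times I,\FF')$, and the closed set $E=M\times\{0,1\}$ on a neighborhood of which this map is $C^r$. The theorem produces, inside the given neighborhood of $H$ (shrunk to also contain $H\circ(\id_M\times\rho)$ and to lie within the sp-open set where the approximation is still close to the original $H$), some $\psi'\in C^r(\FF\times I,\FF')$ which equals $H\circ(\id_M\times\rho)$ on a neighborhood of $E$. This $\psi'$ is the desired $C^r$ integrable homotopy: it is a $C^r$ foliation map $\FF\times I\to\FF'$, hence an integrable homotopy between its restrictions to the two ends, and those restrictions are $\phi$ and $\psi$ respectively because $\psi'$ agrees with $H\circ(\id_M\times\rho)$ near $M\times\{0,1\}$. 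One caveat for the boundary: Theorem~\ref{t:approximation} as stated is for manifolds without boundary, but $\partial(M\times I)=M\times\{0,1\}\subset E$, and as noted in Section~\ref{s:boundary} the approximation theorem extends to manifolds with boundary with the same proof; alternatively, since we keep the map fixed near the boundary, we may thicken $M\times I$ to $M\times(-\delta,1+\delta)$, extend $H$ there using the leafwise structure, apply the boundaryless version, and restrict back.

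**The main obstacle** is the rel-$E$ smoothness issue just handled\,---\,namely that the given integrable homotopy need not be $C^r$ near its ends even though the ends themselves are $C^r$ maps\,---\,and making precise that the leafwise reparametrization trick both restores $C^r$-ness near $E$ and stays within the prescribed sp-neighborhood. The rest is a direct invocation of Theorem~\ref{t:approximation}, together with the routine (but to-be-checked) verification that $H\in C^{r,0}(\FF\times I,\FF')$, which is immediate from the definition of a foliation map since the transverse quotient data is independent of the $I$-coordinate.
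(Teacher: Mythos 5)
Your proposal is essentially the paper's own argument: the paper likewise observes that the homotopy, made constant in $t$ near the ends (by extending it to a $C^{r,0}$ map $\overline{H}$ on $M\times\R$ with $\overline{H}=\phi$ for $t\le 0$ and $\overline{H}=\psi$ for $t\ge 1$, which is exactly your ``thickening'' alternative and sidesteps the boundary issue), is $C^r$ near the closed set where it is constant, applies Theorem~\ref{t:approximation} rel that set, and then reparametrizes $I$ affinely to recover a $C^r$ integrable homotopy between $\phi$ and $\psi$ lying in the prescribed sp-neighborhood. Your reparametrize-first variant with a smooth collar $\rho$, and your justification that $H$ is $C^{r,0}$ and that the reparametrized map stays sp-close to $H$, are at the same level of detail as the paper's corresponding assertions, so the proof is correct by the paper's own standard.
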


\begin{proof}
Let $\overline{H}:\FF\times\R\to\FF'$ be the foliation map defined by
$$
\overline{H}(x,t)=
\begin{cases}
H(x,t)&\text{if $t\in I$}\\
\phi(x)&\text{if $t\le0$}\\
\psi(x)&\text{if $t\ge1$.}\\
\end{cases}
$$
This $\overline{H}$ is $C^{r,0}$, and it is $C^r$ on $M\times(\R\setminus
I)$. By Theorem~\ref{t:approximation}, for any $\epsilon>0$, any sp-neighborhood of $\overline{H}$ contains a $C^r$ foliation map $F:\FF\times\R\to\FF'$ which equals
$\overline{H}$ on $M\times(-\infty,-\epsilon]$ and $M\times[1+\epsilon,\infty)$.
Thus the foliation map $G:\FF\times I\to\FF'$, defined by
$G(x,t)=F(x,(1+2\epsilon)t-\epsilon)$, is a $C^r$ integrable homotopy between $\phi$ and
$\psi$. Moreover $G$ is as sp-close to $H$ as desired when $G$ is sp-close enough to $\overline{H}$ and $\epsilon$ is small enough.
\end{proof}

\begin{prop}\label{p:integrable homotopy}
With the above notation, any foliation map $\phi:\FF\to\FF'$ has a neighborhood $\NN$ in $C_{SP}(\FF,\FF')$ such that there is a continuous map $\NN_{SP}\to C_{SP}(\FF\times I,\FF')$, $\psi\mapsto H_\psi$, so that each $H_\psi$ is an integrable homotopy between $\phi$ and $\psi$, and $H_\phi(x,t)=\phi(x)$ for all $x\in M$ and $t\in I$.
\end{prop}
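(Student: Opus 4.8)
The plan is to carry out the construction locally -- by a straight-line homotopy inside a convex plaque -- and then patch it together over a locally finite foliation atlas of $\FF'$, correcting one chart at a time, in the spirit of the proof of Theorem~\ref{t:approximation}.

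First I would fix the following data. Choose a locally finite regular foliation atlas $\{U'_j,\theta'_j\}_j$ of $\FF'$ (which may be taken countable), with $\theta'_j(U'_j)=T'_j\times B'_j$ and each $B'_j$ convex, so that every plaque of $\theta'_j$ is a convex subset of the chart coordinates. Pulling this back along $\phi$, choose a locally finite family of compact sets $K_j\subset\phi^{-1}(U'_j)$ with $\bigcup_j\Int(K_j)=M$, foliation charts $O_j\supset K_j$ of $\FF$, and continuous functions $\lambda_j\colon M\to[0,1]$ with $\lambda_j\equiv1$ on a neighbourhood of $K_j$ and $\supp(\lambda_j)$ a compact subset of $\phi^{-1}(U'_j)\cap O_j$, the family $\{\supp(\lambda_j)\}_j$ being locally finite. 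Let $\NN$ be the basic sp-neighbourhood of $\phi$ in $C_{SP}(\FF,\FF')$ determined by the charts $O_j$, $U'_j$ and the compact sets $\supp(\lambda_j)$. Then for $\psi\in\NN$ and $x\in\supp(\lambda_j)$ the points $\phi(x)$ and $\psi(x)$ lie in the same (convex) plaque of $\theta'_j$; since $\{\Int(K_j)\}$ covers $M$, it follows in particular that $\psi(x)$ and $\phi(x)$ always lie in a common leaf of $\FF'$.

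Next I would build $H_\psi$ stagewise. Put $\psi^{(0)}=\psi$, fix $0=t_0<t_1<\dots\to1$, and on the parameter interval $[t_{j-1},t_j]$ let $H_\psi$ be a reparametrisation of the integrable homotopy from $\psi^{(j-1)}$ to $\psi^{(j)}$ given on $O_j$ by
\[
(x,t)\longmapsto(\theta'_j)^{-1}\!\Bigl((1-t\lambda_j(x))\,\theta'_j\bigl(\psi^{(j-1)}(x)\bigr)+t\lambda_j(x)\,\theta'_j\bigl(\phi(x)\bigr)\Bigr)
\]
and by $(x,t)\mapsto\psi^{(j-1)}(x)$ off $\supp(\lambda_j)$ (the two formulas agree where $\lambda_j=0$). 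Convexity of the plaque of $\theta'_j$ through $\phi(x)$ keeps this path inside a leaf, so each stage is an integrable homotopy; the map $\psi^{(j)}$ equals $\phi$ near $K_j$, equals $\psi^{(j-1)}$ off $\supp(\lambda_j)$, and -- the point that makes the limit behave -- satisfies $\psi^{(j)}(x)=\psi^{(j-1)}(x)$, with the $j$-th stage constant in $t$, at every $x$ with $\psi^{(j-1)}(x)=\phi(x)$. Setting $H_\psi(x,1)=\phi(x)$ then defines a map $M\times I\to M'$; because $\{\Int(K_j)\}$ covers $M$ and earlier corrections persist, $H_\psi$ equals $\phi$ for $t$ near $1$ on a neighbourhood of any given point, hence is continuous up to $t=1$, and it is an integrable homotopy between $\psi$ and $\phi$. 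When $\psi=\phi$ every stage is the constant homotopy, so $H_\phi(x,t)\equiv\phi(x)$, as required.

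Finally there are two continuity statements to verify. Each $H_\psi$ is continuous in $(x,t)$: for $t<1$ only finitely many stages are relevant near a point (local finiteness of $\{\supp(\lambda_j)\}$) and each stage is given by the explicit continuous formula above, while near $t=1$ one uses that $H_\psi\equiv\phi$ there. The assignment $\psi\mapsto H_\psi$ is sp-continuous because each operation $\psi^{(j-1)}\mapsto\psi^{(j)}$ is sp-continuous by inspection of the formula, a basic sp-neighbourhood of $H_\psi$ in $C_{SP}(\FF\times I,\FF')$ constrains only finitely many stages near each point, and the concatenation is therefore sp-continuous in $\psi$ -- by the same bookkeeping that makes composition and the gluing map of Lemma~\ref{l:H(psi)} continuous. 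The main obstacle, and the place where care is genuinely needed, is to arrange the atlas of $\FF'$ fine and regular enough that at the $j$-th stage $\psi^{(j-1)}(x)$ and $\phi(x)$ still lie in a common convex plaque, so that the straight-line formula makes sense throughout the induction; this, together with the continuity bookkeeping, is where the work of the proof is concentrated.
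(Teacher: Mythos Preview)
Your approach is essentially the paper's: both concatenate local straight-line homotopies in convex plaques of foliation charts of $\FF'$, patched by bump functions over a locally finite cover, the only cosmetic difference being that the paper moves from $\phi$ toward $\psi$ (setting $\phi_0=\phi$ and pushing toward $\psi$ one chart at a time) rather than from $\psi$ toward $\phi$ as you do. The bookkeeping concern you flag at the end---that the intermediate maps stay in the right plaques so the straight-line formula makes sense---is exactly the inductive content of the paper's Claim~\ref{cl:Cr integrable homotopy}, where it is handled by taking the target charts with $B'_i=\R^{p'}$ so that convex combinations automatically remain in the chart and in the same plaque.
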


\begin{proof}
Consider the following data:
\begin{itemize}

\item a locally finite family $\{U_i\}$ of open subsets of $M$;

\item a covering of $M$ by compact subset, $\{K_i\}$, with $K_i\subset U_i$ for
all $i$; and,

\item for each $i$, a foliation chart $\theta'_i:U_i'\to T'_i\times\R^{p'}$ of $\FF'$,
$p'=\dim\FF'$, such that $\phi(K_i)\subset U'_i$.

\end{itemize}
Then take the sp-neighbourhood $\NN=\NN(\phi,\Theta,\Theta',\KK)$ of $\phi$ defined in
Section~\ref{sec:foliation maps}.

Since $\{U_i\}$ is locally finite, $i$ runs in a countable index set; say, either in $\{1,\dots,n\}$ for some $n\in\Z_+$, or in
$\Z_+$. Set $\phi_0=\phi$.

\begin{claim}\label{cl:Cr integrable homotopy}
For each $i$, there is a foliation map $\phi_i:\FF\to\FF'$, and there is an integrable homotopy $H_i:\FF\times I\to\FF'$ between $\phi_{i-1}$ and $\phi_i$ if $i>0$, such that:
\begin{itemize}

\item $\phi_i(U_j)\subset U'_j$ for all $j$;

\item $\phi_i(x)$ and $\phi_{i-1}(x)$ lie in the same plaque of $\theta'_i$ for all $x\in U_i$ and
$i$;

\item $\phi_i(x)=\psi(x)$ for all $x\in K_1\cup\dots\cup K_i$ for all $i$;

\item $\phi_i(x)=\phi_{i-1}(x)$ for all $x\in M\setminus U_i$ and $i$;

\item $\phi_i$ and $H_i$ depend sp-continuously on $\phi_{i-1}$ for $i>0$.

\end{itemize}
\end{claim}

This assertion is proved by induction on $i$.  We
have already set $\phi_0=\phi$. Now suppose that $\phi_{i-1}$ is given for some $i>0$. Fix a continuous map $\lambda_i:M\to I$ such that $\supp\lambda_i\subset U_i$ and
$\lambda_i(x)=1$ for all $x\in K_i$. Then it is easy to show that the statement of
Claim~\ref{cl:Cr integrable homotopy} holds with the following definitions of $\phi_i$ and $H_i$.
Let $H_i(x,t)=\phi_{i-1}(x)$ for $x\in M\setminus U_i$,
$$
\theta'_i\circ H_i(x,t)=
(1-t\,\lambda_i(x))\cdot\theta'_i\circ\phi_{i-1}(x)+
t\,\lambda_i(x)\cdot\theta'_i\circ\psi(x)
$$
for $x\in U_i$, and $\phi_i(x)=H_i(x,1)$ for any $x\in M$.

The result follows from Claim~\ref{cl:Cr integrable homotopy} in the following way. Take
a partition
$$
0=t_1<\dots<t_n<t_{n+1}=1
$$
of $I$ when $i$ runs in $\{1,\dots,n\}$, and take any sequence
$t_i\uparrow1$ with $t_1=0$ when $i$ runs in $\Z_+$. In any case, let
$\rho_i:[t_i,t_{i+1}]\to I$ be any orientation preserving
homeomorphism for each $i$. Then an integrable homotopy $H_\psi:\FF\times I\to\FF'$ between $\phi$ and $\psi$ can be defined by
$$
H_\psi(x,t)=
\begin{cases}
H_i(x,\rho_i(t))&\text{if $t\in[t_i,t_{i+1}]$ for some $i$,}\\
\psi(x)&\text{if $t=1$.}
\end{cases}
$$
Moreover it is easy to check that $H_\psi$ depends continuously on $\psi$.
\end{proof}

\begin{rem}
When the order of differentiability $r$ is high enough,
Proposition~\ref{p:integrable homotopy} could be proved by using more descriptive tools.
For instance, a riemannian metric on the leaves, $C^r$ on $M$, could
be used as follows. If $\phi$ and $\psi$ are close enough in
$C_{SP}(\FF,\FF')$, for each $x\in M$, there is a unique
minimal geodesic path $c_x$ in a leaf of $\FF'$ from $\phi(x)$ to
$\psi(x)$, which is defined in the unit interval $I$ and depends
smoothly on $x$. Therefore an integrable homotopy $H$ between $\phi$
and $\psi$ is defined by $H(x,t)=c_x(t)$.
\end{rem}

Let $A$ be a regular $C^r$ submanifold of $M'$ whose intersection with $\FF'$ is clean.

\begin{cor}\label{c:integrable homotopy}
Each $\phi\in C^r(\FF,\FF';A)$ has some neighborhood $\NN$ in $C^r_{SP}(\FF,\FF';A)$ such that there exists an integrable homotopy in $C^r(\FF\times I,\FF';A)$ between $\phi$ and any map in $\NN$.
\end{cor}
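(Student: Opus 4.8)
The plan is to derive this from Proposition~\ref{p:integrable homotopy}, which supplies $C^0$ integrable homotopies depending continuously on the target map, and Proposition~\ref{p:C r integrable homotopy}, which upgrades a $C^0$ integrable homotopy between two $C^r$ foliation maps to a $C^r$ one; the role of the hypothesis ``$\phi\in C^r(\FF,\FF';A)$'' is kept under control throughout by the sp-openness of the corresponding conditions (Proposition~\ref{p:C r(cF,cF';A) is sp-open}), applied on $M\times I$ via the straightforward extension of these notions to manifolds with boundary (Section~\ref{s:boundary}), since $\FF\times I$ is a foliation on $M\times I$.

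First I would apply Proposition~\ref{p:integrable homotopy} to $\phi$, viewed as an element of $C(\FF,\FF')$, obtaining an sp-neighbourhood $\NN_0$ of $\phi$ and a continuous map $\NN_{0,SP}\to C_{SP}(\FF\times I,\FF')$, $\psi\mapsto H_\psi$, such that each $H_\psi$ is an integrable homotopy between $\phi$ and $\psi$ and $H_\phi$ is the constant homotopy $(x,t)\mapsto\phi(x)$. Fix a good local saturation $S$ of $A$ with $\phi^{-1}(S)$ a neighbourhood of $\phi^{-1}(A)$ in $M$. Then $H_\phi^{-1}(S)=\phi^{-1}(S)\times I$ is a neighbourhood of $H_\phi^{-1}(A)=\phi^{-1}(A)\times I$ in $M\times I$, so $H_\phi\in C(\FF\times I,\FF';A)$. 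Since $C(\FF\times I,\FF';A)$ is sp-open and $\psi\mapsto H_\psi$ is continuous, there is a smaller sp-neighbourhood $\NN_1\subset\NN_0$ of $\phi$ with $H_\psi\in C(\FF\times I,\FF';A)$ for every $\psi\in\NN_1$.

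Next I would put $\NN=\NN_1\cap C^r(\FF,\FF';A)$, which is an sp-neighbourhood of $\phi$ in $C^r_{SP}(\FF,\FF';A)$, and fix $\psi\in\NN$. Then $H_\psi$ is an integrable homotopy between the $C^r$ foliation maps $\phi$ and $\psi$, so by Proposition~\ref{p:C r integrable homotopy} any sp-neighbourhood of $H_\psi$ in $C_{SP}(\FF\times I,\FF')$ contains a $C^r$ integrable homotopy $G_\psi$ between $\phi$ and $\psi$. Choosing such a neighbourhood contained in the sp-open set $C(\FF\times I,\FF';A)$ --- possible because $H_\psi$ lies in it --- we obtain $G_\psi\in C(\FF\times I,\FF';A)$; being moreover $C^r$, it lies in $C^r(\FF\times I,\FF';A)$. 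This $G_\psi$ is the desired integrable homotopy, which finishes the proof.

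There is no genuine obstacle here: the argument is essentially bookkeeping, and the only point requiring a little care is to work with a single good local saturation $S$ throughout, so that membership of $H_\psi$ --- and then of $G_\psi$ --- in $C^r(\FF\times I,\FF';A)$ follows from one application of sp-openness together with the continuity of $\psi\mapsto H_\psi$, rather than from choosing a separate saturation for each map. One should also check that Propositions~\ref{p:integrable homotopy}, \ref{p:C r integrable homotopy} and~\ref{p:C r(cF,cF';A) is sp-open} are being invoked legitimately on the manifold-with-boundary $M\times I$, which is exactly what Section~\ref{s:boundary} permits.
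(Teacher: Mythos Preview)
Your proposal is correct and follows essentially the same approach as the paper: the paper's proof is the single sentence ``This is a straightforward consequence of Propositions~\ref{p:C r(cF,cF';A) is sp-open},~\ref{p:C r integrable homotopy} and~\ref{p:integrable homotopy},'' and you have simply spelled out how these three propositions are combined. Your care in verifying $H_\phi\in C(\FF\times I,\FF';A)$ via a fixed good local saturation, and in noting that the sp-openness of $C(\FF\times I,\FF';A)$ on the manifold-with-boundary $M\times I$ is covered by Section~\ref{s:boundary}, fills in exactly the details the paper leaves implicit.
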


\begin{proof}
This is a straightforward consequence of Propositions~\ref{p:C
r(cF,cF';A) is sp-open},~\ref{p:C r integrable homotopy}
and~\ref{p:integrable homotopy}.
\end{proof}

\begin{cor}\label{c:integrable homotopy to pitchfork ls}
Suppose that $r\ge1$. If $A$ is closed in $M'$, then there exists an integrable homotopy in $C^{r,0}(\FF\times I,\FF';A)$ between each foliation map in $C^{r,0}(\FF,\FF';A)$ and some map in $\pitchfork^r_{\text{\rm ls}}\!\!(\FF,\FF';A)$.
\end{cor}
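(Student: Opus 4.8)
The plan is to deform $\phi$ in two stages — first by an integrable homotopy to a genuinely $C^r$ foliation map, then by a second integrable homotopy to an ls-transverse one — and to concatenate the two homotopies.

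For the first stage I would start from $\phi\in C^{r,0}(\FF,\FF';A)$ and apply Proposition~\ref{p:integrable homotopy}, which produces an sp-neighbourhood $\NN$ of $\phi$ in $C_{SP}(\FF,\FF')$ together with a continuous family $\psi\mapsto H_\psi$ of integrable homotopies between $\phi$ and $\psi$, normalised so that $H_\phi(x,t)\equiv\phi(x)$. The point to check is that $H_\psi$ is again of class $C^{r,0}$ when $\psi$ is, and that it lies in $C^{r,0}(\FF\times I,\FF';A)$ after shrinking $\NN$. The first holds because along an integrable homotopy a point never leaves the plaque it occupies, so the induced map of $H_\psi$ on a local quotient of $\FF'$ does not depend on the homotopy parameter and differs from the ($C^r$) induced map of $\phi$ only by a $C^r$ holonomy transformation, hence is $C^r$. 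The second follows, by continuity of $\psi\mapsto H_\psi$, from the observation that the constant homotopy $H_\phi$ already lies in $C^{r,0}(\FF\times I,\FF';A)$ (its preimage of a good local saturation $S$ of $A$ witnessing $\phi\in C^{r,0}(\FF,\FF';A)$ is $\phi^{-1}(S)\times I$), together with Proposition~\ref{p:C r(cF,cF';A) is sp-open}. Since $C^r(\FF,\FF';A)$ is dense in $C^{r,0}_{SP}(\FF,\FF';A)$ by Corollary~\ref{c:C r(cF,cF';A) is dense in C r,s(cF,cF';A)}, I can then pick $\phi_1$ in this dense set lying in the shrunk neighbourhood and set $G_1:=H_{\phi_1}\in C^{r,0}(\FF\times I,\FF';A)$, an integrable homotopy between $\phi$ and $\phi_1$.

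For the second stage I would apply Corollary~\ref{c:integrable homotopy} to $\phi_1\in C^r(\FF,\FF';A)$ to obtain an sp-neighbourhood $\NN_2$ of $\phi_1$ in $C^r_{SP}(\FF,\FF';A)$ all of whose members are joined to $\phi_1$ by an integrable homotopy in $C^r(\FF\times I,\FF';A)\subseteq C^{r,0}(\FF\times I,\FF';A)$. Because $A$ is closed in $M'$, Theorem~\ref{t:ls-transverse} gives that $\pitchfork^r_{\text{\rm ls}}\!\!(\FF,\FF';A)$ is residual in $C^r_{SP}(\FF,\FF';A)$, which is a Baire space by Corollary~\ref{c:C r SP(cF,cF';A) is Baire}, so this residual set is dense and meets $\NN_2$; choosing $\psi$ in the intersection yields an integrable homotopy $G_2\in C^{r,0}(\FF\times I,\FF';A)$ between $\phi_1$ and $\psi\in\pitchfork^r_{\text{\rm ls}}\!\!(\FF,\FF';A)$. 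Finally I would take $G$ to be the concatenation of $G_1$ and $G_2$, reparametrised so that $G(\cdot,t)=G_1(\cdot,2t)$ for $t\le\frac12$ and $G(\cdot,t)=G_2(\cdot,2t-1)$ for $t\ge\frac12$; it is an integrable homotopy between $\phi$ and $\psi$ (the two halves agree at $t=\frac12$ where both equal $\phi_1$, and for each $x$ the whole path $G(x,\cdot)$ stays in the leaf of $\FF'$ through $\phi(x)$), and one verifies $G\in C^{r,0}(\FF\times I,\FF';A)$ exactly as in the first stage: its transverse data are constant in $t$ and coincide locally with those of $G_1$ or $G_2$ — both $C^r$, so nothing is lost at $t=\frac12$ — and its preimage of a good local saturation of $A$ serving for both $G_1$ and $G_2$ (which exists by the discussion of good local saturations in Section~\ref{s:clean}) is obtained by gluing the corresponding neighbourhoods on $M\times[0,\frac12]$ and $M\times[\frac12,1]$.

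The step I expect to be the main obstacle is precisely this $C^{r,0}$-bookkeeping: making sure that the homotopy produced by Proposition~\ref{p:integrable homotopy}, and the concatenation of two integrable homotopies, stay transversely $C^r$ and respect a good local saturation of $A$. Everything there reduces to the single fact that an integrable homotopy moves points only along the leaves of $\FF'$, so its transverse behaviour is frozen in time and is controlled by the transverse behaviour of either endpoint; this is also why the junction at $t=\frac12$ produces no loss of leafwise differentiability in the transverse directions. The remainder is a routine assembly of Corollary~\ref{c:integrable homotopy}, Theorem~\ref{t:ls-transverse}, Corollary~\ref{c:C r SP(cF,cF';A) is Baire} and Corollary~\ref{c:C r(cF,cF';A) is dense in C r,s(cF,cF';A)}.
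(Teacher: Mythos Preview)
Your argument is correct and uses the same ingredients the paper cites (Corollary~\ref{c:C r SP(cF,cF';A) is Baire}, Corollary~\ref{c:C r(cF,cF';A) is dense in C r,s(cF,cF';A)}, Theorem~\ref{t:ls-transverse}, and Proposition~\ref{p:integrable homotopy}), but you organise them slightly differently. The paper's one-line proof is meant to be read as a single step: combining Corollary~\ref{c:C r(cF,cF';A) is dense in C r,s(cF,cF';A)} with Theorem~\ref{t:ls-transverse} and Corollary~\ref{c:C r SP(cF,cF';A) is Baire} shows directly that $\pitchfork^r_{\text{\rm ls}}\!\!(\FF,\FF';A)$ is dense in $C^{r,0}_{SP}(\FF,\FF';A)$ (dense in $C^r_{SP}$, hence dense for the coarser $C^0$ sp-topology on $C^r$, which in turn is dense in $C^{r,0}_{SP}$), so one may choose an ls-transverse $\psi$ inside the neighbourhood $\NN$ of Proposition~\ref{p:integrable homotopy} and obtain the required homotopy in a single application, with no concatenation. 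Your two-stage route---first to $C^r$, then to ls-transverse, then glue---is perfectly valid and has the virtue of making the $C^{r,0}$ and good-local-saturation bookkeeping for the homotopy more transparent; the paper simply suppresses that verification. Either way the substance is the same.
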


\begin{proof}
This follows directly from Corollaries~\ref{c:C r SP(cF,cF';A) is Baire} and~\ref{c:C r(cF,cF';A) is dense in C r,s(cF,cF';A)}, Theorem~\ref{t:ls-transverse} and Proposition~\ref{p:integrable homotopy}.
\end{proof}

\begin{prop}\label{p:ls-transverse homotopy}
Suppose that $1\le r<\infty$ and $A$ is closed in $M'$. If there is an integrable homotopy in $C(\FF\times I,\FF';A)$ between two foliation maps $\phi$ and $\psi$ in $\pitchfork^r_{\text{\rm ls}}\!\!(\FF,\FF';A)$, then there exists an integrable homotopy $G$ in $\pitchfork^r_{\text{\rm ls}}\!\!(\FF\times I,\FF';A)$ such that $G(x,t)=\phi(x)$ on some neighborhood of $M\times\{0\}$ in $M\times I$, and $G(x,t)=\psi(x)$ on some neighborhood of $M\times\{1\}$ in $M\times I$.
\end{prop}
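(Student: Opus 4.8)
The plan is to mimic the classical argument for homotoping a homotopy rel endpoints into a transverse one (Hirsch, Chapter~3), but carried out in good local saturations and using the leafwise approximation and ls-transversality results already established. Let $H\in C(\FF\times I,\FF';A)$ be the given integrable homotopy between $\phi$ and $\psi$. First I would promote $H$ to a $C^{r,0}$ object: extend $H$ over $\FF\times\R$ by setting it equal to $\phi$ for $t\le 0$ and $\psi$ for $t\ge 1$ (as in the proof of Proposition~\ref{p:C r integrable homotopy}), obtaining $\overline H\in C^{r,0}(\FF\times\R,\FF';A)$, and observe that $\overline H$ is already ls-transverse to $A$ on a neighborhood of $M\times(\R\setminus(0,1))$ in $M\times\R$, because there it coincides with $\phi\times\id$ or $\psi\times\id$, and $\phi,\psi\in\pitchfork^r_{\text{\rm ls}}\!\!(\FF,\FF';A)$ (ls-transversality of a product homotopy on a slice $M\times\{t_0\}$ reduces to ls-transversality of the slice map, since the $I$-direction is tangent to $\FF'$ and adds nothing to the leafwise image; one must just check the leaf-direction bookkeeping). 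Let $E\subset M\times\R$ be a closed neighborhood of $M\times(\R\setminus(0,1))$ on which $\overline H\pitchfork_{\text{\rm ls}}A$.

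Next I would apply Corollary~\ref{c:ls-transverse} to $\overline H\in C^{r,0}(\FF\times\R,\FF';A)$ — wait, more precisely: since $\overline H$ is only $C^{r,0}$, first invoke Corollary~\ref{c:C r(cF,cF';A) is dense in C r,s(cF,cF';A)} to replace $\overline H$, within any prescribed sp-neighborhood in $C^{r,0}_{SP}(\FF\times\R,\FF';A)$, by a genuinely $C^r$ foliation map agreeing with $\overline H$ on a neighborhood of a slightly smaller closed set $E'\subset\Int E$; call it $\overline H_1\in C^r(\FF\times\R,\FF';A)$. Then apply Corollary~\ref{c:ls-transverse} with the closed set $E'$ to obtain $\overline H_2\in\pitchfork^r_{\text{\rm ls}}\!\!(\FF\times\R,\FF';A)$ lying in the same sp-neighborhood and equal to $\overline H_1$ (hence to $\overline H$) on a neighborhood of $E'$, in particular on a neighborhood of $M\times\{0\}$ and $M\times\{1\}$. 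Finally, reparametrize in $t$ exactly as in Proposition~\ref{p:C r integrable homotopy}: choose $\epsilon>0$ small enough that $M\times(-\infty,\epsilon]$ and $M\times[1-\epsilon,\infty)$ lie in the region where $\overline H_2$ equals $\phi\times\id$ resp.\ $\psi\times\id$, and set $G(x,t)=\overline H_2(x,(1+2\epsilon)t-\epsilon)$. Then $G\in\pitchfork^r_{\text{\rm ls}}\!\!(\FF\times I,\FF';A)$, $G(x,t)=\phi(x)$ for $t$ near $0$, and $G(x,t)=\psi(x)$ for $t$ near $1$, as required.

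There is one point of care I would flag as the main obstacle: verifying that ls-transversality of $\overline H$ (a map on the \emph{product} foliation $\FF\times\R$) near the ends genuinely follows from $\phi,\psi\pitchfork_{\text{\rm ls}}A$, and conversely that ls-transversality of the perturbed $\overline H_2$ is preserved under the affine reparametrization in $t$. Both are essentially formal — the $\partial_t$ direction maps into $T\FF'$ and contributes nothing new to $\phi_*(T(\FF\times\R))+T(\FF'|_A)$ relative to a time slice, and affine reparametrization is a diffeomorphism of $\FF\times I$ — but one must check that a \emph{good local saturation} $S$ of $A$ witnessing ls-transversality for $\phi$ and for $\psi$ can be chosen simultaneously (take the intersection of the two, which is again a good local saturation by the remarks preceding Proposition~\ref{p:C r(cF,cF';A) is sp-open}), and that $\overline H^{-1}(S)$ is the required neighborhood of $\overline H^{-1}(A)$ near the ends. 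A second minor subtlety is that Corollary~\ref{c:ls-transverse} is stated for manifolds without boundary while $M\times\R$ is such a manifold, so no boundary version is needed here; had we worked directly with $M\times I$ we would have to appeal to the boundary extensions mentioned in Section~\ref{s:boundary}, which is why passing to $M\times\R$ and reparametrizing is the cleaner route.
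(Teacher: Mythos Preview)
Your overall strategy matches the paper's, but there is a concrete gap in how you handle the endpoints. You take $E$ to be a closed neighbourhood of $M\times(\R\setminus(0,1))$ and then want to apply Corollary~\ref{c:C r(cF,cF';A) is dense in C r,s(cF,cF';A)} with a closed set $E'\subset\Int E$ that still contains $M\times\{0,1\}$. But that corollary requires $\overline H$ to be $C^r$ on a neighbourhood of $E'$, and $\overline H$ is \emph{not} $C^r$ at $M\times\{0\}$ or $M\times\{1\}$: it is obtained by gluing the merely continuous map $H$ to the constant-in-$t$ maps $\phi,\psi$, so the $t$-derivatives need not match there. For the same reason your earlier claim that $\overline H\pitchfork_{\text{ls}}A$ on $E$ is not well posed at $t=0,1$, since ls-transversality is defined only for $C^1$ maps. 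Consequently you cannot arrange $\overline H_2=\overline H$ on a neighbourhood of $M\times\{0,1\}$, and your final reparametrization, which needs $\overline H_2=\phi$ on $M\times(-\infty,\epsilon]$ with $\epsilon>0$, breaks down.

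The paper sidesteps this by never trying to preserve values near $t=0,1$. It applies the approximation corollary with the closed set $M\times(\R\setminus I)$ (where $\overline H$ \emph{is} $C^r$) and only asks the $C^r$ approximation $F$ to agree with $\overline H$ on $M\times(\R\setminus[-1,2])$; then it applies Corollary~\ref{c:ls-transverse} to get $F'\in\pitchfork^r_{\text{ls}}$ agreeing with $F$ on $M\times(\R\setminus[-2,3])$; finally it uses the larger reparametrization $G(x,t)=F'(x,7t-3)$, which sends $t$ near $0$ to values below $-2$ and $t$ near $1$ to values above $3$, where $F'=\phi$ and $F'=\psi$ respectively. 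Your argument is easily repaired along the same lines: take $E'=M\times(\R\setminus(-\delta,1+\delta))$ for some $\delta>0$ and enlarge the reparametrization accordingly. The remaining points you raise---that the $\partial_t$-direction contributes nothing new to ls-transversality, that one can intersect good local saturations, and that working on $M\times\R$ avoids boundary issues---are correct and handled just as in the paper.
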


\begin{proof}
Given an integrable homotopy $H\in C(\FF\times I,\FF';A)$ between $\phi$ and $\psi$, let $\overline{H}\in C^{r,0}(\FF\times\R,\FF';A)$ be defined by
$$
\overline{H}(x,t)=
\begin{cases}
H(x,t) & \text{if $t\in I$}\\
\phi(x) & \text{if $t<0$}\\
\psi(x) & \text{if $t>1$\;.}
\end{cases}
$$
Since $\overline{H}$ is $C^r$ on $M\setminus I$, by
Corollary~\ref{c:C r(cF,cF';A) is dense in C r,s(cF,cF';A)}, there
is a foliation map $F\in C^r(\FF\times\R,\FF';A)$ which equals
$\overline{H}$ on $M\times(\R\setminus[-1,2])$. By
Corollary~\ref{c:ls-transverse}, there exists some foliation map
$F'$ in $\pitchfork^r_{\text{\rm ls}}\!\!(\FF\times\R,\FF';A)$ which
equals $F$ on $\R\setminus[-2,3]$. Then the result follows with
$G:\FF\times I\to\FF'$ defined by $G(x,t)=F'(x,7t-3)$.
\end{proof}

\section{Coincidence points}\label{s:coincidence}

A  specially useful case of ls-transversality is studied in this section.

For $0\le r\le\infty$, let $\FF$ be a $C^r$ foliation on a manifold
$M$. Then the diagonal $\Delta_M$ of $M\times M$ is not
transverse to $\FF\times\FF$, but we have the following.

\begin{prop}\label{p:Delta M cap(FF times FF) clean}
The intersection of $\Delta_M$ and $\FF\times\FF$ is clean.
\end{prop}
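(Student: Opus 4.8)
The statement asserts that the diagonal $\Delta_M \subset M \times M$ has clean intersection with the product foliation $\FF \times \FF$, meaning that around each point of $\Delta_M$ there is a foliation chart $\theta$ of $\FF \times \FF$ with $\theta(U \cap \Delta_M) = T_0 \times B_0$ for regular $C^r$ submanifolds $T_0 \subset T$ and $B_0 \subset B$. The natural approach is to build such a chart explicitly from a single foliation chart of $\FF$ on $M$.

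\smallskip

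The plan is as follows. First I would fix a point $(x,x) \in \Delta_M$ and choose a foliation chart $\theta : U \to T \times B$ of $\FF$ around $x$, with $T \subset \R^q$ open and $B \subset \R^p$ convex open, so that the plaques of $\FF|_U$ are the fibers of the local projection $p : U \to T$. Then $\theta \times \theta : U \times U \to (T \times B) \times (T \times B)$ is a foliation chart of $\FF \times \FF$; after permuting coordinates it has the form $(T \times T) \times (B \times B)$, where the first factor is the local quotient and the second factor (being a product of convex sets, hence convex) carries the plaques. In these coordinates, the image of $(U \times U) \cap \Delta_M$ is $\{(t,t,b,b) : (t,b) \in \theta(U)\}$, i.e. the graph of the diagonal in both the transverse and the leafwise directions. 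So the image is $\Delta_T \times \Delta_B$, where $\Delta_T \subset T \times T$ and $\Delta_B \subset B \times B$ are the respective diagonals.

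\smallskip

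The remaining point is that $\Delta_T$ and $\Delta_B$ are regular $C^r$ submanifolds of $T \times T$ and $B \times B$ respectively — but the diagonal of any manifold (or open subset of Euclidean space) is always a closed regular $C^\infty$ (hence $C^r$) submanifold, cut out locally by the equations $x_i - y_i = 0$, so this is immediate. One subtlety: the chart $\theta \times \theta$ is defined on $U \times U$, which is an open neighborhood of $(x,x)$ in $M \times M$; one should note that every point of $\Delta_M$ is of the form $(x,x)$ and that distinguished open sets $U$ of $\FF$ cover $M$, so these charts $\theta \times \theta$ cover a neighborhood of $\Delta_M$, which is all that is required. There is no real obstacle here; the only care needed is bookkeeping of the coordinate permutation that separates the two transverse factors from the two leafwise factors so that the product structure of the foliation chart is manifest and the diagonal splits accordingly as a product of the two diagonals. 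I would present this by writing down the permuted chart and the image of $\Delta_M$ explicitly, then invoking the standard fact that a diagonal is a regular submanifold.
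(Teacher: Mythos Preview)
Your proposal is correct and follows essentially the same approach as the paper: both take a foliation chart $\theta=(\theta_1,\theta_2):U\to T\times B$ of $\FF$, form the permuted product chart $\sigma=(\theta_1\times\theta_1,\theta_2\times\theta_2):U\times U\to T\times T\times B\times B$ of $\FF\times\FF$, and observe that $\sigma((U\times U)\cap\Delta_M)=\Delta_T\times\Delta_B$. The paper is somewhat terser (it does not spell out that diagonals are regular submanifolds), but the argument is the same.
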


\begin{proof}
For any foliation chart
$\theta=(\theta_1,\theta_2) : U\to T\times B$ of $\FF$, the foliation chart
$$
\sigma=(\theta_1\times\theta_1,\theta_2\times\theta_2): U\times U
\to T\times T\times B\times B
$$
of $\FF\times\FF$ satisfies
$$
\sigma((U\times U)\cap\Delta_M)=\Delta_T\times\Delta_B\;,
$$
where $\Delta_T$ and $\Delta_B$
are the diagonals of $T\times T$ and $B\times B$.
\end{proof}

\begin{cor}\label{c:S}
For a regular covering $\{U_i\}$ of $M$ by distinguished open sets for $\FF$, the set
$$
S=\{(x,y)\in M\times M\ |\ \text{$x$ and $y$ are in the same plaque of some $U_i$}\}
$$
is a good local saturation of $\D_M$.
\end{cor}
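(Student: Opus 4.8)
The plan is to verify the two defining properties of a good local saturation of $\D_M$ directly, using Proposition~\ref{p:Delta M cap(FF times FF) clean} to know that $\D_M\cap(\FF\times\FF)$ is clean (so that the notion makes sense) and then checking that this particular $S$ is a local saturation of $\D_M$ satisfying the conclusions of Proposition~\ref{p:clean}. First I would record that $S$ is an open neighbourhood of $\D_M$ in $M\times M$: indeed $(x,x)$ lies in some $U_i\times U_i$, and the set of pairs $(y,z)\in U_i\times U_i$ lying in the same plaque of $U_i$ is open in $U_i\times U_i$, being $(p_i\times p_i)^{-1}(\D_{T_i})$ for the local projection $p_i$, which is a submersion; hence $S\supset\bigcup_i(p_i\times p_i)^{-1}(\D_{T_i})$ is open and contains $\D_M$. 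Next I would check that $S$ is saturated for the restriction of $\FF\times\FF$ to $S$ itself: if $(y,z)\in S$, say $y,z$ are in the same plaque $P$ of $U_i$, then the leaf of $(\FF\times\FF)|_S$ through $(y,z)$ is the connected component of $(L_y\times L_z)\cap S$ containing $(y,z)$, where $L_y=L_z=:L$ is the common leaf; since $P$ is an open subset of $L$ and $P\times P\subset S$, the whole plaquewise-diagonal piece stays in $S$, and moving along $L\times L$ within $S$ one never leaves $S$ because being ``in the same plaque of some $U_i$'' is preserved under the plaque-to-plaque holonomy along $L$ (the charts $U_i$ form a regular covering, so overlapping distinguished sets sit in a common one). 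Thus $S$ is locally saturated, and in fact it is the $(\FF\times\FF)|_V$-saturation of $\D_M$ for $V=\bigcup_i U_i\times U_i$, so it is a local saturation of $\D_M$.

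It remains to check the three properties in Proposition~\ref{p:clean}: that $S$ is a regular $C^r$ submanifold (here just an open subset of $M\times M$, hence automatic), that $(\FF\times\FF)|_S$ is a $C^r$ foliation whose leaves are open in the leaves of $\FF\times\FF$ (immediate since $S$ is open), and that for each $(x,x)\in\D_M$ the inclusion $T_{(x,x)}(\FF\times\FF)\hookrightarrow T_{(x,x)}S$ induces an isomorphism
$$
T_{(x,x)}(\FF\times\FF)\big/T_{(x,x)}(\FF\times\FF|_{\D_M})\;\cong\;T_{(x,x)}S\big/T_{(x,x)}\D_M\;.
$$
Working in the chart $\sigma$ of Proposition~\ref{p:Delta M cap(FF times FF) clean}, where $\sigma((U\times U)\cap\D_M)=\D_T\times\D_B$ and the leaves of $(\FF\times\FF)|_{\D_M}$ correspond to $\{(t,t)\}\times\D_B$, both sides of the displayed map are naturally identified with $T_xT\oplus T_xT$ modulo the diagonal $\D_{T_xT}$, i.e. with a single copy of $\R^q$; the map induced by the inclusion is the identity under these identifications, hence an isomorphism.

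The only genuinely substantive point — and the one I would write out with some care — is the holonomy-invariance argument showing $S$ is locally saturated, i.e. that if $y,z$ lie in a common plaque of $U_i$ and we push them along their common leaf into another chart $U_j$ (with $U_i\cap U_j\ne\emptyset$, so $U_i\cup U_j$ lies in a distinguished set $U_k$), then their images still lie in a common plaque; this is exactly the regularity of the covering $\{U_i\}$ together with the fact that plaques of $U_k$ restrict to (unions of pieces of) plaques of $U_i$ and of $U_j$. Everything else is bookkeeping in foliation charts. I expect no real obstacle beyond being careful that ``same plaque of some $U_i$'' really is a transitive-along-leaves relation within $V$, which the regularity hypothesis guarantees.
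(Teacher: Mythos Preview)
Your proposal contains a fundamental error: you assert that $S$ is open in $M\times M$, but this is false whenever $q=\codim\FF>0$. The set $(p_i\times p_i)^{-1}(\Delta_{T_i})$ is the preimage of a codimension-$q$ submanifold of $T_i\times T_i$ under a submersion, so it is a codimension-$q$ submanifold of $U_i\times U_i$, not an open subset. In the chart $\sigma$ from Proposition~\ref{p:Delta M cap(FF times FF) clean}, this set corresponds to $\Delta_T\times B\times B$, which has dimension $q+2p$, not $2(q+p)$. Consequently your verifications that ``$S$ is a regular $C^r$ submanifold (here just an open subset \ldots)'' and that ``$(\FF\times\FF)|_S$ has leaves open in the leaves of $\FF\times\FF$ (immediate since $S$ is open)'' are not valid as written; both statements are true, but they require the local description $\Delta_T\times B\times B$, not openness. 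Your identification of the two quotients in the displayed isomorphism with $T_xT\oplus T_xT$ modulo its diagonal (hence $\R^q$) is also wrong for the same reason: both quotients are naturally $T_xB\oplus T_xB$ modulo its diagonal, i.e.\ a copy of $\R^p$, because $T_{(x,x)}S$ corresponds to $\Delta_{T_tT}\times T_bB\times T_bB$ and $T_{(x,x)}\Delta_M$ to $\Delta_{T_tT}\times\Delta_{T_bB}$.

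The paper's proof simply observes that the charts $\sigma_i:U_i\times U_i\to T_i\times T_i\times B_i\times B_i$ from the proof of Proposition~\ref{p:Delta M cap(FF times FF) clean} are exactly a family of the type used in the proof of Proposition~\ref{p:clean} (locally finite, covering $\Delta_M$, each satisfying the clean-intersection condition, and with the union-in-bigger-chart property guaranteed by regularity of $\{U_i\}$); the $S$ of the corollary is then the $(\FF\times\FF)|_V$-saturation of $\Delta_M$ for $V=\bigcup_iU_i\times U_i$, which in each chart is $\Delta_{T_i}\times B_i\times B_i$, matching the definition of $S$. Your strategy of checking the conditions of Proposition~\ref{p:clean} directly is fine in spirit and close to what the paper does, but you must work with the correct local model $\Delta_T\times B\times B$ rather than with an open set.
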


\begin{proof}
This is a direct consequence of the proofs of Propositions~\ref{p:Delta M cap(FF times FF) clean} and~\ref{p:clean}.
\end{proof}

Let $\FF'$ be another $C^r$ foliation on a manifolds $M'$. Take a regular covering $\{U'_i\}$ of $M'$ by distinguished open sets of $\FF$, and let $S'$ be the corresponding good local saturation of $\D_{M'}$ given by Corollary~\ref{c:S}. For each $x'\in M'$, the leaf of $(\FF'\times\FF')|_{\D_{M'}}$ through $(x',x')$ is $\D_{L'}$, where $L'$ is the leaf of $\FF'$ through $x'$.

\begin{prop}\label{p:(phi,psi) in C r(cF,cF' times cF';D M)}
If $\phi,\psi\in C^r(\FF,\FF')$ induce the same map $M/\FF\to M'/\FF'$, then $(\phi,\psi)\in C^r(\FF,\FF'\times\FF';\D_{M'})$.
\end{prop}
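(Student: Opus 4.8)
The plan is to exhibit $(\phi,\psi)$ as a $C^r$ foliation map into $\FF'\times\FF'$ and then to verify, for the good local saturation $S'$ of $\D_{M'}$ furnished by Corollary~\ref{c:S}, the condition defining $C^r(\FF,\FF'\times\FF';\D_{M'})$. That $(\phi,\psi)\in C^r(\FF,\FF'\times\FF')$ is immediate from Lemma~\ref{l:product}: the pair $(\phi,\psi)$ corresponds under the canonical identification to the $C^r$ map $x\mapsto(\phi(x),\psi(x))$, and this map sends a leaf $L$ of $\FF$ into $\phi(L)\times\psi(L)$, a subset of a product of two leaves of $\FF'$, hence of a leaf of $\FF'\times\FF'$.

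So fix a regular covering $\{U'_i\}$ of $M'$ by distinguished open sets, with local quotients $T'_i$ and local projections $\pi'_i:U'_i\to T'_i$, and let $S'=\{(y,z)\in M'\times M':y$ and $z$ are in the same plaque of some $U'_i\}$ be the good local saturation of $\D_{M'}$ given by Corollary~\ref{c:S}. Since $(\phi,\psi)^{-1}(\D_{M'})=\Coin(\phi,\psi)$, it remains to show that $(\phi,\psi)^{-1}(S')$ is a neighbourhood of $\Coin(\phi,\psi)$. Fix $x_0\in\Coin(\phi,\psi)$ and set $y_0=\phi(x_0)=\psi(x_0)$; choose $i$ with $y_0\in U'_i$, and a foliation chart $\theta:U\to T\times B$ of $\FF$ around $x_0$ with $B$ convex and $\phi(U),\psi(U)\subset U'_i$. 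Because $\phi$ and $\psi$ are foliation maps and plaques are connected, $\phi$ and $\psi$ carry each plaque of $U$ into a single plaque of $U'_i$ (a connected subset of a leaf of $\FF'$ meeting $U'_i$ lies in one plaque). Hence $\pi'_i\circ\phi=\bar\phi\circ p$ and $\pi'_i\circ\psi=\bar\psi\circ p$ for continuous maps $\bar\phi,\bar\psi:T\to T'_i$, where $p:U\to T$ is the local projection of $\theta$; moreover $\bar\phi(p(x_0))=\bar\psi(p(x_0))=\pi'_i(y_0)$, and $(\phi,\psi)^{-1}(S')\cap U$ contains $p^{-1}(\{\,t\in T:\bar\phi(t)=\bar\psi(t)\,\})$.

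It therefore suffices to prove that $\bar\phi$ and $\bar\psi$ coincide on a neighbourhood of $p(x_0)$ in $T$, for then $p^{-1}(\{\bar\phi=\bar\psi\})$ is the required neighbourhood of $x_0$. This is where the hypothesis that $\phi$ and $\psi$ induce the same map $M/\FF\to M'/\FF'$ is used: it means that $\phi(x)$ and $\psi(x)$ always lie in one leaf of $\FF'$, so $\bar\phi(t)$ and $\bar\psi(t)$ are carried to each other by a holonomy transformation of $\FF'$ for each $t\in T$; together with the equality $\bar\phi(p(x_0))=\bar\psi(p(x_0))$ at the coincidence point and the continuity of $\bar\phi$ and $\bar\psi$, one should then conclude $\bar\phi=\bar\psi$ near $p(x_0)$. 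I expect this last implication --- from ``everywhere holonomy-related and equal at one point'' to ``equal on a neighbourhood'' --- to be the one genuinely delicate step, precisely because it fails for arbitrary continuous maps and must exploit that $\bar\phi,\bar\psi$ come from honest foliation maps together with the choice of $x_0$ as a true coincidence point; the remaining bookkeeping (independence of the chart $U'_i$, and covering $\Coin(\phi,\psi)$ by such neighbourhoods) is routine.
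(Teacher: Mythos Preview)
Your approach is precisely the paper's: for each coincidence point $x_0$, choose foliation charts around $x_0$ and $y_0=\phi(x_0)=\psi(x_0)$ and argue that every nearby plaque is carried by $\phi$ and by $\psi$ into one and the same target plaque. The paper's proof is a single-sentence assertion of exactly this claim; you have merely unpacked it, correctly isolating the crux as the implication ``$\bar\phi$ and $\bar\psi$ are holonomy-related for every $t$ and agree at $t_0=p(x_0)$, hence agree on a neighbourhood of $t_0$.''

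Your suspicion that this last step is delicate is well founded: as you have formulated it, it can fail. If the leaf of $\FF'$ through $y_0$ has nontrivial holonomy, $\bar\phi$ and $\bar\psi$ may differ on every punctured neighbourhood of $t_0$ while still being holonomy-related. Concretely, take $\FF=\FF'$ to be the suspension of a diffeomorphism $h:S^1\to S^1$ with an isolated fixed point $p_0$ (so $h\neq\id$ on any neighbourhood of $p_0$), let $\phi=\id_M$, and let $\psi$ be the leaf-preserving diffeomorphism covered by $(p,t)\mapsto(p,t+1)$ on $S^1\times\R$, equivalently $\psi([p,t])=[h(p),t]$. On the natural transversal through $x_0=[p_0,0]$ one finds $\bar\phi=\id$ and $\bar\psi=h$, agreeing only at $p_0$; for $q$ near $p_0$ with $q\neq p_0$ the images $\phi(P_q)=P_q$ and $\psi(P_q)=P_{h(q)}$ lie at leafwise distance $1$ and hence cannot sit in a common plaque of any sufficiently fine foliation chart. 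Thus the step you flagged does not go through in this situation, and the paper's one-line proof, which asserts the same plaque-matching without justification, shares exactly the gap you identified.
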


\begin{proof}
Any $x\in(\phi,\psi)^{-1}(\D_{M'})$ has some distinguished neighborhood $U_x$ in $M$ such that, for each plaque $P$ of $U_x$, there is a plaque $P'$ of some $U'_i$ such that $\phi(P),\psi(P)\subset P'$. Hence $(\phi,\psi)(U_x)\subset S'$ for all $x\in\Coin(\phi,\psi)$, yielding the statement.
\end{proof}

The following result has an analogous proof.

\begin{prop}\label{p:homotopy in C r(cF times I,cF' times cF';D M)}
For foliation maps $\phi,\psi,\xi,\zeta\in C^r(\FF,\FF')$ inducing the same map $M/\FF\to M'/\FF'$, any $C^r$ integrable homotopy between $(\phi,\psi)$ and $(\xi,\zeta)$ is in $C^r(\FF\times I,\FF'\times\FF';\D_{M'})$.
\end{prop}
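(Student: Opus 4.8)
The plan is to reduce this to Proposition~\ref{p:(phi,psi) in C r(cF,cF' times cF';D M)} essentially verbatim, using that an integrable homotopy with endpoints inducing the same leaf map also induces a single (leafwise constant) map on leaf spaces. First I would set up notation: let $H:\FF\times I\to\FF'$ be a $C^r$ integrable homotopy between $(\phi,\psi)$ and $(\xi,\zeta)$, so $H=(H_1,H_2)$ with $H_k:\FF\times I\to\FF'$ the two component maps, $H_k(\cdot,0)$ being $\phi,\psi$ and $H_k(\cdot,1)$ being $\xi,\zeta$ for $k=1,2$. The key observation is that, because $H$ is integrable, for each fixed $x\in M$ the path $t\mapsto H_1(x,t)$ stays in a single leaf of $\FF'$, and likewise for $H_2$; moreover since $\phi$ and $\xi$ induce the \emph{same} map on leaf spaces, the leaf of $\FF'$ containing $H_1(x,0)=\phi(x)$ is the same as the leaf containing $H_1(x,1)=\xi(x)$, so $H_1$ sends the whole leaf $L\times I$ of $\FF\times I$ through $x$ into one leaf of $\FF'$, namely the one determined by the common leaf-space image; the same holds for $H_2$. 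Hence both $H_1$ and $H_2$, viewed as foliation maps $\FF\times I\to\FF'$, induce the same map $(M\times I)/(\FF\times I)\to M'/\FF'$ (which factors through $M/\FF$).

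Next I would simply invoke Proposition~\ref{p:(phi,psi) in C r(cF,cF' times cF';D M)} with the foliated source $(M\times I,\FF\times I)$ in place of $(M,\FF)$: since $H_1,H_2\in C^r(\FF\times I,\FF')$ induce the same map on leaf spaces, we conclude $H=(H_1,H_2)\in C^r(\FF\times I,\FF'\times\FF';\D_{M'})$. Concretely, unwinding the proof of that proposition: any $(x,t)\in H^{-1}(\D_{M'})$ has a distinguished neighborhood $U_{(x,t)}$ in $M\times I$ (product of a distinguished neighborhood of $x$ in $M$ with an interval) such that for each plaque $P$ of $U_{(x,t)}$ there is a plaque $P'$ of some $U'_i$ with $H_1(P),H_2(P)\subset P'$, so $H(U_{(x,t)})\subset S'$, which is exactly the condition defining membership in $C^r(\FF\times I,\FF'\times\FF';\D_{M'})$. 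One should check that $\FF\times I$ is a genuine $C^r$ foliation (on a manifold with boundary) and that the machinery of Sections~\ref{s:clean}--\ref{s:ls-transver} applies there, which is covered by the remarks in Section~\ref{s:boundary}.

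The only mild subtlety — which I would expect to be the main point to get right rather than a genuine obstacle — is verifying that the common leaf on which $t\mapsto H_1(x,t)$ and $t\mapsto H_2(x,t)$ travel is \emph{the same} leaf for $H_1$ and $H_2$ at the endpoints, and hence for all $t$. This uses that $\phi$ and $\psi$ (and $\xi$, $\zeta$) all induce the \emph{same} map $M/\FF\to M'/\FF'$: the leaf-space image of the leaf through $x$ is a single leaf $L'$ of $\FF'$ independent of which of the four maps we use, and integrability forces $H_1(x,t),H_2(x,t)\in L'$ for all $t$. Once this is in hand, the argument is word-for-word that of Proposition~\ref{p:(phi,psi) in C r(cF,cF' times cF';D M)}, so I would state it tersely, referring back rather than repeating the plaque-chasing, which is why the paper can legitimately call the proof ``analogous.''
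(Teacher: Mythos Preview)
Your proposal is correct and follows exactly the approach the paper intends: the paper simply says the proof is ``analogous'' to that of Proposition~\ref{p:(phi,psi) in C r(cF,cF' times cF';D M)}, and you have carried out precisely that analogy by checking that the two components $H_1,H_2$ of the homotopy induce the same map $(M\times I)/(\FF\times I)\to M'/\FF'$ and then repeating the plaque argument verbatim with source $(M\times I,\FF\times I)$. One cosmetic slip: you write $H:\FF\times I\to\FF'$ where you mean $H:\FF\times I\to\FF'\times\FF'$, but this is clear from your subsequent decomposition $H=(H_1,H_2)$.
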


From now on in this section, assume that $r\ge1$.  In this case,~\eqref{e:T xFF/T x(FF| A)} becomes
\begin{equation}\label{e:T (x',x')(L' times L')/T (x',x')D L'}
T_{(x',x')}(L'\times L')/T_{(x',x')}\D_{L'}\cong T_{(x',x')}S'/T_{(x',x')}\D_{M'}\;.
\end{equation}
Furthermore
\begin{align*}
T_{(x',x')}(L'\times L')&\equiv T_{x'}L'\oplus T_{x'}L'\;,\\
T_{(x',x')}\D_{L'}&\equiv\{(v,v)\ |\ v\in T_{x'}L'\}\;,
\end{align*}
yielding a canonical isomorphism
\begin{equation}\label{e:T x'L'}
T_{(x',x')}(L'\times L')/T_{(x',x')}\D_{L'}\cong T_{x'}L'\;,\qquad[v,w]\mapsto v-w\;,
\end{equation}
where $[v,w]$ is the normal vector represented by $(u,v)\in T_{(x',x')}(L'\times L')$.

Take $\phi,\psi\in C^r(\FF,\FF')$ inducing the same map $M/\FF\to M'/\FF'$.  A point $x\in M$ is said to be of {\em coincidence\/} of $\phi$ and $\psi$ if $\phi(x)=\psi(x)$. Let $\Coin(\phi,\psi)$ denote the set of coincidence points of $\phi$ and $\psi$; {\em i.e.\/}, $\Coin(\phi,\psi)=(\phi,\psi)^{-1}(\D_{M'})$, which is closed in $M$. A coincidence point $x$ of $\phi$ and $\psi$ is called {\em leafwise simple\/} when $\phi_*-\psi_*:T_x\FF\to T_{x'}\FF'$ is onto, where $x'=\phi(x)$. The set of leafwise simple coincidence
points of $\phi$ and $\psi$ will be denoted by $\Coin_0(\phi,\psi)$.

\begin{prop}\label{p:leafwise simple coincidence point}
A point $x\in\Coin(\phi,\psi)$ is leafwise simple if and only if $(\phi,\psi)$ is leafwise transverse to $\D_M$ at $x$
\end{prop}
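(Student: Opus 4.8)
The plan is to unwind the definition of leafwise transversality from Section~\ref{s:leaf transver}, applied to the foliation map $(\phi,\psi):\FF\to\FF'\times\FF'$ (well defined by Lemma~\ref{l:product}) and to the submanifold $\D_{M'}$ of $M'\times M'$, whose intersection with $\FF'\times\FF'$ is clean by Proposition~\ref{p:Delta M cap(FF times FF) clean} applied to $M'$. Fix $x\in\Coin(\phi,\psi)$, put $x'=\phi(x)=\psi(x)$, and let $L'$ be the leaf of $\FF'$ through $x'$. As noted just before Proposition~\ref{p:(phi,psi) in C r(cF,cF' times cF';D M)}, the leaf of $(\FF'\times\FF')|_{\D_{M'}}$ through $(x',x')$ is $\D_{L'}$, so $T_{(x',x')}\bigl((\FF'\times\FF')|_{\D_{M'}}\bigr)=T_{(x',x')}\D_{L'}$. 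Under the canonical splitting $T_{(x',x')}(L'\times L')\equiv T_{x'}\FF'\oplus T_{x'}\FF'$ this subspace becomes $\{(v,v)\ |\ v\in T_{x'}\FF'\}$, while $(\phi,\psi)_*(T_x\FF)$ becomes $\{(\phi_*v,\psi_*v)\ |\ v\in T_x\FF\}$; hence $(\phi,\psi)\pitchfork_{\text{\rm leaf},x}\D_{M'}$ means exactly that
$$
T_{x'}\FF'\oplus T_{x'}\FF'=\{(\phi_*v,\psi_*v)\ |\ v\in T_x\FF\}+\{(v,v)\ |\ v\in T_{x'}\FF'\}\;.
$$

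The next step is to pass to the quotient by $T_{(x',x')}\D_{L'}$ via the canonical isomorphism~\eqref{e:T x'L'}, which is induced by the surjection $q:T_{x'}\FF'\oplus T_{x'}\FF'\to T_{x'}\FF'$, $(v,w)\mapsto v-w$, with kernel precisely $\{(v,v)\}=T_{(x',x')}\D_{L'}$. Since adjoining $\ker q$ to a subspace does not change its image under $q$, the displayed equality is equivalent to $q\bigl((\phi,\psi)_*(T_x\FF)\bigr)=T_{x'}\FF'$; and since $q\bigl((\phi,\psi)_*v\bigr)=\phi_*v-\psi_*v$, this says precisely that $\phi_*-\psi_*:T_x\FF\to T_{x'}\FF'$ is onto, i.e.\ that $x$ is a leafwise simple coincidence point. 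Both implications then follow at once. (Equivalently, one may apply the composite~\eqref{e:composite on T xFF} from the proof of Proposition~\ref{p:N 0 is open}, which here becomes $v\mapsto\phi_*v-\psi_*v$ after the identifications~\eqref{e:T (x',x')(L' times L')/T (x',x')D L'} and~\eqref{e:T x'L'}.)

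I do not expect any genuine obstacle: the argument is a chase through canonical identifications, using only the cleanness of $\D_{M'}$ in $M'\times M'$ (so that $(\FF'\times\FF')|_{\D_{M'}}$, and hence leafwise transversality at $x$, is meaningful) together with the explicit form $[v,w]\mapsto v-w$ of~\eqref{e:T x'L'}. The sole point needing a little care is to confirm that the map $q$ above really induces~\eqref{e:T x'L'}, equivalently that $\ker q=T_{(x',x')}\D_{L'}$ under the splitting $T_{(x',x')}(L'\times L')\equiv T_{x'}\FF'\oplus T_{x'}\FF'$, which is immediate.
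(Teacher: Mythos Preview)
Your proposal is correct and follows essentially the same approach as the paper's proof: the paper simply observes that the composite~\eqref{e:composite on T xFF} specializes here to $T_xL\xrightarrow{(\phi,\psi)_*}T_{(x',x')}(L'\times L')\to T_{(x',x')}(L'\times L')/T_{(x',x')}\D_{L'}$ and that, via~\eqref{e:T x'L'}, this is exactly $\phi_*-\psi_*:T_xL\to T_{x'}L'$, so surjectivity of the composite (leafwise transversality) coincides with surjectivity of $\phi_*-\psi_*$ (leafwise simple). Your main argument is a more explicit unwinding of the same identification, and your parenthetical remark at the end is precisely the paper's proof.
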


\begin{proof}
Let $L$ be the leaf of $\FF$ through $x$, and let $L'$ be the leaf of $\FF'$ through $x'=\phi(x)$. In this case,~\eqref{e:composite on T xFF} becomes
$$
\begin{CD}
T_xL @>{(\phi,\psi)_*}>> T_{(x',x')}(L'\times L') @>>> T_{(x',x')}(L'\times L')/T_{(x',x')}\D_{L'}\;.
\end{CD}
$$
Via~\eqref{e:T x'L'}, this composite corresponds to $\phi_*-\psi_*:T_xL\to T_{x'}L'$, and the result follows.
\end{proof}

By Proposition~\ref{p:leafwise simple coincidence point}, the following are particular cases of Propositions~\ref{p:N 0 is open},~\ref{p:N 0 is C r and transverse} and~\ref{c:N 0 = transverse}.

\begin{cor}\label{c:Coin 0 is open}
$\Coin_0(\phi,\psi)$ is open in $\Coin(\phi,\psi)$.
\end{cor}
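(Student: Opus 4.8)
The plan is to recognize $\Coin_0(\phi,\psi)$ as an instance of the set $N_0$ studied in Section~\ref{s:leaf transver}, and then to quote Proposition~\ref{p:N 0 is open} verbatim. The only thing that needs checking is that the standing hypotheses of that section are in force for the foliation map $(\phi,\psi):\FF\to\FF'\times\FF'$ and the submanifold $A=\D_{M'}$.

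First I would observe that $\D_{M'}\cap(\FF'\times\FF')$ is clean by Proposition~\ref{p:Delta M cap(FF times FF) clean}, so that the pair $(\FF'\times\FF',\D_{M'})$ meets the assumptions of Sections~\ref{s:clean}--\ref{s:leaf transver}. Next, since $\phi$ and $\psi$ induce the same map $M/\FF\to M'/\FF'$, Proposition~\ref{p:(phi,psi) in C r(cF,cF' times cF';D M)} gives $(\phi,\psi)\in C^r(\FF,\FF'\times\FF';\D_{M'})$, which is precisely the class of maps for which the subsets $N=(\phi,\psi)^{-1}(A)$ and $N_0=\{x\in N\ |\ (\phi,\psi)\pitchfork_{\text{\rm leaf},x}A\}$ of Section~\ref{s:leaf transver} are defined.

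It then remains to identify these two sets with the objects in the statement. By definition $N=(\phi,\psi)^{-1}(\D_{M'})=\Coin(\phi,\psi)$. By Proposition~\ref{p:leafwise simple coincidence point}, a point $x\in\Coin(\phi,\psi)$ is leafwise simple exactly when $(\phi,\psi)$ is leafwise transverse to $\D_{M'}$ at $x$, that is, exactly when $x\in N_0$; hence $N_0=\Coin_0(\phi,\psi)$. Applying Proposition~\ref{p:N 0 is open}, which asserts that $N_0$ is open in $N$, yields that $\Coin_0(\phi,\psi)$ is open in $\Coin(\phi,\psi)$.

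There is no genuine obstacle here: all the content sits in the earlier results. Propositions~\ref{p:(phi,psi) in C r(cF,cF' times cF';D M)} and~\ref{p:leafwise simple coincidence point} translate the coincidence-point language into the ls-transversality framework, and the openness of $N_0$ rests on the fact that the composites~\eqref{e:composite on T xFF} assemble into a vector bundle homomorphism $(T\FF)|_N\to(T\FF')|_A/T(\FF'|_A)$, surjectivity being an open condition on the fibre. The same bookkeeping, with Proposition~\ref{p:N 0 is C r and transverse} and Corollary~\ref{c:N 0 = transverse} in place of Proposition~\ref{p:N 0 is open}, will give the companion statements: $\Coin_0(\phi,\psi)$ is a $C^r$ transversal of $\FF$, and when $(\phi,\psi)\pitchfork_{\text{\rm ls}}\D_{M'}$ the set $\Coin(\phi,\psi)$ is a regular $C^r$ submanifold of $M$ with $\Coin_0(\phi,\psi)$ its locus of transversality to $\FF$.
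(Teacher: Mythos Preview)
Your proposal is correct and follows exactly the paper's own argument: the paper states, immediately before this corollary, that it is a particular case of Proposition~\ref{p:N 0 is open} via Proposition~\ref{p:leafwise simple coincidence point}. You have simply spelled out the identification $N=\Coin(\phi,\psi)$, $N_0=\Coin_0(\phi,\psi)$ that the paper leaves implicit.
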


\begin{cor}\label{c:Coin 0 is C r and transverse}
$\Coin_0(\phi,\psi)$ is a regular $C^r$ submanifold of $M$ transverse to $\FF$.
\end{cor}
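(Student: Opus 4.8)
The plan is to deduce Corollary~\ref{c:Coin 0 is C r and transverse} directly from the machinery of Section~\ref{s:leaf transver}, by recognizing it as the specialization of Proposition~\ref{p:N 0 is C r and transverse} to the foliation map $(\phi,\psi):\FF\to\FF'\times\FF'$ and the submanifold $A=\D_{M'}\subset M'\times M'$. So the first thing I would do is check that this choice of data really falls under the hypotheses of Section~\ref{s:leaf transver}: the intersection of $\D_{M'}$ with $\FF'\times\FF'$ is clean by Proposition~\ref{p:Delta M cap(FF times FF) clean}, so $\FF'|_{\D_{M'}}$ and good local saturations of $\D_{M'}$ are available; and, since $\phi$ and $\psi$ induce the same map $M/\FF\to M'/\FF'$, Proposition~\ref{p:(phi,psi) in C r(cF,cF' times cF';D M)} gives $(\phi,\psi)\in C^r(\FF,\FF'\times\FF';\D_{M'})$, which is exactly the membership required to form the sets $N$ and $N_0$ of Section~\ref{s:leaf transver}.

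With that in place, I would record the identifications. Taking $N=(\phi,\psi)^{-1}(\D_{M'})$ in the notation of Section~\ref{s:leaf transver} gives $N=\Coin(\phi,\psi)$ by definition. For the open subset, $N_0$ is by definition the set of $x\in N$ at which $(\phi,\psi)$ is leafwise transverse to $\D_{M'}$; by Proposition~\ref{p:leafwise simple coincidence point} this holds precisely when $x$ is a leafwise simple coincidence point of $\phi$ and $\psi$, so $N_0=\Coin_0(\phi,\psi)$. (The same identification, via Proposition~\ref{p:N 0 is open}, gives Corollary~\ref{c:Coin 0 is open}.) Then Proposition~\ref{p:N 0 is C r and transverse}, applied verbatim to the present $N_0$, says that $\Coin_0(\phi,\psi)$ is a regular $C^r$ submanifold of $M$ transverse to $\FF$, which is the statement to be proved.

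There is essentially no obstacle here, since all the real content was already established in Sections~\ref{s:clean}--\ref{s:leaf transver}; the only point that deserves a line of verification is that the clean-intersection property for $\D_{M'}$ and the hypothesis that $\phi,\psi$ induce the same map on leaf spaces together place us in the hypotheses of Propositions~\ref{p:N 0 is open} and~\ref{p:N 0 is C r and transverse}, and this is furnished respectively by Propositions~\ref{p:Delta M cap(FF times FF) clean} and~\ref{p:(phi,psi) in C r(cF,cF' times cF';D M)} together with the translation of leafwise transversality to $\D_{M'}$ into leafwise simplicity recorded in Proposition~\ref{p:leafwise simple coincidence point}.
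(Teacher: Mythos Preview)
Your proposal is correct and follows exactly the paper's approach: the paper states that this corollary (together with Corollaries~\ref{c:Coin 0 is open} and~\ref{c:Coin 0 = transverse}) is a particular case of Proposition~\ref{p:N 0 is C r and transverse} via Proposition~\ref{p:leafwise simple coincidence point}. Your write-up is in fact more explicit than the paper in checking the hypotheses (clean intersection via Proposition~\ref{p:Delta M cap(FF times FF) clean} and membership in $C^r(\FF,\FF'\times\FF';\D_{M'})$ via Proposition~\ref{p:(phi,psi) in C r(cF,cF' times cF';D M)}), but the logical content is identical.
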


\begin{cor}\label{c:Coin 0 = transverse}
If $(\phi,\psi)$ is ls-transverse to $\D_{M'}$, then $\Coin(\phi,\psi)$ is a regular submanifold of $M$, and $\Coin_0(\phi,\psi)$ is the set of points where $\Coin(\phi,\psi)$ is transverse to $\FF$.
\end{cor}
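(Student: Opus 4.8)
The plan is to obtain this corollary as a direct specialization of Corollary~\ref{c:N 0 = transverse}, using the dictionary already established in this section. First I would regard $(\phi,\psi)$ as a $C^r$ foliation map $\FF\to\FF'\times\FF'$ and take $A=\D_{M'}$. By Proposition~\ref{p:Delta M cap(FF times FF) clean} applied to $(M',\FF')$, the intersection $\D_{M'}\cap(\FF'\times\FF')$ is clean, so the theory of Sections~\ref{s:clean}--\ref{s:leaf transver} is available with $\D_{M'}$ in the role of $A$. Since $\phi$ and $\psi$ induce the same map $M/\FF\to M'/\FF'$, Proposition~\ref{p:(phi,psi) in C r(cF,cF' times cF';D M)} gives $(\phi,\psi)\in C^r(\FF,\FF'\times\FF';\D_{M'})$; together with the hypothesis that $(\phi,\psi)$ is ls-transverse to $\D_{M'}$, this places us exactly in the setting of Corollary~\ref{c:N 0 = transverse}.

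Next I would match the relevant sets. With $N=(\phi,\psi)^{-1}(\D_{M'})$ we have $N=\Coin(\phi,\psi)$ by definition, and Proposition~\ref{p:leafwise simple coincidence point} identifies the subset $N_0$ of points at which $(\phi,\psi)$ is leafwise transverse to $\D_{M'}$ with $\Coin_0(\phi,\psi)$. Corollary~\ref{c:N 0 = transverse} then asserts precisely that $N=\Coin(\phi,\psi)$ is a regular submanifold of $M$ and that $N_0=\Coin_0(\phi,\psi)$ is the set of points where this submanifold is transverse to $\FF$, which is the claim.

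I do not expect a real obstacle here: the whole content of the statement is the translation above, which has already been prepared by Propositions~\ref{p:Delta M cap(FF times FF) clean}, \ref{p:(phi,psi) in C r(cF,cF' times cF';D M)} and~\ref{p:leafwise simple coincidence point}. The only point needing a little care is that one must fix one good local saturation $S'$ of $\D_{M'}$ --- for instance the one produced from a regular covering of $M'$ by Corollary~\ref{c:S} --- so that the isomorphism~\eqref{e:T xFF/T x(FF| A)}, in the explicit form given by~\eqref{e:T (x',x')(L' times L')/T (x',x')D L'} and~\eqref{e:T x'L'}, is in force; this is the identification underlying Proposition~\ref{p:leafwise simple coincidence point}, and it is already recorded in the discussion preceding the statement.
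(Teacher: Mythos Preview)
Your proposal is correct and follows essentially the same approach as the paper: the paper simply states that this corollary is a particular case of Corollary~\ref{c:N 0 = transverse} via Proposition~\ref{p:leafwise simple coincidence point}, and you have spelled out exactly this translation in detail.
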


Now~\eqref{e:T xM/T xN} becomes
\begin{equation}\label{e:T xM/T xCoin 0}
T_xM/T_x\Coin_0(\phi,\psi)\cong T_{(x',x')}S'/T_{(x',x')}\D_{M'}
\end{equation}
for all $x\in\Coin_0(\phi,\psi)$, where $x'=\phi(x)$. If moreover $(\phi,\psi)$ is ls-transverse to $\D_{M'}$, then~\eqref{e:T xM/T xN} becomes
\begin{equation}\label{e:T xM/T xCoin}
T_xM/T_x\Coin(\phi,\psi)\cong T_{(x',x')}S'/T_{(x',x')}\D_{M'}
\end{equation}
for all $x\in\Coin(\phi,\psi)$, where $x'=\phi(x)$.

\begin{cor}\label{c:Coin 0 transversal}
If $\dim\FF=\dim\FF'$, then $\Coin_0(\phi,\psi)$ is a $C^r$ transversal of $\FF$.
\end{cor}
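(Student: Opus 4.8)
The plan is to notice that nearly everything needed has already been assembled, so that only a dimension count remains. Indeed, by Corollary~\ref{c:Coin 0 is C r and transverse}, $\Coin_0(\phi,\psi)$ is a regular $C^r$ submanifold of $M$ transverse to $\FF$; and, as recalled in Section~\ref{s:preliminaries}, the $C^r$ transversals of $\FF$ are exactly the $q$-dimensional regular $C^r$ submanifolds transverse to $\FF$, where $q=\codim\FF$. So it suffices to prove that $\Coin_0(\phi,\psi)$ has (pure) dimension $q$.

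To do this, I would fix an arbitrary $x\in\Coin_0(\phi,\psi)$, put $x'=\phi(x)=\psi(x)$, and let $L'$ be the leaf of $\FF'$ through $x'$. By the isomorphism~\eqref{e:T xM/T xCoin 0},
$$
T_xM/T_x\Coin_0(\phi,\psi)\cong T_{(x',x')}S'/T_{(x',x')}\D_{M'}\;,
$$
and combining~\eqref{e:T (x',x')(L' times L')/T (x',x')D L'} with~\eqref{e:T x'L'} identifies the right-hand side with $T_{(x',x')}(L'\times L')/T_{(x',x')}\D_{L'}\cong T_{x'}L'$, a space of dimension $\dim\FF'$. Hence $\codim_M\Coin_0(\phi,\psi)=\dim\FF'$ at every point of $\Coin_0(\phi,\psi)$, so this submanifold has pure codimension $\dim\FF'$, and therefore pure dimension $\dim M-\dim\FF'$. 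Using the hypothesis $\dim\FF=\dim\FF'$, this equals $\dim M-\dim\FF=q$, which finishes the argument.

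There is essentially no hard step: the statement is a bookkeeping consequence of the structure theory developed in Sections~\ref{s:leaf transver} and~\ref{s:coincidence}, specialized to $A=\D_{M'}$. The only mild point to watch is that the identifications above must be applied pointwise over all of $\Coin_0(\phi,\psi)$ so as to obtain a constant codimension (and hence a genuine dimension)---which is precisely the form in which~\eqref{e:T xM/T xCoin 0},~\eqref{e:T (x',x')(L' times L')/T (x',x')D L'} and~\eqref{e:T x'L'} are stated---and that the case $\Coin_0(\phi,\psi)=\emptyset$ is covered trivially.
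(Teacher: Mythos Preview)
Your proof is correct and follows essentially the same route as the paper's: invoke Corollary~\ref{c:Coin 0 is C r and transverse} for transversality to $\FF$, and compute the codimension via the chain of isomorphisms~\eqref{e:T xM/T xCoin 0},~\eqref{e:T (x',x')(L' times L')/T (x',x')D L'},~\eqref{e:T x'L'}. If anything, your citation of~\eqref{e:T xM/T xCoin 0} is slightly more apt than the paper's use of~\eqref{e:T xM/T xCoin}, since the corollary does not assume $(\phi,\psi)\pitchfork_{\text{ls}}\D_{M'}$.
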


\begin{proof}
This follows from Corollary~\ref{c:Coin 0 is C r and transverse} because the codimension of $\Coin_0(\phi,\psi)$ is equal to $\dim\FF'$ by~\eqref{e:T xM/T xCoin},~\eqref{e:T (x',x')(L' times L')/T (x',x')D L'} and~\eqref{e:T x'L'}.
\end{proof}

\begin{rem}
It is well known that the existence of a closed transversal is a strong
restriction for a foliation. So, by Corollary~\ref{c:Coin 0 transversal}, if $\dim\FF=\dim\FF'$, then there may not be any
$\phi,\psi\in C^r(\FF,\FF)$ inducing the same map $M/\FF\to M/\FF'$ and with $\Coin(\phi,\psi)=\Coin_0(\phi,\psi)$.
\end{rem}

Let $\nu_0$ denote the normal bundle of $\Coin_0(\phi,\psi)$, and $\nu$ the normal bundle of $\Coin(\phi,\psi)$ if $(\phi,\psi)$ is ls-transverse to $\D_{M'}$. The proof of the following result is elementary.

\begin{prop}\label{p:hom}
The isomorphisms~\eqref{e:T xM/T xCoin 0},~\eqref{e:T (x',x')(L' times L')/T (x',x')D L'} and~\eqref{e:T x'L'} define a $C^r$ homomorphism $\nu_0\to T\FF'$.  If $(\phi,\psi)$ is ls-transverse to $\FF'$, then~\eqref{e:T xM/T xCoin},~\eqref{e:T (x',x')(L' times L')/T (x',x')D L'} and~\eqref{e:T x'L'} define a $C^r$ homomorphism $\nu\to T\FF'$, which extends the above homomorphism $\nu_0\to T\FF'$.
\end{prop}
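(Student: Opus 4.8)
The plan is to realise both homomorphisms as composites of the three displayed isomorphisms, to read off their $C^r$--dependence from the constructions already made in Sections~\ref{s:clean} and~\ref{s:leaf transver}, and then to check that the version over $\Coin(\phi,\psi)$ restricts to the version over $\Coin_0(\phi,\psi)$. Throughout I work under the standing hypotheses of the section, so $r\ge1$ and $(\phi,\psi)\in C^r(\FF,\FF'\times\FF';\D_{M'})$, with $S'$ a fixed good local saturation of $\D_{M'}$.

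First I would fix $x\in\Coin_0(\phi,\psi)$, put $x'=\phi(x)=\psi(x)$, and let $L'$ be the leaf of $\FF'$ through $x'$. The fibre $(\nu_0)_x$ is $T_xM/T_x\Coin_0(\phi,\psi)$, so composing \eqref{e:T xM/T xCoin 0} with the inverse of \eqref{e:T (x',x')(L' times L')/T (x',x')D L'} and then with \eqref{e:T x'L'} produces a linear isomorphism $(\nu_0)_x\to T_{x'}L'=(T\FF')_{x'}$. Since the fibre over $x$ is carried to the fibre over $\phi(x)$, these maps will patch to a vector bundle homomorphism $\nu_0\to T\FF'$ covering $\phi|_{\Coin_0(\phi,\psi)}$ (equivalently $\psi|_{\Coin_0(\phi,\psi)}$), once I know the composite varies $C^r$--ly with $x$.

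For that smoothness I would revisit how the three isomorphisms are built. The isomorphism \eqref{e:T xM/T xCoin 0} is, by the proof of Proposition~\ref{p:N 0 is C r and transverse} applied to $(\phi,\psi)$, $\D_{M'}$ and $S'$, the map induced on $TM/T\Coin_0(\phi,\psi)$ by the composite \eqref{e:composite on T xM}; that composite is a $C^r$ bundle map over $\Coin_0(\phi,\psi)$, surjective with kernel $T\Coin_0(\phi,\psi)$, because $(\phi,\psi)$ is $C^r$ and $S'$, $\D_{M'}$ are $C^r$ submanifolds, so it descends to a $C^r$ isomorphism. The isomorphism \eqref{e:T (x',x')(L' times L')/T (x',x')D L'} is the instance of \eqref{e:T xFF/T x(FF| A)} furnished by Proposition~\ref{p:clean} for the clean pair $\D_{M'}$, $\FF'\times\FF'$ (clean by Proposition~\ref{p:Delta M cap(FF times FF) clean}); it is induced by the inclusion of the $C^r$ subbundles $T((\FF'\times\FF')|_{\D_{M'}})\hookrightarrow TS'|_{\D_{M'}}$, hence $C^r$. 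Finally \eqref{e:T x'L'} is the tautological algebraic isomorphism $[v,w]\mapsto v-w$ under $T_{(x',x')}(L'\times L')\equiv T_{x'}L'\oplus T_{x'}L'$, visibly $C^r$ in $x'$. Composing these gives the asserted $C^r$ homomorphism $\nu_0\to T\FF'$.

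For the second assertion I would invoke Corollary~\ref{c:Coin 0 = transverse}: when $(\phi,\psi)\pitchfork_{\text{\rm ls}}\D_{M'}$, the set $\Coin(\phi,\psi)$ is a regular $C^r$ submanifold and the composite \eqref{e:composite on T xM} has kernel $T_x\Coin(\phi,\psi)$ at every $x\in\Coin(\phi,\psi)$; running the same three--step composite with \eqref{e:T xM/T xCoin} in place of \eqref{e:T xM/T xCoin 0} then yields a $C^r$ homomorphism $\nu\to T\FF'$. To see that it extends the first one, I would use that $\Coin_0(\phi,\psi)$ is open in $\Coin(\phi,\psi)$ (Corollary~\ref{c:Coin 0 is open}), so $T_x\Coin_0(\phi,\psi)=T_x\Coin(\phi,\psi)$ there and hence $\nu_0=\nu|_{\Coin_0(\phi,\psi)}$; moreover over $\Coin_0(\phi,\psi)$ the maps \eqref{e:T xM/T xCoin 0} and \eqref{e:T xM/T xCoin} agree, being both induced by \eqref{e:composite on T xM}, while the factors \eqref{e:T (x',x')(L' times L')/T (x',x')D L'} and \eqref{e:T x'L'} are identical in both cases. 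Thus the two homomorphisms coincide on $\Coin_0(\phi,\psi)$. I expect the only delicate point to be purely organisational --- keeping track of the fact that $\nu_0$ and $\nu$ sit over submanifolds of $M$ while $T\FF'$ sits over $M'$ (so the homomorphisms cover $\phi|_{\Coin_0(\phi,\psi)}$, resp.\ $\phi|_{\Coin(\phi,\psi)}$), together with the identification $\nu_0=\nu|_{\Coin_0(\phi,\psi)}$; there is no analytic difficulty, all the $C^r$--regularity being already packaged in Propositions~\ref{p:clean} and~\ref{p:N 0 is C r and transverse}.
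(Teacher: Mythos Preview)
Your argument is correct. The paper does not actually prove this proposition; it merely precedes the statement with the sentence ``The proof of the following result is elementary.'' Your proposal is exactly the sort of direct verification the authors had in mind: you unwind each of the three isomorphisms, trace their $C^r$-dependence back to Propositions~\ref{p:clean} and~\ref{p:N 0 is C r and transverse}, and then use the openness of $\Coin_0(\phi,\psi)$ in $\Coin(\phi,\psi)$ (Corollary~\ref{c:Coin 0 is open}) to identify $\nu_0$ with $\nu|_{\Coin_0(\phi,\psi)}$ and conclude that the second homomorphism extends the first. There is nothing to compare against beyond this.
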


Suppose that $\dim\FF=\dim\FF'$. Then, by Corollary~\ref{c:Coin 0 transversal}, the inclusion map $T\FF\hookrightarrow TM$ induces a canonical $C^r$ isomorphism
\begin{equation}\label{e:nu 0}
(T\FF)|_{\Coin_0(\phi,\psi)}\cong\nu_0\;.
\end{equation}
The following result follows from the proof of Proposition~\ref{p:leafwise simple coincidence point}.

\begin{prop}\label{p:phi *-psi *}
Via~\eqref{e:nu 0}, the homomorphism $\nu_0\to T\FF'$ of Proposition~\ref{p:hom} corresponds to
$$
\phi_*-\psi_*:(T\FF)|_{\Coin_0(\phi,\psi)}\to T\FF'\;.
$$
\end{prop}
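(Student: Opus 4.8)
The claim is essentially a matter of unwinding the definitions: Proposition~\ref{p:hom} produces a homomorphism $\nu_0\to T\FF'$ out of the three isomorphisms~\eqref{e:T xM/T xCoin 0},~\eqref{e:T (x',x')(L' times L')/T (x',x')D L'} and~\eqref{e:T x'L'}, while~\eqref{e:nu 0} identifies $\nu_0$ with $(T\FF)|_{\Coin_0(\phi,\psi)}$; the content of Proposition~\ref{p:phi *-psi *} is that the composite agrees with $\phi_*-\psi_*$. As the statement already says, this follows from the proof of Proposition~\ref{p:leafwise simple coincidence point}, so the plan is simply to make that dependence explicit at the level of tangent spaces, fibre by fibre.

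Fix $x\in\Coin_0(\phi,\psi)$ and write $x'=\phi(x)=\psi(x)$, with $L$, $L'$ the leaves through $x$ and $x'$. First I would recall, from the proof of Proposition~\ref{p:N 0 is C r and transverse}, that the homomorphism $\nu_0\to T\FF'$ of Proposition~\ref{p:hom} is the fibrewise map obtained by restricting the composite~\eqref{e:composite on T xM}
$$
T_xM\xrightarrow{(\phi,\psi)_*}T_{(x',x')}S'\longrightarrow T_{(x',x')}S'/T_{(x',x')}\D_{M'}
$$
to $T_x\FF$, then transporting the target through~\eqref{e:T (x',x')(L' times L')/T (x',x')D L'} and~\eqref{e:T x'L'}. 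Under the isomorphism~\eqref{e:nu 0}, the source $T_x\FF=T_xL$ is exactly the fibre of $\nu_0$ at $x$, so it suffices to identify this restricted composite with $\phi_*-\psi_*\colon T_xL\to T_{x'}L'$. But this is precisely the commutative diagram established in the proof of Proposition~\ref{p:leafwise simple coincidence point}: there the leafwise composite~\eqref{e:composite on T xFF} for the map $(\phi,\psi)$ at $x$ was shown, via~\eqref{e:T x'L'}, to correspond to $\phi_*-\psi_*$. Since the restriction to $T_x\FF$ of the normal composite~\eqref{e:composite on T xM} is by definition the composite~\eqref{e:composite on T xFF} (this compatibility is exactly what~\eqref{e:T xFF/T x(FF| A)} encodes, and was used in the proof of Proposition~\ref{p:N 0 is C r and transverse}), the two descriptions coincide on each fibre.

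To finish I would note that both maps in question are $C^r$ vector bundle homomorphisms over $\Coin_0(\phi,\psi)$ — the one from Proposition~\ref{p:hom} by construction, and $\phi_*-\psi_*$ because $\phi_*$ and $\psi_*$ restrict to $C^r$ homomorphisms $T\FF\to T\FF'$ for $C^r$ foliation maps — so agreement on each fibre gives agreement as bundle maps, which is the assertion. There is really no obstacle here: the only point requiring care is bookkeeping, namely checking that the identification~\eqref{e:nu 0} of $\nu_0$ with $(T\FF)|_{\Coin_0(\phi,\psi)}$ is the \emph{same} one implicitly used when one restricts~\eqref{e:composite on T xM} to $T_x\FF$ to obtain~\eqref{e:composite on T xFF}. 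This is true because both come from the single inclusion $T\FF\hookrightarrow TM$, but it is worth stating explicitly so the chain of identifications is unambiguous.
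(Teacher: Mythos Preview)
Your proposal is correct and follows essentially the same approach as the paper: the paper's own proof is nothing more than the one-line remark that the result ``follows from the proof of Proposition~\ref{p:leafwise simple coincidence point}'', and you have simply made that dependence explicit by tracing the chain of identifications~\eqref{e:T xM/T xCoin 0},~\eqref{e:T (x',x')(L' times L')/T (x',x')D L'},~\eqref{e:T x'L'} and~\eqref{e:nu 0} fibre by fibre. Your added care in checking that the identification~\eqref{e:nu 0} is the same one used when restricting~\eqref{e:composite on T xM} to $T_x\FF$ is a welcome clarification, but there is nothing to add or correct.
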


\section{Fixed points}\label{s:fixed}

For each $\phi\in C^r_{\text{\rm leaf}}(\FF,\FF)$, its graph map $\Graph(\phi)=(\id_M,\phi):M\to M\times M$ is in $C^r(\FF,\FF\times\FF;\D_M)$ by Proposition~\ref{p:(phi,psi) in C r(cF,cF' times cF';D M)}, and $\Graph(\phi)^{-1}(\D_M)=\Coin(\id_M,\phi)$ is the set of fixed points of $\phi$, denoted by $\Fix(\phi)$. By Lemma~\ref{l:product}, the assignment $\phi\mapsto\Graph(\phi)$ defines a continuous map
$$
\Graph:C^r_{\text{\rm leaf},SP}(\FF,\FF)\to C^r_{SP}(\FF,\FF\times\FF;\D_M)\;.
$$

Suppose that $r\ge 1$. It may not be possible to approach a $C^r$ leaf preserving foliation map $\FF\to\FF$ by other ones whose graphs are ls-transverse to $\D_M$; however, this is possible for leaf preserving foliation diffeomorphisms according to the following.

\begin{thm}\label{t:Diff r(cF) cap Graph -1(pitchfork r ls(cF,cF times cF;D M))}
With the above notation, the set
$$
\Diff^r(\FF)\cap\Graph^{-1}(\pitchfork^r_{\text{\rm ls}}\!\!(\FF,\FF\times\FF;\D_M))
$$
is residual in $\Diff^r_{SP}(\FF)$.
\end{thm}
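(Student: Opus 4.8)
The plan is to realize the target set as the $\Graph$-preimage of a residual set inside an appropriate Baire space, and then transfer residuality back to $\Diff^r_{SP}(\FF)$. First I would observe that $\Diff^r(\FF)$ is sp-open in $C^r_{SP}(\FF,\FF)$ by Corollary~\ref{c:Diff is open}, hence $\Diff^r_{SP}(\FF)$ is a Baire space by Corollary~\ref{c:Diff is Baire}; so it suffices to exhibit the set in question as a residual subset. The natural host space is $C^r_{SP}(\FF,\FF\times\FF;\D_M)$, which by Corollary~\ref{c:C r SP(cF,cF';A) is Baire} (with $A=\D_M$, which is closed in $M\times M$, and using that $\D_M\cap(\FF\times\FF)$ is clean by Proposition~\ref{p:Delta M cap(FF times FF) clean}) is a Baire space. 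By Theorem~\ref{t:ls-transverse}, the set $\pitchfork^r_{\text{\rm ls}}\!\!(\FF,\FF\times\FF;\D_M)$ is residual in it.

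The crucial point is that the continuous map $\Graph:C^r_{\text{\rm leaf},SP}(\FF,\FF)\to C^r_{SP}(\FF,\FF\times\FF;\D_M)$ restricted to $\Diff^r(\FF)$ does \emph{not} merely pull back a residual set to a residual set — pullback of residual sets under a continuous map need not be residual — so I would instead argue that $\Graph$, restricted to $\Diff^r(\FF)$, is an sp-open map onto its image, or at least that it maps sp-open dense subsets of $\Diff^r_{SP}(\FF)$ to sets that are dense in the image and relatively open. Concretely, I expect that the image $\Graph(\Diff^r(\FF))$ is a (locally closed, or at least Baire) subspace of $C^r_{SP}(\FF,\FF\times\FF;\D_M)$ on which $\Graph$ is a homeomorphism onto, the inverse being induced by the second factor projection $\pr_2:\FF\times\FF\to\FF$ composed with the inversion that recovers $\phi$ from $(\id_M,\phi)$; this inverse is sp-continuous by Lemma~\ref{l:composition}. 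Then the real work is to show $\Graph(\Diff^r(\FF))$ is itself residual, or Baire, inside $C^r_{SP}(\FF,\FF\times\FF;\D_M)$: a map $\eta\in C^r(\FF,\FF\times\FF;\D_M)$ lies in this image precisely when $\pr_1\circ\eta=\id_M$ and $\pr_2\circ\eta\in\Diff^r(\FF)$. The condition $\pr_1\circ\eta=\id_M$ carves out an sp-closed subspace (it is a ``plaquewise'' constraint of the type appearing in the definition of the basic neighborhoods), and on that subspace the condition $\pr_2\circ\eta\in\Diff^r(\FF)$ is sp-open by Lemma~\ref{l:composition}-(1) together with Corollary~\ref{c:Diff is open}. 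Being an sp-open subset of the sp-closed subspace $\{\pr_1\circ\eta=\id_M\}$ of the Baire space $C^r_{SP}(\FF,\FF\times\FF;\D_M)$, the image is a Baire space; and it is precisely the homeomorphic image of $\Diff^r_{SP}(\FF)$ under $\Graph$.

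Putting it together: $\Graph:\Diff^r_{SP}(\FF)\to\Graph(\Diff^r(\FF))$ is a homeomorphism onto an sp-open subset $G$ of the subspace $\{\pr_1\circ\eta=\id_M\}$, hence onto a Baire space. The set $\pitchfork^r_{\text{\rm ls}}\!\!(\FF,\FF\times\FF;\D_M)$ is residual in $C^r_{SP}(\FF,\FF\times\FF;\D_M)$ by Theorem~\ref{t:ls-transverse}; I would check that its trace on the subspace $\{\pr_1\circ\eta=\id_M\}$ need \emph{not} be residual in general — this is the genuine obstacle — so rather than intersecting with the whole subspace I would re-run the globalization machinery of Theorem~\ref{t:global} \emph{relative to graphs}. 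That is, I would define a $C^r$ mapping class $\XX$ on $(\FF,\FF)$ by setting $\XX(U,U')$ to be the set of $\phi\in C^r(\FF|_U,\FF|_{U'})$ such that $\Graph(\phi)$ is ls-transverse to $\D_M$ (restricted appropriately), verify the localization axiom, and prove richness by a local computation: in a product foliation chart $T\times L$, writing $\phi(x,y)=(\bar\phi(x),\tilde\phi(x,y))$ with $\bar\phi$ a local diffeomorphism of the transversal and $\tilde\phi$ leafwise, the ls-transversality of $\Graph(\phi)$ to $\D_M$ amounts to transversality of the leafwise map $y\mapsto y-\tilde\phi(x,y)$ to $0$, which is a standard transversality condition amenable to Theorem~\ref{t:transver} (or to the parametric transversality theorem on the leaf factor). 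Then Theorem~\ref{t:global} gives that $\XX(M,M)$ is residual in $C^r_{SP}(\FF,\FF)$; intersecting with the sp-open set $\Diff^r(\FF)$ (Corollary~\ref{c:Diff is open}) and noting that $\XX(M,M)\cap\Diff^r(\FF)=\Diff^r(\FF)\cap\Graph^{-1}(\pitchfork^r_{\text{\rm ls}}\!\!(\FF,\FF\times\FF;\D_M))$ yields the theorem. The hard part is the local richness computation — specifically, checking that the parametric/leafwise transversality argument genuinely applies despite $\D_M$ not being transverse to $\FF\times\FF$, which is exactly why one must work inside the good local saturation $S$ from Corollary~\ref{c:S} and use the identification~\eqref{e:T x'L'}.
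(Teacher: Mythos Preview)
Your mapping-class approach has a genuine gap. For the class $\XX$ to be rich you need $\XX(U,U')$ residual in $C^r_{SP}(\FF|_U,\FF|_{U'})$ for small charts $U,U'$. But take $U=U'=T\times L$ and a foliation map $\phi(x,y)=(\bar\phi(x),\tilde\phi(x,y))$ with $\bar\phi\neq\id_T$ having an isolated fixed point $x_0$. The good local saturation $S$ of $\Delta_U$ is $\{((x,y),(x,y'))\}$, and $\Graph(\phi)^{-1}(S)=\Fix(\bar\phi)\times L$, which is \emph{not} a neighbourhood of $\Graph(\phi)^{-1}(\Delta_U)$. Hence $\Graph(\phi)$ is not ls-transverse to $\Delta_U$ (the very definition requires a neighbourhood of the preimage to map into $S$), and this failure is stable under sp-perturbation because the sp-topology freezes $\bar\phi$. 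So $\XX(U,U')$ misses an entire sp-open set and cannot be residual; the class is not rich and Theorem~\ref{t:global} does not apply. Restricting the mapping class to leaf-preserving maps would fix the local picture, but then Theorem~\ref{t:global} as stated (for all of $C^r(\FF,\FF')$) is no longer available, and you would have to redo the globalization machinery on $C^r_{\text{leaf}}(\FF,\FF)$ --- work you do not carry out.

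The paper avoids this obstacle by a different and much shorter device: it exploits the group structure of $\Diff^r(\FF)$. One works on the \emph{pair} space $\Diff^r(\FF)\times\Diff^r(\FF)\subset C^r(\FF,\FF\times\FF;\Delta_M)$ (inclusion by Proposition~\ref{p:(phi,psi) in C r(cF,cF' times cF';D M)}), which is sp-open, so Theorem~\ref{t:ls-transverse} gives directly that the set of pairs $(\phi,\psi)$ with $(\phi,\psi)\pitchfork_{\text{ls}}\Delta_M$ is residual there. The key identity is the factorization $(\phi,\psi)=\Graph(\psi\circ\phi^{-1})\circ\phi$; since $\phi$ is a diffeomorphism, $(\phi,\psi)\pitchfork_{\text{ls}}\Delta_M$ iff $\Graph(\psi\circ\phi^{-1})\pitchfork_{\text{ls}}\Delta_M$. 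Finally, the map $(\phi,\psi)\mapsto\psi\circ\phi^{-1}$ from $\Diff^r_{SP}(\FF)\times\Diff^r_{SP}(\FF)$ to $\Diff^r_{SP}(\FF)$ is continuous and open (Lemma~\ref{l:Diff is a topological group}), hence takes residual sets to residual sets. This is exactly the ``freeing up the first coordinate'' move that your first paragraph was groping towards but did not find; it replaces the problematic constraint $\pr_1\circ\eta=\id_M$ by allowing the first coordinate to vary over all of $\Diff^r(\FF)$ and then quotienting out by the diagonal action.
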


\begin{proof}
Via the canonical identity given by Lemma~\ref{l:product}, we have
$$
\Diff^r(\FF)\times\Diff^r(\FF)\subset C^r(\FF,\FF\times\FF;\D_M)
$$
by Proposition~\ref{p:(phi,psi) in C r(cF,cF' times cF';D M)}. By Corollary~\ref{c:Diff is open} and Theorem~\ref{t:ls-transverse}, the
set
$$
\AAA=(\Diff^r(\FF)\times\Diff^r(\FF))\cap
\text{}\pitchfork_{\text{\rm ls}}^r\!\!(\FF,\FF\times\FF;\D_M)
$$
is residual in $\Diff^r_{SP}(\FF)\times\Diff^r_{SP}(\FF)$. But any
$(\phi,\psi)\in\Diff^r(\FF)\times\Diff^r(\FF)$ is equal to
the composite
\begin{equation}\label{e:(phi,psi)}
\begin{CD}
\FF @>{\phi}>> \FF
@>{\Graph(\psi\circ\phi^{-1})}>> \FF\times\FF\;,
\end{CD}
\end{equation}
which is ls-transverse to $\D_M$ if and only if so is
$\Graph(\psi\circ\phi^{-1})$. Therefore
$\Graph(\psi\circ\phi^{-1})$ is ls-transverse to $\D_M$ for a residual set of pairs
$(\phi,\psi)$ in $\Diff^r_{SP}(\FF)\times\Diff^r_{SP}(\FF)$.
Then the result follows because the map
$$
\Diff^r_{SP}(\FF)\times\Diff^r_{SP}(\FF)\to\Diff^r_{SP}(\FF)\;,\quad
(\phi,\phi)\mapsto\psi\circ\phi^{-1}\;,
$$
is continuous and open by Lemma~\ref{l:Diff is a topological group}.
\end{proof}

\begin{cor}\label{c:Diff r(cF) cap Graph -1(pitchfork r ls(cF,cF times cF;D M))}
The set
$$
\Diff^r(\FF)\cap\Graph^{-1}(\pitchfork^r_{\text{\rm ls}}\!\!(\FF,\FF\times\FF;\D_M))
$$
is dense in $\Diff^r_{SP}(\FF)$.
\end{cor}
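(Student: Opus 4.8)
The plan is to derive this density statement directly from the residuality statement of Theorem~\ref{t:Diff r(cF) cap Graph -1(pitchfork r ls(cF,cF times cF;D M))} together with the Baire property of the ambient space. First I would recall that, by Corollary~\ref{c:Diff is Baire}, the space $\Diff^r_{SP}(\FF)$ is a Baire space for $1\le r\le\infty$. By the very definition of a Baire space, every residual subset of it is dense. Hence it suffices to observe that the set
$$
\Diff^r(\FF)\cap\Graph^{-1}(\pitchfork^r_{\text{\rm ls}}\!\!(\FF,\FF\times\FF;\D_M))
$$
is residual in $\Diff^r_{SP}(\FF)$, which is exactly the content of Theorem~\ref{t:Diff r(cF) cap Graph -1(pitchfork r ls(cF,cF times cF;D M))}.

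There is essentially no obstacle: the argument is a one-line combination of the two cited results. The only points worth checking are that the two statements refer to the same topology on $\Diff^r(\FF)$ — namely the restriction of the sp-topology — and that the range of differentiability $1\le r\le\infty$ on which Corollary~\ref{c:Diff is Baire} guarantees the Baire property matches the range for which the graph-transversality theorem, and hence this corollary, is stated. Both are immediate from the setup in Sections~\ref{sec:foliation maps} and~\ref{s:fixed}, so the proof reduces to citing Theorem~\ref{t:Diff r(cF) cap Graph -1(pitchfork r ls(cF,cF times cF;D M))} and Corollary~\ref{c:Diff is Baire} and invoking the Baire property.
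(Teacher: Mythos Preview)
Your proof is correct and matches the paper's own argument exactly: the paper simply cites Theorem~\ref{t:Diff r(cF) cap Graph -1(pitchfork r ls(cF,cF times cF;D M))} and Corollary~\ref{c:Diff is Baire}, and density follows since residual subsets of a Baire space are dense.
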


\begin{proof}
This follows from  Theorem~\ref{t:Diff r(cF) cap Graph -1(pitchfork r ls(cF,cF times cF;D M))} and Corollary~\ref{c:Diff is Baire}.
\end{proof}

A fixed point $x$ of a foliation map $\phi\in C^r(\FF,\FF)$ is called {\em leafwise simple\/} when $\id-\phi_*:T_x\FF\to T_x\FF$ is an isomorphism; {\em i.e.\/}, when $x$ is a leafwise simple coincidence point of $\id_M$ and $\phi$, which means that $\Graph(\phi):\FF\to\FF\times\FF$ is leafwise transverse to $\D_M$ at $x$ by Proposition~\ref{p:leafwise simple coincidence point}. The set of leafwise simple fixed points of $\phi$ will be denoted by $\Fix_0(\phi)$. The following are particular cases of Corollaries~\ref{c:Coin 0 is open}--\ref{c:Coin 0 transversal}.

\begin{cor}\label{c:Fix 0 is open}
$\Fix_0(\phi)$ is open in $\Fix(\phi)$.
\end{cor}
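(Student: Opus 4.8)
The plan is to deduce this immediately from Corollary~\ref{c:Coin 0 is open} by specializing the pair of foliation maps to $(\id_M,\phi)$. Recall from Section~\ref{s:fixed} that, for a leaf preserving map $\phi\in C^r_{\text{\rm leaf}}(\FF,\FF)$, one has $\Fix(\phi)=\Graph(\phi)^{-1}(\D_M)=\Coin(\id_M,\phi)$, and that the definition of leafwise simple fixed point was arranged precisely so that $\Fix_0(\phi)=\Coin_0(\id_M,\phi)$. Thus it suffices to check that Corollary~\ref{c:Coin 0 is open} applies to the pair $(\id_M,\phi)$.

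First I would verify the standing hypothesis of Corollary~\ref{c:Coin 0 is open}, namely that $\id_M$ and $\phi$ induce the same map $M/\FF\to M/\FF$. The identity obviously induces $\id_{M/\FF}$, and since $\phi$ preserves each leaf it also induces $\id_{M/\FF}$; hence the hypothesis is satisfied. Then Corollary~\ref{c:Coin 0 is open} yields that $\Coin_0(\id_M,\phi)$ is open in $\Coin(\id_M,\phi)$, which is exactly the assertion that $\Fix_0(\phi)$ is open in $\Fix(\phi)$.

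There is essentially no obstacle here; the only thing to keep straight is the bookkeeping between the two pieces of terminology, i.e.\ that ``leafwise simple fixed point of $\phi$'' and ``leafwise simple coincidence point of $\id_M$ and $\phi$'' coincide, which is immediate from the condition $\id-\phi_*:T_x\FF\to T_x\FF$ being an isomorphism (equivalently, $(\id_M)_*-\phi_*$ being onto, since $T_x\FF$ is finite-dimensional). One could alternatively unwind everything back to Proposition~\ref{p:N 0 is open}, applied to $A=\D_M\subset M\times M$ and to the graph map $\Graph(\phi)=(\id_M,\phi)\in C^r(\FF,\FF\times\FF;\D_M)$, identifying $\Fix_0(\phi)$ with the open subset $N_0$ via Proposition~\ref{p:leafwise simple coincidence point}; but invoking Corollary~\ref{c:Coin 0 is open} directly is cleaner.
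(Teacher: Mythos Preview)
Your proposal is correct and matches the paper's approach exactly: the paper simply notes that Corollaries~\ref{c:Fix 0 is open}--\ref{c:Fix 0 = transverse} are particular cases of Corollaries~\ref{c:Coin 0 is open}--\ref{c:Coin 0 transversal} applied to the pair $(\id_M,\phi)$, which is precisely what you do.
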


\begin{cor}\label{c:Fix 0 transversal}
$\Fix_0(\phi)$ is a $C^r$ transversal of $\FF$.
\end{cor}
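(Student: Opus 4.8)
The plan is to identify \(\Fix_0(\phi)\) with \(\Coin_0(\id_M,\phi)\) and then to quote Corollary~\ref{c:Coin 0 transversal}. First I would observe that, since \(\id_M\) and \(\phi\) are both leaf preserving, they trivially induce the same map \(M/\FF\to M/\FF\) (namely the identity), so by Proposition~\ref{p:(phi,psi) in C r(cF,cF' times cF';D M)} the pair \((\id_M,\phi)\), which equals \(\Graph(\phi)\), lies in \(C^r(\FF,\FF\times\FF;\D_M)\). This is precisely the hypothesis under which the results of Section~\ref{s:coincidence} apply, with \(\FF'=\FF\) and \(M'=M\).

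Next I would unwind the definitions to see that \(\Fix_0(\phi)=\Coin_0(\id_M,\phi)\). By definition, \(\Fix(\phi)=\Coin(\id_M,\phi)\), and a fixed point \(x\) of \(\phi\) is leafwise simple when \(\id-\phi_*:T_x\FF\to T_x\FF\) is an isomorphism. On the other hand, \(x\in\Coin_0(\id_M,\phi)\) means that \((\id_M)_*-\phi_*:T_x\FF\to T_{x}\FF\) is onto, i.e.\ \(\id-\phi_*\) is surjective; but since \(T_x\FF\) is finite-dimensional, surjectivity of the endomorphism \(\id-\phi_*\) is equivalent to its bijectivity. Hence the two notions of ``leafwise simple'' coincide, and \(\Fix_0(\phi)=\Coin_0(\id_M,\phi)\). (This is exactly the identification already recorded in the paragraph preceding Corollary~\ref{c:Fix 0 is open}.)

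Finally I would conclude: since \(\dim\FF=\dim\FF\), Corollary~\ref{c:Coin 0 transversal} applies to the pair \((\id_M,\phi)\) and gives that \(\Coin_0(\id_M,\phi)\) is a \(C^r\) transversal of \(\FF\); by the identification above, so is \(\Fix_0(\phi)\). There is essentially no obstacle here—the statement is a direct specialization of the coincidence-point theory to the case of a graph map, and the only point requiring a word of justification is the passage from ``\(\id-\phi_*\) surjective'' to ``\(\id-\phi_*\) an isomorphism,'' which is immediate by finite-dimensionality of the leafwise tangent spaces.
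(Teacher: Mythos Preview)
Your proof is correct and follows exactly the approach of the paper: the paper states that Corollaries~\ref{c:Fix 0 is open}--\ref{c:Fix 0 = transverse} are particular cases of Corollaries~\ref{c:Coin 0 is open}--\ref{c:Coin 0 transversal}, and you have spelled out precisely this specialization for the case of $\Fix_0(\phi)=\Coin_0(\id_M,\phi)$. Your remark that surjectivity and bijectivity of $\id-\phi_*$ on $T_x\FF$ coincide by finite-dimensionality is the only small justification needed, and it is handled correctly.
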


\begin{cor}\label{c:Fix 0 = transverse}
If $\Graph(\phi)$ is ls-transverse to $\D_M$, then $\Fix(\phi)$ is a regular submanifold of $M$, and $\Fix_0(\phi)$ is the set of points where $\Fix(\phi)$ is transverse to $\FF$.
\end{cor}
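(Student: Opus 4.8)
The plan is to deduce this statement directly from Corollary~\ref{c:Coin 0 = transverse} by specializing to $M'=M$, $\FF'=\FF$, and the pair of foliation maps $(\id_M,\phi)$. First I would note that, since $\phi\in C^r_{\text{\rm leaf}}(\FF,\FF)$ maps each leaf of $\FF$ to itself, both $\id_M$ and $\phi$ induce the identity map on the leaf space $M/\FF$; in particular they induce the same map. Hence Proposition~\ref{p:(phi,psi) in C r(cF,cF' times cF';D M)} applies and yields $\Graph(\phi)=(\id_M,\phi)\in C^r(\FF,\FF\times\FF;\D_M)$, so that everything developed in Section~\ref{s:coincidence} is available for this pair, and the hypothesis ``$\Graph(\phi)$ is ls-transverse to $\D_M$'' is literally the hypothesis ``$(\id_M,\phi)$ is ls-transverse to $\D_M$'' of Corollary~\ref{c:Coin 0 = transverse}.

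Next I would unwind the notation. By definition $\Graph(\phi)^{-1}(\D_M)=\Coin(\id_M,\phi)=\Fix(\phi)$, and a fixed point $x$ of $\phi$ is leafwise simple exactly when $\id-\phi_*:T_x\FF\to T_x\FF$ is an isomorphism, i.e.\ when $x$ is a leafwise simple coincidence point of $\id_M$ and $\phi$; thus $\Fix_0(\phi)=\Coin_0(\id_M,\phi)$ (this identification, together with Proposition~\ref{p:leafwise simple coincidence point}, is already the observation recorded just before the statement). With these two identifications in place, Corollary~\ref{c:Coin 0 = transverse} asserts precisely that $\Coin(\id_M,\phi)$ is a regular submanifold of $M$ and that $\Coin_0(\id_M,\phi)$ is the locus where $\Coin(\id_M,\phi)$ is transverse to $\FF$, which translates verbatim into the claimed statement about $\Fix(\phi)$ and $\Fix_0(\phi)$.

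There is no substantial obstacle here: the entire content lies in Corollary~\ref{c:Coin 0 = transverse} (hence ultimately in Proposition~\ref{p:ls-transver}, Proposition~\ref{p:N 0 is C r and transverse} and Corollary~\ref{c:N 0 = transverse}), and the only point requiring care is the routine verification that the leaf-preserving hypothesis forces $\id_M$ and $\phi$ to induce the same map on $M/\FF$, so that the coincidence-point machinery of Section~\ref{s:coincidence} legitimately applies to $(\id_M,\phi)$.
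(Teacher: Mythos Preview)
Your proposal is correct and follows essentially the same approach as the paper: the paper simply states that this corollary is a particular case of Corollary~\ref{c:Coin 0 = transverse} (under the specialization $(\id_M,\phi)$), and you have spelled out that specialization in detail.
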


\section{$\Lambda$-coincidence and $\Lambda$-Lefschetz number}\label{s:Lambda-coincidence}

Consider the notation of Section~\ref{s:coincidence}. Suppose that $M$ is closed and $r\ge1$. For the sake of simplicity, assume also that $M=M'$ and $\FF=\FF'$. Let $p=\dim\FF$ and $q=\codim\FF$.

For $\phi,\psi\in C^r(\FF,\FF)$ inducing the same map $M/\FF\to M/\FF$, define the locally constant map $\epsilon_{\phi,\psi}:\Coin_0(\phi,\psi)\to\{\pm1\}$ by
$$
\epsilon_{\phi,\psi}(x)=\sign\det(\phi_*-\psi_*:T_x\FF\to T_x\FF)\;.
$$
By Corollaries~\ref{c:Coin 0 = transverse} and~\ref{c:Coin 0 transversal}, if $(\phi,\psi)\pitchfork_{\text{\rm ls}}\D_M$, then $\Coin(\phi,\psi)$ is a regular $C^r$ submanifold of $M$ of dimension $q$, and $\Coin_0(\phi,\psi)$ is the subset of points where $\Coin(\phi,\psi)$ is transverse to the leaves.

Let $\Lambda$ be a transverse invariant measure of $\FF$ absolutely continuous with respect to the Lebesgue measure; {\em i.e.\/}, $\Lambda$ can be considered as a continuous density on any $C^r$ transversal of $\FF$. In particular, we can consider $\Lambda$ as a continuous density on $\Coin_0(\phi,\psi)$.

\begin{thm}\label{t:int}
With the above notation, we have the following:
\begin{enumerate}

\item If $(\phi,\psi)\pitchfork_{\text{\rm ls}}\D_M$, then the integral
$$
\int_{\Coin_0(\phi,\psi)}\epsilon_{\phi,\psi}\,\Lambda
$$
is defined and finite.

\item Let $\xi,\zeta\in C^r(\FF,\FF)$ inducing the same map $M/\FF\to M/\FF$, and such that $(\xi,\zeta)\pitchfork_{\text{\rm ls}}\D_M$. If there exists an integrable homotopy between $(\phi,\psi)$ and $(\xi,\zeta)$, then
$$
\int_{\Coin_0(\phi,\psi)}\epsilon_{\phi,\psi}\,\Lambda
=\int_{\Coin_0(\xi,\zeta)}\epsilon_{\xi,\zeta}\,\Lambda\;.
$$

\end{enumerate}
\end{thm}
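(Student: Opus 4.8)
The plan is as follows.

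For part~(1), I would recall from Section~\ref{s:preliminaries} that, being absolutely continuous, $\Lambda$ is a continuous $1$-density $\widehat\Lambda$ on the normal bundle $\nu$. By Corollaries~\ref{c:Coin 0 = transverse} and~\ref{c:Coin 0 transversal}, $\Coin(\phi,\psi)$ is a compact regular $C^r$ submanifold of $M$ of dimension $q$, and $\Coin_0(\phi,\psi)$ is the open subset where it is transverse to $\FF$. The composite $T\Coin(\phi,\psi)\hookrightarrow TM\to\nu$ is a $C^{r-1}$ bundle homomorphism between two bundles of rank $q$; pulling back $\widehat\Lambda$ through it gives a continuous nonnegative $1$-density $\beta$ on the compact manifold $\Coin(\phi,\psi)$ which agrees with $\Lambda$ on $\Coin_0(\phi,\psi)$ and vanishes on $\Coin(\phi,\psi)\setminus\Coin_0(\phi,\psi)$, that homomorphism being injective exactly on $\Coin_0(\phi,\psi)$. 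Then $\epsilon_{\phi,\psi}$ (extended by $0$, say) is a bounded Borel function on the compact space $\Coin(\phi,\psi)$, so $\int_{\Coin_0(\phi,\psi)}\epsilon_{\phi,\psi}\,\Lambda=\int_{\Coin(\phi,\psi)}\epsilon_{\phi,\psi}\,\beta$ converges absolutely, with absolute value at most $\int_{\Coin(\phi,\psi)}\beta<\infty$.

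For part~(2), I would first reduce to a convenient homotopy. After passing, if necessary, to the double cover on which $\FF$ is transversely orientable --- which multiplies both sides of the asserted identity by the same factor --- identify $\Lambda$ with a continuous basic $q$-form $\alpha$, as in Section~\ref{s:preliminaries}. Starting from the given integrable homotopy between $(\phi,\psi)$ and $(\xi,\zeta)$, and using Propositions~\ref{p:(phi,psi) in C r(cF,cF' times cF';D M)}, \ref{p:homotopy in C r(cF times I,cF' times cF';D M)}, \ref{p:C r integrable homotopy}, Corollary~\ref{c:ls-transverse} and Proposition~\ref{p:ls-transverse homotopy}, together with the boundary versions (Section~\ref{s:boundary}) of Theorem~\ref{t:approximation} and Proposition~\ref{p:ls-transver}, I may assume the homotopy is realized by some $G\in\pitchfork^r_{\text{\rm ls}}(\FF\times I,\FF\times\FF;\D_M)$ with $G=(\phi,\psi)$ on $M\times[0,\delta)$ and $G=(\xi,\zeta)$ on $M\times(1-\delta,1]$. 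Put $N=G^{-1}(\D_M)$. Since $\D_M$ has codimension $p=\dim\FF$ in any of its good local saturations, the boundary version of Proposition~\ref{p:ls-transver} gives that $N$ is a compact regular $C^r$ submanifold-with-boundary of $M\times I$ of dimension $q+1$, with $\partial N=\Coin(\phi,\psi)\sqcup\Coin(\xi,\zeta)$ (the homotopy being constant in $t$ near the ends); and by the boundary versions of Propositions~\ref{p:N 0 is open}, \ref{p:N 0 is C r and transverse} and Corollary~\ref{c:N 0 = transverse}, the leafwise-simple part $N_0\subset N$ is an open $C^r$ submanifold-with-boundary, transverse to $\FF\times I$, with $\partial N_0=\Coin_0(\phi,\psi)\sqcup\Coin_0(\xi,\zeta)$, equal to the locus where $N$ is transverse to $\FF\times I$.

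The heart of the argument will be a Stokes computation on $N_0$. Let $\widetilde\alpha$ be the pull-back of $\alpha$ under $M\times I\to M$; it is a continuous basic $q$-form for $\FF\times I$, and $\widetilde\alpha|_N$ is a continuous $q$-form on the $(q+1)$-manifold $N$ that vanishes on $N\setminus N_0$, since a horizontal $q$-form must vanish wherever $N$ fails to be transverse to $\FF\times I$. I would orient $N_0$ by combining the transverse orientation of $\FF$ with the $C^r$ isomorphism between the normal bundle of $N_0$ in $M\times I$ and the associated rank-$p$ bundle furnished by the homotopy analogue of Proposition~\ref{p:hom}; the crucial point is that near $M\times\{0\}$ (resp.\ near $M\times\{1\}$) this isomorphism is governed, via Proposition~\ref{p:phi *-psi *}, by $\phi_*-\psi_*$ (resp.\ by $\xi_*-\zeta_*$) along $T\FF$, so that the boundary orientation of $\Coin_0(\phi,\psi)$ (resp.\ of $\Coin_0(\xi,\zeta)$) induced from $N_0$ differs from its transverse orientation precisely by the sign $\epsilon_{\phi,\psi}$ (resp.\ $\epsilon_{\xi,\zeta}$) --- this is the foliated version of the classical intersection-number sign computation, and it is what inserts the factors $\epsilon$. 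To cope with the mere continuity of $\alpha$, approximate it by $C^1$ horizontal $q$-forms $\alpha_i\to\alpha$ with $d\alpha_i\to0$ in the compact-open topology (Section~\ref{s:preliminaries}); each $\widetilde\alpha_i|_N$ is $C^1$ and still vanishes on $N\setminus N_0$, and a Stokes argument on $N_0$ --- exhausting $N_0$ by compact pieces and using that $\widetilde\alpha_i|_N$ is continuous on the compact $N$, vanishes along the frontier $N\setminus N_0$, and that $N_0$ has finite volume --- should give
\[
\int_{\partial N_0}\widetilde\alpha_i=\int_{N_0}d\widetilde\alpha_i ,
\]
whose right-hand side (the pull-back of $d\alpha_i$, which tends to $0$ uniformly on the compact $N$, integrated over a set of finite volume) tends to $0$ as $i\to\infty$. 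Passing to the limit on the left and reading off the two boundary pieces with their opposite induced orientations then yields $\int_{\Coin_0(\phi,\psi)}\epsilon_{\phi,\psi}\,\Lambda-\int_{\Coin_0(\xi,\zeta)}\epsilon_{\xi,\zeta}\,\Lambda=0$.

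I expect this last paragraph to be the main obstacle: one must pin down exactly the orientation of $N_0$ coming from the homomorphism of Proposition~\ref{p:hom} and carry out the foliated analogue of the classical intersection-sign bookkeeping at both ends, and one must make the Stokes step on the noncompact $N_0$ rigorous, exploiting that $\widetilde\alpha_i|_N$ only vanishes along, rather than being compactly supported away from, the set $N\setminus N_0$. The passage to the limit $i\to\infty$ is unavoidable because $d\alpha$ need not vanish for a merely continuous basic form $\alpha$.
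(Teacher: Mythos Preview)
Your plan follows the paper's route: reduce to the oriented case, identify $\Lambda$ with a continuous basic $q$-form $\alpha$, produce an ls-transverse homotopy $G$ with $N=G^{-1}(\Delta_M)$, and run Stokes with approximating $C^1$ horizontal forms $\alpha_i\to\alpha$, $d\alpha_i\to0$. The difference lies in \emph{where} you apply Stokes, and this is exactly what manufactures your self-identified ``main obstacle''.

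The paper applies Stokes on the \emph{compact} manifold $N$, not on the noncompact $N_0$. The point you are missing is that the homomorphism $\nu''\to T\FF$ of Proposition~\ref{p:hom} is a fiberwise \emph{isomorphism} on all of $N$ (not just on $N_0$) whenever $G$ is ls-transverse to $\Delta_M$; this is the content of~\eqref{e:T xM/T xCoin} together with~\eqref{e:T (x',x')(L' times L')/T (x',x')D L'} and~\eqref{e:T x'L'}. Hence a tangential orientation of $\FF$ orients $\nu''$ over all of $N$, and together with an orientation of $M\times I$ this orients $N$ itself. Once $N$ is oriented, the observation you already made---that any horizontal $q$-form restricted to the $(q+1)$-manifold $N$ vanishes on $N\setminus N_0$---lets you replace $\int_{\partial N_0}\widetilde\alpha_i$ by $\int_{\partial N}\widetilde\alpha_i$, and Stokes on the compact $N$ gives $\int_{\partial N}\widetilde\alpha_i=\int_N d\widetilde\alpha_i\to0$ with no exhaustion needed. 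The paper in fact sets this up already in part~(1): it orients all of $P=\Coin(\phi,\psi)$ via Proposition~\ref{p:hom} and shows $\int_{P_0}\epsilon_{\phi,\psi}\,\Lambda=\int_P\alpha$ (with that orientation), so part~(2) becomes a clean Stokes computation on the compact $N$. Your exhaustion argument on $N_0$ would require control on $\operatorname{vol}(\partial K_j\setminus\partial N_0)$, which is not automatic; but the issue evaporates once you work on $N$.

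One minor correction: you need to pass to a cover on which $\FF$ is both tangentially \emph{and} transversely orientable. The transverse orientation of $N$ comes from a tangential orientation of $\FF$ (via $\nu''\cong T\FF$), and turning that into an orientation of $N$ requires an orientation of $M\times I$, hence of $M$, which in turn needs both.
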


\begin{proof}
By taking appropriate lifts to finite coverings of tangential and transverse orientations, we can assume that $\FF$ is tangentially and transversely oriented. So $M$ is also oriented so that, if $\omega$ and $\chi$ are $C^r$ sections of $\bigwedge^q(TM/T\FF)^*$ and $\bigwedge^pT\FF^*$ representing the transverse and tangential orientations of $\FF$, respectively, then $\omega\wedge\chi$ is a $C^r$ differential $(p+q)$-form representing the orientation of $M$.

Let $P=\Coin(\phi,\psi)$ and $P_0=\Coin_0(\phi,\psi)$, and let $\nu$ and $\nu_0$ denote the corresponding normal bundles. The orientation of $\FF$ induces a transverse orientation of $P$ so that the homomorphism $\nu\to T\FF$, given by Proposition~\ref{p:hom}, is fiberwise orientation preserving. Let $\sigma$ be a $C^r$ section of $\bigwedge^p\nu^*$ representing this transverse orientation. It induces an orientation of $P$, represented by a $C^r$ differential $q$-form $\tau$, so that $\sigma\wedge\tau$ represents the orientation of $M$ on $P$.

By Proposition~\ref{p:phi *-psi *}, the transverse orientation represented by $\sigma$ on $P_0$ corresponds to the orientation of $(T\FF)|_{P_0}$ represented by $\epsilon_{\phi,\psi}\,\chi$. It follows that $\epsilon_{\phi,\psi}\,\omega$ represents the same orientation as $\tau$ on $P_0$.

Since $\FF$ is transversely oriented, $\Lambda$ can be considered as a continuous basic $q$-form $\alpha$. The restrictions of $\alpha$ and $\omega$ to the $C^r$ transversal $P_0$ are also denoted by $\alpha$ and $\omega$. Thus, on $P_0$, $\Lambda$ can be identified to $\alpha$ if we consider the orientation of $P_0$ represented by $\omega$, and $\Lambda$ can be identified to $\epsilon_{\phi,\psi}\,\alpha$ if we consider the orientation of $P_0$ represented by $\tau$. Therefore, by considering the orientation represented by $\tau$, we get
$$
\int_{P_0}\epsilon_{\phi,\psi}\,\Lambda
=\int_{P_0}\alpha
=\int_P\alpha\;,
$$
which is defined and finite because $M$ is compact and $P$ is closed. This completes the proof of~(1).

Now, let $Q=\Coin(\xi,\zeta)$ and $Q_0=\Coin_0(\xi,\zeta)$, and let $\nu'$ and $\nu'_0$ denote the corresponding normal bundles. As above, the orientation of $\FF$ induces a transverse orientation of $Q$. Let $\sigma'$ be a $C^r$ section of $\bigwedge^p\nu^{\prime*}$ representing this transverse orientation, and let $\tau'$ be a $C^r$ differential $q$-form on $Q$ representing the induced orientation of $Q$ as above. Again, $\epsilon_{\xi,\zeta}\,\omega$ represents the same orientation on $Q_0$ as $\tau'$. Moreover, on $Q_0$, $\Lambda$ can be identified to $\epsilon_{\xi,\zeta}\,\alpha$ if we consider the orientation of $Q_0$ represented by $\tau'$.

Assume that $(\phi,\psi)$ and $(\psi,\zeta)$ are integrably homotopic. By Propositions~\ref{p:ls-transverse homotopy} and~\ref{p:homotopy in C r(cF times I,cF' times cF';D M)}, there is an integrable homotopy $G$ in $\pitchfork^1_{\text{\rm ls}}\!\!(\FF\times I,\FF\times\FF;\D_M)$ such that $G(x,t)=(\phi(x),\psi(x))$ for all $(x,t)$ in some neighborhood of $M\times\{0\}$, and $G(x,t)=(\xi(x),\zeta(x))$ for all $(x,t)$ in some neighborhood of $M\times\{1\}$. Then, by applying Proposition~\ref{p:ls-transver} to the restriction $G:\FF\times(0,1)\to\FF\times\FF$, it follows that $N=G^{-1}(\D_M)$ is a regular $C^1$ submanifold of $M\times I$ of dimension $q+1$ such that
\begin{gather}
\partial N=(P\times\{0\})\cup(Q\times\{1\})\;,\label{e:partial N}\\
N\cap(M\times([0,\epsilon)\cup(1-\epsilon,1]))=(P\times[0,\epsilon))\cup(Q\times(1-\epsilon,1])
\label{e:N cap(M times([0,epsilon)cup(1-epsilon,1]))}
\end{gather}
for some $\epsilon>0$.

Let $\nu''$ denote the normal bundles of $N$. By~\eqref{e:N cap(M times([0,epsilon)cup(1-epsilon,1]))}, the restrictions of $\nu''$ to $P\times[0,\epsilon)$  and $Q\times(1-\epsilon,1]$ can be canonically identified to the pull-backs of $\nu$ and $\nu'$ by the first factor projections. Via these identifications, the restrictions to $P\times[0,\epsilon)$  and $Q\times(1-\epsilon,1]$ of the $C^1$ homomorphism $\nu''\to T\FF$, given Proposition~\ref{p:hom}, is defined by the above homomorphism $\nu\to T\FF$ and $\nu'\to T\FF$.

Let $dt$ denote the standard volume form of $\R$. The pull-backs of $\omega$, $\chi$, $\sigma$, $\tau$, $\sigma'$, $\tau'$ and $dt$ to $M\times I$ by the factors projections are denoted with the same symbols. Consider the orientation of $M\times I$ represented by the top degree $C^r$ differential form $\omega\wedge\chi\wedge dt$.  As above, the orientation of $\FF$ induces a transverse orientation of $N$. Let $\sigma''$ be a $C^1$ section of $\bigwedge^p\nu^{\prime\prime*}$ representing this transverse orientation. These orientations of $\nu''$ and $M$ induce an orientation of $N$ as above, which is represented by a $C^1$ differential $(q+1)$-form $\tau''$ on $N$.

By the above identities of the restrictions of the homomorphism $\nu''\to T\FF$ to $P\times[0,\epsilon)$  and $Q\times(1-\epsilon,1]$, we can choose $\sigma''$ so that its restrictions to $P\times[0,\epsilon)$  and $Q\times(1-\epsilon,1]$ are $\sigma$ and $\sigma'$, respectively. So the restrictions of $\tau''$ to $P\times[0,\epsilon)$  and $Q\times(1-\epsilon,1]$ are $\tau\wedge dt$ and $\tau'\wedge dt$, respectively. Therefore, by~\eqref{e:partial N}, the induced orientation on $\partial N$ is represented by $\tau$ on $P\times\{0\}$ and $-\tau'$ on $Q\times\{1\}$.

The pull-back of $\alpha$ to $M\times I$, as well as its restriction to $N$, is also denoted by $\alpha$. Since $\alpha$ is a continuous basic $q$-form of $\FF$, its pull-back to $M\times I$, also denoted by $\alpha$, is a continuous basic $q$-form of $\FF\times I$. Therefore, there is a sequence of $C^1$ horizontal $q$-forms $\alpha_i$ on $M\times I$ converging to $\alpha$ such that $d\alpha_i$ converges to zero, where uniform convergence is considered because $M\times I$ is compact. So, by the Stokes formula,
\begin{multline*}
\int_{P_0}\epsilon_{\phi,\psi}\,\Lambda-\int_{Q_0}\epsilon_{\xi,\zeta}\,\Lambda
=\int_P\alpha-\int_Q\alpha\\
=\int_{\partial N}\alpha
=\lim_i\int_{\partial N}\alpha_i
=\lim_i\int_Nd\alpha_i
=0\;,
\end{multline*}
completing the proof of~(2).
\end{proof}

\begin{rem}
Let $T=P_0\sqcup Q_0$, and let $\epsilon:T\to\{\pm1\}$ denote the combination of $\epsilon_{\phi,\psi}$ and $\epsilon_{\xi,\zeta}$. If the leaves of $(\FF\times I)|_{N_0}$ were compact, then the non-closed ones form a $C^1$ family of paths joining points of $T$. Thus they define a diffeomorphism $h:T\to T$ which is combination of holonomy transformations of $\FF$ and satisfies $\epsilon\circ h=-\epsilon$. Then Theorem~\ref{t:int}-(ii) would follow directly from the $h$-invariance of $\Lambda$. But, in general, the leaves of $(\FF\times I)|_{N_0}$ may not be compact (unless $\FF$ has compact leaves), and thus this more familiar argument cannot be used.
\end{rem}

By Corollary~\ref{c:integrable homotopy to pitchfork ls} and Theorem~\ref{t:int}, the following definition can be given.

\begin{defn}\label{d:Lambda-coincidence}
With the above notation, let $\phi,\psi\in C^{1,0}(\FF,\FF)$ inducing the same map $M/\FF\to M/\FF$. The {\em $\Lambda$-coincidence\/} of $\phi$ and $\psi$ is the number
$$
\Coin_\Lambda(\phi,\psi)=\int_{\Coin_0(\xi,\zeta)}\epsilon_{\xi,\zeta}\,\Lambda
$$
for any $(\xi,\zeta)$ in $\pitchfork^1_{\text{\rm ls}}(\FF,\FF\times\FF;\D_M)$ such that there is an integrable homotopy between $(\phi,\psi)$ and $(\xi,\zeta)$.
\end{defn}

\begin{rem}
Suppose that $\FF'$ is any $C^r$ foliation of the same dimension as $\FF$ on a manifold $M'$.
If $\FF$ and $\FF'$ are oriented, then the definition of the $\Lambda$-coincidence can be similarly defined for foliation maps $\phi,\psi\in C^{1,0}(\FF,\FF')$ inducing the same map $M/\FF\to M'/\FF'$. In this case, when $(\phi,\psi)$ are in $\pitchfork^r_{\text{\rm ls}}\!\!(\FF,\FF'\times\FF';\D_{M'})$, the function $\epsilon_{\phi,\psi}$ is given by the compatibility of $\phi_*-\psi_*:T_x\FF\to T_{\phi(x)}\FF'$ with the given orientations for each $x\in\Coin_0(\phi,\psi)$.
\end{rem}

The following is an elementary consequence of the definition of $\Lambda$-coincidence.

\begin{prop}\label{p:invariance of the Lambda-coincidence}
Let $\phi,\psi,\xi,\zeta\in C^{1,0}(\FF,\FF)$ inducing the same map $M/\FF\to M/\FF$. If there is an integrable homotopy between $(\phi,\psi)$ and $(\xi,\zeta)$, then
$$
\Coin_\Lambda(\phi,\psi)=\Coin_\Lambda(\xi,\zeta)\;.
$$
\end{prop}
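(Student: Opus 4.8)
The plan is to reduce the statement to Definition~\ref{d:Lambda-coincidence} by producing a single pair in $\pitchfork^1_{\text{\rm ls}}(\FF,\FF\times\FF;\D_M)$ that is integrably homotopic to both $(\phi,\psi)$ and $(\xi,\zeta)$. First I would record the elementary closure properties of integrable homotopies: the reverse $\bar H(x,t)=H(x,1-t)$ of an integrable homotopy is again one, and if $H$ is an integrable homotopy from $\phi_0$ to $\phi_1$ and $H'$ one from $\phi_1$ to $\phi_2$, then the reparametrized concatenation (equal to $H(x,2t)$ for $t\le 1/2$ and to $H'(x,2t-1)$ for $t\ge 1/2$) is again a foliation map $\FF\times I\to\FF$, since for each $x$ the paths $t\mapsto H(x,t)$ and $t\mapsto H'(x,t)$ lie in one leaf and meet at $\phi_1(x)$. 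I would also note that leafwise homotopic maps induce the same map on $M/\FF$; hence all pairs of maps occurring below induce the same map $M/\FF\to M/\FF$ as $\phi$, so by Propositions~\ref{p:(phi,psi) in C r(cF,cF' times cF';D M)} and~\ref{p:homotopy in C r(cF times I,cF' times cF';D M)} the relevant pairs (and the connecting homotopies) lie over $\D_M$, and the function $\epsilon$ is defined for each of them.

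Next I would produce the common witness. Since $r=1\ge 1$, $\D_M$ is closed in $M\times M$ and has clean intersection with $\FF\times\FF$ (Proposition~\ref{p:Delta M cap(FF times FF) clean}), and $(\phi,\psi)\in C^{1,0}(\FF,\FF\times\FF;\D_M)$ by Proposition~\ref{p:(phi,psi) in C r(cF,cF' times cF';D M)}, Corollary~\ref{c:integrable homotopy to pitchfork ls} yields an integrable homotopy between $(\phi,\psi)$ and some $(\xi_0,\zeta_0)$ in $\pitchfork^1_{\text{\rm ls}}(\FF,\FF\times\FF;\D_M)$. By Lemma~\ref{l:product}, $\xi_0$ and $\zeta_0$ are $C^1$ foliation maps $\FF\to\FF$, and, being leafwise homotopic to $\phi$ and $\psi$, they induce the same map $M/\FF\to M/\FF$ as $\phi,\psi,\xi,\zeta$. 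Thus $(\xi_0,\zeta_0)$ is an admissible witness in Definition~\ref{d:Lambda-coincidence}, so that
$$
\Coin_\Lambda(\phi,\psi)=\int_{\Coin_0(\xi_0,\zeta_0)}\epsilon_{\xi_0,\zeta_0}\,\Lambda\;.
$$

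Finally I would close the loop: by hypothesis there is an integrable homotopy between $(\phi,\psi)$ and $(\xi,\zeta)$, and reversing it and concatenating with the homotopy from $(\phi,\psi)$ to $(\xi_0,\zeta_0)$ gives an integrable homotopy between $(\xi,\zeta)$ and $(\xi_0,\zeta_0)$; since $(\xi_0,\zeta_0)$ lies in $\pitchfork^1_{\text{\rm ls}}(\FF,\FF\times\FF;\D_M)$, it is an admissible witness for $(\xi,\zeta)$ as well, whence $\Coin_\Lambda(\xi,\zeta)=\int_{\Coin_0(\xi_0,\zeta_0)}\epsilon_{\xi_0,\zeta_0}\,\Lambda=\Coin_\Lambda(\phi,\psi)$. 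There is no genuine obstacle in this argument: the well-definedness of $\Coin_\Lambda$ (Theorem~\ref{t:int} together with Corollary~\ref{c:integrable homotopy to pitchfork ls}) does the real work, and the only point demanding a little care will be checking that the concatenation and reversal of integrable homotopies respect the foliation-map condition over $\D_M$ and the ``same induced map on $M/\FF$'' condition, so that the formula of Definition~\ref{d:Lambda-coincidence} applies to the common witness $(\xi_0,\zeta_0)$.
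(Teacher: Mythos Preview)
Your proposal is correct and matches the paper's intent: the paper does not give a proof at all, simply declaring the proposition ``an elementary consequence of the definition of $\Lambda$-coincidence,'' and what you have written is exactly the elementary unpacking of that remark (integrable homotopy is symmetric under time-reversal and transitive under concatenation, so any ls-transverse witness for $(\phi,\psi)$ is also a witness for $(\xi,\zeta)$).
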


\begin{cor}\label{c:Lambda-coincidence is locally constant}
The $\Lambda$-coincidence is locally constant on the space
$$
\{(\phi,\psi)\in C^{1,0}(\FF,\FF\times\FF)\ |\ \text{$\phi$ and $\psi$ induce the same map $M/\FF\to M/\FF$}\}
$$
with the strong plaquewise topology.
\end{cor}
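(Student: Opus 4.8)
This is an immediate consequence of Propositions~\ref{p:integrable homotopy} and~\ref{p:invariance of the Lambda-coincidence}, the only point requiring attention being the bookkeeping about which maps induce the same map on the leaf space. Write $\CC$ for the space in the statement; via the identification of Lemma~\ref{l:product}, $\CC$ is a subset of $C_{SP}(\FF,\FF\times\FF)$ carrying the subspace topology. Fix an arbitrary $(\phi,\psi)\in\CC$. The plan is to exhibit an sp-neighbourhood of $(\phi,\psi)$ in $\CC$ on which $\Coin_\Lambda$ is identically $\Coin_\Lambda(\phi,\psi)$; since $(\phi,\psi)$ is arbitrary, this is precisely local constancy.

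First I would apply Proposition~\ref{p:integrable homotopy} to the foliation map $(\phi,\psi)\colon\FF\to\FF\times\FF$, obtaining a neighbourhood $\NN$ of $(\phi,\psi)$ in $C_{SP}(\FF,\FF\times\FF)$ such that every element of $\NN$ is integrably homotopic to $(\phi,\psi)$. Set $\NN_0=\NN\cap\CC$, an sp-neighbourhood of $(\phi,\psi)$ in $\CC$, and let $(\phi',\psi')\in\NN_0$ be arbitrary; by Lemma~\ref{l:product} it is legitimate to write elements of $\NN$ as such pairs. Let $H=(H_1,H_2)\colon\FF\times I\to\FF\times\FF$ be an integrable homotopy between $(\phi,\psi)$ and $(\phi',\psi')$ provided by Proposition~\ref{p:integrable homotopy}.

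Next I would observe that $\phi$, $\psi$, $\phi'$ and $\psi'$ all induce the same map $M/\FF\to M/\FF$: by hypothesis $\phi$ and $\psi$ do, and $\phi'$ and $\psi'$ do, while for each $x\in M$ the paths $t\mapsto H_1(x,t)$ and $t\mapsto H_2(x,t)$ lie in single leaves of $\FF$, joining $\phi(x)$ to $\phi'(x)$ and $\psi(x)$ to $\psi'(x)$, so that $\phi$ and $\phi'$, and likewise $\psi$ and $\psi'$, induce the same map as well. Hence $H$ belongs to $C(\FF\times I,\FF\times\FF;\D_M)$ by Proposition~\ref{p:homotopy in C r(cF times I,cF' times cF';D M)}; together with Proposition~\ref{p:(phi,psi) in C r(cF,cF' times cF';D M)} and Corollary~\ref{c:integrable homotopy to pitchfork ls} this also guarantees that $\Coin_\Lambda(\phi',\psi')$ is defined in the sense of Definition~\ref{d:Lambda-coincidence}. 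Finally, the existence of the integrable homotopy $H$ between $(\phi,\psi)$ and $(\phi',\psi')$ lets me invoke Proposition~\ref{p:invariance of the Lambda-coincidence} to conclude $\Coin_\Lambda(\phi,\psi)=\Coin_\Lambda(\phi',\psi')$. Thus $\Coin_\Lambda$ is constant on $\NN_0$, which proves the corollary. Since all the substance has already been absorbed into Propositions~\ref{p:integrable homotopy} and~\ref{p:invariance of the Lambda-coincidence}, no real obstacle remains; the only thing to watch is that a pair $(\phi',\psi')$ close to $(\phi,\psi)$ in $\CC$ automatically induces the same map on $M/\FF$ as $(\phi,\psi)$, and this is forced by the mere existence of $H$.
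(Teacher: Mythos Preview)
Your proposal is correct and follows exactly the route the paper takes: the paper's proof is the single sentence ``This follows directly from Propositions~\ref{p:integrable homotopy} and~\ref{p:invariance of the Lambda-coincidence}.'' You have simply unpacked that sentence, including the (correct) verification that all four maps $\phi,\psi,\phi',\psi'$ induce the same map on $M/\FF$, which is the only hypothesis of Proposition~\ref{p:invariance of the Lambda-coincidence} not immediately visible.
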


\begin{proof}
This follows directly from Propositions~\ref{p:integrable homotopy} and~\ref{p:invariance of the Lambda-coincidence}.
\end{proof}

\begin{defn}\label{d:Lambda-Lefschetz number}
With the above notation, the {\em $\Lambda$-Lefschetz number\/} of any  $\phi\in C^{1,0}_{\text{\rm leaf}}(\FF,\FF)$ is
$$
L_\Lambda(\phi)=\Coin_\Lambda(\Graph(\phi))\;.
$$
\end{defn}

The following are particular cases of Proposition~\ref{p:invariance of the Lambda-coincidence} and Corollary~\ref{c:Lambda-coincidence is locally constant}.

\begin{cor}
Integrably homotopic maps in $C^{1,0}_{\text{\rm leaf}}(\FF,\FF)$
have the same $\Lambda$-Lef\-schetz number.
\end{cor}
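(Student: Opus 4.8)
The plan is to deduce this from Proposition~\ref{p:invariance of the Lambda-coincidence} by applying it to the graph maps. Recall that, by Definition~\ref{d:Lambda-Lefschetz number},
$$
L_\Lambda(\phi)=\Coin_\Lambda(\Graph(\phi))=\Coin_\Lambda(\id_M,\phi)
$$
for every $\phi\in C^{1,0}_{\text{\rm leaf}}(\FF,\FF)$, since $\Graph(\phi)=(\id_M,\phi)$. So, given integrably homotopic maps $\phi,\psi\in C^{1,0}_{\text{\rm leaf}}(\FF,\FF)$, it suffices to check that the pairs $(\id_M,\phi)$ and $(\id_M,\psi)$ fulfil the hypotheses of Proposition~\ref{p:invariance of the Lambda-coincidence}. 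This is immediate: $\id_M$ is $C^\infty$, hence $C^{1,0}$, and $\phi,\psi\in C^{1,0}_{\text{\rm leaf}}(\FF,\FF)\subset C^{1,0}(\FF,\FF)$; moreover $\id_M$ induces the identity on $M/\FF$, and so do the leaf preserving maps $\phi$ and $\psi$, so the two maps in each pair induce the same map $M/\FF\to M/\FF$.

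The key step is to promote an integrable homotopy between $\phi$ and $\psi$ to one between $(\id_M,\phi)$ and $(\id_M,\psi)$. First I would pick an integrable homotopy $H:\FF\times I\to\FF$ between $\phi$ and $\psi$ and set
$$
\widetilde H:M\times I\to M\times M\;,\qquad\widetilde H(x,t)=(x,H(x,t))\;.
$$
Clearly $\widetilde H$ is continuous, $\widetilde H(\cdot,0)=(\id_M,\phi)=\Graph(\phi)$ and $\widetilde H(\cdot,1)=(\id_M,\psi)=\Graph(\psi)$. It remains to verify that $\widetilde H$ is an integrable homotopy in the sense of Section~\ref{s:homotopies}, i.e.\ a foliation map $\FF\times I\to\FF\times\FF$ whose slice paths lie in leaves. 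Fix $x\in M$ and let $L$ be the leaf of $\FF$ through $x$. Since $\phi$ preserves the leaves, $H(x,0)=\phi(x)\in L$; since $H$ is an integrable homotopy, the path $t\mapsto H(x,t)$ lies in a single leaf of $\FF$, which is therefore $L$. Hence $\widetilde H(\{x\}\times I)\subset\{x\}\times L\subset L\times L$, so $\widetilde H$ maps the leaf $L\times I$ of $\FF\times I$ into the leaf $L\times L$ of $\FF\times\FF$ and the slice path $t\mapsto\widetilde H(x,t)$ lies in the leaf $L\times L$. Thus $\widetilde H$ is the desired integrable homotopy between $\Graph(\phi)$ and $\Graph(\psi)$.

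With this in hand, Proposition~\ref{p:invariance of the Lambda-coincidence} gives $\Coin_\Lambda(\id_M,\phi)=\Coin_\Lambda(\id_M,\psi)$, that is $L_\Lambda(\phi)=L_\Lambda(\psi)$, which is the assertion. I do not expect any serious obstacle here: all the analytic content is already packaged in Theorem~\ref{t:int} and Proposition~\ref{p:invariance of the Lambda-coincidence}, and the only point demanding a little care is the verification above that the ``vertical'' homotopy $\widetilde H$ genuinely lands in the product foliation $\FF\times\FF$ and stays leafwise, which uses precisely that $\phi$ is leaf preserving together with the leafwise nature of $H$.
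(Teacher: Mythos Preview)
Your proof is correct and follows exactly the approach the paper intends: the corollary is stated there as a particular case of Proposition~\ref{p:invariance of the Lambda-coincidence}, and you have simply spelled out the natural construction $\widetilde H(x,t)=(x,H(x,t))$ that exhibits $(\id_M,\phi)$ and $(\id_M,\psi)$ as integrably homotopic. The only minor comment is that from $\widetilde H(\{x\}\times I)\subset L\times L$ you jump to $\widetilde H(L\times I)\subset L\times L$; this is fine since the same leaf $L$ is obtained for every point of $L$, but you might say so explicitly.
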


\begin{cor}
The $\Lambda$-Lefschetz number is locally constant on $C^{1,0}_{\text{\rm leaf},SP}(\FF,\FF)$.
\end{cor}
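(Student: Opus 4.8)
The plan is to obtain this statement as the special case of Corollary~\ref{c:Lambda-coincidence is locally constant} corresponding to graphs. By Definition~\ref{d:Lambda-Lefschetz number} we have $L_\Lambda(\phi)=\Coin_\Lambda(\Graph(\phi))$ for each $\phi\in C^{1,0}_{\text{\rm leaf}}(\FF,\FF)$, so it is enough to show that the composite
$$
\phi\longmapsto\Graph(\phi)=(\id_M,\phi)\longmapsto\Coin_\Lambda(\id_M,\phi)
$$
is locally constant on $C^{1,0}_{\text{\rm leaf},SP}(\FF,\FF)$.

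First I would check that the graph construction lands in the space on which Corollary~\ref{c:Lambda-coincidence is locally constant} applies: since $\phi$ preserves every leaf, both $\id_M$ and $\phi$ induce the identity map $M/\FF\to M/\FF$, and $\id_M$ is $C^\infty$, so $(\id_M,\phi)\in C^{1,0}(\FF,\FF\times\FF)$ and the two components induce the same map on the leaf space; by Proposition~\ref{p:(phi,psi) in C r(cF,cF' times cF';D M)} one even has $(\id_M,\phi)\in C^{1,0}(\FF,\FF\times\FF;\D_M)$. Next I would invoke the sp-continuity of the graph map, which is the $C^{1,0}$ version of the continuity statement recorded at the start of Section~\ref{s:fixed} and follows from Lemma~\ref{l:product} in exactly the same way. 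Composing this continuous map with the locally constant function furnished by Corollary~\ref{c:Lambda-coincidence is locally constant} shows that $L_\Lambda$ is locally constant on $C^{1,0}_{\text{\rm leaf},SP}(\FF,\FF)$.

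Since every step is either an unwinding of Definition~\ref{d:Lambda-Lefschetz number} or a citation of an already-proved result, there is no genuine obstacle. The only point requiring a moment's verification is that $\phi\mapsto\Graph(\phi)$ is sp-continuous at the $C^{1,0}$ level, which --- as in Lemma~\ref{l:product} --- amounts to writing a product foliation chart of $\FF\times\FF$ as a product of foliation charts of $\FF$ and reading off that the defining inequalities of a basic sp-neighbourhood of $(\id_M,\phi)$ pull back to basic sp-neighbourhoods of $\phi$.
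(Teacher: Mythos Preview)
Your argument is correct and matches the paper's approach: the paper simply records this corollary as a particular case of Corollary~\ref{c:Lambda-coincidence is locally constant}, and your proof spells out exactly why that is so via the sp-continuity of $\Graph$ (Lemma~\ref{l:product}) and the fact that leaf-preserving maps induce the identity on $M/\FF$.
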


For any $\phi\in\Diff^1(\FF)$, by Corollary~\ref{c:Diff r(cF) cap Graph -1(pitchfork r ls(cF,cF times cF;D M))}, there is some $\psi\in\Diff^1(\FF)$ such that there is an integrable homotopy between $\phi$ and $\psi$, and $\Graph(\psi)\pitchfork_{\text{\rm ls}}\D_M$. Then
$$
L_\Lambda(\phi)=\int_{\Fix_0(\phi)}\epsilon_\phi\,\Lambda\;.
$$

Let $\phi:\FF\to\FF$ be a foliation homeomorphism, which possibly does not preserve each leaf. For any transversal $T$ of $\FF$, the image $\phi(T)$ is another transversal. Thus we can consider the restriction of $\Lambda$ to $T$ and $\phi(T)$. It is said that $\Lambda$ is {\em invariant\/} by $\phi$ if the homeomorphism $\phi:T\to\phi(T)$ preserves $\Lambda$ for any transversal $T$.

\begin{prop}
If $\phi,\psi\in\Diff^1(M,\FF)$ induce the same map $M/\FF\to M/\FF$ and $\Lambda$ is $\phi$-invariant, then
$$
\Coin_\Lambda(\phi,\psi)=L_\Lambda(\psi\circ\phi^{-1})\;.
$$
\end{prop}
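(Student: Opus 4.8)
The plan is to realize $\Coin_\Lambda(\phi,\psi)$ as the $\Lambda$-Lefschetz number of a graph by pushing everything forward with the diffeomorphism $\phi$ and invoking the $\phi$-invariance of $\Lambda$. Put $g=\psi\circ\phi^{-1}$. Since $\phi$ and $\psi$ induce the same map on $M/\FF$, the diffeomorphism $g$ is leaf preserving, so $g\in\Diff^1(\FF)\subset C^{1,0}_{\text{\rm leaf}}(\FF,\FF)$ and $L_\Lambda(g)=\Coin_\Lambda(\Graph(g))$ is defined; moreover, just as in~\eqref{e:(phi,psi)}, $(\phi,\psi)=\Graph(g)\circ\phi$ as maps $M\to M\times M$. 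Hence it is enough to prove that precomposition with $\phi$ does not change the $\Lambda$-coincidence.

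First I would record the behaviour of $F\mapsto F\circ\phi$ on $C^{1,0}(\FF,\FF\times\FF;\D_M)$: by Lemma~\ref{l:composition}-(3) (applied to the proper $C^1$ maps $\phi$ and $\phi^{-1}$) it is an sp-homeomorphism of this space onto itself; it preserves the subset of maps whose two components induce the same map on $M/\FF$; it carries integrable homotopies to integrable homotopies via $H\mapsto H\circ(\phi\times\id_I)$; and it maps $\pitchfork^1_{\text{\rm ls}}(\FF,\FF\times\FF;\D_M)$ onto itself, since ls-transversality to $\D_M$ is unaffected by precomposition with a diffeomorphism. Now pick, by Corollary~\ref{c:integrable homotopy to pitchfork ls}, some $\Xi=(\xi,\zeta)$ in $\pitchfork^1_{\text{\rm ls}}(\FF,\FF\times\FF;\D_M)$ that is integrably homotopic to $\Graph(g)$; then $\Xi\circ\phi$ lies in $\pitchfork^1_{\text{\rm ls}}(\FF,\FF\times\FF;\D_M)$ and is integrably homotopic to $(\phi,\psi)$. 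By Definition~\ref{d:Lambda-coincidence} and Theorem~\ref{t:int}, $\Coin_\Lambda(\phi,\psi)=\int_{\Coin_0(\xi\circ\phi,\zeta\circ\phi)}\epsilon_{\xi\circ\phi,\zeta\circ\phi}\,\Lambda$ and $L_\Lambda(g)=\int_{\Coin_0(\xi,\zeta)}\epsilon_{\xi,\zeta}\,\Lambda$, so the proof comes down to the equality of these two integrals.

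To obtain it I would transport the first integral through $\phi$. Since $\Coin(\xi\circ\phi,\zeta\circ\phi)=\phi^{-1}(\Coin(\xi,\zeta))$ and $\phi_*$ is a fibrewise isomorphism of $T\FF$, the diffeomorphism $\phi$ restricts to a $C^1$ diffeomorphism of the $C^1$ transversal $\Coin_0(\xi\circ\phi,\zeta\circ\phi)$ onto the $C^1$ transversal $\Coin_0(\xi,\zeta)$; it preserves $\Lambda$ because $\Lambda$ is $\phi$-invariant; and, by Propositions~\ref{p:hom} and~\ref{p:phi *-psi *}, it intertwines the normal-bundle homomorphisms $\nu_0\to T\FF$ attached to the two coincidence sets --- the one for $\Xi\circ\phi$ being the composite of $\phi_*|_{T\FF}$ with the one for $\Xi$ --- so that it carries the transverse orientations constructed in the proof of Theorem~\ref{t:int} to one another and hence $\epsilon_{\xi\circ\phi,\zeta\circ\phi}=\epsilon_{\xi,\zeta}\circ\phi$. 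The change-of-variables formula for measures then gives the equality of the two integrals, whence $\Coin_\Lambda(\phi,\psi)=L_\Lambda(g)=L_\Lambda(\psi\circ\phi^{-1})$.

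The step I expect to be the main obstacle is the identity $\epsilon_{\xi\circ\phi,\zeta\circ\phi}=\epsilon_{\xi,\zeta}\circ\phi$: one has to check that no extra factor $\sign\det(\phi_*\colon T_x\FF\to T_{\phi(x)}\FF)$ appears, which requires combining the precise recipe for orienting $\Coin_0$ in Theorem~\ref{t:int} --- through $\sigma\wedge\tau=\omega\wedge\chi$ and the orientation $\omega\wedge\chi$ of $M$ --- with the way $\phi$ respects the tangential and transverse structures of $\FF$. Once that orientation bookkeeping is carried out, the remaining verifications (the functoriality of precomposition with $\phi$ and the measure-theoretic change of variables) are routine.
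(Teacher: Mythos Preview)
Your approach is correct and is exactly the paper's: the paper's entire proof is the single line ``This follows because the composite~\eqref{e:(phi,psi)} equals $(\phi,\psi)$,'' i.e.\ the factorization $(\phi,\psi)=\Graph(\psi\circ\phi^{-1})\circ\phi$. You have simply unpacked what that line is asking the reader to supply --- that precomposition with the foliation diffeomorphism $\phi$ preserves ls-transversality and integrable homotopies, and that the resulting integrals agree because $\Lambda$ is $\phi$-invariant and the signs match --- so there is nothing to add beyond noting that your orientation bookkeeping in the last paragraph is indeed the only nonformal point, and it goes through as you indicate.
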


\begin{proof}
This follows because the composite~\eqref{e:(phi,psi)} equals $(\phi,\psi)$.
\end{proof}

\begin{rem}
It is clear that, instead of assuming that $M$ is closed, we can assume that $\Lambda$ is compactly supported.
\end{rem}

\end{document}